\documentclass[dvipsnames,onefignum,onetabnum]{siamart171218}
\usepackage[top=1.4in,bottom=1.275in,left=1.4in,right=1.4in]{geometry}

\usepackage[T1]{fontenc}
\usepackage[english]{babel}%
\usepackage{csquotes}%
\usepackage{hyphenat}%
\usepackage{siunitx}
\sisetup{exponent-product=\cdot,binary-units=true}
\usepackage{placeins}
\usepackage{cite}%
\usepackage[begintext=``, endtext='']{quoting}
\SetBlockEnvironment{quoting}
\SetBlockThreshold{1}

\newcommand{\revised}[1]{{#1}}
\newcommand{\changed}[1]{{#1}}

\usepackage{amsfonts}
\usepackage{amssymb}
\usepackage{amscd}
\usepackage{mathtools}
\usepackage[mathscr]{eucal}
\usepackage{calligra}
\DeclareMathAlphabet{\mathcalligra}{T1}{calligra}{m}{n}
\DeclareFontShape{T1}{calligra}{m}{n}{<->s*[1.1]callig15}{}
\usepackage{bm}
\usepackage{autonum}%
\usepackage[nice]{nicefrac}

\usepackage{tikz}
\usepackage{pgf}
\usepackage{pgfplots}
\usepackage{pgfplotstable}
\usepackage{pgf-pie}
\usepackage{shellesc}
\pgfplotsset{compat=newest}
\usetikzlibrary{spy}
\usetikzlibrary{shapes}
\usetikzlibrary{backgrounds}
\usetikzlibrary{shadows}
\usetikzlibrary{matrix}
\usetikzlibrary{external}
\usepgfplotslibrary{fillbetween}
\tikzexternalize[prefix=tikz/]

\usepackage{comment}
\usepackage{graphicx}
\usepackage[colorinlistoftodos,prependcaption,textsize=small]{todonotes}
\usepackage{cleveref}

\crefname{equation}{}{}
\usepackage{csvsimple}
\usepackage{booktabs}
\usepackage{environ}
\usepackage{adjustbox}
\usepackage{relsize}
\usepackage[font=small,figurewithin=section]{caption}%
\usepackage[subrefformat=parens,labelformat=simple]{subcaption}%

\usepackage{afterpage}

\newtheorem{remark}{Remark}[section]

\let\min\relax \DeclareMathOperator*\min{\vphantom{p}min}
\let\max\relax \DeclareMathOperator*\max{\vphantom{p}max}
\let\subset\relax \DeclareMathOperator{\subset}{\subseteq}
 
\let\tilde\widetilde
\let\hat\widehat

\DeclarePairedDelimiter\floor{\lfloor}{\rfloor}

\DeclareMathOperator*{\supp}{supp}

\newcommand{\sspace}{\hspace{0.25pt}}

\newcommand{\approxsol}[1]{{\tilde{#1}}_{h}}

\renewcommand{\a}{a}

\newcommand{\R}{\mathbb{R}} %
\newcommand{\N}{\mathbb{N}} %
\newcommand{\dd}{\,\mathrm{d}} %
\newcommand{\bdry}{\partial} %
\newcommand{\tr}{\mathrm{tr}} %
\newcommand{\spann}{\mathrm{span}} %
\newcommand{\adj}{\mathrm{adj}} %

\newcommand{\upperone}[2]{\raisebox{-0.2\height}{$#1{}^1\!$}}
\newcommand{\upone}{\mathpalette\upperone\relax}
\newcommand{\lowertwo}[2]{\raisebox{-0.1\height}{$#1{}_{\!2}$}}

\newcommand{\lowtwo}{\mathpalette\lowertwo\relax}

\newcommand{\frachalf}[2]{\raisebox{0.0\height}{$#1{}\upone/\lowtwo$}}

\newcommand{\onehalf}{\mathpalette\frachalf\relax}

\newcommand{\balpha}{{\bm{\alpha}}}

\newcommand{\bdelta}{{\bm{\delta}}}

\newcommand{\bxi}{\bm{\xi}}

\newcommand{\bvarphi}{\bm{\varphi}}

\newcommand{\bme}{\bm{e}}

\newcommand{\bmi}{\bm{i}}
\newcommand{\bmj}{\bm{j}}

\newcommand{\bmn}{\bm{n}}

\newcommand{\bmp}{\bm{p}}

\newcommand{\bmx}{\bm{x}}
\newcommand{\bmy}{\bm{y}}

\newcommand{\bmJ}{\bm{J}}

\newcommand{\sfc}{\mathsf{c}}

\newcommand{\sff}{\mathsf{f}}

\newcommand{\sfu}{\mathsf{u}}
\newcommand{\sfv}{\mathsf{v}}
\newcommand{\sfw}{\mathsf{w}}

\newcommand{\sfA}{\mathsf{A}}
\newcommand{\sfB}{\mathsf{B}}

\newcommand{\sfJ}{\mathsf{J}}

\newcommand{\sfM}{\mathsf{M}}
\newcommand{\sfN}{\mathsf{N}}

\newcommand{\mcD}{\mathcal{D}}

\newcommand{\mcI}{\mathcal{I}}

\newcommand{\mcO}{\mathcal{O}}
\newcommand{\mcP}{\mathcal{P}}
\newcommand{\mcQ}{\mathcal{Q}}

\newcommand{\scD}{\mathscr{D}}

\newcommand{\bbN}{\mathbb{N}}

\newcommand{\bbX}{\mathbb{X}}

\newcommand{\bff}{\mathbf{f}}
\newcommand{\bfg}{\mathbf{g}}

\newcommand{\bfu}{\mathbf{u}}
\newcommand{\bfv}{\mathbf{v}}

\newcolumntype{?}{!{\vrule width 1.2pt}}

\makeatletter
\newsavebox{\measure@tikzpicture}
\NewEnviron{scaletikzpicturetowidth}[1]{%
	\tikzifexternalizingnext{%
		\def\tikz@width{#1}%
		\def\tikzscale{1}\begin{lrbox}{\measure@tikzpicture}%
			\tikzset{external/export next=false,external/optimize=false}%
			\BODY
		\end{lrbox}%
		\pgfmathparse{#1/\wd\measure@tikzpicture}%
		\edef\tikzscale{\pgfmathresult}%
		\BODY
	}{%
		\BODY
	}%
}
\makeatother

\definecolor{color1}{rgb}{0, 0.4470, 0.7410}
\definecolor{color2}{rgb}{0.8500, 0.3250, 0.0980}
\definecolor{color3}{rgb}{0.9290, 0.6940, 0.1250}
\definecolor{color4}{rgb}{0.7060, 0.3840, 0.7650}
\definecolor{color5}{rgb}{0.4660, 0.6740, 0.1880}
\definecolor{color6}{rgb}{0.3010, 0.7450, 0.9330}
\definecolor{color7}{rgb}{0.6350, 0.0780, 0.1840}

\newcommand{\logLogSlopeTriangle}[6]
{

	\pgfplotsextra
	{
		\pgfkeysgetvalue{/pgfplots/xmin}{\xmin}
		\pgfkeysgetvalue{/pgfplots/xmax}{\xmax}
		\pgfkeysgetvalue{/pgfplots/ymin}{\ymin}
		\pgfkeysgetvalue{/pgfplots/ymax}{\ymax}

		\pgfmathsetmacro{\xArel}{#1-#2}
		\pgfmathsetmacro{\yArel}{#3}
		\pgfmathsetmacro{\xBrel}{#1}
		\pgfmathsetmacro{\yBrel}{\yArel}
		\pgfmathsetmacro{\xCrel}{\xArel}

		\pgfmathsetmacro{\lnxB}{\xmin*(1-(#1-#2))+\xmax*(#1-#2)} %
		\pgfmathsetmacro{\lnxA}{\xmin*(1-#1)+\xmax*#1} %
		\pgfmathsetmacro{\lnyA}{\ymin*(1-#3)+\ymax*#3} %
		\pgfmathsetmacro{\lnyC}{\lnyA+#5/#4*(\lnxA-\lnxB)}
		\pgfmathsetmacro{\yCrel}{\lnyC-\ymin)/(\ymax-\ymin)} %

		\coordinate (A) at (rel axis cs:\xArel,\yArel);
		\coordinate (B) at (rel axis cs:\xBrel,\yBrel);
		\coordinate (C) at (rel axis cs:\xCrel,\yCrel);

		\draw[gray!75!black, dashed, line width=1.0pt, #6]   (A)-- node[pos=0.5,anchor=north] {#4}
		(B)-- 
		(C)-- node[pos=0.5,anchor=east] {#5}
		cycle;
	}
}

\pgfplotsset{
  log x ticks with fixed point/.style={
      xticklabel={
        \pgfkeys{/pgf/fpu=true}
        \pgfmathparse{exp(\tick)}%
        \pgfmathprintnumber[fixed relative, precision=3]{\pgfmathresult}
        \pgfkeys{/pgf/fpu=false}
      }
  },
  log y ticks with fixed point/.style={
      yticklabel={
        \pgfkeys{/pgf/fpu=true}
        \pgfmathparse{exp(\tick)}%
        \pgfmathprintnumber[fixed relative, precision=3]{\pgfmathresult}
        \pgfkeys{/pgf/fpu=false}
      }
  }
}

\tikzset{
  ashadow/.style={opacity=.25, shadow xshift=0.07, shadow yshift=-0.07},
}
\def\arrow{
  (-0.3,0.0) [rounded corners=0.5] -- (-0.1,0.2) [rounded corners=0] -- (-0.1,0.1) [rounded corners=0.5] -- (0.3,0.1) -- (0.3,-0.1) [rounded corners=0] -- (-0.1,-0.1) [rounded corners=0.5] -- (-0.1,-0.2) [rounded corners=0.8] -- cycle
}

\definecolor{CustomGreen}{RGB}{65,169,50}

\pgfplotstableread{./Results/MatrixSparsity/A_p2.dat}\mytable
 \graphicspath{{./Figures/}}

\title{The surrogate matrix methodology: Low-cost assembly for isogeometric analysis}

\author{Daniel~Drzisga\thanks{Lehrstuhl f\"ur Numerische Mathematik, Fakult\"at f\"ur Mathematik (M2), Technische Universit\"at M\"unchen, Garching bei M\"unchen (\email{drzisga@ma.tum.de}, \email{keith@ma.tum.de}, \email{wohlmuth@ma.tum.de})}
\and Brendan~Keith\footnotemark[1]
\and Barbara~Wohlmuth\footnotemark[1]}

\begin{document}

\maketitle

\begin{abstract}
A new methodology in isogeometric analysis (IGA) is presented.
This methodology delivers low-cost variable-scale approximations (surrogates) of the matrices which IGA conventionally requires to be computed from element-scale quadrature formulas.
To generate surrogate matrices, quadrature must only be performed on certain elements in the computational domain.
This, in turn, determines only a subset of the entries in the final matrix.
The remaining matrix entries are computed by a simple B-spline interpolation procedure.
Poisson's equation, membrane vibration, plate bending, and Stokes' flow problems are studied.
In these problems, the use of surrogate matrices has a negligible impact on solution accuracy.
Because only a small fraction of the original quadrature must be performed, we are able to report beyond a fifty-fold reduction in overall assembly time in the same software.
The capacity for even further speed-ups is clearly demonstrated.
The implementation used here was achieved by a small number of modifications to the open-source IGA software library GeoPDEs.
Similar modifications could be made to other present-day software libraries.
\end{abstract} 
\begin{keywords}
  Assembly, surrogate numerical methods, isogeometric analysis, a priori analysis.
\end{keywords}

\section{Introduction} %
\label{sec:introduction}

Avoiding unnecessary work is of utmost importance when computing at the frontiers of contemporary research.
To frame a workable definition, recall that practical simulations in science and engineering involve a large number of possible sources of error.
For instance, we highlight the categories of modeling error, numerical error, and data error, each of which have many subcategories.
The total error in a simulation is controlled by the aggregate of each relevant source of error.
In this paper, ``unnecessary work'' \textemdash{} or, more precisely, \emph{over-computation} \textemdash{} is any machine expense used to drive one source of error in a problem far below the total error.
It cannot be overstated that removing sources of over-computation can have an outsized influence on the computational cost of getting an accurate solution.

In some instances, circumventing over-computation is the simplest way to accelerate a numerical algorithm.
For example, in the use of iterative methods, for both linear and non-linear problems, it has long been acknowledged that \emph{over-solving} a discretized problem is a negligent expense.
Relaxing iterative solver errors usually reduces to just adjusting the tolerances naturally built into established algorithms.
In other instances, sources of over-computation are less conspicuous and avoiding them requires the development of new algorithms.
For example, in the field of uncertainty quantification, it has recently come to light that sampling error can be relaxed \textemdash{} and, in turn, computational cost can be significantly reduced \textemdash{} by the use of a tunable \emph{surrogate response surface} \cite{butler2012posteriori,MATTIS201836}.

The focus of this article is the Galerkin form of isogeometric analysis (IGA) \cite{hughes2005isogeometric,cottrell2009isogeometric}.
At first sight, in view of the long list of computer methods which rose beforehand, the Galerkin isogeometric method may be seen as a rather paradigmatic approach to the discretization of partial differential equations (PDEs).
Indeed, Galerkin IGA methods are little more than finite element methods which employ non-uniform rational B-spline (NURBS) bases \cite{hollig2003finite}.
Although it was immediately shown by Hughes et al. \cite{hughes2005isogeometric} that the use of such a basis improves the interoperability between computer-aided design (CAD) and PDE analysis, many other benefits of the IGA approach were also demonstrated early on in the IGA literature.
Of particular note, the arbitrary smoothness of NURBS bases generally improves the accuracy per degree of freedom and lends itself to convenient techniques for the discretization of high-order PDEs \cite{hughes2018mathematics,hollig2003finite}.
It is these and other serendipitous features of IGA which have attributing to its truly meteoric success in modern computational science and engineering research.

It is well-established that traditional isogeometric methods face a great computational burden at the point of matrix assembly.
This is due, in part, to the large support of the basis functions.
Although many other common concerns are naturally alleviated by the IGA paradigm, this particular challenge is clearly evidenced by the expansive literature on quadrature rules and accelerated assembly algorithms \cite{hiemstra2017optimal,bressan2018sum,sangalli2018matrix,mantzaflaris2017low,hofreither2018black,ANTOLIN2015817,mantzaflaris2015integration,Mantzaflaris2014,Calabro2017,Fahrendorf2018,hughes2010efficient,auricchio2012simple,hiemstra2019fast}.
Indeed, we may further accentuate this remark with the following quote from the 2014 review article \cite{da2014mathematical}:
\blockquote{\ldots at the moment the assembly of the matrix is the most time-consuming part of isogeometric codes. The development of optimal assembly procedures is an important task required to render isogeometric methods a competitive technology.}

In this article, we present a simple methodology to avoid \emph{over-assembling} matrices in IGA.
Roughly speaking, it requires performing quadrature for only a small fraction of the trial and test basis function interactions and then {approximating the rest through}, for example, {interpolation}.
This leads to a large sparse matrix where the majority of entries have not been computed using any quadrature at all.
\changed{Usually}, such matrices will not coincide with the ones generated by performing quadrature for every non-zero entry (cf. \Cref{sub:polynomial_reproduction}), but they can be interpreted as surrogates for those matrices.

The main idea \changed{used here} was first introduced in the context of first-order finite elements by Bauer et al. in \cite{bauer2017two}.
Thereafter, applications to peta-scale geodynamical simulations were presented in \cite{bauer2018large,bauer2018new} and a theoretical analysis was given in \cite{drzisga2018surrogate}.
In the massively parallel applications \cite{bauer2017two,bauer2018large,bauer2018new}, it was natural to work with so-called ``macro-meshes'' as well as a piecewise polynomial space for resolving the surrogate matrices.
This choice was motivated by a low communication cost across the faces of the macro-elements, a convenient cache-aware implementation, and the fact that a hybrid mesh structure allowed for extremely fast evaluation of the three-dimensional polynomials; see \cite{bauer2018large,bauer2018new} for further details.
In contrast, the surrogate matrices in this paper are computed using a B-spline interpolation space.
With this particular strategy, we demonstrate that the cost of matrix assembly in conventional IGA codes can be reduced by an order of magnitude.

Our approach bears some similarities to the integration by interpolation and lookup (IIL) approach proposed in \cite{Mantzaflaris2014,mantzaflaris2015integration}.
\changed{In those two works, an integrand factor from the weak form, composed of both the coefficients of the underlying PDE as well as the geometry mapping, is approximated.}
In this work, the actual entries of the final matrix are shown to be related to a small number of smooth so-called \emph{stencil functions}; instead of a factor in the integrand, it is these stencil functions which are approximated.

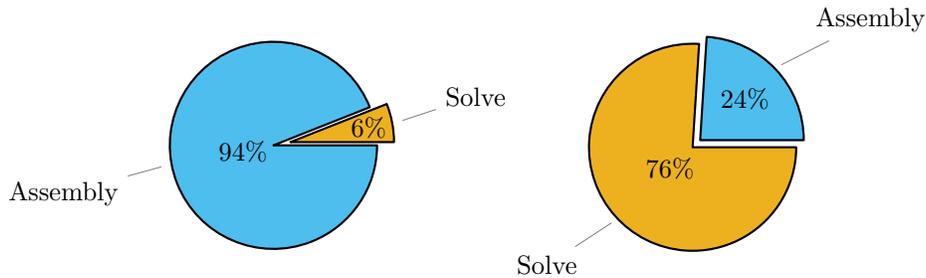
\begin{figure}
\centering
\begin{scaletikzpicturetowidth}{0.7\textwidth}
\begin{tikzpicture}[scale=\tikzscale]
\pie[color={color3,color6}, text = pin, explode=0.25]{6/Solve, 94/Assembly}
\pie[color={color6,color3}, text = pin, explode=0.15, pos={12,0}]{24/Assembly, 76/Solve}
\end{tikzpicture}
\end{scaletikzpicturetowidth}
\caption{\label{fig:PieCharts} Left: Distribution of computational cost with standard IGA. Right: Distribution of computational cost with a surrogate IGA strategy. (Timings taken from the experiment presented in \Cref{fig:dynamic_sampling_p2_timing,fig:dynamic_sampling_p2} for $c = 3$. Both experiments ran on a single compute core and the default MATLAB backslash operator has been used as a solver.)}
\end{figure}

An advantage of the IIL approach is that, in theory, it does not require a uniform knot vector assumption (cf. \Cref{sec:preliminaries}).
However, in practice, this assumption is necessary in order to obtain compact lookup tables \cite{mantzaflaris2015integration}.
On the other hand, one advantage of our approach is that it can be easily implemented using existing IGA assembly paradigms.
Another advantage is that the implementation is identical whether using a B-spline or a NURBS basis.

Before moving on, some other important remarks deserve to be emphasized:
\begin{itemize}
    \item
    The methodology we propose for IGA applications is essentially independent of the quadrature rule used at the individual element or basis function level.
    \changed{This lays bare the possibility for it to be used in conjunction with many other cutting edge techniques for accelerated IGA assembly.}
    \item
    \changed{For our new surrogate methods, it would be most efficient if the matrix entries which require quadrature were to be computed based on individual basis function interactions.
    This leaves out standard element-by-element assembly strategies, but would work well with the control point pairwise method proposed in \cite{Karatarakis2014} or, ideally, with a row-by-row approach; e.g., \cite{hiemstra2019fast,Calabro2017,sangalli2018matrix}.
    Nevertheless, one should still expect to see significant speed-ups with surrogate methods in standard element-by-element codes, at least for moderate polynomial orders.}
    In order to underscore this fact, we did not develop a stand-alone code.
    Instead, we implemented \changed{our} surrogate methods by \changed{simply} modifying the assembly routines in the open-source library GeoPDEs \cite{de2011geopdes,vazquez2016new}, \changed{leaving ever other aspect of the code fixed}.
    For illustration, the reader may refer to the left and right sides of \cref{fig:PieCharts} to compare the relative timings before and after some relatively minor changes were made to this software (cf. \Cref{app:computing_surrogates_with_existing_iga_codes} and \cite{drzisga2019igasurrogateimpl}).
    In both cases, the differences in solving time and solution accuracy were {negligible}.
    We expect that most other element-by-element IGA codes should be easy to modify in a similar manner.
    \item
    Many efficient assembly strategies for IGA see their performance advantage only in the high polynomial order regime.
    \changed{Here, the performance usually grows with each $h$-refinement.
    Indeed, at just over one million degrees of freedom, our experiments demonstrate assembly speed-ups beyond \emph{\textbf{fifty times}}, in the exact same code, with a simple second-order NURBS basis (see \Cref{ssub:dynamic_sampling_length}).}
\end{itemize}

In our experiments, we analyze surrogate IGA methods for Poisson's equation, membrane vibration, plate bending, and Stokes' flow problems.
The Poisson case is analyzed in detail and the additional experiments are provided in order to motivate further study.
It is our eventual goal to adapt our methods to a matrix-free framework, similar to what has been used recently in low-order settings \cite{drzisga2018surrogate,bauer2017two,bauer2018large,bauer2018new}.
This would certainly be helpful in order to reach the full potential of IGA in extreme scale computations.

In the next section, we take stock of the majority of mathematical notation used in the remainder of the paper.
In \Cref{sec:surrogate_matrices_exploiting_basis_structure}, we introduce the notion of a stencil function in the IGA context.
In \Cref{sec:surrogate_matrices_interpolation_of_stencil_functions}, we investigate the accuracy of B-spline interpolation with regard to stencil functions.
In \Cref{sec:surrogate_matrices_preserving_structure}, we use interpolants of the stencil functions (i.e., surrogate stencil functions) to define surrogate matrices for IGA.
\Cref{app:computing_surrogates_with_existing_iga_codes} consists of a brief description of our software implementation.
A more complete description is provided in \cite{drzisga2019igasurrogateimpl}.
In \Cref{sec:Poissons_equation,sec:transverse_vibrations_of_an_isotropic_membrane,sec:the_biharmonic_equation,sec:stokes_equation}, we examine surrogate IGA methods for Poisson's equation, membrane vibration, plate bending, and Stokes' flow problems, respectively.
\Cref{app:marsden_s_identity} is included to support some of the analysis carried out in \Cref{sec:surrogate_matrices_interpolation_of_stencil_functions}.

\section{Preliminaries} %
\label{sec:preliminaries}

In this section, we lay out the principal mathematical focus and notation of the paper.

\subsection{Model problems and notation} %
\label{sub:notation_and_model_problems}
Let $\Omega\subset\R^n$ be a domain, $n=2,3$.
Let $V = V(\Omega)$ be a Hilbert space over $\R$, the field of real numbers, and let $V^\ast$ denote its topological dual.
For historical reasons, we proceed by adopting notation from the $h$-version of the finite element method and thus let $V_h$ denote a finite-dimensional subspace of $V$.
Although we also deal with a number of important alternatives (see, e.g., \Cref{sec:transverse_vibrations_of_an_isotropic_membrane,sec:stokes_equation}), we are chiefly interested in the following three problems:
\begin{subequations}
\label{eq:VariationalFormulations}
\begin{alignat}{3}
    &
    \text{Find } u \in V \text{ satisfying }
    &
    \quad
    a(u,v)
    &=
    F(v)
    \quad
    &&\text{for all } v\in V
    \,.
\label{eq:ContinousVF}
    \\
    &
    \text{Find } u_h \in V_h \text{ satisfying }
    &
    \quad
    a(u_h,v_h)
    &=
    F(v_h)
    \quad
    &&\text{for all } v_h\in V_h
    \,.
\label{eq:DiscreteVF}
    \\
    &
    \text{Find } \tilde{u}_h \in V_h \text{ satisfying }
    &
    \quad
    \tilde{a}(\tilde{u}_h,v_h)
    &=
    F(v_h)
    \quad
    &&\text{for all } v_h\in V_h
    \,.
\label{eq:SurrogateVF}
\end{alignat}
\end{subequations}
Here and throughout, $a:V\times V \to \R$ is a continuous and coercive bilinear form, $\tilde{a}:V_h\times V_h \to \R$ is an approximation of $a|_{V_h\times V_h}$, hereby deemed the \emph{surrogate} for $a(\cdot,\cdot)$, and $F\in V^\ast$ is a bounded linear functional.

\begin{figure}
\centering
    \includegraphics[width=0.45\textwidth]{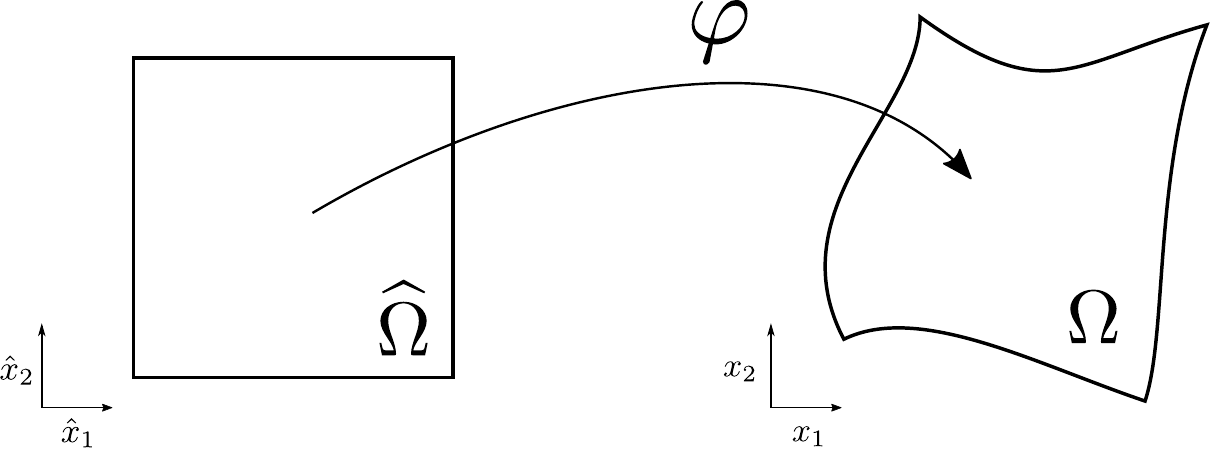}
    \caption{\label{fig:NURBSmapping} Illustration of a smooth transformation from a parametric domain $\hat{\Omega}\subset\R^2$ to a physical domain $\Omega\subset\R^2$.}
\end{figure}

To keep the exposition simple and to the point, we will assume that the physical domain of every problem $\Omega\subset\R^n$ is defined as the image of a single parametric domain $\hat{\Omega} = (0,1)^n$.
This leaves all of our analysis in the {single} patch geometry setting, $\Omega = \bvarphi(\hat{\Omega})$, for some diffeomorphism $\bvarphi:\hat{\Omega}\to \R^n$ of sufficient regularity; see \Cref{fig:NURBSmapping}.
The single patch setting is by no means a necessary assumption.
The entirety of the analysis considered here can easily be generalized to the {multi}-patch setting (cf. \Cref{sub:the_multi_patch_setting}).
However, in order to stay in the isogeometric setting, we assume that $\bvarphi(\hat{\bmx}) = \sum_i \sfc_i \hat{N}_i(\hat{\bmx})$, where each $\sfc_i\in \R^n$ is a {control point} vector and each $\hat{N}_i(\hat{\bmx})$ is a NURBS basis function on the parametric domain $\hat{\Omega}$.
Here, NURBS basis functions are defined in the standard way, as described in \Cref{sub:nurbs}.

For matrices $\sfM \in \R^{l\times m}$, define the ${\max}$-norm, $\|\sfM\|_{\max} = \max_{i,j}|\sfM_{ij}|$.
For any function $v:\Omega\to \R$, we will use the notation, $\|v\|_0$, $\|v\|_1$, and $\|v\|_2$, for the canonical \mbox{$L^2(\Omega)$-,} \mbox{$H^1(\Omega)$-,} and \mbox{$H^2(\Omega)$-norms,} respectively.
\changed{
Moreover, if $v$ is smooth, we define its support as $\supp(v) = \{\bmx\in\Omega \,:\, v(\bmx) \neq 0\}$.
}
When dealing with a domain $\mcD\subset\Omega$, denote the related $L^2(\mcD)$, $H^1(\mcD)$, and $H^2(\mcD)$ norms by $\|v\|_{0,\mcD}$, $\|v\|_{1,\mcD}$, and $\|v\|_{2,\mcD}$, respectively.
Denote the space of univariate polynomials of degree at most $q$ by $\mcP_q$.
Likewise, denote the space of multivariate polynomials of degree at most $q$, in each Cartesian direction $\bme_i$, by $\mcQ_{q} = [\mcP_{q}]^n$ and denote $\mcQ_{q}(\mcD) = \{f|_\mcD \,:\, f\in \mcQ_{q}\}$.
We will often deal with Cartesian subdomains $\mcD= \mcD_1\times\cdots\times \mcD_n$.
In this case, it is natural to deal with Cartesian--Sobolev seminorms; e.g., $[f]_{W^{r,\infty}(\mcD)} = \sum_{i=1}^n \|D^{r\cdot\bme_i} f\|_{L^\infty(\mcD)}$.
Note that $[f]_{W^{r,\infty}(\mcD)} \leq |f|_{W^{r,\infty}(\mcD)} = \sum_{|\bm{\alpha}|=r} \|D^{\balpha} f\|_{L^\infty(\mcD)}$.
All remaining notation will be defined as it arises.

\subsection{Cardinal B-splines and NURBS} %
\label{sub:nurbs}

Let $m\geq 2p+1$ and $\{b_k\}_{k=1}^m$ be an order $p$ B-spline basis on the unit interval $(0,1)$.
Let $N = m^n$ and let $\{\hat{N}_i\}_{i=1}^N$ be a corresponding NURBS basis on $\hat{\Omega}$.
Namely,
\begin{equation}
  \hat{N}_i(\hat{\bmx})
  =
  \frac{w_i \hat{B}_i(\hat{\bmx})}{\sum_j w_j \hat{B}_j(\hat{\bmx})}
  \,,
  \qquad
  \hat{\bmx}=(\hat{x}_1,\ldots,\hat{x}_n)\in\hat{\Omega}
  \,,
\label{eq:NURBS}
\end{equation}
where each $\hat{B}_i(\hat{\bmx}) = {b}_{i_1}(\hat{x}_1) \cdots {b}_{i_n}(\hat{x}_n)$ is a multivariate B-spline of uniform order $p$ and each $w_i>0$ is a fixed weight parameter.
Here and from now on, we identify every global index $1\leq i \leq N$ with a multi-index $\bmi = (i_1,\ldots,i_n)$, $1\leq i_k\leq m$, through the colexicographical relationship $i = i_1 + (i_2 -1) m + \cdots + (i_n -1) m^{n-1}$.

Generally, a univariate B-spline basis $\{{b}_k\}_{k=1}^{m}$ is defined by an ordered multiset, or \emph{knot vector}, $\Xi = \{\xi_1,\ldots,\xi_{m+p+1}\}$.
In this paper, we deal only with \emph{open uniform knot vectors}; i.e., $\xi_1,\ldots,\xi_{p+1}=0$, $\xi_{m+1},\ldots,\xi_{m+p+1}=1$, and $\xi_{k+1} - \xi_k = \frac{1}{m-p}$, otherwise.
The quantity $h = \max_{1\leq k\leq m-1}|\xi_{k+1}-\xi_{k}| = \frac{1}{m-p}$ will be an important parameter for us, which we hereby refer to as the \emph{mesh size}.
Clearly, we could consider NURBS spaces with different orders $p_1,\ldots,p_n$ in each Cartesian direction \cite{piegl2012nurbs,hughes2005isogeometric}.
In order to simplify the exposition, we avoid this complication.

\begin{figure}
  \centering
  \includegraphics[trim=0.2cm 0cm 1cm 0cm,clip=true,height=3.1cm]{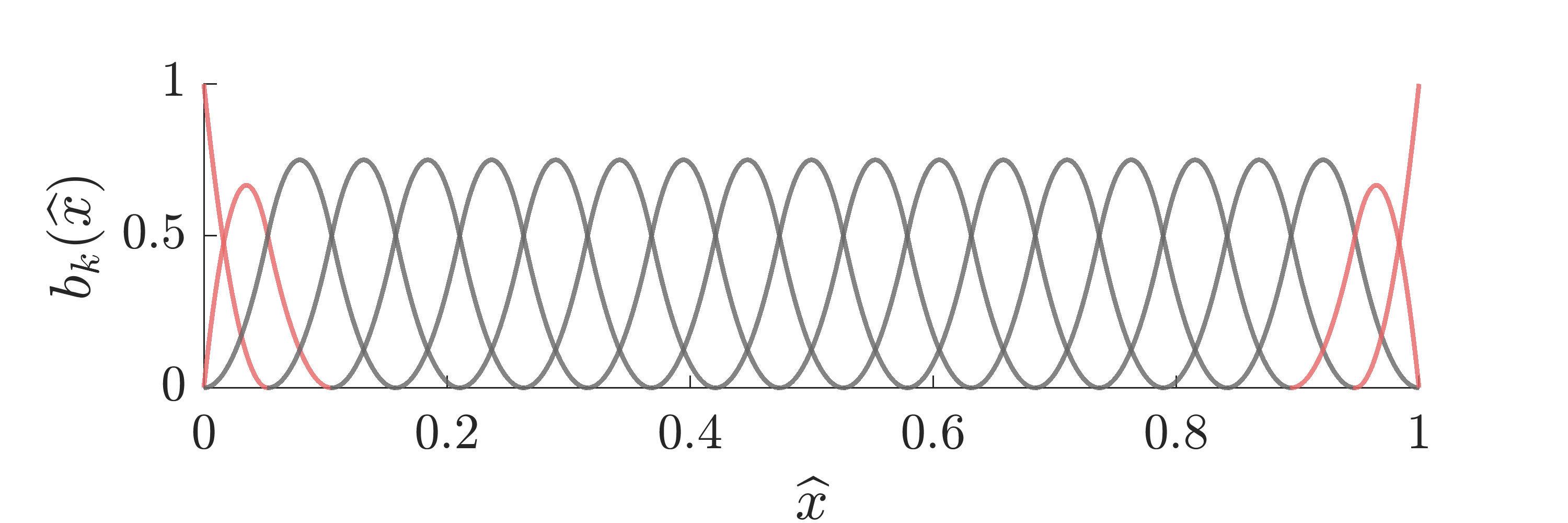}
  \hspace{0.5cm}
  \includegraphics[trim=2cm 0.2cm 2cm -0.2cm,clip=true,height=3.1cm]{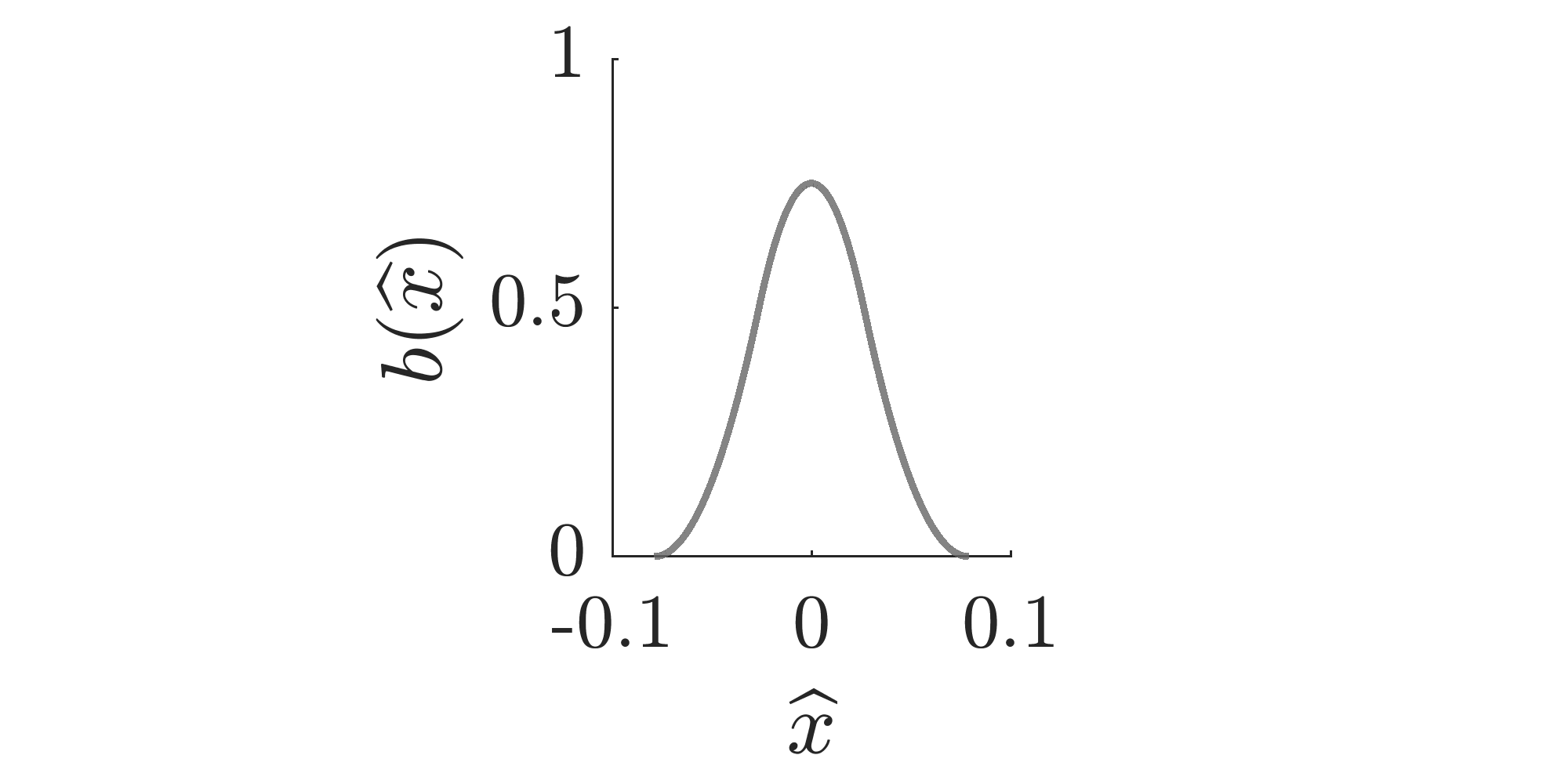}
\caption{\label{fig:B-splines} Left: $1$D B-spline basis functions $\{b_k\}_{k=1}^{21}$ generated by the open uniform knot vector $\Xi^{(2)}$, defined in~\cref{eq:OpenKnotVectorEg}. Right: Each gray basis function is equivalent, up to translation, to the function $b(\hat{x})$. The red functions are obviously not.} 
\end{figure}

For an explicit example of an open uniform knot vector, consider
\begin{equation}
  \Xi^{(p)}
  =
  \{\underbrace{0,\cdots,0}_{\text{$p$ times}}\}\cup\Big\{\frac{k}{19}\Big\}_{k=0}^{19}\cup\{\underbrace{1,\cdots,1}_{\text{$p$ times}}\}
  \,.
\label{eq:OpenKnotVectorEg}
\end{equation}
The corresponding $p=2$ B-spline basis is depicted in \cref{fig:B-splines}.
Observe that all but four of the basis functions (highlighted in red) are identical, up to an equally spaced set of translations.
These functions are called \emph{cardinal B-splines} \cite{schoenberg1973cardinal,schoenberg1946contributionsA,schoenberg1946contributionsB}.

Let $\tilde{x}^{(k)} = (k - \frac{p+1}{2})\cdot h$, for each ${k} = p+1,\ldots,m-p$.
In general, there are always $m-2p$ univariate cardinal B-spline basis functions which can each be expressed ${b}_{k}(\hat{x}) = {b}(\hat{x} - \tilde{x}^{(k)})$, for some function, ${b}(\hat{x})$, centered at the origin \revised{with support in $(-\frac{p+1}{2} \cdot{h}, \frac{p+1}{2} \cdot{h})$} (see, e.g., \cref{fig:B-splines}).
\revised{The $\tilde{x}^{(k)}$ correspond to the midpoints of each function ${b}_{k}$.}
Likewise, there are $(m-2p)^n$ multivariate cardinal B-splines.
That is $\hat{B}_i(\hat{\bmx}) = \hat{B}(\hat{\bmx} - \tilde{\bmx}_i)$, where $\tilde{\bmx}_i = \big(\tilde{x}^{(i_1)},\ldots, \tilde{x}^{(i_n)})$ and $\hat{B}(\hat{\bmx}) = b({\hat{x}_{1}})\cdots b({\hat{x}_{n}})$.
For future reference, define the set of all such $\tilde{\bmx}_i$ as $\tilde{\bbX}$.
Also, notice that the ratio of cardinal B-splines basis functions to total B-spline basis functions quickly tends to unity, $\big(\frac{m-2p}{m}\big)^n\to 1$, as $m$ increases.

\section{Surrogate matrices: Exploiting basis structure} %
\label{sec:surrogate_matrices_exploiting_basis_structure}

Equations~\cref{eq:DiscreteVF,eq:SurrogateVF}, respectively, induce two related matrix equations,
\begin{equation}
  \sfA\sfu = \sff\qquad \text{ and } \qquad\tilde{\sfA}\tilde{\sfu} = \sff,
\label{eq:DiscreteVariationalProblems}
\end{equation}
for basis function coefficients $\sfu,\tilde{\sfu}\in\R^N$.
As mentioned previously, the key idea in this paper is constructing the majority of the surrogate stiffness matrix $\tilde{\sfA}$ via \emph{interpolation} of the true stiffness matrix $\sfA$.
In this section, we first describe exactly what is meant by this statement and then demonstrate how isogeometric analysis makes it possible.

\subsection{Stencil functions} %
\label{sub:exploiting_basis_structure1}

Recall~\cref{eq:DiscreteVF}.
Generally, every function $v_h\in V_h$ can be identified with a unique function on the domain $\hat{\Omega}$ through a suitable pushforward operator $\bvarphi_\ast$.
Namely, $v_h = \bvarphi_\ast \hat{v}_h$.
Define $\hat{V}_h$ be the set of all such $\hat{v}_h$, which is a discrete space in the parametric domain $\hat{\Omega}$.
Accordingly, the bilinear form $a:V_h\times V_h\to \R$ can be identified with a parametric domain bilinear form $\hat{a}:\hat{V}_h\times \hat{V}_h\to \R$ in such a way that $a(w_h,v_h) = \hat{a}(\hat{w}_h,\hat{v}_h)$, for all $\hat{w}_h,\hat{v}_h\in \hat{V}_h$.

Let $\{\phi_i\} = \{\bvarphi_\ast \hat{\phi}_i\}$ be a basis for $V_h$ with $\{\hat{\phi}_i\}$ the corresponding basis for $\hat{V}_h$.
The fundamental observation in the surrogate matrix methodology now follows.
If, $\hat{\phi}$ is some fixed reference function and, for a set of indices $i,j$, $\hat{\phi}_i(\hat{\bmx}) = \hat{\phi}(\hat{\bmx}-\tilde{\bmx}_i)$ and $\hat{\phi}_j(\hat{\bmx}) = \hat{\phi}(\hat{\bmx}-\tilde{\bmx}_j)$, then
\begin{equation}
  \hat{a}(\hat{\phi}_j,\hat{\phi}_i)
  =
  \hat{a}(\hat{\phi}(\cdot-\tilde{\bmx}_j),\hat{\phi}(\cdot-\tilde{\bmx}_i))
  :=
  \Phi(\tilde{\bmx}_j,\tilde{\bmx}_i)
  \,.
\label{eq:TemporaryStencilFunction}
\end{equation}
Here, in the rightmost equality, the definition of a new scalar-valued function $\Phi(\cdot,\cdot)$ has been made, wherein any dependence on the mesh size $h$ has been implicitly assumed. 
This function, $\Phi(\tilde{\bmx}_j,\tilde{\bmx}_i)$, may also be expressed in terms of $\tilde{\bmx}_i$ and a translation $\bdelta = \tilde{\bmx}_j - \tilde{\bmx}_i$.
In this alternative characterization, after denoting $\sfA_{ij} = a(\phi_j,\phi_i)$, we may write
\begin{equation}
  \sfA_{ij}
  =
  \Phi_\bdelta(\tilde{\bmx}_i)
\label{eq:StencilRoughDefinition}
\end{equation}
and $\bdelta$ may be treated as a parameter.
\revised{We define} $\Phi_\bdelta(\tilde{\bmx}_i) = \Phi(\tilde{\bmx}_i + \bdelta,\tilde{\bmx}_i) = \Phi(\tilde{\bmx}_j,\tilde{\bmx}_i)$.

For a fixed number of translations $\bdelta$, these so-called \emph{stencil functions}, $\Phi_\bdelta(\cdot)$, can be identified with the majority of entries in many IGA stiffness matrices.
In many circumstance, each $\Phi_\bdelta(\cdot)$ is smooth and may, therefore, be interpolated after only being evaluated at small number of points in the parametric domain $\tilde{\bmx}_i\in {\hat{\Omega}}$.
After denoting the interpolants \textemdash{} i.e., the \emph{surrogate stencil functions} \textemdash{} by $\tilde{\Phi}_\bdelta(\cdot)$, we simply define
\begin{equation}
  \tilde{\sfA}_{ij}
  =
  \tilde{\Phi}_\bdelta(\tilde{\bmx}_i)
  \,.
\label{eq:SurrogateRoughDefinition}
\end{equation}

\begin{remark}
In some cases, the stencil functions $\Phi_\bdelta$ are themselves polynomials (see, e.g., \Cref{prop:PolynomialReproduction,lem:MassMatrixReproduction,lem:stokesreproduction}).
Therefore, if polynomial interpolation of sufficiently high order is used, the true stiffness matrix be generated exactly \textemdash{} i.e., $\tilde{\Phi}_\bdelta = \Phi_\bdelta$ and thus $\tilde{\sfA} = \sfA$, up to round-off error \textemdash{} in significantly less time than with a traditional assembly algorithm.
Otherwise, in many scenarios, a sufficiently accurate approximation of the stiffness matrix $\tilde{\sfA} \approx \sfA$ will be generated.
\end{remark}

\subsection{B-spline basis functions} %
\label{sub:exploiting_basis_structure2}

Fix $V= H^1(\Omega)$, $\bvarphi:\hat{\Omega}\to\Omega$, $\hat{V}_h = \spann\{\hat{B}_i\}$, and, accordingly, $V_h = \spann\{B_i\}$, where each $B_i = \hat{B}_i\circ \bvarphi^{-1}$.
Consider the bilinear form $a(u,v) = \int_\Omega \nabla u \cdot \nabla v \dd x$.
It is easy to verify that $a(\cdot,\cdot)$ pulls back to
\begin{equation}
    \hat{a}(\hat{w},\hat{v})
    =
    \int_{\hat{\Omega}} \hat{\nabla} \hat{w}(\hat{\bmx})^\top {K}(\hat{\bmx})\, \hat{\nabla} \hat{v}(\hat{\bmx}) \dd \hat{\bmx}
    \,,
    \qquad
    \text{where}
    \qquad
    {K} = \frac{D\bvarphi^{-1}\sspace\sspace D\bvarphi^{-\top}}{|\det{\left(D\bvarphi^{-1}\right)}|}
    \,,
\label{eq:BilinearFormPoissonTensor}
\end{equation}
with arguments $\hat{w},\hat{v}\in \hat{V} = H^1(\hat{\Omega})$.

Recall \Cref{sub:nurbs}.
Assume that $\{\hat{B}_i\}$ is generated by an open uniform knot vector $\Xi$ with $p>0$ fixed.
Obviously, $\sfA_{ij} = \hat{a}(\hat{B}_j,\hat{B}_i)$.
In the cardinal B-spline setting, $\hat{B}_i(\hat{\bmx}) = \hat{B}(\hat{\bmx}-\tilde{\bmx}_i)$ and $\hat{B}_j(\hat{\bmx}) = \hat{B}(\hat{\bmx}-\tilde{\bmx}_j)$.
Therefore, by a simple change of variables,
\begin{equation}
  \begin{aligned}
    \sfA_{ij}
    =
    \int_{\hat{\Omega}} \hat{\nabla} \hat{B}(\hat{\bmx}-\tilde{\bmx}_i)^\top {K}(\hat{\bmx})\, \hat{\nabla} \hat{B}(\hat{\bmx}-\tilde{\bmx}_j) \dd \hat{\bmx}
    &=
    \int_{\hat{\omega}_{\bdelta}} \hat{\nabla} \hat{B}(\hat{\bmy})^\top {K}(\tilde{\bmx}_i + \hat{\bmy})\, \hat{\nabla} \hat{B}_{\bdelta}(\hat{\bmy}) \dd \hat{\bmy}
    \,,
  \end{aligned}
\label{eq:ChangeOfVars}
\end{equation}
where $\bdelta = \tilde{\bmx}_j - \tilde{\bmx}_i$, $\hat{B}_\bdelta(\hat{\bmy}) = \hat{B}(\hat{\bmy}-\bdelta)$, and $\hat{\omega}_\bdelta = \supp(\hat{B})\cap\supp(\hat{B}_{\bdelta})$.

There is a natural correspondence between the density of the matrix $\sfA$ and the set of translations $\bdelta$ such that $\hat{\omega}_\bdelta \neq \emptyset$.
Consequently, the cardinality of the set of relevant translations, $\scD = \{ \bdelta = \tilde{\bmx}_j - \tilde{\bmx}_i \,:\, \hat{\omega}_\bdelta \neq \emptyset\}$, is fixed for all sufficiently large $m$.
Namely, $|\scD| = (2p+1)^n$.
Now, for each $\bdelta \in \scD$, we may define the stencil function
\begin{equation}
  \Phi_\bdelta(\tilde{\bmx})
  =
  \int_{\hat{\omega}_{\bdelta}} \hat{\nabla} \hat{B}(\hat{\bmy})^\top {K}(\tilde{\bmx} + \hat{\bmy})\, \hat{\nabla} \hat{B}_\bdelta(\hat{\bmy}) \dd \hat{\bmy}
  \,.
\label{eq:StencilFunction}
\end{equation}

\begin{figure}
  \centering
  \begin{tikzpicture}
  \begin{scope}[spy using outlines={circle,black,magnification=4,size=4cm, connect spies}]
    \node {\pgfimage[interpolate=true,width=0.7\textwidth]{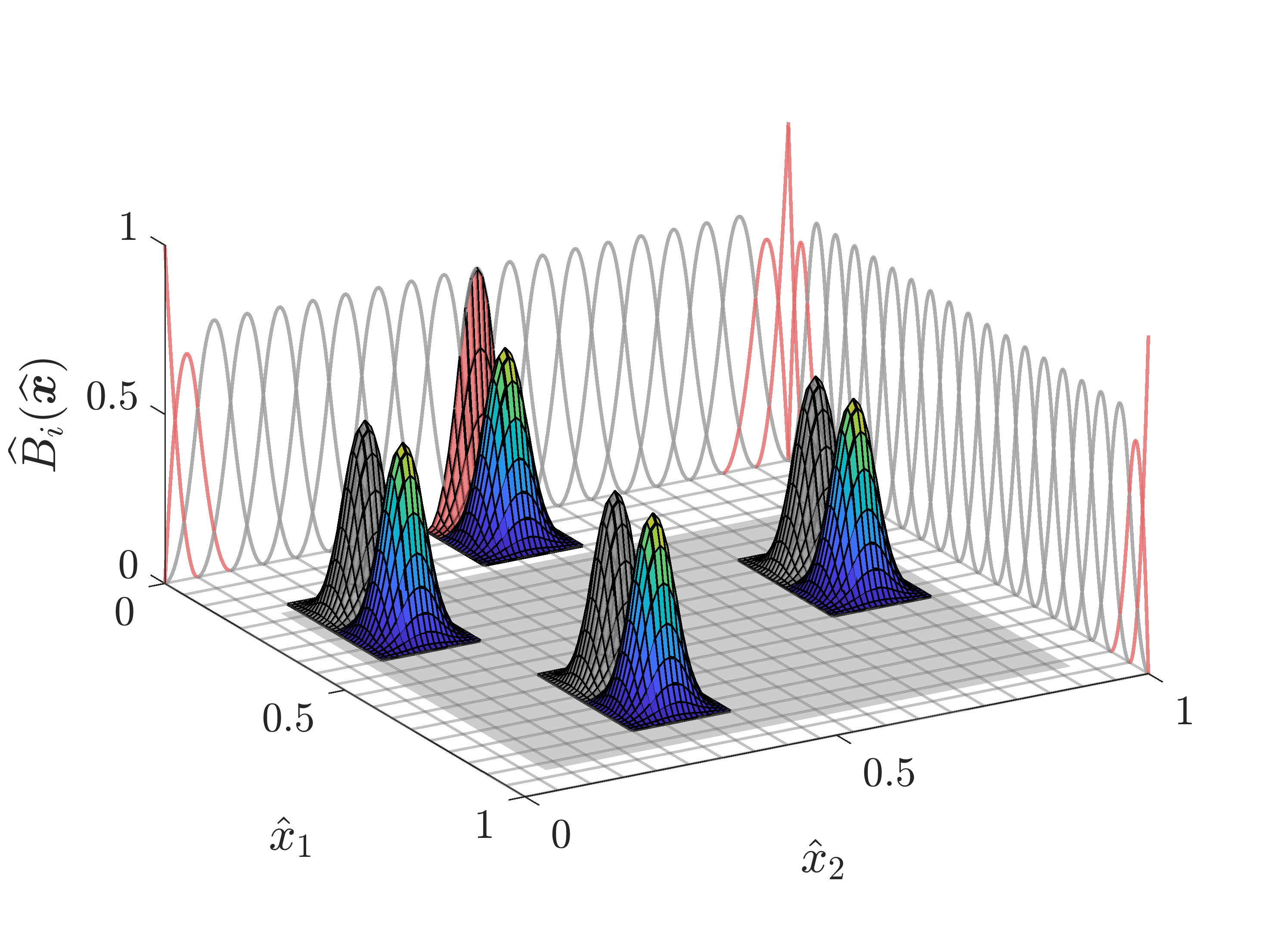}};
    \spy [black, size=5cm, magnification=2] on (0.05,-1.05) in node [left] at (9,1.25);
  \end{scope}

  \draw[left color=red, right color=red!60, opacity=0.8, drop shadow={ashadow, color=red!60!black}, rotate=-30, shift={(5,3.65)}, scale=1.60] \arrow;
  \end{tikzpicture}
\caption[Representative B-spline basis functions]{\label{fig:BasisFunctions_delta-20} Second-order cardinal B-spline basis functions $\hat{B}_i(\hat{\bmx}) = \hat{B}(\hat{\bmx}-\tilde{\bmx}_i)$ (color gradient).
After translation by $\bdelta = \begin{psmallmatrix}-2h\\0\end{psmallmatrix}$ (represented by the red arrow), most of these basis functions are equal to another cardinal basis function $\hat{B}_j(\hat{\bmx}) = \hat{B}_i(\hat{\bmx}-\bdelta)$ (gray).
For every such function, $\tilde{\bmx}_i\in\tilde{\Omega}_\bdelta$ (this subset is shaded in gray on the mesh).
Clearly, this property does not hold for every cardinal basis function, as indicated by the cardinal B-spline neighboring the boundary and the nearby (non-cardinal) basis function (red).
} 
\end{figure}

Let $\mathrm{conv}(\tilde{\bbX})$ denote the convex hull of $\tilde{\bbX}$.
Such functions are defined at any point $\tilde{\bmx} \in \mathrm{conv}(\tilde{\bbX})$ where $\tilde{\bmx} + \bdelta \in \mathrm{conv}(\tilde{\bbX})$.
Ultimately, this means that the domain of $\Phi_\bdelta$, which we will denote \revised{by} $\tilde{\Omega}_\bdelta$, depends on $\bdelta$ (see, e.g., \Cref{fig:BasisFunctions_delta-20}).
Clearly, we always have $\bm{0}\in \scD$.
The reader may easily verify that $\tilde{\Omega}_{\bm{0}} = \mathrm{conv}(\tilde{\bbX}) = \bigcup_{\bdelta\in\scD} {\tilde{\Omega}_\bdelta}$ and $\tilde{\Omega}_\bdelta + \bdelta \subset \tilde{\Omega}_{\bm{0}}$, for each $\bdelta \in \scD$.

\subsection{NURBS basis functions} %
\label{sub:basis_structure_nurbs}

The principal difference between the treatment of a NURBS basis $\{\hat{N}_i\}$ and the related B-spline basis $\{\hat{B}_i\}$ is that a NURBS basis cannot be assumed to have the translation invariance property which leads directly to~\cref{eq:TemporaryStencilFunction}.
Fortunately, as we now demonstrate, this property is not entirely necessary to define a useful stencil function.

Define $W(\hat{\bmx}) = \sum_j w_j\hat{B}_j(\hat{\bmx})$, where $\{w_j\}$ are the weight parameters appearing in~\cref{eq:NURBS}.
It is known that $W(\hat{\bmx})$ is unchanged under mesh refinements.
Therefore, employing a similar change of variables argument as used in~\cref{eq:ChangeOfVars}, it holds that
\begin{equation}
  \begin{aligned}
    \sfA_{ij}
    =
    \revised{\hat{a}}(\hat{N}_j,\hat{N}_i)
    &=
    w_iw_j
    \int_{\hat{\omega}_{\bdelta}} \hat{\nabla} \bigg(\frac{\hat{B}(\hat{\bmy})}{W(\tilde{\bmx}_i + \hat{\bmy})}\bigg)^\top {K}(\tilde{\bmx}_i + \hat{\bmy})\, \hat{\nabla} \bigg(\frac{\hat{B}_\bdelta(\hat{\bmy})}{W(\tilde{\bmx}_i + \hat{\bmy})}\bigg) \dd \hat{\bmy}
    \,,
  \end{aligned}
\label{eq:ChangeOfVarsNURBS}
\end{equation}
where, as in~\cref{eq:ChangeOfVars}, $\bdelta = \tilde{\bmx}_j - \tilde{\bmx}_i$.
At this point, it may be natural to divide by $w_iw_j$ and define the stencil function $\Phi_\bdelta(\tilde{\bmx})$ from the resulting expression on the right-hand side of~\cref{eq:ChangeOfVarsNURBS}.
Instead, we pause to consider the regularity of $W(\hat{\bmx})$.

Recall~\cref{eq:SurrogateRoughDefinition}.
Since each $\hat{B}_i(\hat{\bmx})$ is only piecewise polynomial, it is clear that, in general, $W\not\in C^{q}(\hat{\Omega})$, for any $q\geq p$.
This fact could significantly limit the accuracy of an interpolant $\tilde{\Phi}_\bdelta = \Pi\Phi_\bdelta$ which we may wish to construct.
Therefore, we restrict our attention to $W\in C^{q}(\hat{\Omega})$, where $q\geq p$.
It turns out that this set of functions, $\spann\{\hat{B}_i\} \cap C^{q}(\hat{\Omega})$, is equal to the polynomial space $\mcQ_{p}(\hat{\Omega})$.
Moreover, restricting to the subset $\tilde{\Omega}_{\bm{0}}$, each weight parameter can be expressed \revised{as} $w_i = w(\tilde{\bmx}_i)$, where $w(\hat{\bmx})$ is a polynomial in $\mcQ_p(\tilde{\Omega}_{\bm{0}})$.
(See \Cref{app:marsden_s_identity} for details.)
Therefore, for any $W\in \mcQ_p(\hat{\Omega})$, we may define
\begin{equation}
  \Phi_\bdelta(\tilde{\bmx})
  =
  w(\tilde{\bmx})w(\tilde{\bmx}+\bdelta)
  \int_{\hat{\omega}_{\bdelta}} \hat{\nabla} \bigg(\frac{\hat{B}(\hat{\bmy})}{W(\tilde{\bmx} + \hat{\bmy})}\bigg)^\top {K}(\tilde{\bmx} + \hat{\bmy})\, \hat{\nabla} \bigg(\frac{\hat{B}_\bdelta(\hat{\bmy})}{W(\tilde{\bmx} + \hat{\bmy})}\bigg) \dd \hat{\bmy}
  ,
\label{eq:StencilFunctionNURBS}
\end{equation}
for each $\tilde{\bmx}\in\tilde{\Omega}_\bdelta$ and $\bdelta \in \scD$.
Clearly, $\sfA_{ij} = \Phi_\bdelta(\tilde{\bmx}_i)$ for each corresponding $\bdelta \in \scD$.
An illustration of a stencil function coming from an IGA discretization with a NURBS basis is presented in \Cref{fig:BasisFunctions_delta00}.

\begin{remark}
  Notably, when $W=1$, then $w=1$ also.
  Therefore,~\cref{eq:StencilFunctionNURBS} is consistent with~\cref{eq:StencilFunction}.
  \changed{Obviously, in practice, neither of these expressions needs to be used in order to evaluate $\Phi_\bdelta(\cdot)$ at any point $\tilde{\bmx}_i$.
  Indeed, since $\Phi_\bdelta(\tilde{\bmx}_i) = \sfA_{ij}$, any existing IGA code already has a mechanism to compute $\Phi_\bdelta(\tilde{\bmx}_i)$ using quadrature (cf. \Cref{app:computing_surrogates_with_existing_iga_codes}).
  Nevertheless, these expressions are important for analysis.}
\end{remark}

\begin{figure}
  \centering
  \includegraphics[trim=2.5cm 0cm 2.5cm 0cm,clip=true,height=6cm]{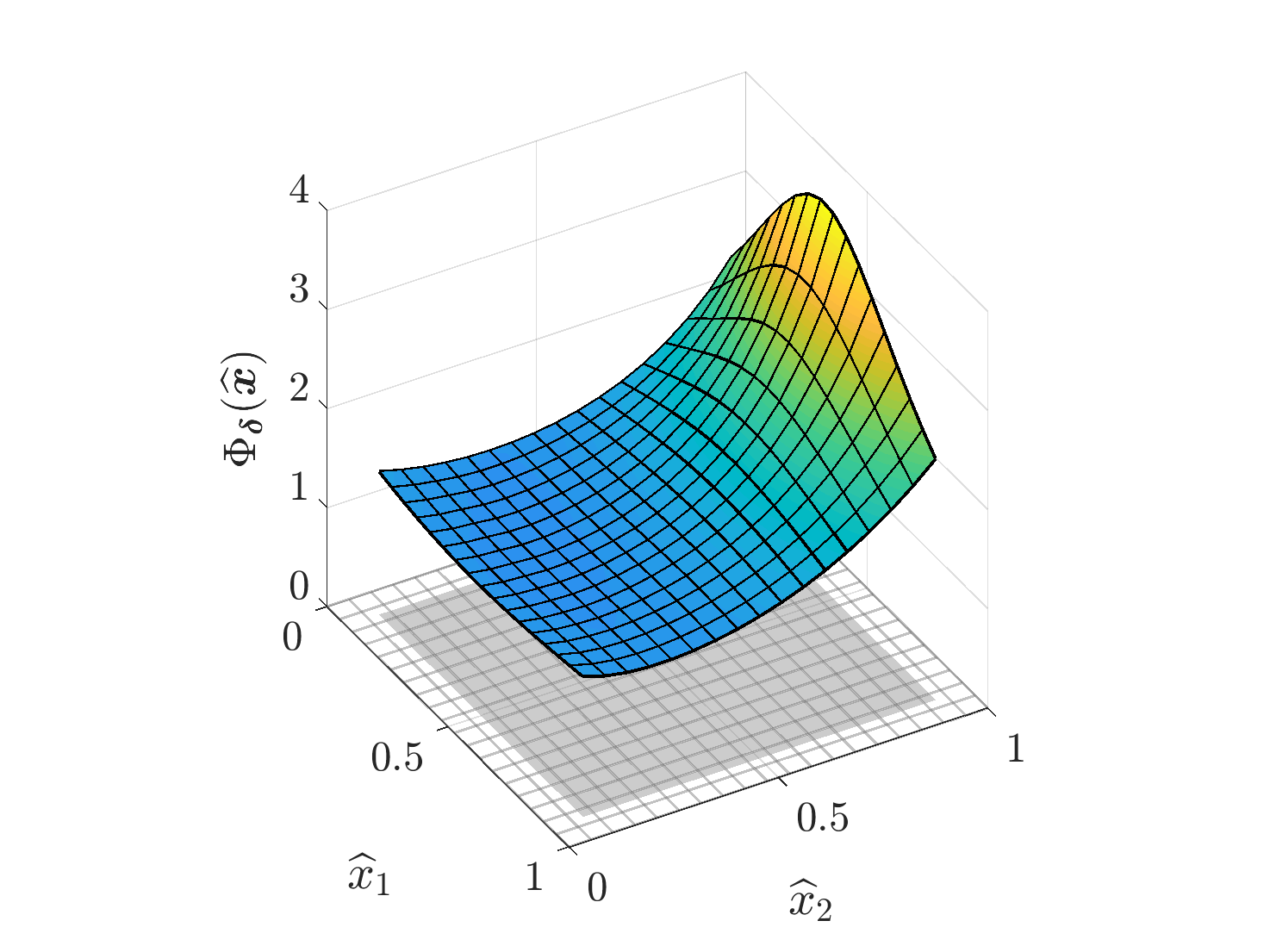}
  \caption{\label{fig:BasisFunctions_delta00}
  The graph of a stencil function $\Phi_\bdelta:\tilde{\Omega}_\bdelta\to\R$ defined by~\cref{eq:StencilFunctionNURBS}.
  The corresponding physical geometry $\Omega$ is depicted in \Cref{fig:PoissonGeometries2D}.
  In this case, $\bdelta = \bm{0}\cdot h$ and $p=2$.
  The domain $\tilde{\Omega}_{\bm{0}}$ is shaded gray.
  For further details, see \Cref{sub:poisson_numerical_experiments}.}
\end{figure}

\subsection{Symmetric bilinear forms} %
\label{sub:symmetric_bilinear_forms}

When $a(w,v) = a(v,w)$, for all $w,v\in V$, a translational symmetry is induced on the set of corresponding stencil functions.
Indeed, it is simple to see that $\Phi(\tilde{\bmx}_j,\tilde{\bmx}_i) = \Phi(\tilde{\bmx}_i,\tilde{\bmx}_j)$ and, therefore,
\begin{equation}
  \Phi_\bdelta(\tilde{\bmx}_i)
  =
  \Phi(\tilde{\bmx}_i+\bdelta,\tilde{\bmx}_i)
  =
  \Phi(\tilde{\bmx}_j-\bdelta,\tilde{\bmx}_j)
  =
  \Phi_{-\bdelta}(\tilde{\bmx}_j)
  \,.
\end{equation}
A similar conclusion can be drawn in the NURBS scenario above.
\Cref{fig:StencilFunctions} presents a visual comparison of two stencil functions, $\Phi_\bdelta$ and $\Phi_{-\bdelta}$, generated by an isogeometric NURBS basis and the corresponding symmetric bilinear form~\cref{eq:BilinearFormPoissonTensor}.

\begin{figure}
\captionsetup[subfigure]{font=footnotesize,labelformat=empty}
  \centering
  \begin{subfigure}[c]{0.4\textwidth}
      \includegraphics[trim=2cm 0cm 2cm 0cm,clip=true,width=\textwidth]{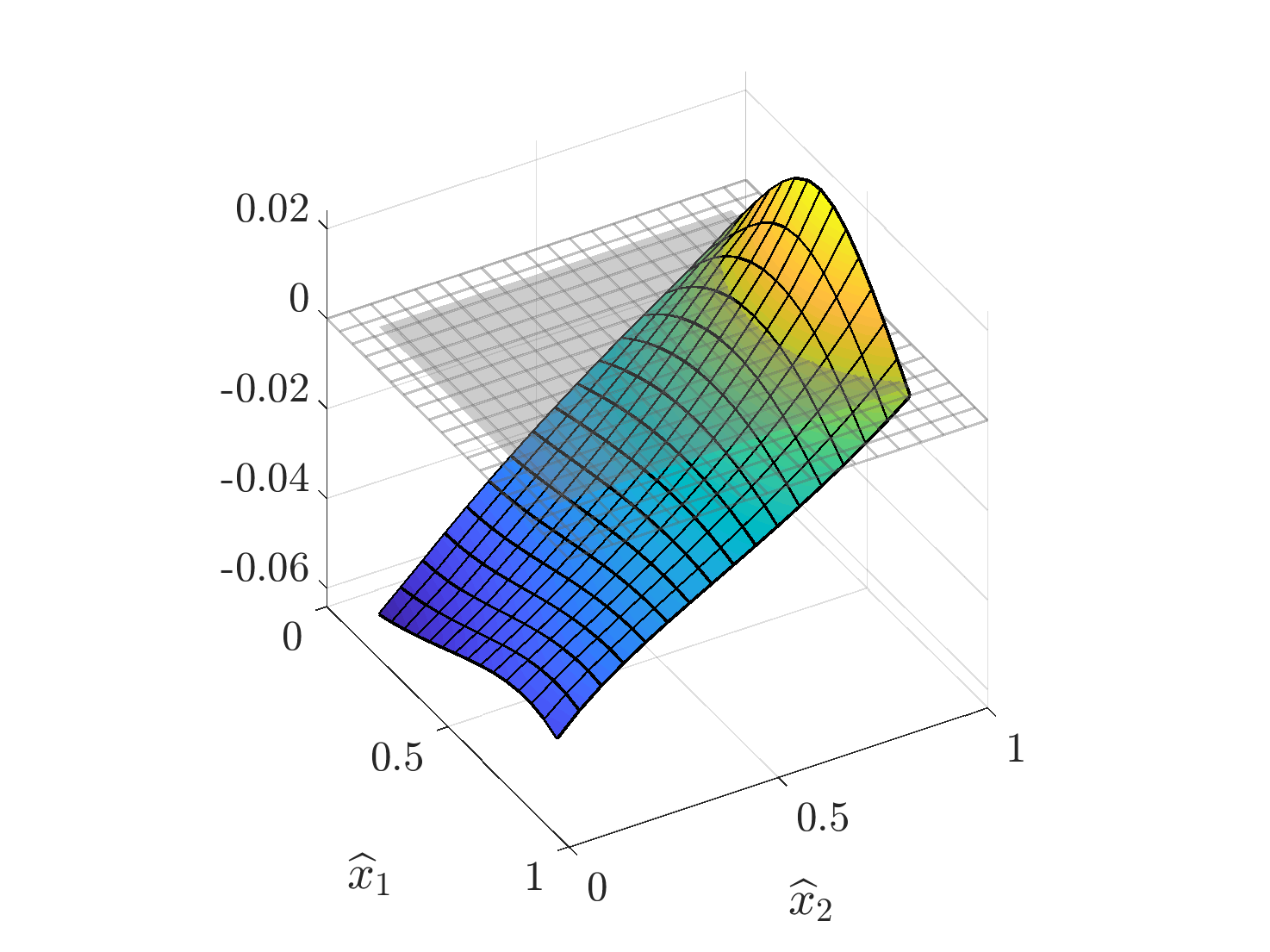}
      \label{fig:delta-20_standard}
  \end{subfigure}
  ~
  \begin{subfigure}[c]{0.4\textwidth}
      \includegraphics[trim=2cm 0cm 2cm 0cm,clip=true,width=\textwidth]{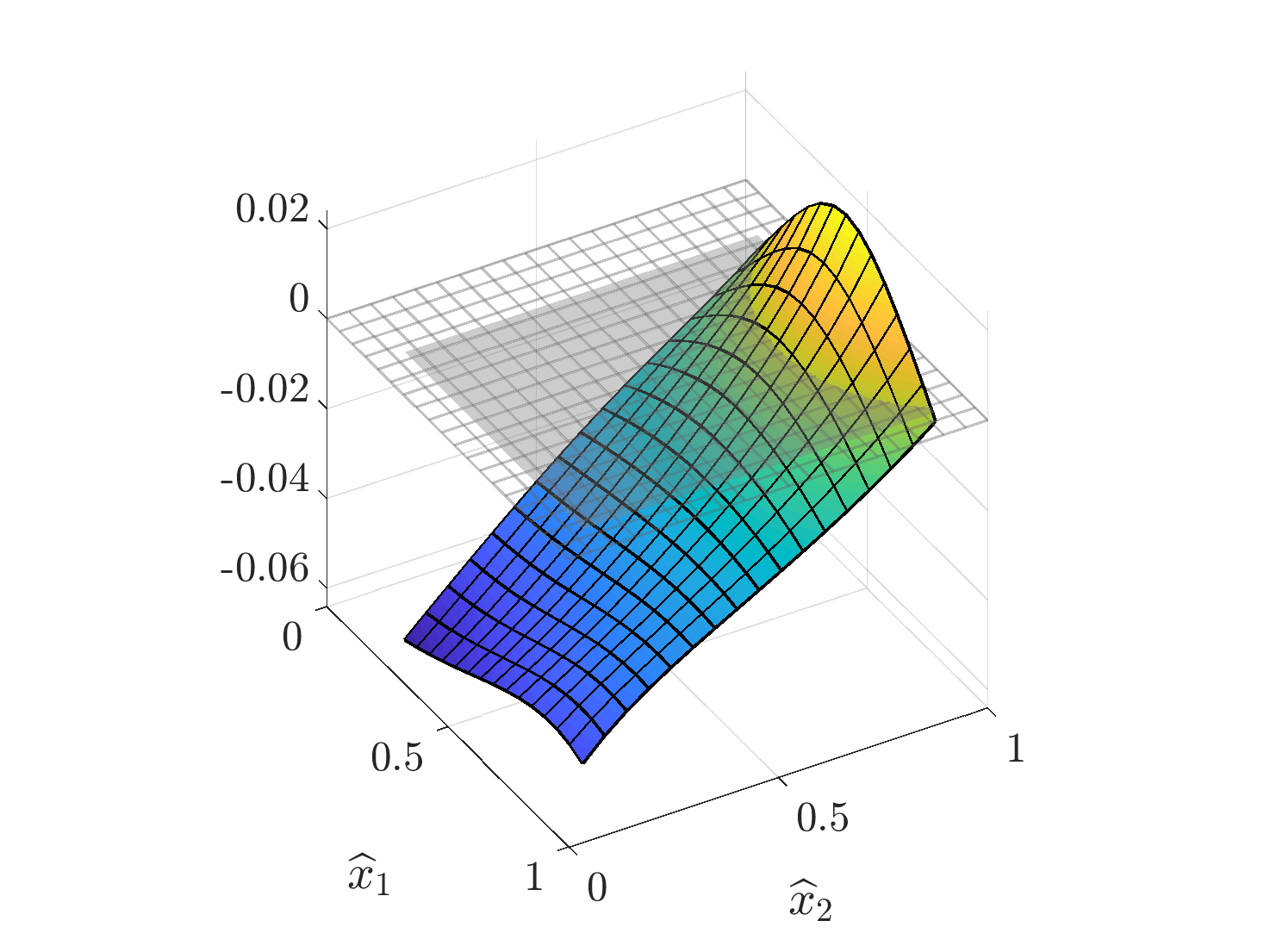}
      \label{fig:delta01_standard}
  \end{subfigure}
\caption[Graphs of stencil functions]{\label{fig:StencilFunctions}
Left and right, respectively: Graphs of stencil functions $\Phi_\bdelta(\tilde{\bmx})$ for $\bdelta = \protect{\begin{psmallmatrix}2h\\0\end{psmallmatrix}}$ and $\bdelta = \protect{\begin{psmallmatrix}-2h\\0\end{psmallmatrix}}$.
The associated domains $\tilde{\Omega}_\bdelta$ are shaded in gray (cf. \Cref{fig:BasisFunctions_delta-20}). Notice the translational symmetry $\Phi_\bdelta(\tilde{\bmx}) = \Phi_{-\bdelta}(\tilde{\bmx}+\bdelta)$.}
\end{figure}

\subsection{The multi-patch setting} %
\label{sub:the_multi_patch_setting}

In the multi-patch setting, the physical domain $\Omega$ is partitioned into a finite number of disjoint subdomains $\overline{\Omega} = \bigcup_{k=1}^{L} \overline{\Omega^{(k)}}$.
We may assume that each such domain or \emph{patch} $\Omega^{(k)}$, as they are usually called, can be identified with a common parametric domain $\hat{\Omega}$, through a unique NURBS mapping $\Omega^{(k)} = \bvarphi^{(k)}(\hat{\Omega})$.
In this case, one may define a separate set of stencil functions $\Phi_\bdelta^{(k)}(\tilde{\bmx}):\tilde{\Omega}_\bdelta \to \R$, for each index $k$.
The forthcoming analysis is immediately applicable to this setting, but treating it outright would require unnecessarily complicated notation.
We also did not consider this setting in our numerical experiments.

\subsection{Edge-based and face-based stencil functions} %
\label{sub:edge_and_face_based_stencil_functions}
  It is also possible to define stencil functions, in addition to those above, which exploit lower-dimensional translational symmetries.
  For instance, many functions in the basis $\{\hat{N}_i\}$ are only equivalent under translations parallel to a given edge in the parametric domain $\hat{\Omega}$.
  Likewise, a new family of stencil functions could be constructed for each edge or face in a patch $\Omega^{(k)}$.
  We do not pursue this observation here.

\section{Surrogate matrices: Interpolation of stencil functions} %
\label{sec:surrogate_matrices_interpolation_of_stencil_functions}

Recall~\cref{eq:StencilRoughDefinition,eq:SurrogateRoughDefinition}.
A surrogate matrix $\tilde{\sfA}\approx \sfA$ will be useful to us if: (1) each stencil function $\Phi_\bdelta$ has high regularity and, therefore, the high-order approximation $\Pi{\Phi}_\bdelta \approx \Phi_\bdelta$ will have high-order accuracy; and (2) each $\tilde{\Phi}_\bdelta = \Pi{\Phi}_\bdelta$ can be computed and evaluated fast.
In this section, we focus on the former of these two requirements.
Particulars on our implementation are withheld until \Cref{app:computing_surrogates_with_existing_iga_codes}.

As a simplifying accommodation, we perform all of our analysis on the largest subset of $\hat{\Omega}$ where every stencil function is defined.
That is, $\tilde{\Omega} = \bigcap_{\bdelta\in\scD} {\tilde{\Omega}_\bdelta}$.
A simple computation shows that
\begin{equation}
  \tilde{\Omega}
  =
  \bigg[\frac{3p+1}{2(m-p)},1-\frac{3p+1}{2(m-p)}\bigg]^n
  \subsetneq
  \hat{\Omega}
  \,.
\end{equation}
All of the results in this section can be reformulated with $\tilde{\Omega}$ replaced by $\tilde{\Omega}_\bdelta$.
However, this generalization also requires a different operator $\Pi$ for each $\bdelta \in \scD$; see, e.g., \cite{drzisga2018surrogate}.

\subsection{B-spline interpolation} %
\label{sub:b_spline_interpolation}

Let $\{\tilde{B}_j\}$ be a degree $q\geq 0$ multivariate B-spline basis on $\tilde{\Omega}$ with the quasi-uniform knot vector $\tilde{\bm{\Xi}} = \tilde{\Xi}_1\times \cdots\times \tilde{\Xi}_n$ and define $S_q(\tilde{\bm{\Xi}}) = \spann\{\tilde{B}_j\}$.
We will refer to each $\tilde{\bxi}_{\bmj} \in \tilde{\bm{\Xi}}$, $\bmj = (j_1,\cdots,j_n)$, as a \emph{sampling point} and define a new length scale parameter, hereby referred to as the \emph{sampling length}, $H = \max_{|\bmj| = 1,\bmi}\big\{\|\tilde{\bxi}_{\bmi+\bmj}-\tilde{\bxi}_{\bmi}\|_{\mathrm{max}} \,:\, \tilde{\bxi}_{\bmi+\bmj} \in \tilde{\bm{\Xi}}\big\}$.

In isogeometric analysis, the geometry determines the basis used in PDE discretization.
When constructing the surrogate stencil functions $\tilde{\Phi}_\bdelta$, we are primarily interested in using a basis of higher order $q$ than the underlying spatial discretization $p$.
Generally, such a choice $q>p$ is desirable because it will allow us to guarantee that the discretization error in a standard IGA method will dominate the error actually attributed to using a surrogate (cf. \Cref{subs:constant_sampling_length}).

In this paper, we only consider constructing B-spline interpolants $\tilde{\Phi}_\bdelta = \Pi_{H}\Phi_\bdelta$, where $\Pi_{H}$ is a stable local interpolation operator onto the space $S_q(\tilde{\bm{\Xi}})$.
Various global interpolants could also be considered, as well as sparse grid interpolants \cite{bungartz2004sparse} and least-squares projections (cf. \cite{drzisga2018surrogate}).
We see no benefit in using a NURBS basis to approximate $\Phi_{\bdelta}$, even when a NURBS mapping $\bvarphi:\hat{\Omega}\to\Omega$ defines the physical domain.
The following lemma follows directly from \cite[Theorem~4.2]{dahmen1980multidimensional}.
\begin{lemma}%
\label{lem:SplineBAE}
  For every bounded projection $\Pi_{H}\colon C^0({\tilde{\Omega}}) \to S_q(\tilde{\bm{\Xi}})$, with $\|\Pi_{H}\|< C_0$ for some $H$-independent constant $C_0$, it holds that
  \begin{equation}
    \|f-\Pi_{H} f\|_{L^\infty(\tilde{\Omega})}
    \leq
    C_1
    H^{q+1}
    [f]_{W^{q+1,\infty}(\tilde{\Omega})}
    \,,
    \quad
    \text{for all }
    f\in W^{q+1,\infty}(\tilde{\Omega})
    \,,
  \end{equation}
  where $C_1$ is a constant depending only on $q$, $\tilde{\Omega}$, and $\|\Pi_{H}\|$.
\end{lemma}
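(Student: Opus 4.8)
The plan is to obtain the estimate by combining two standard ingredients: a Lebesgue-type stability bound valid for any bounded projection, and a direct (Jackson-type) approximation estimate for tensor-product spline spaces, the latter being essentially the content of \cite[Theorem~4.2]{dahmen1980multidimensional}.

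First I would use that $\Pi_{H}$ is a projection onto $S_q(\tilde{\bm{\Xi}})$, i.e., $\Pi_{H}s = s$ for every $s\in S_q(\tilde{\bm{\Xi}})$. Then, for any such $s$, one has $f - \Pi_{H}f = (f-s) - \Pi_{H}(f-s)$, so that
\begin{equation*}
  \|f-\Pi_{H}f\|_{L^\infty(\tilde{\Omega})}
  \leq
  (1+\|\Pi_{H}\|)\,\|f-s\|_{L^\infty(\tilde{\Omega})}
  \leq
  (1+C_0)\,\|f-s\|_{L^\infty(\tilde{\Omega})}\,.
\end{equation*}
Taking the infimum over $s\in S_q(\tilde{\bm{\Xi}})$ reduces the problem to bounding the best approximation error of $f$ in the spline space. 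Here I would invoke that the knot vector $\tilde{\bm{\Xi}}$ is quasi-uniform with sampling length $H$ and that the degree-$q$ tensor-product B-spline basis reproduces $\mcQ_q$ exactly (cf.\ \Cref{app:marsden_s_identity}); then \cite[Theorem~4.2]{dahmen1980multidimensional} supplies a constant $C$, depending only on $q$ and $\tilde{\Omega}$, with
\begin{equation*}
  \inf_{s\in S_q(\tilde{\bm{\Xi}})}\|f-s\|_{L^\infty(\tilde{\Omega})}
  \leq
  C\,H^{q+1}\,[f]_{W^{q+1,\infty}(\tilde{\Omega})}
  \qquad\text{for all } f\in W^{q+1,\infty}(\tilde{\Omega})\,.
\end{equation*}
Combining the two displays yields the claim with $C_1 = (1+C_0)\,C$.

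A point I would want to handle carefully --- though it is a matter of bookkeeping rather than a genuine obstacle --- is that only the \emph{Cartesian} seminorm $[f]_{W^{q+1,\infty}}$, built from the pure derivatives $D^{(q+1)\bme_i}f$, should appear on the right-hand side, rather than the full seminorm $|f|_{W^{q+1,\infty}}$. This anisotropic form is exactly what the tensor-product construction delivers: one approximates in each coordinate direction in turn, each sweep costing a factor $H^{q+1}\|D^{(q+1)\bme_i}f\|_{L^\infty}$, so the cited estimate must be applied in its directional form on the cube $\tilde{\Omega}$. I would also track that $\|\Pi_{H}\|$ enters only through the uniform bound $C_0$, so that $C_1$ depends on $\Pi_{H}$ solely through $C_0$ (and on $q$ and $\tilde{\Omega}$). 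Finally, since $\Pi_{H}$ is assumed to be a stable \emph{local} interpolation operator, an alternative route would be to localize the argument to each knot cell --- using locality of $\Pi_{H}$ together with a Bramble--Hilbert estimate on the enlarged patch --- and then take the maximum over cells; this is sharper but unnecessary for the stated global bound.
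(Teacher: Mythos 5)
Your argument is correct and is essentially the paper's intended one: the paper offers no detailed proof, merely noting that the lemma ``follows directly from'' the cited Theorem~4.2 of Dahmen et al., which supplies exactly the best-approximation bound $\inf_{s\in S_q(\tilde{\bm{\Xi}})}\|f-s\|_{L^\infty(\tilde{\Omega})}\leq C H^{q+1}[f]_{W^{q+1,\infty}(\tilde{\Omega})}$ in the directional (Cartesian) seminorm, and your Lebesgue-lemma reduction $\|f-\Pi_H f\|\leq(1+\|\Pi_H\|)\inf_s\|f-s\|$ is the standard way to pass from that to the stated estimate with $C_1$ depending only on $q$, $\tilde{\Omega}$, and $\|\Pi_H\|$. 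Your bookkeeping remarks about the anisotropic seminorm and the dependence on $\|\Pi_H\|$ only through $C_0$ are consistent with the statement as given.
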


\revised{\Cref{fig:SurrogateStencilFunctions} presents graphs of stencil functions, $\Phi_\bdelta(\tilde{\bmx})$ and their surrogates $\tilde{\Phi}_\bdelta(\tilde{\bmx})$ obtained by B-spline interpolation for various $\bdelta$. The stencil functions are generated by an isogeometric NURBS basis and the symmetric bilinear form~\cref{eq:BilinearFormPoissonTensor}.}
\begin{remark}
\label{rem:SamplingPoints}
  The norm $\|\Pi_{H}\|$ can be greatly influenced by the distribution of the sampling points $\tilde{\bxi}_{\bmj}\in\tilde{\bm{\Xi}}$.
  In all of our experiments, we kept $\tilde{\bm{\Xi}}\subset \tilde{\Omega}\cap\tilde{\bbX}$.
  This convenient choice delivered good results.
  When we wish to underscore the convention $\tilde{\bm{\Xi}}\subset \tilde{\Omega}\cap\tilde{\bbX}$, we will denote the sampling points in $ \tilde{\bm{\Xi}}$ by $\tilde{\bmx}_i^\mathrm{s}$.
  Moreover, from now on, dependence on the subset $\tilde{\Omega}$ will not be stated since $\tilde{\Omega} \to \hat{\Omega}$ as $h\to 0$ and the parametric domain $\hat{\Omega} = (0,1)^n$ is always fixed.
\end{remark}

\subsection{Regularity of the stencil functions} %
\label{sub:regularityPoisson}

Define $\tilde{\Phi}_\bdelta = \Pi_{H}\Phi_\bdelta$, where $\Pi_{H}$ is any projection satisfying the assumptions of \Cref{lem:SplineBAE}.
In this subsection, we present an essential theorem on the error in the class of surrogate stencil functions defined in~\cref{eq:StencilFunctionNURBS}.
In order to expedite our presentation, we only prove \Cref{thm:RegularityOfStencilFunctions} here under an assumption which directly relates to the B-spline basis scenario~\cref{eq:StencilFunction}.
The general proof is given in \Cref{app:marsden_s_identity}.

\begin{theorem}
\label{thm:RegularityOfStencilFunctions}
  Let $\Phi_\bdelta:\tilde{\Omega}\to\R$ be defined by~\cref{eq:StencilFunctionNURBS} and assume that $\bvarphi:\hat{\Omega}\to\Omega$ induces a coefficient tensor ${K} \in \big[W^{q+1,\infty}(\hat{\Omega})\big]^{n\times n}$.
  If $W\in \mcQ_p(\hat{\Omega})$, then there exists a constant $C_2$, depending only on $p$, $q$, $\|\Pi_{H}\|$, and \changed{$\bvarphi$}, such that
  \begin{equation}
    \big\|\Phi_\bdelta-\tilde{\Phi}_\bdelta\big\|_{L^\infty(\tilde{\Omega})}
    \leq
    C_2
    \sspace
    h^{n-2} H^{q+1}
    \quad
    \text{for each }
    \bdelta\in\scD
    \,.
  \label{eq:NURBSStencilFunctionError}
  \end{equation}
  Moreover, if $W(\hat{\bmx})=1$, then $C_2 \leq C\, C_1\,[K]_{W^{q+1,\infty}(\hat{\Omega})}$, for some $C$ depending only on $p$.
\end{theorem}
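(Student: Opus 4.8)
The plan is to combine \Cref{lem:SplineBAE} with a direct bound on the Cartesian--Sobolev seminorm of the stencil function. Since $\tilde\Phi_\bdelta = \Pi_H\Phi_\bdelta$, \Cref{lem:SplineBAE} gives $\|\Phi_\bdelta-\tilde\Phi_\bdelta\|_{L^\infty(\tilde\Omega)} \leq C_1 H^{q+1}[\Phi_\bdelta]_{W^{q+1,\infty}(\tilde\Omega)}$, so the entire theorem reduces to showing $[\Phi_\bdelta]_{W^{q+1,\infty}(\tilde\Omega)} \leq C\,h^{n-2}$ with $C$ of the asserted form. I would do this first in the reduced case $W\equiv 1$, where $\Phi_\bdelta$ is given by the simpler expression~\cref{eq:StencilFunction}, and then indicate how the general NURBS case is reached; the full NURBS argument is deferred to \Cref{app:marsden_s_identity}.

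In the case $W\equiv 1$, the integration domain $\hat\omega_\bdelta = \supp(\hat B)\cap\supp(\hat B_\bdelta)$ does not depend on $\tilde\bmx$, and $K\in[W^{q+1,\infty}(\hat\Omega)]^{n\times n}$, so I would differentiate under the integral sign; the only $\tilde\bmx$-dependence sits in the argument of $K$, so for each Cartesian direction $\bme_i$ one gets $D^{(q+1)\bme_i}\Phi_\bdelta(\tilde\bmx) = \int_{\hat\omega_\bdelta}\hat\nabla\hat B(\hat\bmy)^\top (D^{(q+1)\bme_i}K)(\tilde\bmx+\hat\bmy)\,\hat\nabla\hat B_\bdelta(\hat\bmy)\dd\hat\bmy$. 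Pulling $\|D^{(q+1)\bme_i}K\|_{L^\infty(\hat\Omega)}$ out and applying Cauchy--Schwarz in $\hat\bmy$ leaves the factor $\|\hat\nabla\hat B\|_{L^2(\hat\Omega)}\|\hat\nabla\hat B_\bdelta\|_{L^2(\hat\Omega)}$. The crucial estimate is then $\|\hat\nabla\hat B\|_{L^2(\hat\Omega)}^2 \leq C_p\,h^{n-2}$, which I would obtain from the tensor-product structure $\hat B = b(\hat y_1)\cdots b(\hat y_n)$ together with the rescaling $b(\hat y)=\beta(\hat y/h)$ for a fixed reference cardinal B-spline $\beta$: this yields $\|b\|_{L^2}^2\leq C_p h$ and $\|b'\|_{L^2}^2\leq C_p h^{-1}$, so each component $\partial_k\hat B$ has squared $L^2$-norm at most $(C_p h^{-1})(C_p h)^{n-1}=C_p h^{n-2}$. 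Summing over $i$ gives $[\Phi_\bdelta]_{W^{q+1,\infty}(\tilde\Omega)}\leq C_p\,h^{n-2}[K]_{W^{q+1,\infty}(\hat\Omega)}$, which together with the reduction above produces both~\cref{eq:NURBSStencilFunctionError} and the sharp constant $C_2\leq C\,C_1\,[K]_{W^{q+1,\infty}(\hat\Omega)}$ in this case.

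For general $W\in\mcQ_p(\hat\Omega)$ I would work from~\cref{eq:StencilFunctionNURBS}, using that the weights admit the polynomial representation $w\in\mcQ_p(\tilde\Omega_{\bm 0})$ (see \Cref{app:marsden_s_identity}) and that $\hat\bmx\mapsto W(\hat\bmx)$ is a polynomial bounded away from zero on the compact parametric region reached by the integrals (positivity of the weights plus the partition-of-unity property of the $\hat B_j$). A Leibniz expansion of $D^{(q+1)\bme_i}$ then distributes the derivatives among $w(\tilde\bmx)$, $w(\tilde\bmx+\bdelta)$, $K(\tilde\bmx+\hat\bmy)$, and the smooth reciprocal-weight factors built from $1/W(\tilde\bmx+\hat\bmy)$; derivatives of $w$ of order exceeding $p$ vanish, and the reciprocal-weight factors and all their derivatives are bounded in $L^\infty$ by a constant depending only on $\bvarphi$. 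In each resulting term the only $\hat\bmy$-differentiated factors remain $\hat B$ and $\hat B_\bdelta$, carrying at most one gradient each, so the same scaling argument applies and every term is controlled by $h^{n-2}$ times a constant depending on $p$, $q$, and $\bvarphi$; collecting these (and the $\|\Pi_H\|$-dependence from $C_1$) gives~\cref{eq:NURBSStencilFunctionError}.

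The step I expect to be the main obstacle is the bookkeeping behind the $h^{n-2}$ factor rather than any conceptual difficulty: I must check that the rescaling $b(\hat y)=\beta(\hat y/h)$ is uniform across all the cardinal basis functions involved — which is exactly why the analysis is restricted to $\tilde\Omega$ — that the balance of one factor $h^{-1}$ from the differentiated coordinate against $n-1$ factors $h$ from the remaining coordinates persists in every term of the Leibniz expansion (including the cross terms, where one instead encounters $\|\hat B\|_{L^2}\|\hat\nabla\hat B_\bdelta\|_{L^2}$ or $\|\hat\nabla\hat B\|_{L^2}\|\hat B_\bdelta\|_{L^2}$, which are no larger than $h^{n-2}$), and, in the NURBS case, that $1/W$ is genuinely smooth and bounded below on the relevant compact set — which is precisely where the polynomial structure of $W$, and hence of the weights, is used.
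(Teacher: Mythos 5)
Your proposal is correct and follows essentially the same route as the paper: differentiate the stencil function under the integral sign, use the cardinal B-spline scaling (equivalently, the bound $\|\hat{\nabla}\hat{B}\cdot\hat{\nabla}\hat{B}_\bdelta\|_{L^1(\hat{\omega}_\bdelta)}\lesssim h^{n-2}$, which your Cauchy--Schwarz splitting reproduces) to show $[\Phi_\bdelta]_{W^{q+1,\infty}(\tilde{\Omega})}\lesssim h^{n-2}$, and then apply \Cref{lem:SplineBAE}; your treatment of general $W\in\mcQ_p(\hat{\Omega})$ via a Leibniz expansion, the polynomial weight representation $w\in\mcQ_p(\tilde{\Omega}_{\bm 0})$, and positivity of $W$ matches the paper's argument in \Cref{app:marsden_s_identity}.
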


\begin{proof}[Proof of \Cref{thm:RegularityOfStencilFunctions}, under the assumption $W(\hat{\bmx})=1$]
  For every multi-index \revised{$\balpha = (\alpha_1, \ldots, \alpha_n)$, with each $0 \leq \alpha_i \leq q+1$}, it holds that
  \begin{equation}
    D^{\balpha}\Phi_\bdelta(\tilde{\bmx})
    =
    \int_{\hat{\omega}_{\bdelta}} \hat{\nabla} \hat{B}(\hat{\bmy})^\top D^{\balpha} {K}(\tilde{\bmx} + \hat{\bmy})\, \hat{\nabla} \hat{B}_\bdelta(\hat{\bmy}) \dd \hat{\bmy}
    .
  \label{eq:DerivativeOfStencilFunction}
  \end{equation}
  Therefore,
  $
    \|D^{\balpha}\Phi_\bdelta\|_{L^\infty(\tilde{\Omega})}
    \leq
    \|D^{\balpha} {K}\|_{L^\infty(\hat{\Omega})} \|\hat{\nabla} \hat{B}\cdot \hat{\nabla} \hat{B}_\bdelta\|_{L^1(\hat{\omega}_\bdelta)}
    \leq
    C
    h^{n-2} \|D^{\balpha} {K}\|_{L^\infty(\hat{\Omega})}
  $
  \revised{and, moreover, $\Phi_\bdelta \in W^{q+1,\infty}(\tilde{\Omega})$.
  The result follows from \cref{lem:SplineBAE} using $f = \Phi_\bdelta$.}
\end{proof}

\begin{remark}
\label{rem:PolynomialReproduction}
  Observe that $[K]_{W^{q+1,\infty}(\hat{\Omega})} = 0$ iff $K\in [\mcQ_{q}(\hat{\Omega})]^{n\times n}$.
  Generally, due to the definition of $K$ appearing in~\cref{eq:BilinearFormPoissonTensor}, this assumption can only be expected to be satisfied when the geometry map $\bvarphi$ is affine.
  Nevertheless, if $W =  1$ as well, then \Cref{thm:RegularityOfStencilFunctions} would imply that $\tilde{\Phi}_\bdelta = \Phi_\bdelta \in \mcQ_{q}(\tilde{\Omega})$.
  This a useful reproduction property which can help to verify the implementation.
  It also manifests in stencil functions defined from more complicated bilinear forms than~\cref{eq:BilinearFormPoissonTensor}.
  The reproduction of stencil functions is described in a general scenario in \Cref{sub:polynomial_reproduction}.
\end{remark}

\begin{figure}
  \centering
    \begin{subfigure}[c]{0.28\textwidth}
        \includegraphics[trim=2cm 0cm 2cm 0cm,clip=true,width=\textwidth]{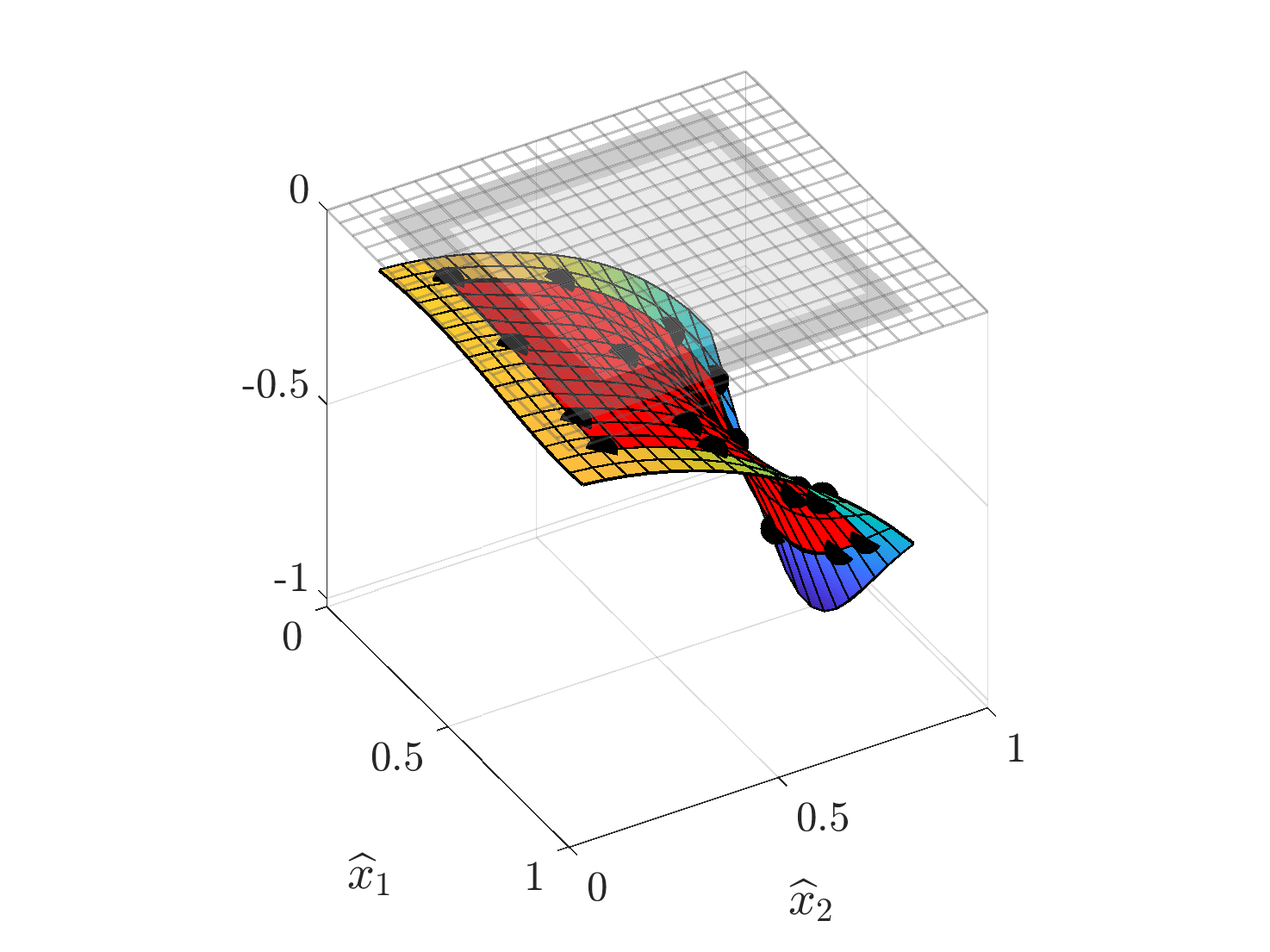}
        \label{fig:delta01}
    \end{subfigure}
    \quad
    \begin{subfigure}[c]{0.28\textwidth}
        \includegraphics[trim=2cm 0cm 2cm 0cm,clip=true,width=\textwidth]{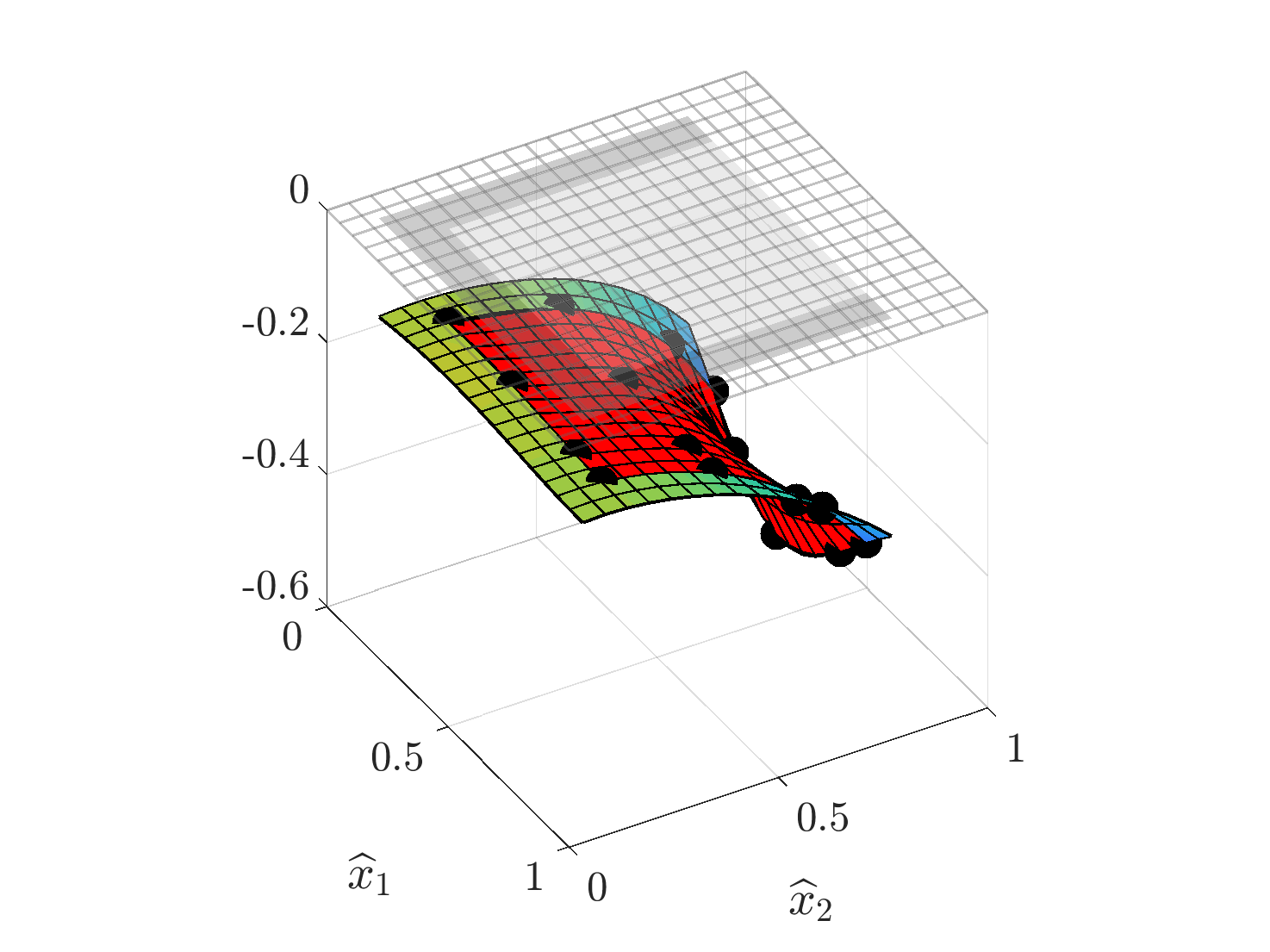}
        \label{fig:delta02}
    \end{subfigure}
    \quad
    \begin{subfigure}[c]{0.28\textwidth}
        \includegraphics[trim=2cm 0cm 2cm 0cm,clip=true,width=\textwidth]{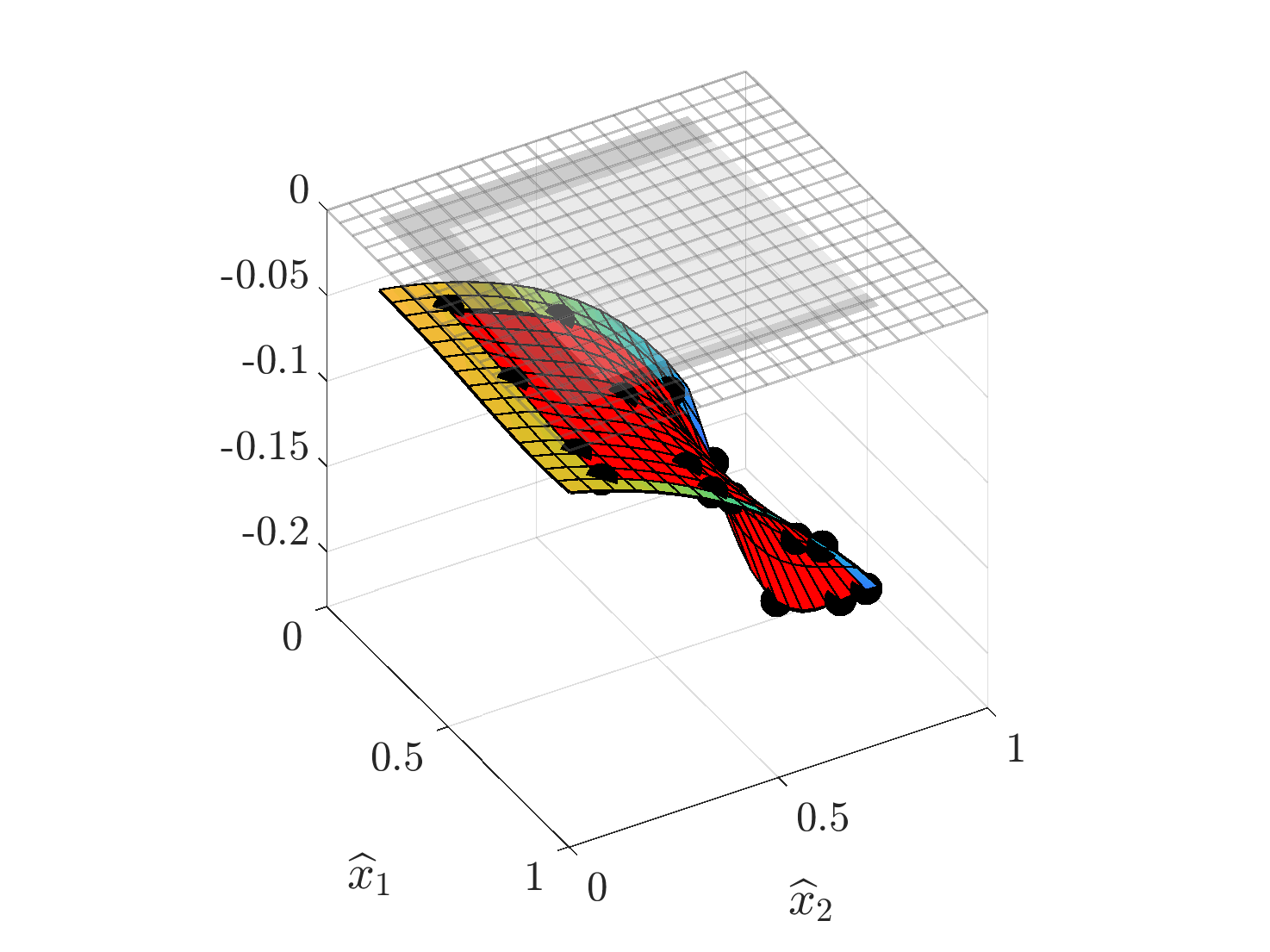}
        \label{fig:delta12}
    \end{subfigure}

    \begin{subfigure}[c]{0.28\textwidth}
        \includegraphics[trim=2cm 0cm 2cm 0cm,clip=true,width=\textwidth]{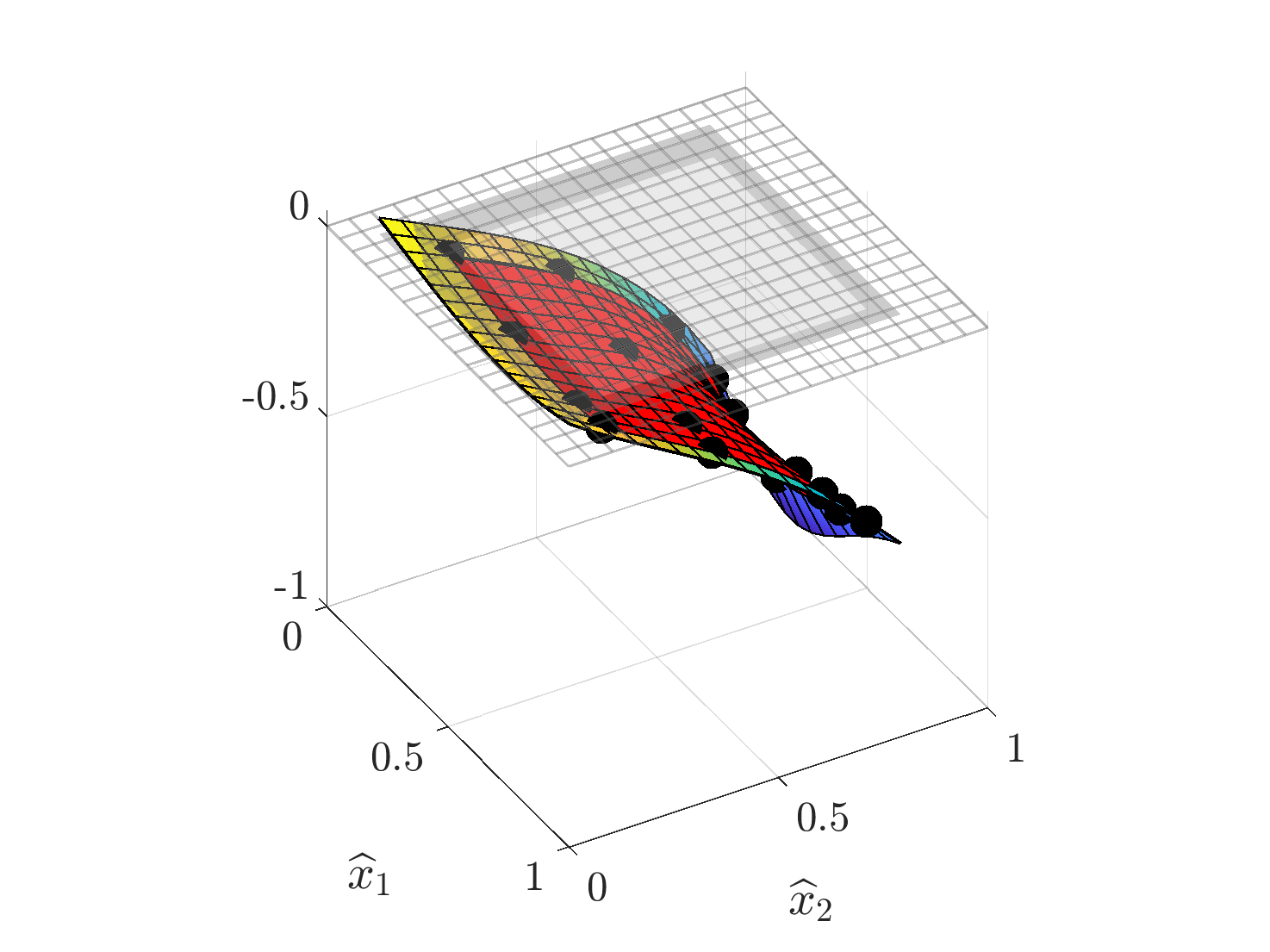}
        \label{fig:delta11}
    \end{subfigure}
    \quad
    \begin{subfigure}[c]{0.28\textwidth}
        \includegraphics[trim=2cm 0cm 2cm 0cm,clip=true,width=\textwidth]{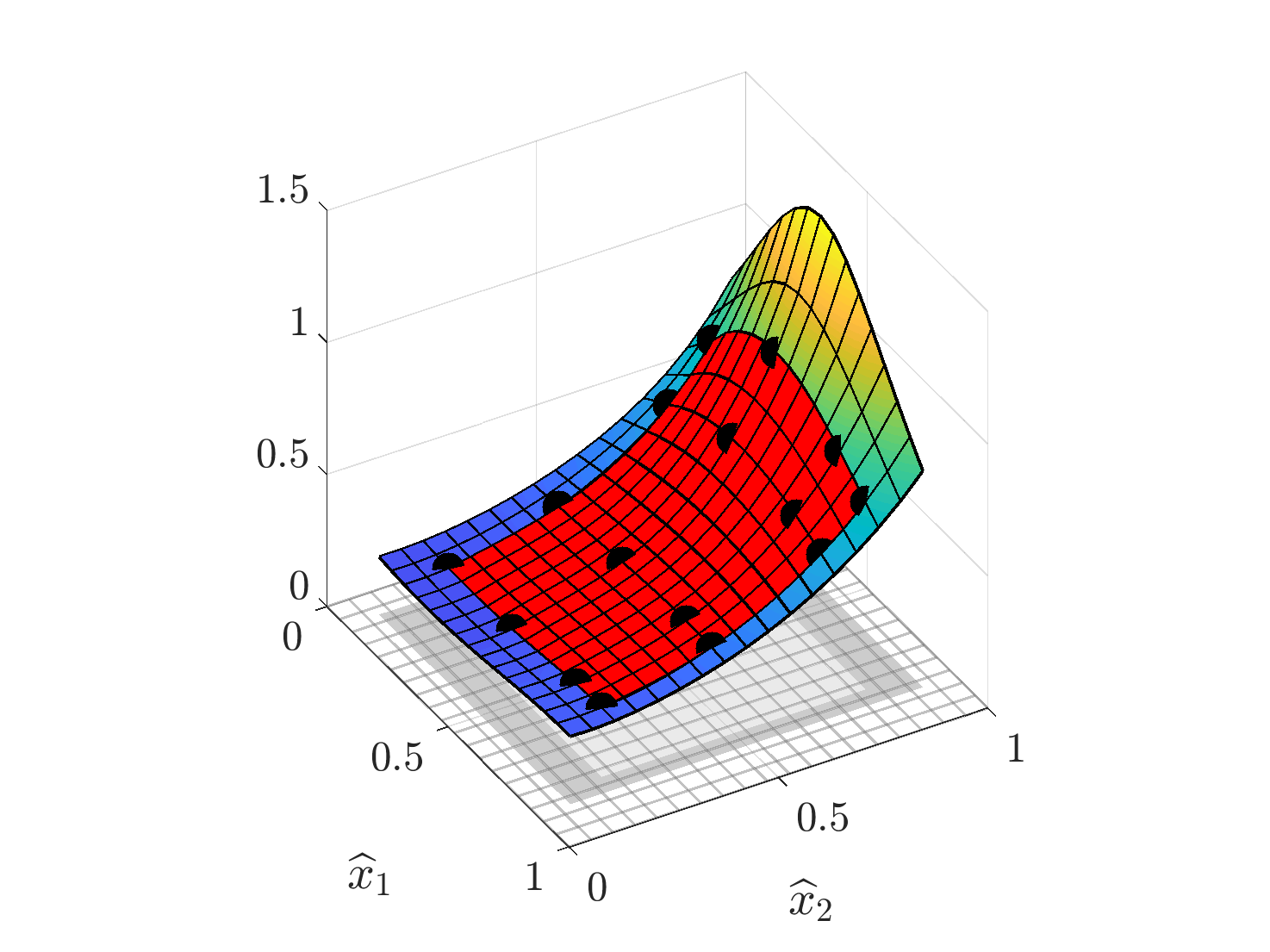}
        \label{fig:delta10}
    \end{subfigure}
    \quad
    \begin{subfigure}[c]{0.28\textwidth}
        \includegraphics[trim=2cm 0cm 2cm 0cm,clip=true,width=\textwidth]{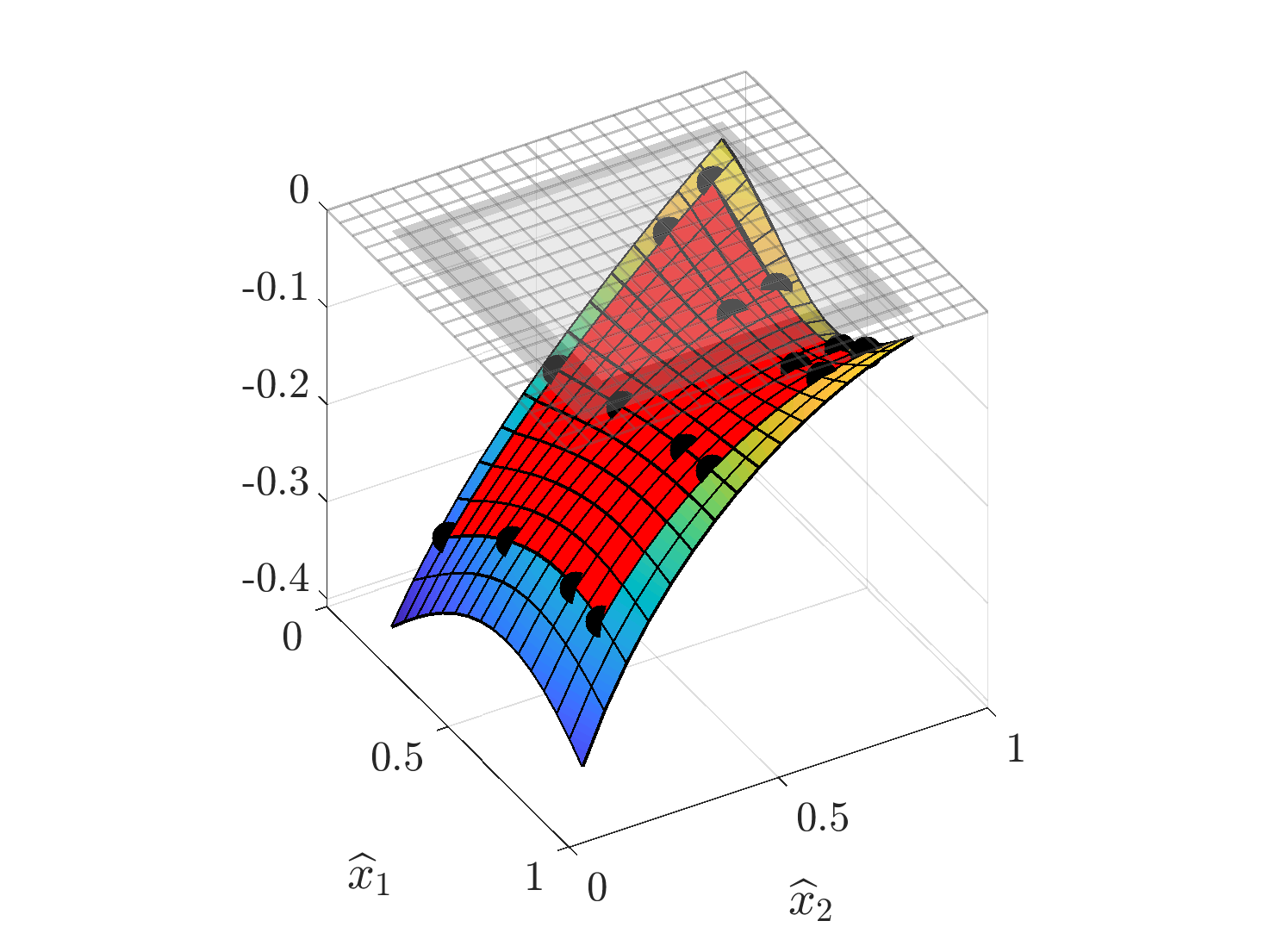}
        \label{fig:delta-11}
    \end{subfigure}

    \begin{subfigure}[c]{0.28\textwidth}
        \includegraphics[trim=2cm 0cm 2cm 0cm,clip=true,width=\textwidth]{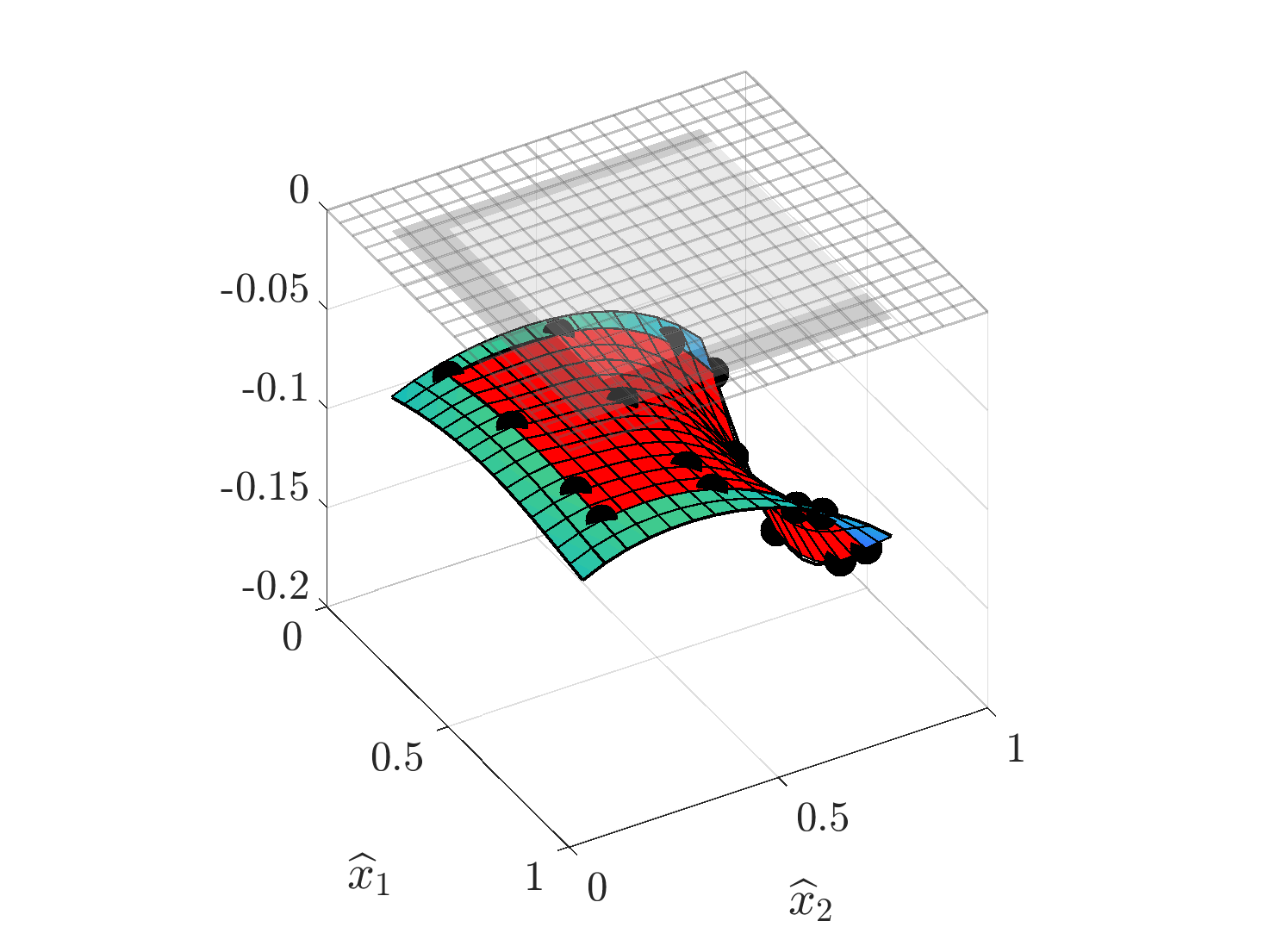}
        \label{fig:delta-12}
    \end{subfigure}
    \quad
    \begin{subfigure}[c]{0.28\textwidth}
        \includegraphics[trim=2cm 0cm 2cm 0cm,clip=true,width=\textwidth]{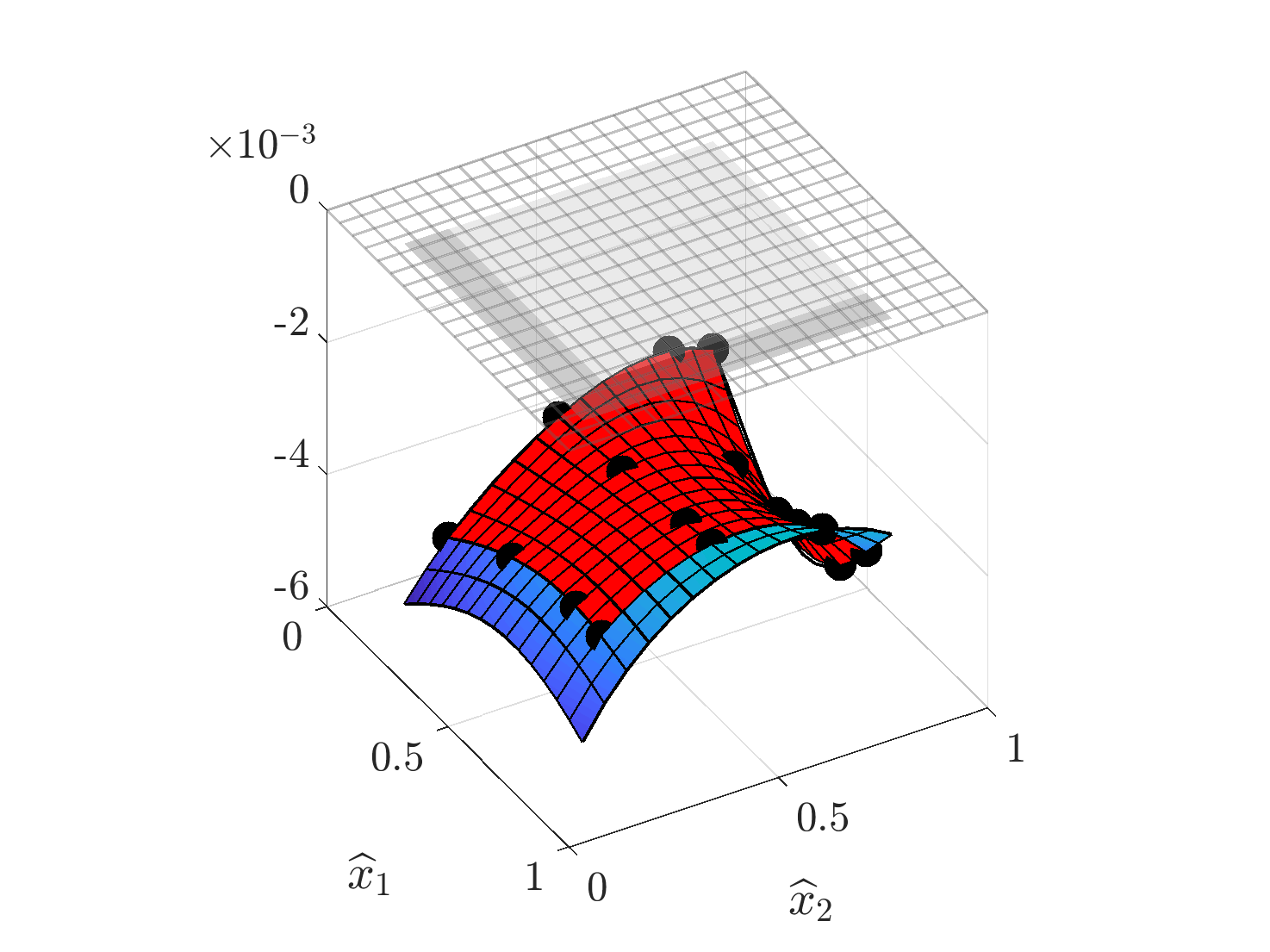}
        \label{fig:delta-22}
    \end{subfigure}
    \quad
    \begin{subfigure}[c]{0.28\textwidth}
        \includegraphics[trim=2cm 0cm 2cm 0cm,clip=true,width=\textwidth]{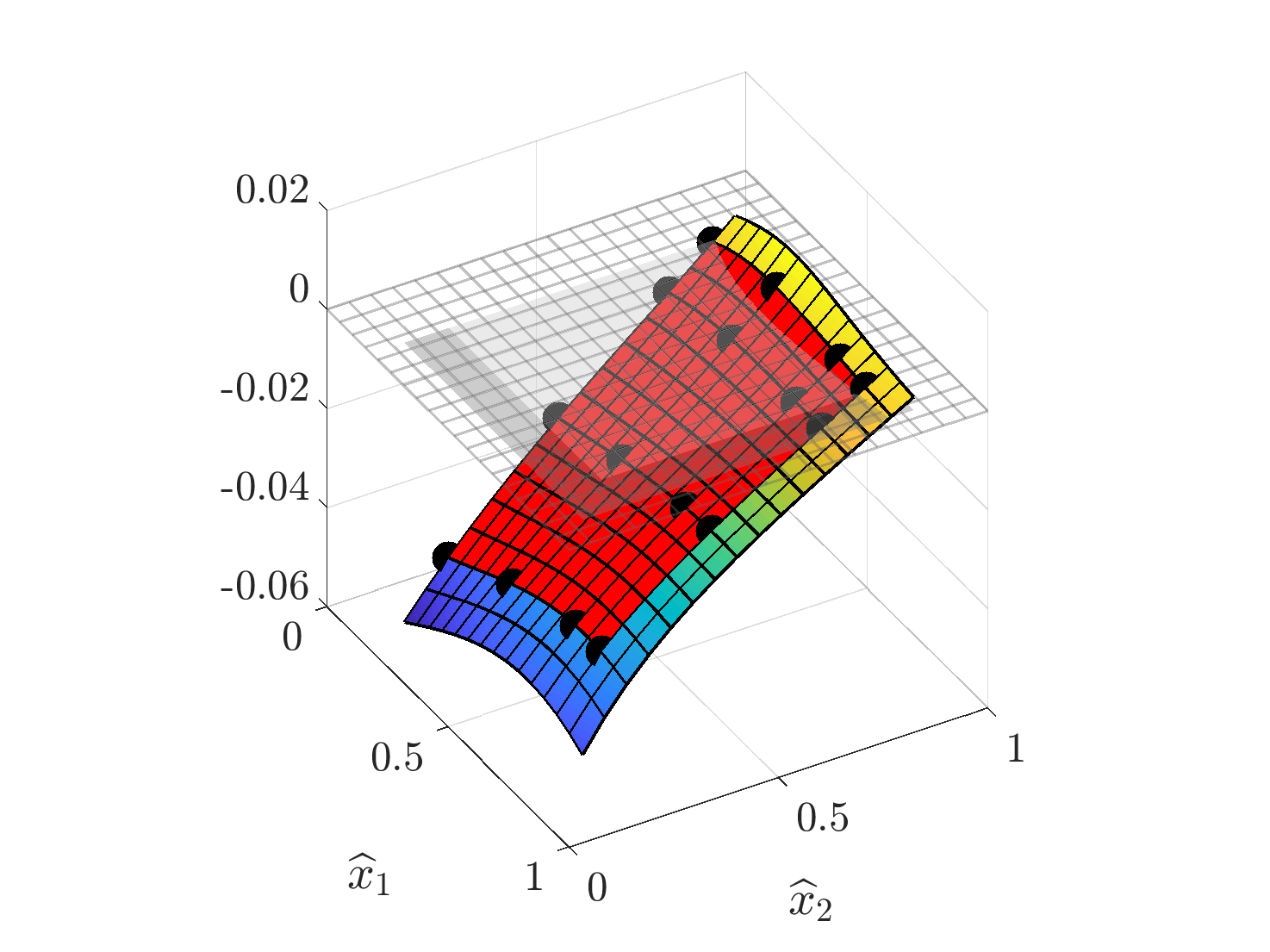}
        \label{fig:delta-21}
    \end{subfigure}

    \begin{subfigure}[c]{0.28\textwidth}
        \includegraphics[trim=2cm 0cm 2cm 0cm,clip=true,width=\textwidth]{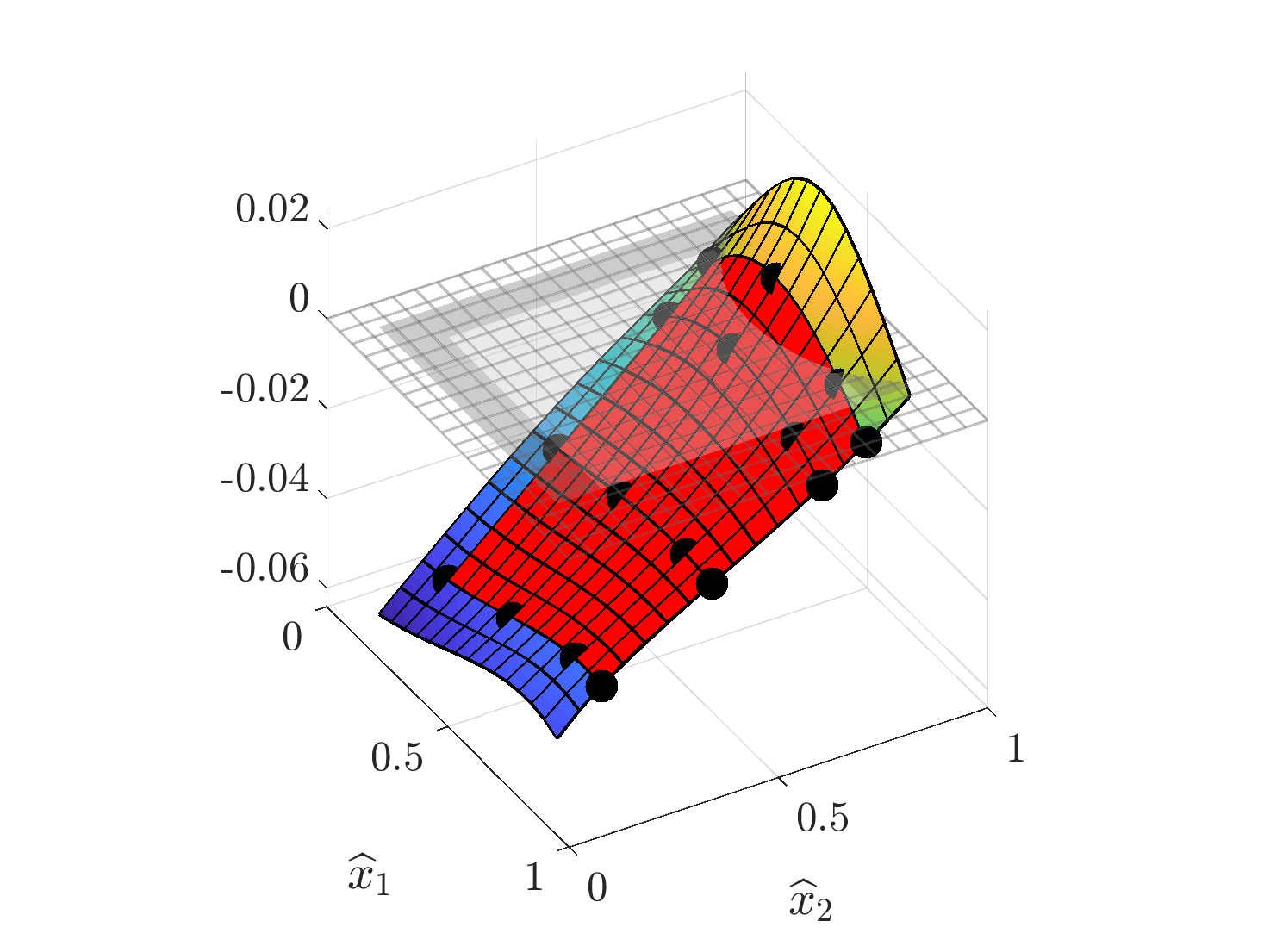}
        \label{fig:delta20}
    \end{subfigure}
    \quad
    \begin{subfigure}[c]{0.28\textwidth}
        \includegraphics[trim=2cm 0cm 2cm 0cm,clip=true,width=\textwidth]{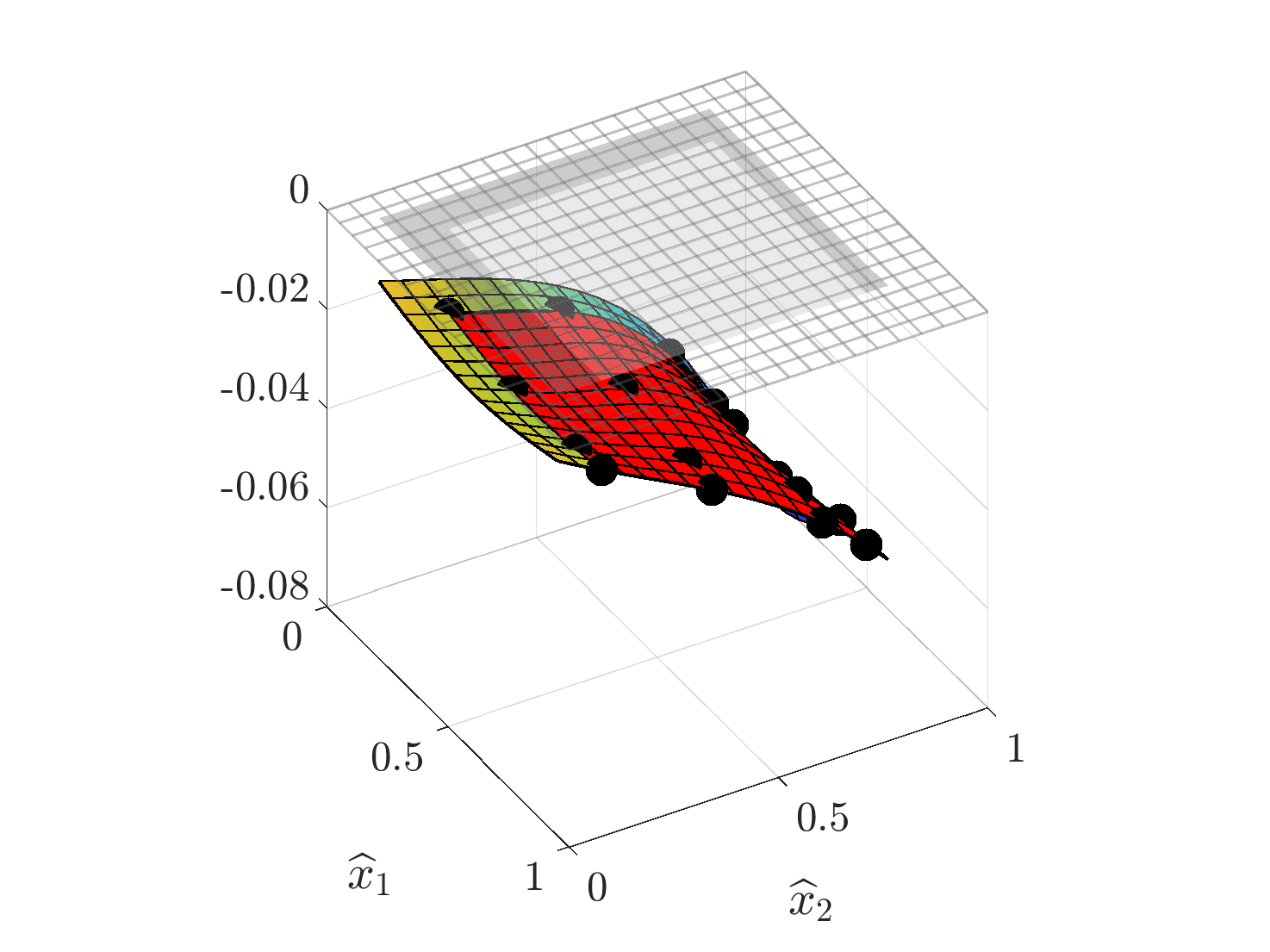}
        \label{fig:delta21}
    \end{subfigure}
    \quad
    \begin{subfigure}[c]{0.28\textwidth}
        \includegraphics[trim=2cm 0cm 2cm 0cm,clip=true,width=\textwidth]{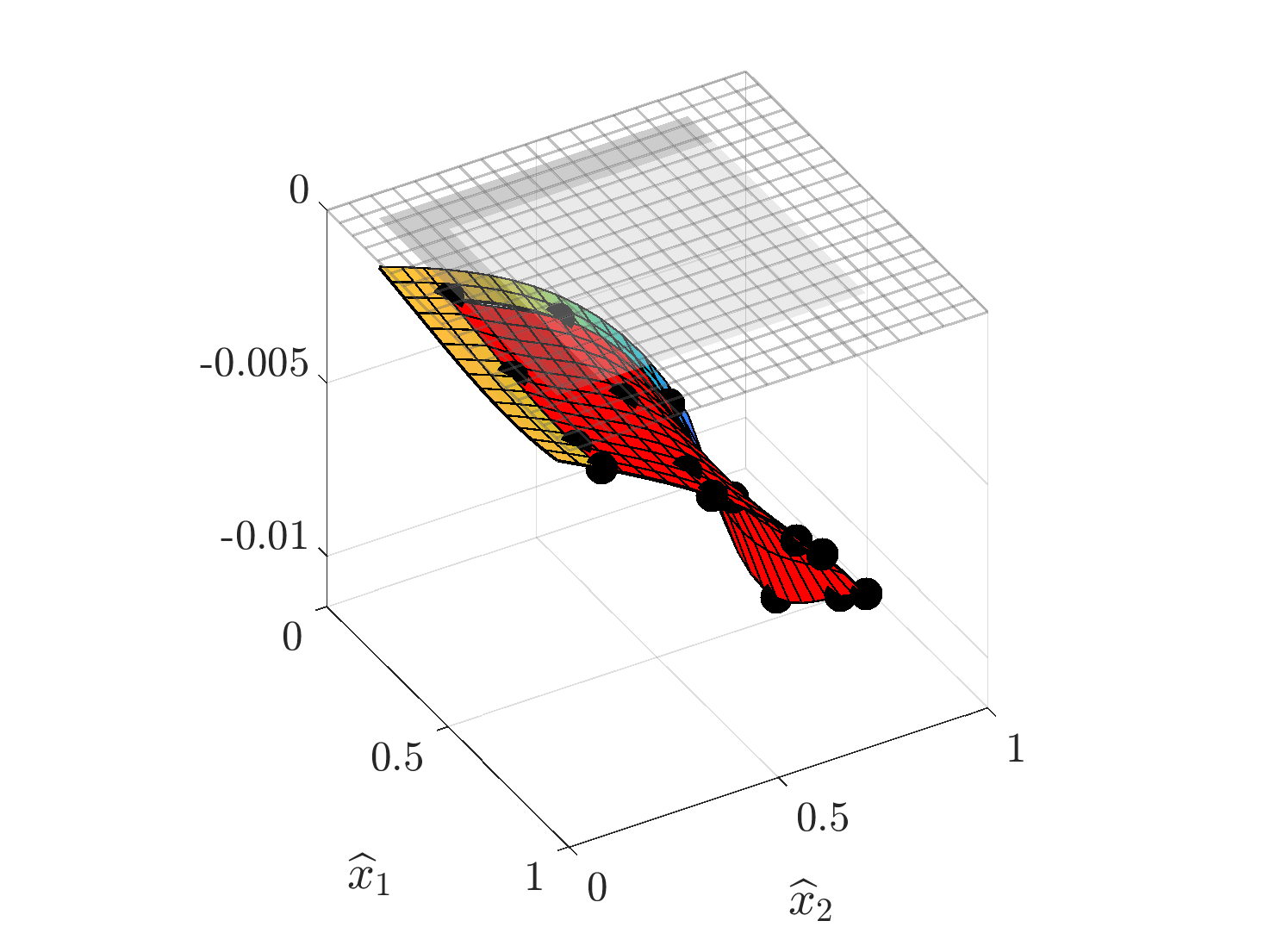}
        \label{fig:delta22}
    \end{subfigure}
  \caption{\label{fig:SurrogateStencilFunctions}Graphs of the stencil functions $\Phi_\bdelta(\tilde{\bmx})$ (color gradient) and their surrogates $\tilde{\Phi}_\bdelta(\tilde{\bmx})$ (red) for the various $\bdelta$ required in populating the upper diagonal of the stiffness matrix $\sfA_{ij}$ defined in~\cref{eq:DefinitionsOfSurrogateMatrices}. The sampling points $\tilde{\bmx}_{i}^\mathrm{s} \in \tilde{\bm{\Xi}} \subsetneq \tilde{\Omega}\cap\tilde{\bbX}$ used to construct the interpolant are depicted with black dots. The subsets $\tilde{\Omega}\subset \tilde{\Omega}_\bdelta$ are highlighted in light gray and the remaining subsets $\tilde{\Omega}_\bdelta\setminus\tilde{\Omega}$ are depicted in a darker gray.
  }
\end{figure}

\section{Surrogate matrices: Preserving structure} %
\label{sec:surrogate_matrices_preserving_structure}

In this section, we discuss a number of different surrogate matrix definitions.
Under both definitions, we also show that the surrogate matrices may actually fully reproduce the very matrices they approximate.
Note that these definitions may also be employed for general matrices and not only for a stiffness matrix.
If not stated otherwise, in the following subsections we assume that the matrix $\sfA$ emanates from a discretization of a general bilinear form.

\subsection{General surrogate matrices} %
\label{sub:general_surrogate_matrices}

In \Cref{sub:exploiting_basis_structure1}, the definition $\tilde{\sfA}_{ij} = \tilde{\Phi}_\bdelta(\tilde{\bmx}_i)$ was presented, with $\bdelta = \tilde{\bmx}_j-\tilde{\bmx}_i$.
However, this is valid exclusively for the special indices $1\leq i,j\leq N$ with a cardinal B-spline associated to them.
Of course, we may always compute the remaining components of the surrogate matrix $\tilde{\sfA}$ directly, using element-wise quadrature, as done in standard IGA assembly algorithms.
Therefore, we propose the following definition:
\begin{subequations}
\label{eq:DefinitionsOfSurrogateMatrices}
\begin{equation}
  \tilde{\sfA}_{ij}
  =
  \begin{cases}
  \tilde{\Phi}_\bdelta(\tilde{\bmx}_i) & \text{if } \tilde{\bmx}_i,\tilde{\bmx}_j\in\tilde{\Omega},\\
  \sfA_{ij}  &\text{otherwise.}\\
  \end{cases}
\label{eq:DefinitionOfSurrogateMatrix}
\end{equation}
This is a convenient definition which essentially can be used for constructing a surrogate matrix from any general bilinear form $a(\cdot,\cdot)$.
Typically, this definition is used for general non-symmetric matrices, e.g., for the divergence matrices $\sfB$ arising in the discretization of Stokes' flow (cf. \Cref{sec:stokes_equation}).
However,~\cref{eq:DefinitionOfSurrogateMatrix} can easily be improved when the bilinear form is symmetric or has a well-known kernel.

\subsection{Symmetry} %
\label{sub:symmetric_matrices}

If the bilinear form is symmetric, \cref{eq:DefinitionOfSurrogateMatrix} does not guarantee that the corresponding surrogate stiffness matrix $\tilde{\sfA}$ will be symmetric.
In order to enforce symmetry, it is convenient to just include the action of copying $\sfA_{ij}$ into $\sfA_{ji}$, for all $i>j$.
Therefore, we propose the following symmetric surrogate matrix definition:
\begin{equation}
  \tilde{\sfA}_{ij}
  =
  \begin{cases}
  \tilde{\Phi}_\bdelta(\tilde{\bmx}_i) & \text{if } \tilde{\bmx}_i,\tilde{\bmx}_j\in\tilde{\Omega} \text{ and } i\leq j,\\
  \tilde{\sfA}_{ji} & \text{if } \tilde{\bmx}_i,\tilde{\bmx}_j\in\tilde{\Omega} \text{ and } i>j,\\
  \sfA_{ij}  &\text{otherwise.}\\
  \end{cases}
\label{eq:DefinitionOfSurrogateMatrixSymmetric}
\end{equation}
With the definition, note that only $\frac{(2p+1)^n+1}{2}$ surrogate stencil functions need to be computed.
We employ this definition in the construction of surrogate mass matrices $\sfM$ which are symmetric but do not have a kernel (cf. \Cref{sec:transverse_vibrations_of_an_isotropic_membrane}).

\subsection{Preserving the kernel} %
\label{sub:preserving_the_kernel}

Recall that $\sum N_i(\bmx) = 1$.
In the situation $a(u,v) = \int_\Omega \nabla u \cdot \nabla v \dd x$, we have that $a(1,w) = a(w,1) = 0$, for all $w\in H^1(\Omega)$.
Therefore, $1 \in V_h = \spann\{N_i\}$ and, moreover, $\sfA \sfv_1 = 0$, where $\sfv_1 = (1,\ldots,1)^\top$.
Clearly, neither definition~\cref{eq:DefinitionOfSurrogateMatrix} nor definition~\cref{eq:DefinitionOfSurrogateMatrixSymmetric}, will guarantee that $\tilde{\sfA}\sfv_1 = 0$.
Therefore, we \revised{will} pose the following symmetric kernel-preserving definition:
\begin{equation}
  \tilde{\sfA}_{ij}
  =
  \begin{cases}
  \tilde{\Phi}_\bdelta(\tilde{\bmx}_i) & \text{if } \tilde{\bmx}_i,\tilde{\bmx}_j\in\tilde{\Omega} \text{ and } i<j,\\
  \tilde{\sfA}_{ji} & \text{if } \tilde{\bmx}_i,\tilde{\bmx}_j\in\tilde{\Omega} \text{ and } i>j,\\
  \sfA_{ij}  &\text{in all other cases where } i\neq j\\
  - \sum_{k\neq i}\tilde{\sfA}_{ik} & \text{if } i=j,\\
  \end{cases}
\label{eq:DefinitionOfSurrogateMatrixSymmetricKernel}
\end{equation}
\end{subequations}
With this definition, the reader may readily verify that $\tilde{\sfA}\sfv_1 = 0$ and $\tilde{\sfA} = \tilde{\sfA}^\top$.

\begin{remark}
Two important comments are in order.
First, in a matrix-free setting, where memory copying cannot be performed efficiently, one can actually design a set of surrogate stencil functions $\tilde{\Phi}_\bdelta$ which preserves the symmetry of the stiffness matrix (see, e.g., \cite[Remark~3.4]{drzisga2018surrogate}).
Second, in general, it is difficult to generalize the row sum trick in~\cref{eq:DefinitionOfSurrogateMatrixSymmetricKernel}, in a way which preserves symmetry, when the bilinear form $a(\cdot,\cdot)$ has a multi-dimensional kernel (cf. \Cref{sec:the_biharmonic_equation}).
\end{remark}

\subsection{Polynomial reproduction} %
\label{sub:polynomial_reproduction}

Until now, we have focused, almost entirely, on the analysis of surrogate stiffness matrices which derive from the bilinear form \changed{generated by} Poisson's equation.
Clearly, the methodology presented above can be applied to other settings as well.
The general scenario we are interested in is when $a(\cdot,\cdot)$ in~\cref{eq:ContinousVF} can be expressed in the parametric domain as
\begin{subequations}
\begin{equation}
    \hat{a}(\hat{w},\hat{v}) = \int_{\hat{\Omega}} G(\hat{\bmx},\revised{\hat{w}}(\hat{\bmx}),\revised{\hat{v}}(\hat{\bmx})) \dd \hat{\bmx}
    \qquad \text{for all } \hat{w}, \hat{v}\in \hat{V},
\label{eq:BilinearFormStructure}
\end{equation}
where, for all smooth $\hat{w},\hat{v}$,
\begin{equation}
    G(\hat{\bmx},\hat{w}(\hat{\bmy}),\hat{v}(\hat{\bmy})) = 0
    ,
    \qquad
    \text{whenever }
    \hat{\bmy}\notin\supp(\hat{w})\cap\supp(\hat{v})
    \,.
\label{eq:SupportAssumption}
\end{equation}
\end{subequations}

We now consider the general coefficient matrix $\sfA$ and a cardinal B-spline basis $\{\hat{B}_i\}$ (cf. \Cref{sub:exploiting_basis_structure2}).
Invoking~\cref{eq:SupportAssumption}, a simple change of variables leads us to
\begin{equation}
    \begin{aligned}
        \sfA_{ij}
        =
        \revised{\hat{a}}(\hat{B}_j,\hat{B}_i)
        &=
        \int_{\hat{\omega}_{\bdelta}} \!\!G(\tilde{\bmx}_i+\hat{\bmy},\hat{B}_{\bdelta}(\hat{\bmy}),\hat{B}(\hat{\bmy})) \dd \hat{\bmy}
        \,,
    \end{aligned}
\label{eq:GeneralChangeOfVars}
\end{equation}
where, as before, $\bdelta = \tilde{\bmx}_j - \tilde{\bmx}_i$ and $\hat{\omega}_\bdelta = \supp(\hat{B})\cap\supp(\hat{B}_{\bdelta})$.
Using the techniques put forth in \Cref{sub:basis_structure_nurbs}, this expression can easily be generalized for cardinal NURBS bases with polynomial weight functions $W(\hat{\bmx})$.
However, considering only the case of a cardinal B-spline basis, if $G(\cdot,\cdot,\cdot)$ is a $\mcQ_p(\hat{\Omega})$ polynomial in its first argument, we have the following reproduction property (cf. \Cref{rem:PolynomialReproduction}).
\begin{proposition}
\label{prop:PolynomialReproduction}
    Assume that~\cref{eq:BilinearFormStructure,eq:SupportAssumption} hold.
    For all $\hat{\bmx}\in\tilde{\Omega}$, define
    \begin{equation}
        \Phi_\bdelta(\tilde{\bmx})
        =
        \int_{\hat{\omega}_{\bdelta}} G(\tilde{\bmx}+\hat{\bmy},\hat{B}_{\bdelta}(\hat{\bmy}),\hat{B}(\hat{\bmy})) \dd \hat{\bmy}
        .
    \label{eq:GeneralStencilFunction}
    \end{equation}
    If $G(\cdot,\hat{\bmy},\hat{\bmy})\in\mcQ_p(\hat{\Omega})$, for every $\hat{\bmy}\in\hat{\Omega}$, then $\Phi_\bdelta \in \mcQ_q(\tilde{\Omega})$.
    Moreover, taking~\cref{eq:DefinitionOfSurrogateMatrix} as the definition of the surrogate $\tilde{\sfA}$, it holds that $\tilde{\sfA} = \sfA$.
\end{proposition}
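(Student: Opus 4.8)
The plan is to isolate the one nontrivial fact — that each stencil function $\Phi_\bdelta$ is a low-degree polynomial — and deduce everything else from it. By \Cref{sub:b_spline_interpolation} and \Cref{lem:SplineBAE}, $\Pi_{H}$ is a bounded \emph{projection} onto the tensor-product spline space $S_q(\tilde{\bm{\Xi}})$, and that space reproduces all polynomials of coordinate-degree at most $q$, i.e.\ $\mcQ_q(\tilde{\Omega})\subseteq S_q(\tilde{\bm{\Xi}})$ (a standard consequence of Marsden's identity; cf.\ \Cref{app:marsden_s_identity}). Hence, once $\Phi_\bdelta\in\mcQ_q(\tilde{\Omega})$ is established for every $\bdelta\in\scD$, we get $\tilde{\Phi}_\bdelta=\Pi_{H}\Phi_\bdelta=\Phi_\bdelta$ on all of $\tilde{\Omega}$. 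The matrix identity then follows entry by entry from~\cref{eq:DefinitionOfSurrogateMatrix}: when $\tilde{\bmx}_i,\tilde{\bmx}_j\in\tilde{\Omega}$, both $\hat{B}_i$ and $\hat{B}_j$ are cardinal B-splines (the box $\tilde{\Omega}$ is contained in the set of cardinal-B-spline centers), so, writing $\bdelta=\tilde{\bmx}_j-\tilde{\bmx}_i$, the change-of-variables identity~\cref{eq:GeneralChangeOfVars} together with~\cref{eq:GeneralStencilFunction} gives $\sfA_{ij}=\Phi_\bdelta(\tilde{\bmx}_i)$ (with both sides understood as $0$ when $\bdelta\notin\scD$), whence $\tilde{\sfA}_{ij}=\tilde{\Phi}_\bdelta(\tilde{\bmx}_i)=\Phi_\bdelta(\tilde{\bmx}_i)=\sfA_{ij}$; in every remaining case~\cref{eq:DefinitionOfSurrogateMatrix} sets $\tilde{\sfA}_{ij}=\sfA_{ij}$ directly. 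Thus $\tilde{\sfA}=\sfA$.

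It remains to prove $\Phi_\bdelta\in\mcQ_q(\tilde{\Omega})$; I would actually show that $\Phi_\bdelta$ inherits, as a function of $\tilde{\bmx}$, the coordinate-polynomial degree of $G$ in its first argument — at most $p$ by hypothesis — so that $\Phi_\bdelta\in\mcQ_p(\tilde{\Omega})\subseteq\mcQ_q(\tilde{\Omega})$, the inclusion using $q\geq p$ (cf.\ \Cref{sub:b_spline_interpolation}). Fix $\bdelta\in\scD$ and work from~\cref{eq:GeneralStencilFunction}. Reading the hypothesis as ``for every admissible value of the two function-data slots, $G$ depends on its first argument as a polynomial of coordinate-degree at most $p$'', and using that translations preserve $\mcQ_p$, the integrand $\tilde{\bmx}\mapsto G(\tilde{\bmx}+\hat{\bmy},\hat{B}_{\bdelta}(\hat{\bmy}),\hat{B}(\hat{\bmy}))$ lies in $\mcQ_p$ for each fixed $\hat{\bmy}\in\hat{\omega}_\bdelta$; equivalently, each pure derivative $D^{(p+1)\bme_k}$, $k=1,\dots,n$, annihilates it in the $\tilde{\bmx}$-variable. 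These $\tilde{\bmx}$-derivatives are continuous and bounded uniformly over the bounded set $\hat{\omega}_\bdelta$, so differentiation under the integral sign is justified (and also yields $\Phi_\bdelta\in C^\infty(\tilde{\Omega})$, so there is no regularity gap), giving $D^{(p+1)\bme_k}\Phi_\bdelta\equiv 0$ on $\tilde{\Omega}$ for every $k$. An elementary induction on the space dimension $n$ (peeling off one coordinate at a time) shows that any function on a box all of whose pure $(p+1)$-st order partial derivatives vanish belongs to $\mcQ_p$; applied to $\Phi_\bdelta$, this finishes the argument.

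The only point requiring care is the degree bookkeeping in the middle step: fixing the correct reading of the hypothesis on $G$ (polynomiality in the first argument \emph{uniformly} over the remaining slots, not merely at a single point), and keeping track that the advertised conclusion $\Phi_\bdelta\in\mcQ_q(\tilde{\Omega})$ is obtained only through $\mcQ_p\subseteq\mcQ_q$. The differentiation under the integral sign and the box-polynomial lemma are entirely routine, and for a cardinal NURBS basis the same scheme would apply after factoring out the polynomial weights as in~\cref{eq:StencilFunctionNURBS}, which is presumably why the statement here is confined to the B-spline case. I do not expect any genuine difficulty beyond the bookkeeping above.
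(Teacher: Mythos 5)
Your proposal is correct and takes essentially the same route as the paper: both reduce the claim to $\tilde{\Phi}_\bdelta=\Pi_{H}\Phi_\bdelta=\Phi_\bdelta$ by showing that, for each fixed $\hat{\bmy}\in\hat{\omega}_\bdelta$, the integrand in~\cref{eq:GeneralStencilFunction} is a polynomial of coordinate-degree at most $p$ in $\tilde{\bmx}$, so that $\Phi_\bdelta\in\mcQ_p(\tilde{\Omega})\subset\mcQ_q(\tilde{\Omega})$ and is therefore reproduced by the degree-$q$ spline projection, after which $\tilde{\sfA}=\sfA$ follows entrywise from~\cref{eq:DefinitionOfSurrogateMatrix}. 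The only difference is mechanical: the paper integrates the expansion $G(\tilde{\bmx}+\hat{\bmy},\hat{B}_\bdelta(\hat{\bmy}),\hat{B}(\hat{\bmy}))=\sum_{\balpha}c_\balpha(\hat{\bmy})(\tilde{\bmx}+\hat{\bmy})^{\balpha}$ termwise in $\hat{\bmy}$, whereas you detect polynomiality by differentiating under the integral sign and using the vanishing of the pure $(p+1)$-st order derivatives, making explicit the implicit assumption $q\geq p$.
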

\begin{proof}
    It suffices to show that $\tilde{\Phi}_\bdelta = \Phi_\bdelta$, for all $\bdelta \in \scD$.
    Let $\balpha = (\alpha_1,\ldots,\alpha_n)$ be a multi-index, $\tilde{\bmx}^\balpha = \tilde{x}_{1}^{\alpha_1}\cdots \tilde{x}_{n}^{\alpha_n}$.
    By assumption, we may express $G(\tilde{\bmx}, \hat{B}_\bdelta(\hat{\bmy}),\hat{B}(\hat{\bmy})) = \sum_{i=1}^n \sum_{\alpha_i \leq p} c_\balpha(\hat{\bmy}) \tilde{\bmx}^\balpha$, where each coefficient function $c_\balpha(\hat{\bmy})$ has support only in $\hat{\omega}_\bdelta$.
    Moreover, if $\tilde{\bmx}+\hat{\bmy}\in \hat{\Omega}$, then
    \begin{equation}
        G(\tilde{\bmx}+\hat{\bmy}, \hat{B}_{\bdelta}(\hat{\bmy}),\hat{B}(\hat{\bmy}))
        =
        \sum_{i=1}^n \sum_{\alpha_i \leq p} c_{\balpha}(\hat{\bmy}) (\tilde{\bmx}+\hat{\bmy})^\balpha
    \label{eq:PolyExpansion}
    \end{equation}
    is clearly an equal degree polynomial in the $\tilde{\bmx}$-variable.
    The proof is completed noting that the integral in~\cref{eq:GeneralStencilFunction} is performed in the $\hat{\bmy}$-variable over the set $\hat{\omega}_{\bdelta}$ and $\revised{\hat{\omega}_{\bdelta}} \subset \hat{\Omega}$, for every $\bdelta\in\scD$.~
\end{proof}

\section{Surrogate matrices: Faster assembly with existing software} %
\label{app:computing_surrogates_with_existing_iga_codes}
All the examples in this paper were implemented using the GeoPDEs package for Isogeometric Analysis in MATLAB and Octave \cite{de2011geopdes,vazquez2016new}.
This package provides a framework for implementing and testing new isogeometric methods for the solution of partial differential equations.
We reused most of the original low-level functions and only had to make changes to some high-level assembly functions.
A detailed explanation of the code modifications and extensions is provided in \cite{drzisga2019igasurrogateimpl}.
Additionally, a reference implementation with example code is available in the git repository \cite{githubdrzisga}.
Nonetheless, we give here a short explanation of the implementation in GeoPDEs.
In particular, for the Poisson problem, we modified \mbox{\texttt{op\_gradu\_gradv\_tp}} using the following strategy:

\revised{First, the sampling length $H$ and the sampling points need to be specified.
For this purpose, we introduce the sampling parameter $M ̄\in \N$ which relates the small scale $h$ to the coarse scale $H$ via $H = M \cdot h$.}
Starting from the first point $\tilde{\bmx}_i\in{\tilde{\Omega}}\cap\tilde{\bbX}$, let $\tilde{\bm{\Xi}}$ be the lattice containing every $M{}^\text{th}$ point $\tilde{\bmx}_i\in{\tilde{\Omega}}\cap\tilde{\bbX}$, in each Cartesian direction.
When $M$ does not evenly divide these points in any given Cartesian direction, include the $M{}^\text{th}$ endpoints $\tilde{\bmx}_i\in{\bdry\tilde{\Omega}}\cap\tilde{\bbX}$ as well; see, e.g., the black dots in \Cref{fig:SurrogateStencilFunctions}.
The stencil functions $\Phi_\bdelta(\tilde{\bmx})$ are evaluated at all points $\tilde{\bmx}_{i}^\mathrm{s}\in\tilde{\bm{\Xi}}$ and these values are used as the support points of the ensuing B-spline interpolant $\tilde{\Phi}_\bdelta(\tilde{\bmx})$.

An additional benefit of this choice is seen in that $\tilde{\Phi}_\bdelta(\tilde{\bmx}_{i}^\mathrm{s}) = \Phi_\bdelta(\tilde{\bmx}_{i}^\mathrm{s})$, at each point $\tilde{\bmx}_{i}^\mathrm{s}\in \tilde{\bm{\Xi}}$.
This leads to an increased point-wise accuracy and lower potential cost, since each entry $\tilde{\sfA}_{ij} = \tilde{\Phi}_\bdelta(\tilde{\bmx}_{i}^\mathrm{s})$ in the surrogate stiffness matrix is equal to the correct entry, $\sfA_{ij} = \Phi_\bdelta(\tilde{\bmx}_{i}^\mathrm{s})$.
Here, it is appropriate to point \revised{out} that when $M=1$ every point $\tilde{\bmx}_i$ is sampled, $\tilde{\bm{\Xi}} = \tilde{\Omega}\cap\tilde{\bbX}$.
In this case, $H=h$ and there is no difference from the surrogate $\tilde{\sfA}$ and the true $\sfA$.

In order to evaluate $\Phi_\bdelta(\tilde{\bmx}_{i}^\mathrm{s})$, we identify the matrix rows which correspond to the sampling points $\tilde{\bmx}_{i}^\mathrm{s}\in\tilde{\bm{\Xi}}$.
Additionally, we include the rows which correspond to basis functions near the domain boundary.
Each of these rows \revised{needs} to be assembled using quadrature formulas; see the red and green points in \cref{fig:Sparsity}.
The number of interior rows depends on $M$, whereas the number of rows corresponding to the boundary depends on the order of the basis functions $p$.
After that, we identify all of the active elements which need to be assembled to compute the estimated rows, cf.~\cref{fig:ActiveElements}.
Note that the number of required elements for each sample point depends on the order $p$.
In order to assemble these elements, we employ the \mbox{\texttt{op\_gradu\_gradv}} function, but skip the elements which are not active.

\begin{figure}%
\centering
\includegraphics[width=0.7\textwidth]{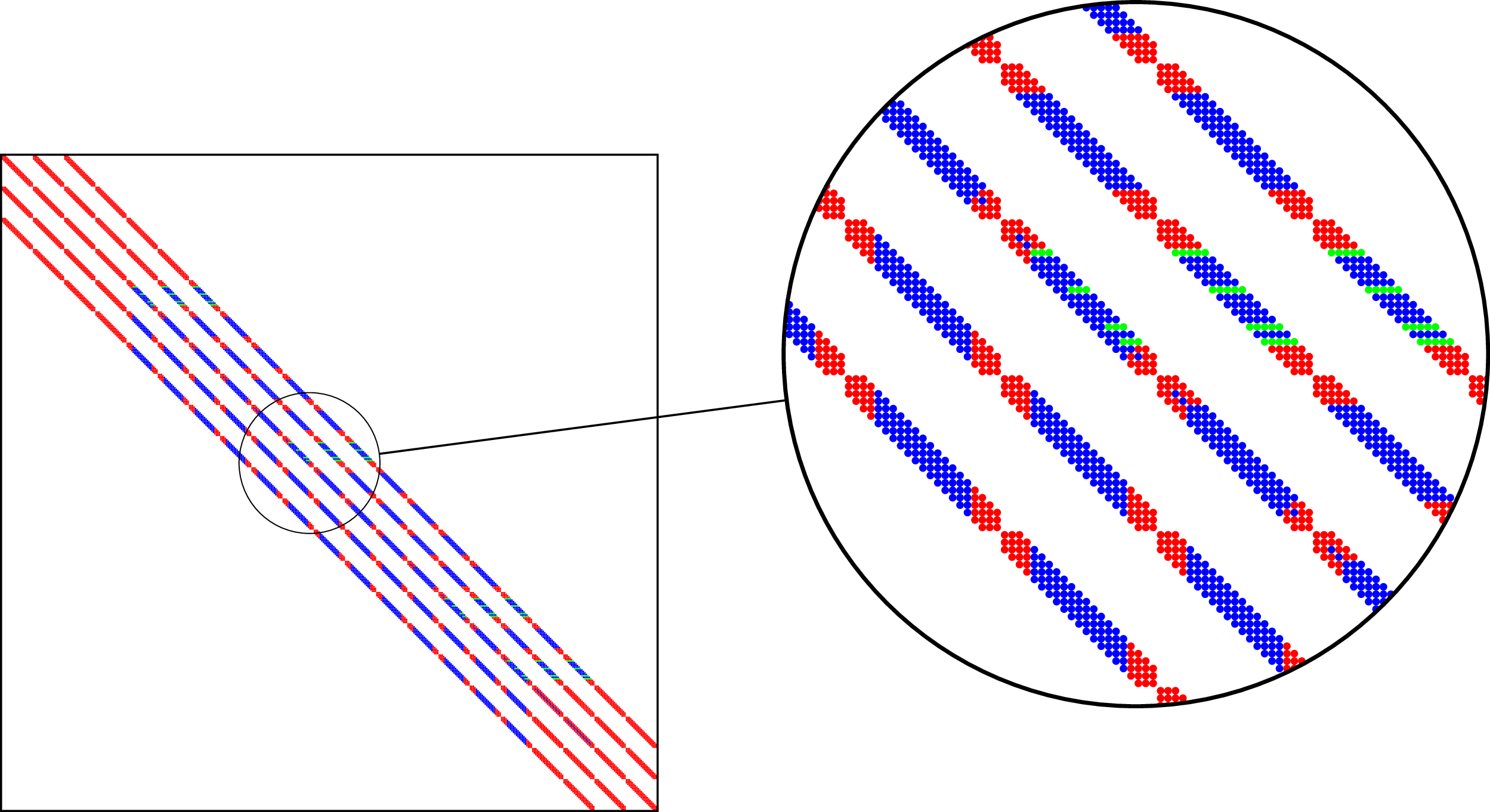}
\caption{\label{fig:Sparsity}Sparsity pattern of the surrogate stiffness matrix $\tilde{\sfA}$. The red and green points indicate the entries of the stiffness matrix which are evaluated in the standard way. The blue points indicate the entries which are obtained by evaluating the interpolated stencil functions. The red points correspond to the basis functions near the boundaries and the green entries are used as supporting points for the interpolation. \revised{Some of the diagonal entries are drawn in blue due to the modification of preserving the kernel.}}
\end{figure}

\begin{figure}
  \centering
    {\begin{subfigure}[c]{0.42\textwidth}
        \includegraphics[height=5.5cm]{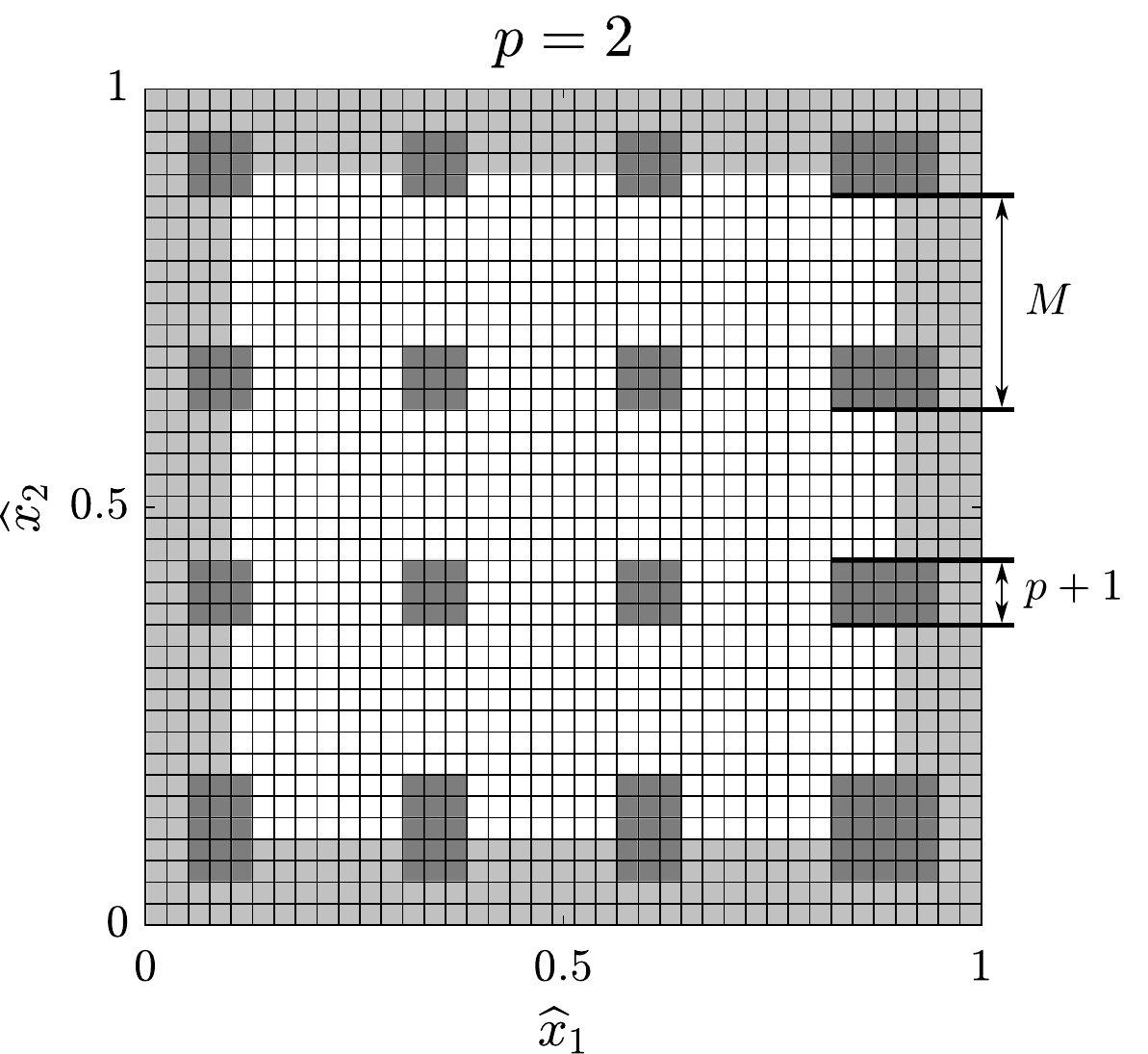}
    \end{subfigure}}
    \qquad\quad
    {\begin{subfigure}[c]{0.42\textwidth}
        \includegraphics[height=5.5cm]{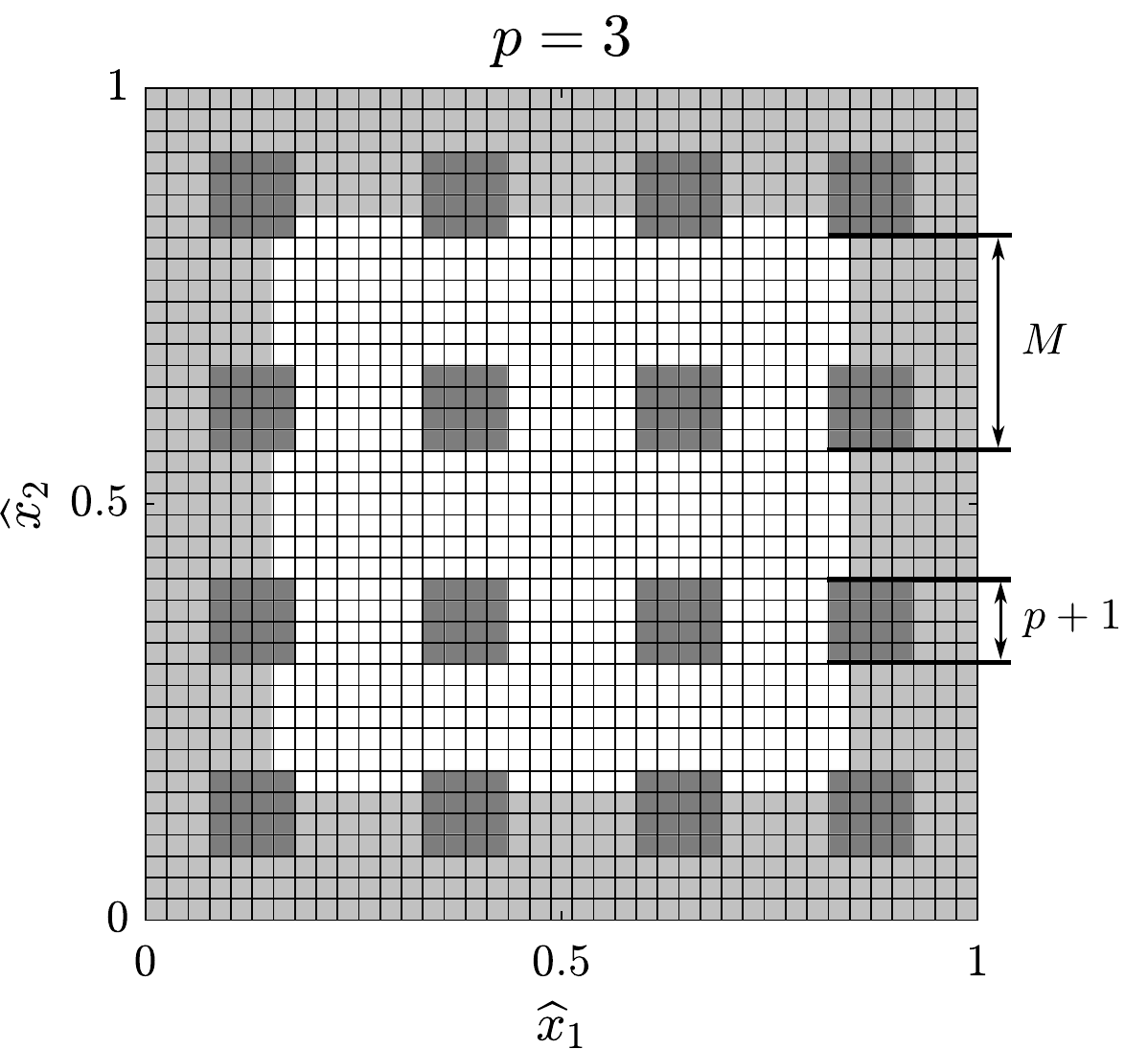}
    \end{subfigure}}
    \caption{\label{fig:ActiveElements}The active elements (shown in gray) involved in the surrogate assembly for $M=10$ with forty knots in each Cartesian direction. The light gray elements correspond to the active boundary elements and the dark gray elements correspond to the inner active elements required for the sampling of the stencil functions.}
\end{figure}

To construct the interpolated stencil functions $\tilde{\Phi}_\bdelta$, it is possible to use the builtin MATLAB functions \mbox{\texttt{interp2}} and \mbox{\texttt{interp3}}.
However, in 2D, we use the \mbox{\texttt{RectBivariateSpline}} function provided by the SciPy Python package \cite{scipy}, which supports spline interpolation up to order~$5$.
These interpolated stencil functions are then evaluated in order to retrieve the remaining values of $\tilde{\sfA}$; cf.~the blue off-diagonal entries in \cref{fig:Sparsity}.
The assembly functions for the mass matrix, the biharmonic equation, and the Stokes problem were modified in a similar way.
For symmetric matrices, only the upper-diagonal entries are interpolated and copied to the lower-diagonal entries (cf. \cref{eq:DefinitionOfSurrogateMatrixSymmetric}).
In the Poisson and biharmonic case, we additionally enforce the zero-row sum property by changing the diagonal entries for all rows which include at least one interpolated value (cf. \cref{eq:DefinitionOfSurrogateMatrixSymmetricKernel}).

In order to show that the surrogate approach may be easily applied to other IGA frameworks, we tried to keep the modifications as simple as possible.
However, in an IGA implementation tailored to the surrogate approach, even more properties may be exploited to achieve better performance.
For example, in the current implementation, the complete local stiffness matrices of the active elements are computed via quadrature, but in practice only a single row of the local matrix is required.
Exploiting this fact would save a significant amount of unnecessary computation, especially as $p$ grows, but such an implementation in GeoPDEs would also involve the modification of low-level functions.

\begin{remark}
\revised{Dirichlet boundary conditions are enforced for surrogate methods in the standard way; that is, by eliminating dofs from the original linear system.
As usual, let $\tilde{\sfA}$ be the full matrix without any consideration for boundary conditions.
Without loss of generality, we may assume that all the smallest global indices correspond to the set of interior dofs, denoted by $I$, and all of the largest correspond to the set of Dirichlet dofs, denoted by $D$.
The unconstrained linear system, $\tilde{\sfA} \tilde{\sfu}=\sff$, may then be written in block form as follows:
\begin{align}
\label{eq:BlockSystem}
\begin{bmatrix}
\tilde{\sfA}_{II} & \tilde{\sfA}_{ID}\\
\tilde{\sfA}_{DI} & \tilde{\sfA}_{DD}
\end{bmatrix} \begin{bmatrix}
\tilde{\sfu}_I\\
\tilde{\sfu}_D
\end{bmatrix} = \begin{bmatrix}
\sff_I\\
\sff_D
\end{bmatrix}
.
\end{align}
Recall that all cardinal basis functions vanish at domain boundary $\bdry\Omega$.
Therefore, following definition~\cref{eq:DefinitionOfSurrogateMatrixSymmetricKernel}, $\tilde{\sfA}_{ID} = {\sfA}_{ID}$ and $\tilde{\sfA}_{DD} = {\sfA}_{DD}$, where ${\sfA}_{ID}$ and ${\sfA}_{DD}$ are the corresponding submatrices of the standard stiffness matrix $\sfA$.
Furthermore, because the values of $\tilde{\sfu}_D$ are prescribed,~\cref{eq:BlockSystem} may be reduced to the linear system $\tilde{\sfA}_{II}\tilde{\sfu}_I = \sff_I - {\sfA}_{ID}\tilde{\sfu}_D$.
This reduced system may be solved to determine all the unprescribed solution coefficients, $\tilde{\sfu}_I$.
}
\end{remark}

\section{Poisson's equation} %
\label{sec:Poissons_equation}

In this section, we analyze a surrogate IGA discretization of Poisson's equation on the domain $\Omega = \bvarphi(\hat{\Omega})$.
Here, as well as in the forthcoming problems, we restrict our attention to Dirichlet boundary conditions.
This simplifies the analysis while also retaining all of its interesting features.
Given a function $f\in L^2(\Omega)$, the corresponding weak form is the following:
\begin{equation}
	\text{Find } u\in H^1_0(\Omega)
	\text{ satisfying }
	\quad
	a(u,v) = F(v)
	\quad
	\text{for all }
	v\in H^1_0(\Omega)
	\,,
\label{eq:PoissonWeakForm}
\end{equation}
where $a(u,v) = \int_\Omega \nabla u \cdot \nabla v \dd x$ and $F(v) = \int_\Omega f\hspace{0.25pt} v \dd x$.
At this point, it has been made well-understood that the bilinear form $a(\cdot,\cdot)$ can be rewritten on the parametric domain $\hat{\Omega}$, using the expression~\cref{eq:BilinearFormPoissonTensor}.
Recalling this detail, we continue on with the simplifying assumption $K \in \big[W^{q+1,\infty}(\hat{\Omega})\big]^{n\times n}$.

\subsection{Inconsistency} %
\label{sub:accuracy}

Recall~\cref{eq:VariationalFormulations,eq:DiscreteVariationalProblems} and take $V_h = \spann\{N_i\}$, where every $N_i = \hat{N}_i \circ \bvarphi^{-1}$.
Analysis of surrogate methods best proceeds using the surrogate bilinear form $\tilde{a}:V_h\times V_h\to \R$ inherent to the surrogate matrix $\tilde{\sfA}$.
Explicitly,
\begin{equation}
	\tilde{a}(w_h,v_h)
	=
	\sfv^\top \tilde{\sfA}\sfw
	,
\end{equation}
for all $w_h = \sum_i \sfw_i N_i, v_h = \sum_i \sfv_i N_i \in V_h$.
Here and throughout, we shall use definition~\cref{eq:DefinitionOfSurrogateMatrixSymmetricKernel} in constructing the surrogate stiffness matrix $\tilde{\sfA}$ and its associated surrogate bilinear form $\tilde{a}(\cdot,\cdot)$.

In the proceeding analysis, \Cref{thm:BilinearFormDifferencePoisson} is of fundamental importance.
Its proof is a simple consequence of \Cref{thm:RegularityOfStencilFunctions,lem:FiniteDifference}.
From now on, for any matrix $\sfN$, we use the notation $|\sfN|_{\max} = \max_{i\neq j} |\sfN_{ij}|$.

\begin{lemma}
\label{lem:FiniteDifference}
	For all $v_h,w_h \in V_h$, the following upper bound holds:
	\begin{align}
		|\a(v_h, w_h) - \tilde{a}(v_h, w_h)|
		\leq
		C_3
		h^{2-n}
		|\sfA-\tilde{\sfA}|_{\max} \|\nabla v_h\|_0 \|\nabla w_h\|_0
		\,,
	\label{eq:FiniteDifferenceBound}
	\end{align}
	where $C_3$ is a constant depending only on \changed{$\bvarphi$} and $p$.
\end{lemma}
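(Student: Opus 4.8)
The plan is to pass to the matrix level and exploit the symmetric, kernel-preserving structure built into~\cref{eq:DefinitionOfSurrogateMatrixSymmetricKernel}. Writing $v_h = \sum_i \sfv_i N_i$, $w_h = \sum_i \sfw_i N_i$ and setting $\sfE := \sfA - \tilde{\sfA}$, we have $a(v_h,w_h) - \tilde{a}(v_h,w_h) = \sfw^\top \sfE\,\sfv$. By \Cref{sub:preserving_the_kernel} and the remark following~\cref{eq:DefinitionOfSurrogateMatrixSymmetricKernel}, both $\sfA$ and $\tilde{\sfA}$ are symmetric and satisfy $\sfA\sfv_1 = \tilde{\sfA}\sfv_1 = 0$ with $\sfv_1 = (1,\dots,1)^\top$; hence $\sfE$ is symmetric with vanishing row \emph{and} column sums. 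This is precisely what is needed for the elementary identity
\[
  \sfw^\top \sfE\,\sfv \;=\; -\tfrac12 \sum_{i,j} \sfE_{ij}\,(\sfw_i - \sfw_j)(\sfv_i - \sfv_j),
\]
obtained by expanding the right-hand side and discarding the four resulting sums using the zero-sum property and symmetry. Moreover, by~\cref{eq:DefinitionOfSurrogateMatrixSymmetricKernel}, for $i\neq j$ the entry $\sfE_{ij}$ vanishes unless $\tilde{\bmx}_i,\tilde{\bmx}_j\in\tilde{\Omega}$ and $\bdelta := \tilde{\bmx}_j - \tilde{\bmx}_i \in \scD\setminus\{\bm0\}$, in which case $|\sfE_{ij}| \le |\sfA - \tilde{\sfA}|_{\max}$; the diagonal terms drop out of the identity. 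Re-indexing the surviving sum by $\bdelta$ and applying Cauchy--Schwarz (recall $|\scD| = (2p+1)^n$ is fixed), the claim reduces to the coefficient-difference estimate
\[
  \sum_{i} \big|\sfv_i - \sfv_{\bdelta(i)}\big|^2 \;\le\; C\, h^{2-n}\, \|\nabla v_h\|_0^2 \qquad\text{for every } \bdelta\in\scD,
\]
with $C = C(p,\bvarphi)$, where $\bdelta(i)$ denotes the index with $\tilde{\bmx}_{\bdelta(i)} = \tilde{\bmx}_i + \bdelta$ (and the analogous estimate for $w_h$).

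To establish this estimate I would first normalize. Since $\{\hat{N}_i\}$ is a partition of unity, replacing $v_h$ by $v_h + c$ turns the coefficients into $\sfv_i + c$ and therefore changes neither side of the displayed inequality; we may thus assume $\int_{\hat{\Omega}} \hat{v}_h\,\dd\hat{\bmx} = 0$, where $\hat{v}_h = v_h\circ\bvarphi$. Next, any $\bdelta\in\scD$ is an integer combination of the $h\bme_k$ with $\ell^1$-length at most $np$, so $\sfv_i - \sfv_{\bdelta(i)}$ telescopes along a lattice path of at most $np$ unit steps; a discrete Cauchy--Schwarz inequality together with a (routine) counting/reindexing argument then reduces matters to bounding $\sum_i |\sfv_i - \sfv_{i+\bme_k}|^2$ in each Cartesian direction $\bme_k$ (here $i+\bme_k$ is the index obtained by incrementing the $k$-th multi-index component). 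Only differences with both endpoints in $\tilde{\Omega}$ occur, so the cardinal B-spline structure is available throughout.

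For the single axis-aligned difference I would use B-spline calculus. In the pure B-spline case ($W\equiv1$), the classical differentiation formula writes $\partial_{\hat{x}_k}\hat{v}_h$ as a tensor-product B-spline expansion of degree $p-1$ in direction $\bme_k$ and degree $p$ in the remaining directions, with coefficients proportional (with an $h^{-1}$ factor) to the differences $\sfv_i - \sfv_{i-\bme_k}$; combined with the lower $L^2$-stability of tensor-product B-spline bases, $\big\|\sum_i c_i \hat{B}_i\big\|_{L^2(\hat{\Omega})}^2 \ge c\,h^n\sum_i c_i^2$ with $c = c(p,n)>0$, this yields $\sum_i |\sfv_i - \sfv_{i-\bme_k}|^2 \le C\,h^{2-n}\,\|\hat{\nabla}\hat{v}_h\|_{L^2(\hat{\Omega})}^2$. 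Finally $\hat{\nabla}\hat{v}_h = (D\bvarphi)^\top (\nabla v_h)\circ\bvarphi$, so boundedness of $D\bvarphi$ and of $\det(D\bvarphi)^{-1}$ gives $\|\hat{\nabla}\hat{v}_h\|_{L^2(\hat{\Omega})} \le C(\bvarphi)\|\nabla v_h\|_0$ and closes the B-spline case. For a genuine NURBS basis one runs the same argument on the \emph{B-spline} function $g_h := W\hat{v}_h$, whose coefficients are $\sfg_i = w_i\,\sfv_i$ with $w_i = w(\tilde{\bmx}_i)$ the polynomial weight of \Cref{sub:basis_structure_nurbs}, transfers the bound back to $\hat{v}_h$ via the quotient rule using the smoothness and strict positivity of $W\in\mcQ_p(\hat{\Omega})$ (this is where the dependence of $C_3$ on $\bvarphi$ through the weights enters), and absorbs the resulting lower-order term $\|\hat{v}_h\|_{L^2(\hat{\Omega})}$ by the Poincar\'e--Wirtinger inequality on $\hat{\Omega}$, which is licensed by the zero-mean normalization above. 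Collecting the constants gives $C_3 = \tfrac12 (2p+1)^n C(p,\bvarphi)$.

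The main obstacle is exactly this last coefficient-difference estimate: bounding differences of (NURBS) coefficients by the parametric gradient with the sharp power $h^{2-n}$. The B-spline differentiation identity together with Riesz stability disposes of the B-spline case cleanly; the real subtlety is the weight function $W$, and the device that delivers the clean $\|\nabla v_h\|_0$ bound --- rather than a weaker $\|v_h\|_1$ bound --- is the observation that coefficient differences are insensitive to additive constants, which permits the mean-zero normalization and hence the use of Poincar\'e.
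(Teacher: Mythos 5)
Your proposal is correct, and its first half coincides with the paper's own argument: both exploit the symmetry and zero row/column sums guaranteed by~\cref{eq:DefinitionOfSurrogateMatrixSymmetricKernel} to obtain the summation-by-parts identity $a(v_h,w_h)-\tilde{a}(v_h,w_h) = -\tfrac12\sum_{i,j}(\sfA_{ij}-\tilde{\sfA}_{ij})(\sfv_i-\sfv_j)(\sfw_i-\sfw_j)$, discard the diagonal, bound the off-diagonal entries by $|\sfA-\tilde{\sfA}|_{\max}$, and reduce everything to a coefficient-difference estimate. Where you genuinely diverge is in proving that estimate. The paper stays local: for each $i$ it introduces the two seminorms $|\sfv|_{\mcI(i)}$ and $|\sfv|_{N_i}=\|\nabla v_h\|_{0,\supp(N_i)}$, identifies their common kernel via the partition of unity, invokes equivalence of norms on the finite-dimensional quotient $\R^N/Q_i$ (dimension bounded by $(2p+1)^n$), rescales, and then sums with the finite-overlap property; this needs nothing beyond partition of unity and local linear independence, so it applies verbatim to any NURBS basis. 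You instead argue globally and constructively: telescoping $\bdelta$-differences to axis-aligned unit differences, then the B-spline differentiation formula plus the lower $L^2$ (Riesz) stability of the tensor-product basis to get $\sum_i|\sfv_i-\sfv_{i+\bme_k}|^2\lesssim h^{2-n}\|\hat\nabla\hat v_h\|_{L^2}^2$, and for NURBS passing to $W\hat v_h$ with coefficients $w_i\sfv_i$, undoing the weights, and absorbing the lower-order $\|\hat v_h\|_{L^2}$ term by Poincar\'e--Wirtinger after the mean-zero normalization licensed by the shift-invariance of coefficient differences. This buys explicit, trackable constants and makes the mechanism behind the $h^{2-n}$ power transparent, at the price of being slightly less general as written: your NURBS step leans on the polynomial weight representation $w_i=w(\tilde{\bmx}_i)$ of \Cref{sub:basis_structure_nurbs} (or at least on uniform positivity and $|w_i-w_j|\lesssim h$ for neighboring indices), an assumption the paper's proof of this lemma does not need, although it is in force anyway in the setting where the lemma is ultimately applied alongside \Cref{thm:RegularityOfStencilFunctions}.
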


\begin{proof}
\revised{
	In this proof, we will use the symbols ``$\lesssim$'' and ``$\eqsim$'' to denote upper bounds and equivalence, respectively, up to constants depending at most on $p$ and {$\bvarphi$}.
	For each $i=1,\ldots,N$, let $\mcI(i)$ be the set of indices $j$ such that $\supp({N}_i)\cap\supp({N}_j)\neq \emptyset$ and notice that $\mcI(i) \leq |\scD| = (2p+1)^n$.
	We begin with the observation that $\sum_{i} \big(\sfA_{ij} - \tilde{\sfA}_{ij}\big) = \sum_{j} \big(\sfA_{ij} - \tilde{\sfA}_{ij}\big) = 0$.
	With these two identities in hand, we find that
	\begin{equation}
	\label{eq:DifferenceIdentity}
		\begin{aligned}
			\a(v_h, w_h) - \tilde{a}(v_h, w_h)
			&=
			-\frac{1}{2}\,
			{\sum_{i,j}}
			\big(\sfA_{ij} - \tilde{\sfA}_{ij}\big) (\sfv_i - \sfv_j)\sspace ({\sfw_i} - \sfw_j)
			\\
			&\leq
			|\sfA-\tilde{\sfA}|_{\max}
			\sum_i \sum_{j\in \mcI(i)} |\sfv_i - \sfv_j|\sspace |\sfw_i - \sfw_j|
			\\
			&\leq
			|\sfA-\tilde{\sfA}|_{\max}
			\sum_i \Bigg(
			\sum_{j\in \mcI(i)}
			|\sfv_i - \sfv_j|^2
			\Bigg)^{\onehalf}
			\Bigg(
			\sum_{j\in \mcI(i)} 
			|\sfw_i - \sfw_j|^2
			\Bigg)^{\onehalf}
			.
		\end{aligned}
	\end{equation}

	For the time being, fix the index $i=1,\ldots,N$.
	For each coefficient vector $\sfv\in\R^N$, define $|\sfv|_{\mcI(i)} = \big(\sum_{j\in \mcI(i)}|\sfv_i - \sfv_j|^2\big)^{1/2}$.
	One may easily check that $|\,\cdot\,|_{\mcI(i)}$ is a seminorm on $\R^N$.
	In order to identify its kernel, simply observe that $|\sfv|_{\mcI(i)} = 0$ iff $\sfv_j = \sfv_i$ for each coefficient $j\in \mcI(i)$.
	Since $i\in\mcI(i)$, an equivalent way of stating this condition is that there exists some constant $C\in\R$ such that $\sfv_j = C$ for each $j\in\mcI(i)$.
	Now, recall that ${v}_h({\bmx}) = \sum_i \sfv_i {N}_i({\bmx})$ for all ${\bmx} \in {\Omega}$ and consider the following alternative seminorm:
	\begin{equation}
		|\sfv|_{{N}_i}
		=
		\|\nabla {v}_h\|_{0,\supp({N}_i)}
		\,.
	\end{equation}
	Notice that $|\sfv|_{{N}_i} = 0$ iff ${v}_h({\bmx})$ is equal to a constant on $\supp({N}_i)$, say $C\in\R$.
	From the partition of unity property inherent to all NURBS bases, it must hold that $C = {v}_h(\bmx) = \sum_{j\in \mcI(i)} C{N}_j(\bmx)$ for all $\bmx\in\supp({N}_i)$.
	In other words, since $\{{N}_i|_{\supp(N_i)}\}_{j\in\mcI(i)}$ is a linearly independent set, $|\sfv|_{{N}_i} = 0$ iff $\sfv_j = C$ for each $j\in\mcI(i)$.

	In the previous paragraph, we showed that the kernels of $|\sfv|_{{N}_i}$ and $|\sfv|_{\mcI(i)}$ are identical; namely, $|\sfv|_{{N}_i} = 0$ iff $|\sfv|_{\mcI(i)} = 0$ iff $\sfv \in Q_i$, where
	\begin{equation}
		Q_i = \{\sfv\in\R^N \colon \sfv_j = \sfv_i \text{ for each } j\in\mcI(i)\}
		\,.
	\end{equation}
	Clearly, $|\,\cdot\,|_{{N}_i}$ and $|\,\cdot\,|_{\mcI(i)}$ induce norms on the quotient space $\R^N / Q_i$.
	The next important observation is that $|Q_i| = N + 1 - |\mcI(i)|$, which may be witnessed by inspection.
	Because $\mathrm{dim}(\R^N / Q_i) = |\mcI(i)| - 1 \leq (2p+1)^n -1 $ is finite and depends only on $p$, it follows from the well-known equivalence of norms on finite dimensional vector spaces (e.g., the vector space $\R^N / Q_i$) that the seminorms $|\,\cdot\,|_{{N}_i}$ and $|\,\cdot\,|_{\mcI(i)}$ are equivalent.
	Of course, the corresponding equivalence constants will depend on $h$, $\bvarphi$, and $p$.
	Nevertheless, a standard scaling argument is all that is required to see that $|\sfv|_{\mcI(i)} \eqsim h^{2-n} |\sfv|_{{N}_i}$.
	Therefore, employing~\cref{eq:DifferenceIdentity}, we may simply write
	\begin{equation}
		\begin{aligned}
			|\a(v_h, w_h) - \tilde{a}(v_h, w_h)|
			&\lesssim
			h^{2-n} |\sfA-\tilde{\sfA}|_{\max}
			\sum_i
			\big\|\nabla v_h\big\|_{0,\supp({N}_i)} \big\|\nabla w_h\big\|_{0,\supp({N}_i)}
			\,.
		\end{aligned}
	\end{equation}
	The proof is completed by applying the discrete Cauchy--Schwarz inequality to the right-hand side of the inequality above and employing the fact that $\big(\sum_i \|f\|_{0,\supp({N}_i)}^2\big)^{\onehalf} \eqsim \|f\|_0$, for all $f\in L^2(\Omega)$.
}
\end{proof}

\begin{theorem}
\label{thm:BilinearFormDifferencePoisson}
	Let $C_4 = C_2\cdot C_3$.
	For all $v_h,w_h \in V_h$, the following upper bound holds:
	\begin{align}
		|\a(v_h, w_h) - \tilde{a}(v_h, w_h)|
		\leq
		C_4
		H^{q+1}
		\|\nabla v_h\|_0 \|\nabla w_h\|_0
		\,.
	\end{align}
\end{theorem}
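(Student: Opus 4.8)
The plan is to obtain the estimate as an immediate composition of \Cref{lem:FiniteDifference} and \Cref{thm:RegularityOfStencilFunctions}. First I would apply \Cref{lem:FiniteDifference} to reduce the bilinear-form discrepancy to an entrywise matrix discrepancy: for all $v_h, w_h \in V_h$,
\[
  |\a(v_h, w_h) - \tilde{a}(v_h, w_h)|
  \leq
  C_3\, h^{2-n}\, |\sfA - \tilde{\sfA}|_{\max}\, \|\nabla v_h\|_0\, \|\nabla w_h\|_0 .
\]
It then remains only to show that $|\sfA - \tilde{\sfA}|_{\max} \leq C_2\, h^{n-2} H^{q+1}$; substituting this bound into the inequality above cancels the two powers of $h$ exactly and produces the asserted estimate with $C_4 = C_2 \cdot C_3$.

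Second, I would bound $|\sfA - \tilde{\sfA}|_{\max} = \max_{i\neq j}|\sfA_{ij} - \tilde{\sfA}_{ij}|$ by inspecting the cases in the definition~\cref{eq:DefinitionOfSurrogateMatrixSymmetricKernel}, which is the one in force throughout this section. If $\tilde{\bmx}_i, \tilde{\bmx}_j \in \tilde{\Omega}$ with $i < j$, then $\sfA_{ij} = \Phi_\bdelta(\tilde{\bmx}_i)$ and $\tilde{\sfA}_{ij} = \tilde{\Phi}_\bdelta(\tilde{\bmx}_i)$ with $\bdelta = \tilde{\bmx}_j - \tilde{\bmx}_i \in \scD$, so $|\sfA_{ij} - \tilde{\sfA}_{ij}| \leq \|\Phi_\bdelta - \tilde{\Phi}_\bdelta\|_{L^\infty(\tilde{\Omega})}$; here one uses $\tilde{\Omega} = \bigcap_{\bdelta\in\scD}\tilde{\Omega}_\bdelta \subseteq \tilde{\Omega}_\bdelta$, so that $\Phi_\bdelta$ is genuinely defined at $\tilde{\bmx}_i$. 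For $i > j$ the symmetry of $\a(\cdot,\cdot)$ together with the copy rule in~\cref{eq:DefinitionOfSurrogateMatrixSymmetricKernel} gives $|\sfA_{ij} - \tilde{\sfA}_{ij}| = |\sfA_{ji} - \tilde{\sfA}_{ji}|$, which is covered by the previous case. In every remaining off-diagonal case $\sfA_{ij} = \tilde{\sfA}_{ij}$, so the difference vanishes, and the diagonal modification is immaterial since $|\,\cdot\,|_{\max}$ ranges only over $i\neq j$. Finally, applying \Cref{thm:RegularityOfStencilFunctions} to each $\bdelta\in\scD$ yields $\|\Phi_\bdelta - \tilde{\Phi}_\bdelta\|_{L^\infty(\tilde{\Omega})} \leq C_2\, h^{n-2} H^{q+1}$, whence $|\sfA - \tilde{\sfA}|_{\max} \leq C_2\, h^{n-2} H^{q+1}$.

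There is no real obstacle in this argument; it is essentially bookkeeping. The only points requiring a little care are (i) keeping track of which entries of $\tilde{\sfA}$ actually arise from an interpolated stencil function, so that the per-entry estimate of \Cref{thm:RegularityOfStencilFunctions} may legitimately be invoked, and (ii) matching the factor $h^{2-n}$ coming from \Cref{lem:FiniteDifference} against the factor $h^{n-2}$ coming from \Cref{thm:RegularityOfStencilFunctions}, so that their product is $1$ and the final bound is genuinely $h$-independent. Combining the two displays above then completes the proof.
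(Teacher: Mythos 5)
Your proposal is correct and follows essentially the same route as the paper's proof: reduce via \Cref{lem:FiniteDifference} to the entrywise bound on $|\sfA-\tilde{\sfA}|_{\max}$, then use \Cref{thm:RegularityOfStencilFunctions} together with definition~\cref{eq:DefinitionOfSurrogateMatrixSymmetricKernel} to bound the maximal (off-diagonal) entry by $C_2 h^{n-2}H^{q+1}$, so the $h$-factors cancel and $C_4 = C_2\cdot C_3$. Your explicit case analysis of~\cref{eq:DefinitionOfSurrogateMatrixSymmetricKernel} only spells out what the paper leaves implicit.
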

\begin{proof}
  Obviously, we are done if $a(\cdot,\cdot) = \tilde{a}(\cdot,\cdot)$.
  Therefore, assume $a(\cdot,\cdot) \neq \tilde{a}(\cdot,\cdot)$ and let $i<j$ be the indices of the maximal value $|\sfA_{ij}-\tilde{\sfA}_{ij}| = \big|\sfA - \tilde{\sfA}\big|_{\max} > 0$.
  Since $\tilde{\sfA}$ is defined by~\cref{eq:DefinitionOfSurrogateMatrixSymmetricKernel}, \Cref{thm:RegularityOfStencilFunctions} leads us to the inequality
  \begin{equation}
    \big|\sfA - \tilde{\sfA}\big|_{\max}
    =
    |\Phi_\bdelta(\tilde{\bmx}_i) - \tilde{\Phi}_\bdelta(\tilde{\bmx}_i)|
    \leq
    C_2h^{n-2} H^{q+1}
    \,.
  \label{eq:InftyNormStep1}
  \end{equation}
  The proof is completed using \Cref{lem:FiniteDifference}.
\end{proof}

\begin{remark}
\label{rem:StructureScalingLoss}
	In \Cref{lem:FiniteDifference}, it is important that $\tilde{\sfA}$ be defined using~\cref{eq:DefinitionOfSurrogateMatrixSymmetricKernel}.
	Indeed, in~\cref{eq:DifferenceIdentity}, it is the symmetry and the zero row sum property preserved in this definition which allows $|\a(v_h, w_h) - \tilde{a}(v_h, w_h)|$ to be bounded by products of differences in the coefficients $\sfv_i$ and $\sfw_j$.
	If we had used definition~\cref{eq:DefinitionOfSurrogateMatrix} or \cref{eq:DefinitionOfSurrogateMatrixSymmetric}, one would have to directly work with the upper bound
	\begin{equation}
		|\a(v_h, w_h) - \tilde{a}(v_h, w_h)|
		\leq
		\|\sfA - \tilde{\sfA}\|_{\max}
		\sum_{i}
		\sum_{j\in\mcI(i)}
		|\sfv_i|\sspace|\sfw_j|
		\,.
	\end{equation}
	This, in turn, can only be finessed to arrive at an inequality like
	\begin{equation}
		|\a(v_h, w_h) - \tilde{a}(v_h, w_h)|
		\leq
		C_3^\prime
		h^{-n}
		\|\sfA - \tilde{\sfA}\|_{\max}
		\|v_h\|_0 \|w_h\|_0
		\,,
	\label{eq:FiniteDifferenceBoundAlternative}
	\end{equation}
	for some other constant $C_3^\prime$, depending only on \changed{$\bvarphi$} and $p$.
	Notice the loss of an $h^2$ scaling factor when comparing~\cref{eq:FiniteDifferenceBound,eq:FiniteDifferenceBoundAlternative}.
	\changed{
	This difference could greatly affect solution accuracy.
	}
\end{remark}

\subsection{\textit{A priori} error estimation} %
\label{sub:poisson_s_equation}

Define $V_{h,0} = V_h \cap H^1_0(\Omega)$.
The following lemma is identical in spirit to \cite[Theorem~7.1]{drzisga2018surrogate}.
By the Lax--Milgram theorem, this lemma allows us to conclude that there exists a unique surrogate solution corresponding to~\cref{eq:PoissonWeakForm}, namely $\tilde{u}_h\in V_{h,0}$, for every sufficiently small $H>0$.
\begin{lemma}
\label{cor:Coercivity}
	Let $\alpha = (1+C_P)^{-1}$, where $C_P$ is the Poincar\'e constant for the domain $\Omega$.
	If $H^{q+1}<{\alpha}\cdot C_4^{-1}$, then the surrogate bilinear form $\tilde{a}:V_h\times V_h \to \R$ is coercive on $V_{h,0}$.
	Letting $\tilde{\alpha}>0$ be the associated coercivity constant, it also holds that $\tilde{\alpha}\to\alpha$, as $H\to 0$.
\end{lemma}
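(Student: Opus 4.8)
The plan is to run the standard perturbation-of-coercivity argument, using the coercivity of the exact bilinear form $a$ together with the estimate of \Cref{thm:BilinearFormDifferencePoisson}. First I would record the coercivity of $a$ on $H^1_0(\Omega)$: by the Poincar\'e inequality, $\|v\|_0^2 \leq C_P \|\nabla v\|_0^2$ for every $v\in H^1_0(\Omega)$, so that $\|v\|_1^2 \leq (1+C_P)\|\nabla v\|_0^2$ and hence $a(v,v) = \|\nabla v\|_0^2 \geq \alpha \|v\|_1^2$ with $\alpha = (1+C_P)^{-1}$. In particular this holds on the subspace $V_{h,0} \subset H^1_0(\Omega)$.

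Next I would apply \Cref{thm:BilinearFormDifferencePoisson} with $w_h = v_h$; its hypotheses are in force because, as stated at the start of this section, the surrogate matrix $\tilde{\sfA}$ here is built from~\cref{eq:DefinitionOfSurrogateMatrixSymmetricKernel}. This yields, for all $v_h\in V_{h,0}$,
\[
  |a(v_h,v_h) - \tilde a(v_h,v_h)| \leq C_4 H^{q+1}\|\nabla v_h\|_0^2 \leq C_4 H^{q+1}\|v_h\|_1^2 .
\]
Combining this with the coercivity of $a$,
\[
  \tilde a(v_h,v_h) \geq a(v_h,v_h) - |a(v_h,v_h) - \tilde a(v_h,v_h)| \geq \big(\alpha - C_4 H^{q+1}\big)\|v_h\|_1^2 .
\]
Under the stated smallness condition $H^{q+1} < \alpha\, C_4^{-1}$, the constant $\tilde\alpha := \alpha - C_4 H^{q+1}$ is strictly positive, so $\tilde a$ is coercive on $V_{h,0}$ with coercivity constant $\tilde\alpha$; and since $C_4$ is independent of $H$, it is immediate that $\tilde\alpha \to \alpha$ as $H\to 0$.

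There is no genuine obstacle here: every ingredient — the Poincar\'e inequality, the definition of $\alpha$, and the bound of \Cref{thm:BilinearFormDifferencePoisson} — is already available, and the argument is a one-line triangle-inequality perturbation. The only point requiring minor care is the bookkeeping of norms, since \Cref{thm:BilinearFormDifferencePoisson} is phrased in the seminorm $\|\nabla\cdot\|_0$ while coercivity is naturally stated in the full norm $\|\cdot\|_1$: one bounds the perturbation term using $\|\nabla v_h\|_0 \leq \|v_h\|_1$ and passes from $\|\nabla v_h\|_0^2$ to $\alpha\|v_h\|_1^2$ in the main term via Poincar\'e. Together with the boundedness of $\tilde a$ on $V_h\times V_h$ (immediate from the continuity of $a$ and \Cref{thm:BilinearFormDifferencePoisson}), the Lax--Milgram theorem then delivers the unique surrogate solution $\tilde u_h\in V_{h,0}$ for all sufficiently small $H$.
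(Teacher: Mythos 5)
Your proof is correct and follows essentially the same route as the paper: coercivity of $a$ with constant $\alpha=(1+C_P)^{-1}$ via Poincar\'e, perturbation by the consistency bound $C_4H^{q+1}\|\nabla v_h\|_0^2\leq C_4H^{q+1}\|v_h\|_1^2$ from \Cref{thm:BilinearFormDifferencePoisson}, and the conclusion $\tilde\alpha\geq\alpha-C_4H^{q+1}\to\alpha$. The only cosmetic difference is that the paper normalizes to the unit sphere of $H^1_0(\Omega)$ instead of carrying the factor $\|v_h\|_1^2$ explicitly.
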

\begin{proof}
	Let $S = \{v\in H^1_0(\Omega) \,:\, \|v\|_1 = 1\}$ be the surface of the unit ball in $H^1_0(\Omega)$.
	Notice that $\alpha \leq a(v_h,v_h) \leq \tilde{a}(v_h,v_h) + |a(v_h,v_h) - \tilde{a}(v_h,v_h)|$ for all $v_h \in V_h\cap S$ and, therefore,
	\begin{equation}
		\alpha
		-
		|a(v_h,v_h) - \tilde{a}(v_h,v_h)|
		\leq
		\tilde{a}(v_h,v_h)
		\quad
		\text{for all } v_h \in V_h\cap S
		\,.
	\label{eq:Coercivity1}
	\end{equation}
	Invoking \cref{lem:FiniteDifference}, the second term on the left may be bounded from above as follows:
	\begin{equation}
		|a(v_h,v_h) - \tilde{a}(v_h,v_h)|
		\leq
		C_4 H^{q+1}
		\,.
	\label{eq:Coercivity2}
	\end{equation}
	Clearly, if $C_4 H^{q+1}< {\alpha}$, then $0 < \alpha - |a(v_h,v_h) - \tilde{a}(v_h,v_h)| \leq \tilde{a}(v_h,v_h)$, as necessary.
\end{proof}

\begin{theorem}
\label{thm:APrioriBounds}
	Let $\theta >1$.
	If $u\in H^{p+1}(\Omega)$, then there exists a constant $C_5$, depending only on $p$ and \changed{$\bvarphi$}, such that
	\begin{subequations}
	\begin{equation}
		\|u - \approxsol{u}\|_{1}
		\leq
		C_5h^{p}\|u\|_{p+1} + \theta\cdot\alpha^{-1}C_4H^{q+1} \|\nabla u\|_0
		\,,
	\label{APrioriBoundH1}
	\end{equation}
	for every sufficiently small $H>0$.
	Additionally, if $\Omega\subset \R^n$ is convex, then there exists a constant $C_6$, depending only on $p$ and \changed{$\bvarphi$}, such that
	\begin{equation}
		\|u - \approxsol{u}\|_0
		\leq
		C_6h^{p+1}\|u\|_{p+1} + \theta\cdot C_P C_4H^{q+1} \|\nabla u\|_0
		\,,
	\label{eq:APrioriBoundL2}
	\end{equation}
	\end{subequations}
	for every sufficiently small $H>0$.
\end{theorem}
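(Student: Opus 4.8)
The plan is to decompose the surrogate error through the \emph{true} Galerkin solution $u_h\in V_{h,0}=V_h\cap H^1_0(\Omega)$ of \cref{eq:PoissonWeakForm}: I would write $u-\tilde u_h=(u-u_h)+(u_h-\tilde u_h)$, bound the first summand by C\'ea's lemma together with the standard approximation properties of NURBS spaces, and bound the second (a pure surrogate perturbation) using \Cref{cor:Coercivity} and \Cref{thm:BilinearFormDifferencePoisson}; the $L^2$ bound then follows by an Aubin--Nitsche duality argument that uses the convexity of $\Omega$. Throughout, \Cref{cor:Coercivity} guarantees that, for $H$ small enough, $\tilde u_h$ exists and $\tilde a$ is coercive on $V_{h,0}$ with a constant $\tilde\alpha\to\alpha$ as $H\to0$; I also use the continuity of $a$ with constant $1$ and the energy-stability estimate $\|\nabla u_h\|_0\le\|\nabla u\|_0$.

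For \cref{APrioriBoundH1}: since $u\in H^{p+1}(\Omega)$, C\'ea's lemma and the quasi-interpolation estimate for NURBS give $\|u-u_h\|_1\le C_5 h^{p}\|u\|_{p+1}$ with $C_5$ depending only on $p$ and $\bvarphi$. For the surrogate part, put $e_h=u_h-\tilde u_h\in V_{h,0}$ and use $\tilde a(\tilde u_h,e_h)=F(e_h)=a(u_h,e_h)$ and then \Cref{thm:BilinearFormDifferencePoisson}:
\[
\tilde\alpha\|e_h\|_1^2\le\tilde a(e_h,e_h)=(\tilde a-a)(u_h,e_h)\le C_4 H^{q+1}\|\nabla u_h\|_0\|\nabla e_h\|_0\le C_4 H^{q+1}\|\nabla u\|_0\|e_h\|_1,
\]
so $\|e_h\|_1\le\tilde\alpha^{-1}C_4 H^{q+1}\|\nabla u\|_0$. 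Since $\tilde\alpha^{-1}\to\alpha^{-1}$ and $\theta>1$, one has $\tilde\alpha^{-1}\le\theta\alpha^{-1}$ for all $H$ small enough, and a triangle inequality yields \cref{APrioriBoundH1}.

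For \cref{eq:APrioriBoundL2}: put $e=u-\tilde u_h$ and let $z\in H^1_0(\Omega)$ solve $a(w,z)=(e,w)_{L^2(\Omega)}$ for all $w\in H^1_0(\Omega)$; convexity of $\Omega$ gives $z\in H^2(\Omega)$ with $\|z\|_2\le C\|e\|_0$, and testing with $z$ gives $\|\nabla z\|_0\le C_P\|e\|_0$. Pick $z_h\in V_{h,0}$ with $\|z-z_h\|_1\le C h\|z\|_2$. Because $a(u,z_h)=F(z_h)=\tilde a(\tilde u_h,z_h)$, we have $a(e,z_h)=(\tilde a-a)(\tilde u_h,z_h)$, hence
\[
\|e\|_0^2=a(e,z)=a(e,z-z_h)+(\tilde a-a)(\tilde u_h,z_h)\le\|\nabla e\|_0\|\nabla(z-z_h)\|_0+C_4 H^{q+1}\|\nabla\tilde u_h\|_0\|\nabla z_h\|_0.
\]
Inserting $\|\nabla(z-z_h)\|_0\le C h\|z\|_2\le C h\|e\|_0$, $\|\nabla z_h\|_0\le(C_P+Ch)\|e\|_0$, the bound $\|\nabla\tilde u_h\|_0\le\|\nabla u_h\|_0+\|e_h\|_1\le(1+\tilde\alpha^{-1}C_4 H^{q+1})\|\nabla u\|_0$ from the previous step, and \cref{APrioriBoundH1} for $\|\nabla e\|_0\le\|e\|_1$, then dividing by $\|e\|_0$, the terms proportional to $h\cdot h^{p}\|u\|_{p+1}$ combine into $C_6 h^{p+1}\|u\|_{p+1}$ (with $C_6$ depending only on $p$ and $\bvarphi$), while the surrogate contribution equals $C_P C_4 H^{q+1}\|\nabla u\|_0$ times a factor tending to $1$ as $H\to0$; choosing $H$ small enough that this factor is below $\theta$ gives \cref{eq:APrioriBoundL2}.

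The argument is essentially a standard variational-crime estimate, so the content is concentrated in two places. First, it is crucial to route the decomposition through the true Galerkin solution $u_h$: this is what isolates the $O(h^{p})$ discretization error from the surrogate perturbation, and — thanks to $\|\nabla u_h\|_0\le\|\nabla u\|_0$ — makes the surrogate term scale with $\|\nabla u\|_0$ rather than $\|u\|_{p+1}$ and carry no stray power of $h$, so the optimal rates survive. Second, this clean $H^{q+1}$ scaling (as opposed to $h^{-r}H^{q+1}$ for some $r>0$) ultimately rests on \Cref{thm:BilinearFormDifferencePoisson}, i.e.\ on \Cref{lem:FiniteDifference}, which only holds because $\tilde{\sfA}$ is built from the symmetric, zero-row-sum definition~\cref{eq:DefinitionOfSurrogateMatrixSymmetricKernel} (cf.\ \Cref{rem:StructureScalingLoss}): there the $h^{2-n}|\sfA-\tilde{\sfA}|_{\max}$ factor exactly cancels the $h^{n-2}H^{q+1}$ estimate of \Cref{thm:RegularityOfStencilFunctions}. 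The remaining work — checking that $\tilde\alpha^{-1}$, the elliptic-regularity constant, and the prefactor $1+\tilde\alpha^{-1}C_4 H^{q+1}$ all approach their ideal values as $H\to0$, so that the slack $\theta>1$ absorbs the discrepancy — is routine bookkeeping, and is exactly why the estimates are asserted only for sufficiently small $H$.
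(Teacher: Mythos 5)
Your proof is correct. The $H^1$ part is essentially identical to the paper's: the same split through the true Galerkin solution $u_h$, the same identity $\tilde{a}(e_h,e_h)=\tilde{a}(e_h,u_h)-a(e_h,u_h)$ with $e_h=u_h-\tilde{u}_h$, the same use of \Cref{cor:Coercivity}, \Cref{thm:BilinearFormDifferencePoisson}, and $\|\nabla u_h\|_0\leq\|\nabla u\|_0$, with $\theta$ absorbing $\tilde{\alpha}^{-1}\to\alpha^{-1}$. The $L^2$ part takes a genuinely different, though equally standard, route. The paper again splits through $u_h$: it quotes the known convexity-based estimate $\|u-u_h\|_0\leq C_6h^{p+1}\|u\|_{p+1}$ and then handles $\|u_h-\tilde{u}_h\|_0$ by a purely \emph{discrete} dual problem (find $w_h\in V_{h,0}$ with $a(w_h,v_h)=(u_h-\tilde{u}_h,v_h)_\Omega$, so that $\|u_h-\tilde{u}_h\|_0^2=\tilde{a}(w_h,\tilde{u}_h)-a(w_h,\tilde{u}_h)$), which needs no elliptic regularity beyond what is already hidden in the cited estimate and yields exactly the $C_PC_4H^{q+1}$ prefactor up to $\|\nabla\tilde{u}_h\|_0\to\|\nabla u_h\|_0$. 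You instead run a full Aubin--Nitsche argument on the total error $u-\tilde{u}_h$, using $H^2$ regularity of the continuous dual solution $z$ and an approximant $z_h$; this recovers the $h^{p+1}$ term without separately invoking the standard $L^2$ estimate, but it produces extra cross terms of size $O(hH^{q+1})$ and $O(h)\cdot C_4H^{q+1}$ that you must fold into the factor absorbed by $\theta$. That absorption is legitimate in this setting because the sampling points lie on the knot lattice, so $H\geq h$ and ``$H$ sufficiently small'' forces $h$ small as well; it is worth making that dependence explicit, since otherwise your bracket $(C_P+Ch)+C\theta\alpha^{-1}h$ would not tend to $C_P$ for fixed $h$. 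With that caveat stated, both arguments deliver the theorem with the same constants.
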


\begin{proof}
	We first prove~\cref{APrioriBoundH1}.
	Let $u_h\in V_{h,0}$ be the solution of the standard IGA discretization~\cref{eq:DiscreteVF} associated to~\cref{eq:PoissonWeakForm}.
	Clearly, $\|u - \tilde{u}_h\|_1 \leq \|u - u_h\|_1 + \|u_h - \tilde{u}_h\|_1$.
	Moreover, by \cite[Theorem~6.1]{da2014mathematical}, $\|u - u_h\|_1 \leq C_5 h^{p}\|u\|_{p+1}$.
	Recalling \cref{cor:Coercivity}, we find that
	\begin{equation}
		\tilde{\alpha}
		\|u_h - \tilde{u}_h\|_1^2
		\leq
		\tilde{a}(u_h - \tilde{u}_h,u_h - \tilde{u}_h)
		=
		\tilde{a}(u_h - \tilde{u}_h,u_h)
		-
		a(u_h - \tilde{u}_h,u_h)
		\,.
	\end{equation}
	After invoking~\cref{thm:BilinearFormDifferencePoisson}, it now readily follows that $\|u_h - \tilde{u}_h\|_1 \leq \tilde{\alpha}^{-1}C_4 H^{q+1} \|\nabla u\|_0$.
	Since $\tilde{\alpha}\to\alpha$, in the limit $H\to 0$, it also holds that $\|u_h - \tilde{u}_h\|_1 \leq \theta\cdot\alpha^{-1}C_4 H^{q+1} \|\nabla u\|_0$, for all sufficiently small $H>0$.
	
	Our proof of~\cref{eq:APrioriBoundL2}, also involves the triangle inequality: $\|u-\tilde{u}_h\|_0 \leq \|u-u_h\|_0 + \|u_h-\tilde{u}_h\|_0$.
	If $\Omega$ is convex, then $\|u-u_h\|_0 \leq C_6 h^{p+1}\|u\|_{p+1}$.
	Next, find $w_h\in V_{h,0}$ satisfying $a(w_h,v_h) = (u_h-\tilde{u}_h,v_h)_\Omega$, for all $v_h\in V_{h,0}$.
	It holds that $\|\nabla w_h\|_0 \leq C_P \|u_h-\tilde{u}_h\|_0$.
	Now, observe that
		$\|u_h-\tilde{u}_h\|_0^2
		=
		a(w_h,u_h-\tilde{u}_h)
		=
		\tilde{a}(w_h,\tilde{u}_h) - a(w_h,\tilde{u}_h)
		$.
	Finally, invoke~\cref{thm:BilinearFormDifferencePoisson} to arrive at
	\begin{equation}
		\|u_h-\tilde{u}_h\|_0^2
		\leq
		C_4
		H^{q+1} \|\nabla \tilde{u}_h\|_{0}\|\nabla w_h\|_0
		\leq
		C_P C_4
		H^{q+1} \|\nabla \tilde{u}_h\|_{0} \|u_h-\tilde{u}_h\|_{0}
		\,,
	\end{equation}
	which works to deliver the stated estimate, since $\|\nabla \tilde{u}_h\|_{0}\to\|\nabla u_h\|_0 \leq \|\nabla u\|_0$, as $H\to 0$.
\end{proof}

\subsection{Numerical experiments} %
\label{sub:poisson_numerical_experiments}

\begin{figure}
  \centering
    \begin{subfigure}[c]{0.35\textwidth}
        \includegraphics[trim=1.5cm 0cm 1.5cm 0cm,clip=true,height=4.5cm]{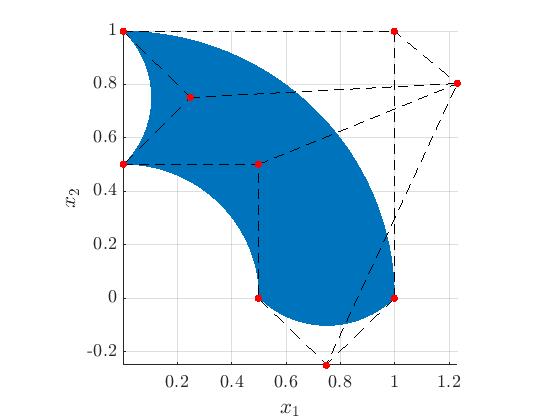}
        \caption{
        \label{fig:PoissonGeometries2D}
        2D domain
        }
    \end{subfigure}
    \qquad
    \begin{subfigure}[c]{0.35\textwidth}
        \includegraphics[trim=1cm 0cm 1.5cm 0cm,clip=true,height=4.5cm]{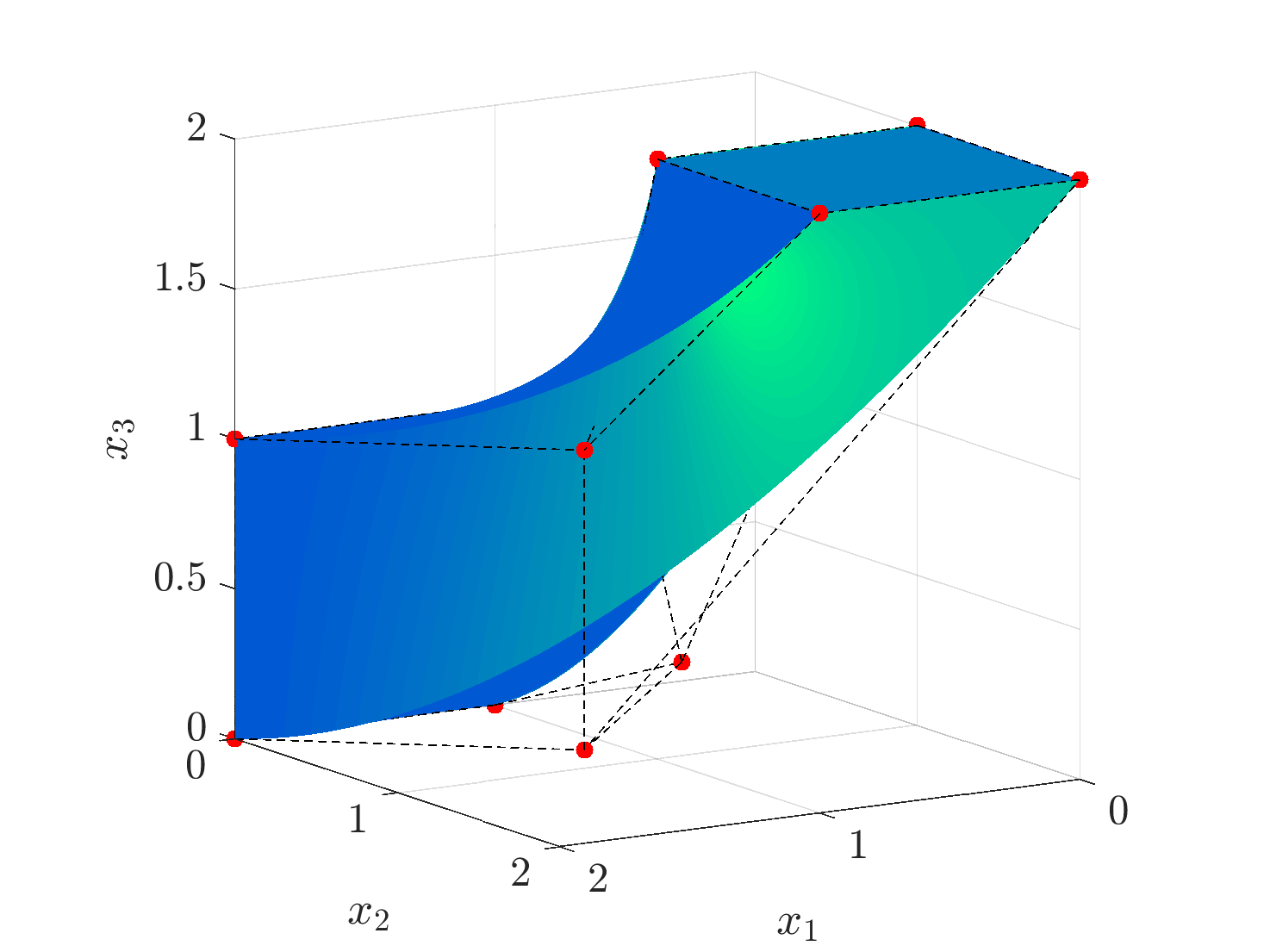}
        \caption{
        \label{fig:PoissonGeometries3D}
        3D domain
        }
    \end{subfigure}
    \caption{\label{fig:PoissonGeometries}Two domains $\Omega$ for problem~\cref{eq:PoissonWeakForm}.}
\end{figure}

The two bounds~\cref{APrioriBoundH1,eq:APrioriBoundL2} need to be experimentally verified.
Because both estimates depend on the two scales $h$ and $H$, in order to be overtly thorough, we should provide verification in both scales, independently.
That is, first holding the $h$-scale fixed and varying $H$ and, alternatively, holding the $H$-scale fixed and varying $h$.
We forgo this verification step and instead refer the reader to similar studies with low-order finite elements in \cite{bauer2017two,bauer2018large,bauer2018new,drzisga2018surrogate}.
In this section, therefore, we verify the bounds above under the assumption $H = H(h)$.
We expect that this would be the typical use case.

\subsubsection{Overview} %
\label{ssub:overview}

In our experiments with Poisson's equation~\cref{eq:PoissonWeakForm}, we considered a $2$D and a $3$D domain $\Omega$; see \Cref{fig:PoissonGeometries}.
The $2$D domain is given by a second-order NURBS parameterization and the $3$D domain is parameterized by second-order B-splines.
This particular $2$D domain (\Cref{fig:PoissonGeometries2D}) was chosen instead of the $2$D quarter annulus featured later (\Cref{fig:EigenvalueGeometry}) in order to break some symmetries which would otherwise appear in its stencil functions (\Cref{fig:SurrogateStencilFunctions}).
(Additional stencil function symmetries can appear if $\bvarphi\colon\hat{\Omega}\to\Omega$ is a conformal mapping.)
We also note that, after fixing the NURBS parameterizations for the edges on the boundary of the $2$D domain, the parameterization $\bvarphi\colon\hat{\Omega}\to\Omega$ was generated as a Coons patch \cite{piegl2012nurbs}.
Because each of the four edges had polynomial weight functions in their parameterizations, the resulting Coons patch NURBS parameterization, $\bvarphi\colon\hat{\Omega}\to\Omega$, contained a global polynomial weight function $W(\hat{\bmx}) = \sum_j w_j\hat{B}_j(\hat{\bmx})$.
Consequently, the assumptions of \Cref{thm:RegularityOfStencilFunctions} were satisfied in all of our experiments.

\begin{figure}%
\begin{minipage}{\textwidth}
\centering
\begin{minipage}{0.32\textwidth}
\begin{scaletikzpicturetowidth}{\textwidth}
\begin{tikzpicture}[scale=\tikzscale,font=\large]
\begin{loglogaxis}[
xlabel={Degrees of freedom},
ylabel={Relative $H^1$ error},
xmajorgrids,
ymajorgrids,
legend style={fill=white, fill opacity=0.6, draw opacity=1, text opacity=1},
legend pos=north east
]
\addplot[black, mark=*, very thick, mark options={scale=1.5}, fill opacity=0.6] table [x index = {0}, y index={1}, col sep=comma] {./Results/PoissonStatic2D/h1errors_p2.csv};
\addlegendentry{$M=1$};
\addplot[color2, mark=diamond*, very thick] table [x index = {0}, y index={2}, col sep=comma] {./Results/PoissonStatic2D/h1errors_p2.csv};
\addlegendentry{$q=1$};
\addplot[color3, mark=diamond*, very thick] table [x index = {0}, y index={3}, col sep=comma] {./Results/PoissonStatic2D/h1errors_p2.csv};
\addlegendentry{$q=2$};
\addplot[color4, mark=diamond*, very thick] table [x index = {0}, y index={4}, col sep=comma] {./Results/PoissonStatic2D/h1errors_p2.csv};
\addlegendentry{$q=3$};
\addplot[color5, mark=diamond*, very thick] table [x index = {0}, y index={5}, col sep=comma] {./Results/PoissonStatic2D/h1errors_p2.csv};
\addlegendentry{$q=4$};
\addplot[color6, mark=diamond*, very thick] table [x index = {0}, y index={6}, col sep=comma] {./Results/PoissonStatic2D/h1errors_p2.csv};
\addlegendentry{$q=5$};
\logLogSlopeTriangle{0.7}{0.2}{0.15}{2}{2}{};
\legend{}; %
\end{loglogaxis}
\end{tikzpicture} \end{scaletikzpicturetowidth}
\end{minipage}
\qquad
\begin{minipage}{0.32\textwidth}
\begin{scaletikzpicturetowidth}{\textwidth}
\begin{tikzpicture}[scale=\tikzscale,font=\large]
\begin{loglogaxis}[
xlabel={Degrees of freedom},
ylabel={Relative $L^2$ error},
xmajorgrids,
ymajorgrids,
legend style={fill=white, fill opacity=0.6, draw opacity=1, text opacity=1},
legend pos=north east
]
\addplot[black, mark=*, very thick, mark options={scale=1.5}, fill opacity=0.6] table [x index = {0}, y index={1}, col sep=comma] {./Results/PoissonStatic2D/l2errors_p2.csv};
\addlegendentry{$M=1$};
\addplot[color2, mark=diamond*, very thick] table [x index = {0}, y index={2}, col sep=comma] {./Results/PoissonStatic2D/l2errors_p2.csv};
\addlegendentry{$q=1$};
\addplot[color3, mark=diamond*, very thick] table [x index = {0}, y index={3}, col sep=comma] {./Results/PoissonStatic2D/l2errors_p2.csv};
\addlegendentry{$q=2$};
\addplot[color4, mark=diamond*, very thick] table [x index = {0}, y index={4}, col sep=comma] {./Results/PoissonStatic2D/l2errors_p2.csv};
\addlegendentry{$q=3$};
\addplot[color5, mark=diamond*, very thick] table [x index = {0}, y index={5}, col sep=comma] {./Results/PoissonStatic2D/l2errors_p2.csv};
\addlegendentry{$q=4$};
\addplot[color6, mark=diamond*, very thick] table [x index = {0}, y index={6}, col sep=comma] {./Results/PoissonStatic2D/l2errors_p2.csv};
\addlegendentry{$q=5$};
\logLogSlopeTriangle{0.7}{0.2}{0.15}{2}{3}{};
\legend{}; %
\end{loglogaxis}
\end{tikzpicture} \end{scaletikzpicturetowidth}
\end{minipage}
\end{minipage}
\\
\begin{minipage}{\textwidth}
\centering
\begin{minipage}{0.32\textwidth}
\begin{scaletikzpicturetowidth}{\textwidth}
\begin{tikzpicture}[scale=\tikzscale,font=\large]
\begin{loglogaxis}[
xlabel={Degrees of freedom},
ylabel={Relative $H^1$ error},
xmajorgrids,
ymajorgrids,
legend style={fill=white, fill opacity=0.6, draw opacity=1, text opacity=1},
legend pos=north east
]
\addplot[black, mark=*, very thick, mark options={scale=1.5}, fill opacity=0.6] table [x index = {0}, y index={1}, col sep=comma] {./Results/PoissonStatic2DHighFrequency/h1errors_p2.csv};
\addlegendentry{$M=1$};
\addplot[color2, mark=diamond*, very thick] table [x index = {0}, y index={2}, col sep=comma] {./Results/PoissonStatic2DHighFrequency/h1errors_p2.csv};
\addlegendentry{$q=1$};
\addplot[color3, mark=diamond*, very thick] table [x index = {0}, y index={3}, col sep=comma] {./Results/PoissonStatic2DHighFrequency/h1errors_p2.csv};
\addlegendentry{$q=2$};
\addplot[color4, mark=diamond*, very thick] table [x index = {0}, y index={4}, col sep=comma] {./Results/PoissonStatic2DHighFrequency/h1errors_p2.csv};
\addlegendentry{$q=3$};
\addplot[color5, mark=diamond*, very thick] table [x index = {0}, y index={5}, col sep=comma] {./Results/PoissonStatic2DHighFrequency/h1errors_p2.csv};
\addlegendentry{$q=4$};
\addplot[color6, mark=diamond*, very thick] table [x index = {0}, y index={6}, col sep=comma] {./Results/PoissonStatic2DHighFrequency/h1errors_p2.csv};
\addlegendentry{$q=5$};
\logLogSlopeTriangle{0.7}{0.2}{0.17}{2}{2}{};
\legend{}; %
\end{loglogaxis}
\end{tikzpicture} \end{scaletikzpicturetowidth}
\end{minipage}
\qquad
\begin{minipage}{0.32\textwidth}
\begin{scaletikzpicturetowidth}{\textwidth}
\begin{tikzpicture}[scale=\tikzscale,font=\large]
\begin{loglogaxis}[
xlabel={Degrees of freedom},
ylabel={Relative $L^2$ error},
xmajorgrids,
ymajorgrids,
legend style={fill=white, fill opacity=0.6, draw opacity=1, text opacity=1},
legend pos=north east
]
\addplot[black, mark=*, very thick, mark options={scale=1.5}, fill opacity=0.6] table [x index = {0}, y index={1}, col sep=comma] {./Results/PoissonStatic2DHighFrequency/l2errors_p2.csv};
\addlegendentry{$M=1$};
\addplot[color2, mark=diamond*, very thick] table [x index = {0}, y index={2}, col sep=comma] {./Results/PoissonStatic2DHighFrequency/l2errors_p2.csv};
\addlegendentry{$q=1$};
\addplot[color3, mark=diamond*, very thick] table [x index = {0}, y index={3}, col sep=comma] {./Results/PoissonStatic2DHighFrequency/l2errors_p2.csv};
\addlegendentry{$q=2$};
\addplot[color4, mark=diamond*, very thick] table [x index = {0}, y index={4}, col sep=comma] {./Results/PoissonStatic2DHighFrequency/l2errors_p2.csv};
\addlegendentry{$q=3$};
\addplot[color5, mark=diamond*, very thick] table [x index = {0}, y index={5}, col sep=comma] {./Results/PoissonStatic2DHighFrequency/l2errors_p2.csv};
\addlegendentry{$q=4$};
\addplot[color6, mark=diamond*, very thick] table [x index = {0}, y index={6}, col sep=comma] {./Results/PoissonStatic2DHighFrequency/l2errors_p2.csv};
\addlegendentry{$q=5$};
\logLogSlopeTriangle{0.7}{0.2}{0.17}{2}{3}{};
\end{loglogaxis}
\end{tikzpicture} \end{scaletikzpicturetowidth}
\end{minipage}
\end{minipage}
\caption{\label{fig:varying_hs_p2} $2$D domain. Relative errors for $p=2$, $M=5$, and two different manufactured solutions. Top row: $u(\bmx) = \sin\left(\pi \,x_1\right)\,\sin\left(\pi \,x_2\right)$. Bottom row: $u(\bmx) = \sin\left(20\,\pi \,x_1\right)\,\sin\left(20\,\pi \,x_2\right)$.}
\end{figure}

In constructing $\tilde{\sfA}$ (cf. \Cref{app:computing_surrogates_with_existing_iga_codes}) and, thus, the surrogate stencil functions $\tilde{\Phi}_\bdelta = \Pi_{H}\Phi_\bdelta$, we tried local B-spline interpolants $\Pi_{H}\colon C^0({\tilde{\Omega}})\to S_q(\tilde{\bm{\Xi}})$ of various order $1\leq q\leq 5$.
As stated previously (see \Cref{rem:SamplingPoints}), we used the convention $\tilde{\bm{\Xi}}\subset\tilde{\Omega}\cap\tilde{\bbX}$.
In our $2$D experiments, we explored both $p=2$ or $p=3$ NURBS bases and the full range of possible $q$.
In our $3$D experiments, we only explored the case $p=2$ using $q=1,3$.
This limitation of our $3$D experiments was due to the restraints of the MATLAB B-spline interpolation routine we were using.

In the first set of experiments (\Cref{fig:varying_hs_p2,fig:varying_hs_p3,fig:varying_hs_p2_3D}), the sampling length $H$ was set to $H=Mh$, with the constant $M=5$.
In the second set of experiments (\Cref{fig:dynamic_sampling_p2_timing,fig:dynamic_sampling_p2}), we used a mesh-dependent sampling length, defined below.

\subsubsection{Constant sampling lengths} %
\label{subs:constant_sampling_length}

\begin{figure}%
\begin{minipage}{\textwidth}
\centering
\begin{minipage}{0.32\textwidth}
\begin{scaletikzpicturetowidth}{\textwidth}
\begin{tikzpicture}[scale=\tikzscale,font=\large]
\begin{loglogaxis}[
xlabel={Degrees of freedom},
ylabel={Relative $H^1$ error},
xmajorgrids,
ymajorgrids,
legend style={fill=white, fill opacity=0.6, draw opacity=1, text opacity=1},
legend pos=north east
]
\addplot[black, mark=*, very thick, mark options={scale=1.5}, fill opacity=0.6] table [x index = {0}, y index={1}, col sep=comma] {./Results/PoissonStatic2D/h1errors_p3.csv};
\addlegendentry{$M=1$};
\addplot[color3, mark=diamond*, very thick] table [x index = {0}, y index={3}, col sep=comma] {./Results/PoissonStatic2D/h1errors_p3.csv};
\addlegendentry{$q=2$};
\addplot[color4, mark=diamond*, very thick] table [x index = {0}, y index={4}, col sep=comma] {./Results/PoissonStatic2D/h1errors_p3.csv};
\addlegendentry{$q=3$};
\addplot[color5, mark=diamond*, very thick] table [x index = {0}, y index={5}, col sep=comma] {./Results/PoissonStatic2D/h1errors_p3.csv};
\addlegendentry{$q=4$};
\addplot[color6, mark=diamond*, very thick] table [x index = {0}, y index={6}, col sep=comma] {./Results/PoissonStatic2D/h1errors_p3.csv};
\addlegendentry{$q=5$};
\logLogSlopeTriangle{0.7}{0.2}{0.15}{2}{3}{};
\legend{}; %
\end{loglogaxis}
\end{tikzpicture} \end{scaletikzpicturetowidth}
\end{minipage}
\qquad
\begin{minipage}{0.32\textwidth}
\begin{scaletikzpicturetowidth}{\textwidth}
\begin{tikzpicture}[scale=\tikzscale,font=\large]
\begin{loglogaxis}[
xlabel={Degrees of freedom},
ylabel={Relative $L^2$ error},
xmajorgrids,
ymajorgrids,
legend style={fill=white, fill opacity=0.6, draw opacity=1, text opacity=1},
legend pos=north east
]
\addplot[black, mark=*, very thick, mark options={scale=1.5}, fill opacity=0.6] table [x index = {0}, y index={1}, col sep=comma] {./Results/PoissonStatic2D/l2errors_p3.csv};
\addlegendentry{$M=1$};
\addplot[color3, mark=diamond*, very thick] table [x index = {0}, y index={3}, col sep=comma] {./Results/PoissonStatic2D/l2errors_p3.csv};
\addlegendentry{$q=2$};
\addplot[color4, mark=diamond*, very thick] table [x index = {0}, y index={4}, col sep=comma] {./Results/PoissonStatic2D/l2errors_p3.csv};
\addlegendentry{$q=3$};
\addplot[color5, mark=diamond*, very thick] table [x index = {0}, y index={5}, col sep=comma] {./Results/PoissonStatic2D/l2errors_p3.csv};
\addlegendentry{$q=4$};
\addplot[color6, mark=diamond*, very thick] table [x index = {0}, y index={6}, col sep=comma] {./Results/PoissonStatic2D/l2errors_p3.csv};
\addlegendentry{$q=5$};
\logLogSlopeTriangle{0.7}{0.2}{0.15}{2}{4}{};
\legend{}; %
\end{loglogaxis}
\end{tikzpicture} \end{scaletikzpicturetowidth}
\end{minipage}
\end{minipage}
\\
\begin{minipage}{\textwidth}
\centering
\begin{minipage}{0.32\textwidth}
\begin{scaletikzpicturetowidth}{\textwidth}
\begin{tikzpicture}[scale=\tikzscale,font=\large]
\begin{loglogaxis}[
xlabel={Degrees of freedom},
ylabel={Relative $H^1$ error},
xmajorgrids,
ymajorgrids,
legend style={fill=white, fill opacity=0.6, draw opacity=1, text opacity=1},
legend pos=north east
]
\addplot[black, mark=*, very thick, mark options={scale=1.5}, fill opacity=0.6] table [x index = {0}, y index={1}, col sep=comma] {./Results/PoissonStatic2DHighFrequency/h1errors_p3.csv};
\addlegendentry{$M=1$};
\addplot[color3, mark=diamond*, very thick] table [x index = {0}, y index={3}, col sep=comma] {./Results/PoissonStatic2DHighFrequency/h1errors_p3.csv};
\addlegendentry{$q=2$};
\addplot[color4, mark=diamond*, very thick] table [x index = {0}, y index={4}, col sep=comma] {./Results/PoissonStatic2DHighFrequency/h1errors_p3.csv};
\addlegendentry{$q=3$};
\addplot[color5, mark=diamond*, very thick] table [x index = {0}, y index={5}, col sep=comma] {./Results/PoissonStatic2DHighFrequency/h1errors_p3.csv};
\addlegendentry{$q=4$};
\addplot[color6, mark=diamond*, very thick] table [x index = {0}, y index={6}, col sep=comma] {./Results/PoissonStatic2DHighFrequency/h1errors_p3.csv};
\addlegendentry{$q=5$};
\logLogSlopeTriangle{0.7}{0.2}{0.17}{2}{3}{};
\legend{}; %
\end{loglogaxis}
\end{tikzpicture} \end{scaletikzpicturetowidth}
\end{minipage}
\qquad
\begin{minipage}{0.32\textwidth}
\begin{scaletikzpicturetowidth}{\textwidth}
\begin{tikzpicture}[scale=\tikzscale,font=\large]
\begin{loglogaxis}[
xlabel={Degrees of freedom},
ylabel={Relative $L^2$ error},
xmajorgrids,
ymajorgrids,
legend style={fill=white, fill opacity=0.6, draw opacity=1, text opacity=1},
legend pos=north east
]
\addplot[black, mark=*, very thick, mark options={scale=1.5}, fill opacity=0.6] table [x index = {0}, y index={1}, col sep=comma] {./Results/PoissonStatic2DHighFrequency/l2errors_p3.csv};
\addlegendentry{$M=1$};
\addplot[color3, mark=diamond*, very thick] table [x index = {0}, y index={3}, col sep=comma] {./Results/PoissonStatic2DHighFrequency/l2errors_p3.csv};
\addlegendentry{$q=2$};
\addplot[color4, mark=diamond*, very thick] table [x index = {0}, y index={4}, col sep=comma] {./Results/PoissonStatic2DHighFrequency/l2errors_p3.csv};
\addlegendentry{$q=3$};
\addplot[color5, mark=diamond*, very thick] table [x index = {0}, y index={5}, col sep=comma] {./Results/PoissonStatic2DHighFrequency/l2errors_p3.csv};
\addlegendentry{$q=4$};
\addplot[color6, mark=diamond*, very thick] table [x index = {0}, y index={6}, col sep=comma] {./Results/PoissonStatic2DHighFrequency/l2errors_p3.csv};
\addlegendentry{$q=5$};
\logLogSlopeTriangle{0.7}{0.2}{0.17}{2}{4}{};
\end{loglogaxis}
\end{tikzpicture} \end{scaletikzpicturetowidth}
\end{minipage}
\end{minipage}
\caption{\label{fig:varying_hs_p3} $2$D domain. Relative errors for $p=3$, $M=5$, and two different manufactured solutions. Top row: $u(\bmx) = \sin\left(\pi \,x_1\right)\,\sin\left(\pi \,x_2\right)$. Bottom row: $u(\bmx) = \sin\left(20\,\pi \,x_1\right)\,\sin\left(20\,\pi \,x_2\right)$.}
\end{figure}

\begin{figure}%
\begin{minipage}{\textwidth}
\centering
\begin{minipage}{0.32\textwidth}
\begin{scaletikzpicturetowidth}{\textwidth}
\begin{tikzpicture}[scale=\tikzscale,font=\large]
\begin{loglogaxis}[
xlabel={Degrees of freedom},
ylabel={Relative $H^1$ error},
xmajorgrids,
ymajorgrids,
legend style={fill=white, fill opacity=0.6, draw opacity=1, text opacity=1},
legend pos=north east
]
\addplot[black, mark=*, very thick, mark options={scale=1.5}, fill opacity=0.6] table [x index = {0}, y index={1}, col sep=comma] {./Results/PoissonStatic3D/h1errors_p2.csv};
\addlegendentry{$M=1$};
\addplot[color2, mark=diamond*, very thick] table [x index = {0}, y index={2}, col sep=comma] {./Results/PoissonStatic3D/h1errors_p2.csv};
\addlegendentry{$q=1$};
\addplot[color4, mark=diamond*, very thick] table [x index = {0}, y index={3}, col sep=comma] {./Results/PoissonStatic3D/h1errors_p2.csv};
\addlegendentry{$q=3$};
\logLogSlopeTriangle{0.7}{0.2}{0.17}{3}{2}{};
\legend{}; %
\end{loglogaxis}
\end{tikzpicture}
 \end{scaletikzpicturetowidth}
\end{minipage}
\qquad
\begin{minipage}{0.32\textwidth}
\begin{scaletikzpicturetowidth}{\textwidth}
\begin{tikzpicture}[scale=\tikzscale,font=\large]
\begin{loglogaxis}[
xlabel={Degrees of freedom},
ylabel={Relative $L^2$ error},
xmajorgrids,
ymajorgrids,
legend style={fill=white, fill opacity=0.6, draw opacity=1, text opacity=1},
legend pos=north east
]
\addplot[black, mark=*, very thick, mark options={scale=1.5}, fill opacity=0.6] table [x index = {0}, y index={1}, col sep=comma] {./Results/PoissonStatic3D/l2errors_p2.csv};
\addlegendentry{$M=1$};
\addplot[color2, mark=diamond*, very thick] table [x index = {0}, y index={2}, col sep=comma] {./Results/PoissonStatic3D/l2errors_p2.csv};
\addlegendentry{$q=1$};
\addplot[color4, mark=diamond*, very thick] table [x index = {0}, y index={3}, col sep=comma] {./Results/PoissonStatic3D/l2errors_p2.csv};
\addlegendentry{$q=3$};
\logLogSlopeTriangle{0.7}{0.2}{0.17}{3}{3}{};
\legend{}; %
\end{loglogaxis}
\end{tikzpicture}
 \end{scaletikzpicturetowidth}
\end{minipage}
\end{minipage}
\\
\begin{minipage}{\textwidth}
\centering
\begin{minipage}{0.32\textwidth}
\begin{scaletikzpicturetowidth}{\textwidth}
\begin{tikzpicture}[scale=\tikzscale,font=\large]
\begin{loglogaxis}[
xlabel={Degrees of freedom},
ylabel={Relative $H^1$ error},
xmajorgrids,
ymajorgrids,
legend style={fill=white, fill opacity=0.6, draw opacity=1, text opacity=1},
legend pos=north east
]
\addplot[black, mark=*, very thick, mark options={scale=1.5}, fill opacity=0.6] table [x index = {0}, y index={1}, col sep=comma] {./Results/PoissonStatic3DHighFrequency/h1errors_p2.csv};
\addlegendentry{$M=1$};
\addplot[color2, mark=diamond*, very thick] table [x index = {0}, y index={2}, col sep=comma] {./Results/PoissonStatic3DHighFrequency/h1errors_p2.csv};
\addlegendentry{$q=1$};
\addplot[color4, mark=diamond*, very thick] table [x index = {0}, y index={3}, col sep=comma] {./Results/PoissonStatic3DHighFrequency/h1errors_p2.csv};
\addlegendentry{$q=3$};
\logLogSlopeTriangle{0.8}{0.2}{0.15}{3}{2}{};
\legend{}; %
\end{loglogaxis}
\end{tikzpicture}
 \end{scaletikzpicturetowidth}
\end{minipage}
\qquad
\begin{minipage}{0.32\textwidth}
\begin{scaletikzpicturetowidth}{\textwidth}
\begin{tikzpicture}[scale=\tikzscale,font=\large]
\begin{loglogaxis}[
xlabel={Degrees of freedom},
ylabel={Relative $L^2$ error},
xmajorgrids,
ymajorgrids,
legend style={fill=white, fill opacity=0.6, draw opacity=1, text opacity=1},
legend pos=north east
]
\addplot[black, mark=*, very thick, mark options={scale=1.5}, fill opacity=0.6] table [x index = {0}, y index={1}, col sep=comma] {./Results/PoissonStatic3DHighFrequency/l2errors_p2.csv};
\addlegendentry{$M=1$};
\addplot[color2, mark=diamond*, very thick] table [x index = {0}, y index={2}, col sep=comma] {./Results/PoissonStatic3DHighFrequency/l2errors_p2.csv};
\addlegendentry{$q=1$};
\addplot[color4, mark=diamond*, very thick] table [x index = {0}, y index={3}, col sep=comma] {./Results/PoissonStatic3DHighFrequency/l2errors_p2.csv};
\addlegendentry{$q=3$};
\logLogSlopeTriangle{0.8}{0.2}{0.15}{3}{3}{};
\end{loglogaxis}
\end{tikzpicture}
 \end{scaletikzpicturetowidth}
\end{minipage}
\end{minipage}
\caption{\label{fig:varying_hs_p2_3D} $3$D domain. Relative errors for $p=2$, $M=5$, and two different manufactured solutions. Top row: $u(\bmx) = \sin\left(\pi \,x_1\right)\,\sin\left(\pi \,x_2\right)\,\sin\left(\pi \,x_3\right)$. Bottom row: ${u(\bmx) = \sin\left(20\,\pi \,x_1\right)\,\sin\left(20\,\pi \,x_2\right)\,\sin\left(20\,\pi \,x_3\right)}$.}
\end{figure}

For the constant sampling strategy, with $M=5$, convergence plots of the errors in surrogate solutions are presented 
in \Cref{fig:varying_hs_p2,fig:varying_hs_p3,fig:varying_hs_p2_3D}.
After inspecting these figures, the reader should observe that there is a noticeable difference in the accuracy of the surrogate solutions $\tilde{u}_h$, dependent on the load $f$.
For instance, with the ``low-frequency'' manufactured solution, $u(\bmx) = \sin\left(\pi \,x_1\right)\,\sin\left(\pi \,x_2\right)$, the errors in the surrogate solutions appear to converge to the error generated by the (standard) non-surrogate IGA solution $u_h$ asymptotically, at various rates.
However, for the ``high-frequency'' manufactured solution, $u(\bmx) = \sin\left(20\,\pi \,x_1\right)\,\sin\left(20\,\pi \,x_2\right)$, each of the surrogate errors and the corresponding standard IGA error are nearly indistinguishable (except, sometimes, in the case $q=1$).

This difference can be explained in a simple way.
Observe that the $h$-dependent terms in~\cref{APrioriBoundH1,eq:APrioriBoundL2} are multiplied by the high-order norm $\|u\|_{p+1}$ and, meanwhile, the $H$-dependent terms are only multiplied by the lower-order seminorm $\|\nabla u\|_0$.
Let us call the first term in each bound the \emph{discretization error} and the second term the \emph{consistency error}.
Because of the presence of the high-order norm $\|u\|_{p+1}$, the discretization error is much more sensitive to irregularities or oscillations in the solution.
Another important detail not to overlook is that $C_4$ ultimately involves high-order norms of the tensor coefficient $K(\hat{\bmx})$.
Meanwhile, $C_5$ and $C_6$ depend (through $\Omega$), at most, on the $H^2$-norm of $K(\hat{\bmx})$.

These observations can lead in many interesting directions, but the conclusion which the reader should ultimately arrive at is that the assembly of the stiffness matrix should only be carried out to the accuracy required by the problem at hand.
Therefore, the correct surrogate assembly strategy must take into account properties of the problem geometry and the given loads, as well as each of the parameters $h$, $p$, and $q$.
A simple way of doing this is to use mesh-dependent sampling lengths.

\begin{remark}
\label{rem:parity}
In \cite{drzisga2018surrogate}, it was documented that the $L^2$ error converged like $h^{p+1} + H^{q+2}$ when $\Pi_H$ resembles an $L^2$ projection.
However, when using an interpolation instead, like it is done in this work, the error converges like $h^{p+1} + H^{q+2}$ whenever the interpolation order $q$ is even and like $h^{p+1} + H^{q+1}$ otherwise.
This parity effect can also be observed in results of this work; see, e.g., the top right plot in~\cref{fig:varying_hs_p3}.
We do not prove this improved convergence rate but refer to proofs of quadrature formulas where the same parity effect can be observed\revised{; see, e.g., \cite[Chapter 2.5]{davis2007methods}.}
\end{remark}

\subsubsection{Mesh-dependent sampling lengths} %
\label{ssub:dynamic_sampling_length}

In a successful surrogate method, the discretization error should always dominate the consistency error.
Of course, however, this domination should not be exercised to such an extent that overall efficiency is compromised.
As a rule of thumb, keeping the consistency error at or below $5\%$ of the discretization error is often acceptable.
Even within this somewhat conservative threshold, balancing the two errors appropriately can lead to very significant performance advantages.

Let $q>p$ and consider the mesh-dependent sampling length $M(h) = \max\Big\{1,\floor[\big]{c \cdot h^{\frac{p-q+\beta}{q+1}}}\Big\}$, where both $c,\beta\geq0$ are tuneable parameters.
Returning the definition $H = M\cdot h$, we now find that, for any $c>0$,
\begin{equation}
	H^{q+1}
	\leq
	(M\cdot h)^{q+1}
	=
	c^{q+1} h^{p+1+\beta}
	,
	\quad
	\text{as }
	h\to 0\,.
\end{equation}
Here, for any $\beta>0$, both the $H^1$ and $L^2$ discretization error will eventually dominate their associated consistency error.
Therefore, this parameter exists to maintain the dominance of the discretization error, throughout mesh refinements.
We found $\beta = \nicefrac{1}{2}$ to be a suitable choice for our purposes.
The parameter $c$, on the other hand, must be calibrated to the problem at hand.

All run-time measurements in this sections were obtained on a machine equipped with two Intel\textsuperscript{\textregistered} Xeon\textsuperscript{\textregistered} Gold 6136 processors with a nominal base frequency of 3.0 GHz.
Each processor has 12 physical cores which results in a total of 24 physical cores, but only single core was used to run the following experiments.
The total available memory of \SI{251}{\giga\byte} is split into two NUMA domains, one for each socket.
For compatibility reasons \revised{with using SciPy}, we employed the \revised{BLAS library in version 3.7.1} provided by the operating system \revised{Ubuntu 18.04.2}, but using other optimized libraries might improve the performance of the MATLAB solver.

In \Cref{fig:dynamic_sampling_p2_timing} we show the assembly times our implementation accrued for various values of $c$, when $p=2$ and $q=5$.
Inspection of \Cref{fig:dynamic_sampling_p2} clearly shows that $c = 0.75$ is suitable for $u(\bmx) = \sin\left(\pi \,x_1\right)\,\sin\left(\pi \,x_2\right)$ and $c = 3$ is suitable for $u(\bmx) = \sin\left(20\,\pi \,x_1\right)\,\sin\left(20\,\pi \,x_2\right)$.
\revised{Let $t_\mathrm{std}$ be the time required to assemble the matrix with the standard approach and $t_\mathrm{surr}$ the time using the surrogate approach. The speed-up from using the surrogate approach is then defined as $\frac{t_\mathrm{std}}{t_\mathrm{surr}} - 1$.}
When compared to the non-surrogate assembly strategy at just over one million degrees of freedom, note that the first choice gives an assembly speed-up of around $1500\%$ and the second choice gives a speed-up of more than $5000\%$.
These enormous speed-ups are in fact not that surprising after inspecting the percentage of matrix entries which must be computed using traditional quadrature formulas; see, again, \Cref{fig:dynamic_sampling_p2_timing}.

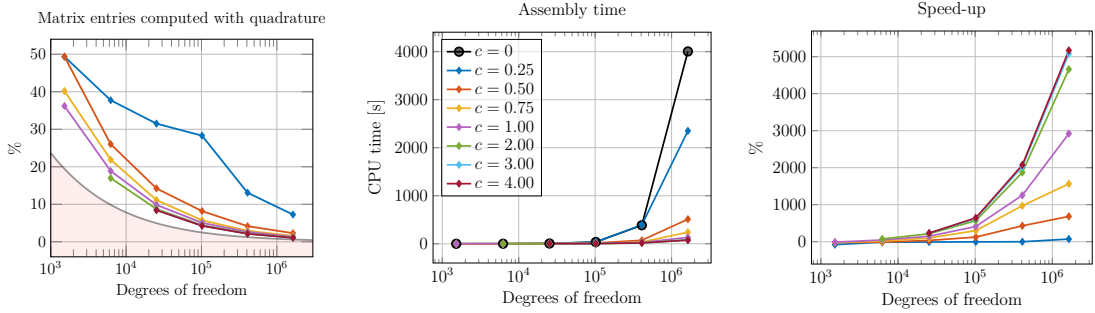
\begin{figure}%
\centering
\begin{minipage}{0.30\textwidth}
\begin{scaletikzpicturetowidth}{\textwidth}
\begin{tikzpicture}[font=\large]
\tikzstyle{every node}=[scale=\tikzscale]
\begin{semilogxaxis}[
scale=\tikzscale,
xlabel={Degrees of freedom},
ylabel={\%},
xmajorgrids,
ymajorgrids,
title={Matrix entries computed with quadrature},
enlargelimits=false,
xmin=1000,
xmax=3000000,
ymin=-4,
ymax=54,
/pgfplots/ytick={0,10,...,50},
every axis plot/.append style={line width=0.69pt}
]
\addplot[color1, mark=diamond*, mark options={scale=\tikzscale}] table [x index = {0}, y index={10}, col sep=comma] {./Results/PoissonDynamic2DHighFrequency/timing_p2_q5_cc0.250.csv};
\addplot[color2, mark=diamond*, mark options={scale=\tikzscale}] table [x index = {0}, y index={10}, col sep=comma] {./Results/PoissonDynamic2DHighFrequency/timing_p2_q5_cc0.500.csv};
\addplot[color3, mark=diamond*, mark options={scale=\tikzscale}] table [x index = {0}, y index={10}, col sep=comma] {./Results/PoissonDynamic2DHighFrequency/timing_p2_q5_cc0.750.csv};
\addplot[color4, mark=diamond*, mark options={scale=\tikzscale}] table [x index = {0}, y index={10}, col sep=comma] {./Results/PoissonDynamic2DHighFrequency/timing_p2_q5_cc1.000.csv};
\addplot[color5, mark=diamond*, mark options={scale=\tikzscale}] table [x index = {0}, y index={10}, col sep=comma] {./Results/PoissonDynamic2DHighFrequency/timing_p2_q5_cc2.000.csv};
\addplot[color6, mark=diamond*, mark options={scale=\tikzscale}] table [x index = {0}, y index={10}, col sep=comma] {./Results/PoissonDynamic2DHighFrequency/timing_p2_q5_cc3.000.csv};
\addplot[color7, mark=diamond*, mark options={scale=\tikzscale}] table [x index = {0}, y index={10}, col sep=comma] {./Results/PoissonDynamic2DHighFrequency/timing_p2_q5_cc4.000.csv};
\addplot[name path=f,domain=1000:3000000, samples=200,color=black,opacity=0.4]{100*(1 - ((x^(1/2) - 2*2)/(x)^(1/2))^2)};
\path[name path=axis] (axis cs:1000,-4) -- (axis cs:3000000,-4);
\addplot[thick,color=red,opacity=0.2,fill=red,fill opacity=0.07] fill between[of=f and axis, soft clip={domain=1000:3000000}];
\end{semilogxaxis}
\end{tikzpicture} \end{scaletikzpicturetowidth}
\end{minipage}
\hfill
\begin{minipage}{0.32\textwidth}
\begin{scaletikzpicturetowidth}{\textwidth}
\begin{tikzpicture}[scale=\tikzscale,font=\large]
\begin{semilogxaxis}[
xlabel={Degrees of freedom},
ylabel={CPU time [s]},
xmajorgrids,
ymajorgrids,
title={Assembly time},
legend style={fill=white, fill opacity=0.6, draw opacity=1, text opacity=1},
legend pos=north west,
/pgf/number format/1000 sep={}
]
\addplot[black, mark=*, very thick, mark options={scale=1.5}, fill opacity=0.6] table [x index = {0}, y index={1}, col sep=comma] {./Results/PoissonDynamic2DHighFrequency/rel/std_time.csv};
\addlegendentry{$c=0\phantom{.00}$};
\addplot[color1, mark=diamond*, very thick] table [x index = {0}, y index={6}, col sep=comma] {./Results/PoissonDynamic2DHighFrequency/timing_p2_q5_cc0.250.csv};
\addlegendentry{$c = 0.25$};
\addplot[color2, mark=diamond*, very thick] table [x index = {0}, y index={6}, col sep=comma] {./Results/PoissonDynamic2DHighFrequency/timing_p2_q5_cc0.500.csv};
\addlegendentry{$c = 0.50$};
\addplot[color3, mark=diamond*, very thick] table [x index = {0}, y index={6}, col sep=comma] {./Results/PoissonDynamic2DHighFrequency/timing_p2_q5_cc0.750.csv};
\addlegendentry{$c = 0.75$};
\addplot[color4, mark=diamond*, very thick] table [x index = {0}, y index={6}, col sep=comma] {./Results/PoissonDynamic2DHighFrequency/timing_p2_q5_cc1.000.csv};
\addlegendentry{$c = 1.00$};
\addplot[color5, mark=diamond*, very thick] table [x index = {0}, y index={6}, col sep=comma] {./Results/PoissonDynamic2DHighFrequency/timing_p2_q5_cc2.000.csv};
\addlegendentry{$c = 2.00$};
\addplot[color6, mark=diamond*, very thick] table [x index = {0}, y index={6}, col sep=comma] {./Results/PoissonDynamic2DHighFrequency/timing_p2_q5_cc3.000.csv};
\addlegendentry{$c = 3.00$};
\addplot[color7, mark=diamond*, very thick] table [x index = {0}, y index={6}, col sep=comma] {./Results/PoissonDynamic2DHighFrequency/timing_p2_q5_cc4.000.csv};
\addlegendentry{$c = 4.00$};
\end{semilogxaxis}
\end{tikzpicture}
 \end{scaletikzpicturetowidth}
\end{minipage}
\hfill
\begin{minipage}{0.32\textwidth}
\begin{scaletikzpicturetowidth}{\textwidth}
\begin{tikzpicture}[scale=\tikzscale,font=\large]
\begin{semilogxaxis}[
xlabel={Degrees of freedom},
ylabel={\%},
xmajorgrids,
ymajorgrids,
title={Speed-up},
/pgfplots/ytick={0,1000,...,6000},
/pgf/number format/1000 sep={}
]
\addplot[color1, mark=diamond*, very thick] table [x index = {0}, y index={1}, col sep=comma] {./Results/PoissonDynamic2DHighFrequency/rel/timing_p2_q5_cc0.250_rel.csv};
\addplot[color2, mark=diamond*, very thick] table [x index = {0}, y index={1}, col sep=comma] {./Results/PoissonDynamic2DHighFrequency/rel/timing_p2_q5_cc0.500_rel.csv};
\addplot[color3, mark=diamond*, very thick] table [x index = {0}, y index={1}, col sep=comma] {./Results/PoissonDynamic2DHighFrequency/rel/timing_p2_q5_cc0.750_rel.csv};
\addplot[color4, mark=diamond*, very thick] table [x index = {0}, y index={1}, col sep=comma] {./Results/PoissonDynamic2DHighFrequency/rel/timing_p2_q5_cc1.000_rel.csv};
\addplot[color5, mark=diamond*, very thick] table [x index = {0}, y index={1}, col sep=comma] {./Results/PoissonDynamic2DHighFrequency/rel/timing_p2_q5_cc2.000_rel.csv};
\addplot[color6, mark=diamond*, very thick] table [x index = {0}, y index={1}, col sep=comma] {./Results/PoissonDynamic2DHighFrequency/rel/timing_p2_q5_cc3.000_rel.csv};
\addplot[color7, mark=diamond*, very thick] table [x index = {0}, y index={1}, col sep=comma] {./Results/PoissonDynamic2DHighFrequency/rel/timing_p2_q5_cc4.000_rel.csv};
\end{semilogxaxis}
\end{tikzpicture}
 \end{scaletikzpicturetowidth}
\end{minipage}
\caption{\label{fig:dynamic_sampling_p2_timing} $2$D domain with $p=2$ and $q=5$. Assembly times with the mesh-dependent sampling strategy $M(h) = \max\Big\{1,\floor[\big]{c \cdot h^{\frac{p-q+\frac{1}{2}}{q+1}}}\Big\}$.
On the left, note the \revised{percentage} of matrix entries computed with quadrature.
Here, the \revised{percentage} of entries involving non-cardinal basis functions (roughly $1-(\frac{m-2p}{m})^n$) is shaded out to indicate the theoretical lower bound.}
\end{figure}

\begin{figure}
	\centering
	\begin{minipage}{0.32\textwidth}
	\begin{scaletikzpicturetowidth}{\textwidth}
\begin{tikzpicture}[scale=\tikzscale,font=\large]
\begin{loglogaxis}[
xlabel={Degrees of freedom},
ylabel={Relative $L^2$ error},
xmajorgrids,
ymajorgrids,
legend style={at={(0.96,0.96)},anchor=north east},
]
\addplot[black, mark=*, very thick, mark options={scale=1.5}, fill opacity=0.6] table [x index = {0}, y index={1}, col sep=comma] {./Results/PoissonDynamic2D/timing_p2_q5_cc0.750.csv};
\addlegendentry{$c = 0\phantom{.00}$};
\addplot[color1, mark=diamond*, very thick] table [x index = {0}, y index={2}, col sep=comma] {./Results/PoissonDynamic2D/timing_p2_q5_cc0.250.csv};
\addlegendentry{$c = 0.25$};
\addplot[color2, mark=diamond*, very thick] table [x index = {0}, y index={2}, col sep=comma] {./Results/PoissonDynamic2D/timing_p2_q5_cc0.500.csv};
\addlegendentry{$c = 0.50$};
\addplot[color3, mark=diamond*, very thick] table [x index = {0}, y index={2}, col sep=comma] {./Results/PoissonDynamic2D/timing_p2_q5_cc0.750.csv};
\addlegendentry{$c = 0.75$};
\addplot[color4, mark=diamond*, very thick] table [x index = {0}, y index={2}, col sep=comma] {./Results/PoissonDynamic2D/timing_p2_q5_cc1.000.csv};
\addlegendentry{$c = 1.00$};
\logLogSlopeTriangle{0.7}{0.2}{0.17}{2}{3}{};
\end{loglogaxis}
\end{tikzpicture} 	\end{scaletikzpicturetowidth}
	\end{minipage}
	\qquad
	\begin{minipage}{0.32\textwidth}
	\begin{scaletikzpicturetowidth}{\textwidth}
\begin{tikzpicture}[scale=\tikzscale,font=\large]
\begin{loglogaxis}[
xlabel={Degrees of freedom},
ylabel={Relative $L^2$ error},
xmajorgrids,
ymajorgrids,
legend style={fill=white, fill opacity=0.6, draw opacity=1, text opacity=1},
legend pos=north east
]
\addplot[black, mark=*, very thick, mark options={scale=1.5}, fill opacity=0.6] table [x index = {0}, y index={1}, col sep=comma] {./Results/PoissonDynamic2DHighFrequency/timing_p2_q5_cc0.750.csv};
\addlegendentry{$c=0\phantom{.00}$};
\addplot[color4, mark=diamond*, very thick] table [x index = {0}, y index={2}, col sep=comma] {./Results/PoissonDynamic2DHighFrequency/timing_p2_q5_cc1.000.csv};
\addlegendentry{$c = 1.00$};
\addplot[color5, mark=diamond*, very thick] table [x index = {0}, y index={2}, col sep=comma] {./Results/PoissonDynamic2DHighFrequency/timing_p2_q5_cc2.000.csv};
\addlegendentry{$c = 2.00$};
\addplot[color6, mark=diamond*, very thick] table [x index = {0}, y index={2}, col sep=comma] {./Results/PoissonDynamic2DHighFrequency/timing_p2_q5_cc3.000.csv};
\addlegendentry{$c = 3.00$};
\addplot[color7, mark=diamond*, very thick] table [x index = {0}, y index={2}, col sep=comma] {./Results/PoissonDynamic2DHighFrequency/timing_p2_q5_cc4.000.csv};
\addlegendentry{$c = 4.00$};
\logLogSlopeTriangle{0.7}{0.2}{0.17}{2}{3}{};
\end{loglogaxis}
\end{tikzpicture} 	\end{scaletikzpicturetowidth}
	\end{minipage}
	\caption{\label{fig:dynamic_sampling_p2}$2$D domain with $p=2$ and $q=5$. Relative $L^2$ errors for $M(h) = \max\Big\{1,\floor[\big]{c \cdot h^{\frac{p-q+\frac{1}{2}}{q+1}}}\Big\}$, with two different manufactured solutions.
	Left: $u(\bmx) = \sin\left(\pi \,x_1\right)\,\sin\left(\pi \,x_2\right)$.
	Right: $u(\bmx) = \sin\left(20\,\pi \,x_1\right)\,\sin\left(20\,\pi \,x_2\right)$.}
\end{figure}

\section{Transverse vibrations of an isotropic membrane} %
\label{sec:transverse_vibrations_of_an_isotropic_membrane}

One of the great advantages of the IGA paradigm is its superior accuracy in structural vibration problems.
Any worthwhile surrogate method should maintain this advantage.
Therefore, in this section, we extend the analysis of the previous section to the analysis of transverse vibrations of a two-dimensional elastic membrane.
The corresponding weak form is the following:
\begin{equation}
	\left\{
	\begin{alignedat}{3}
		&\text{Find all pairs }  u\in H^1_0(\Omega)\setminus\{0\}
		\text{ and } \lambda\in\R
		\text{ satisfying}
		\quad
		\\
		&a(u,v) = \lambda\,m(u,v)
		\quad
		\text{for all }
		v\in H^1_0(\Omega)
		\,,
	\end{alignedat}
	\right.
\label{eq:EigenvalueWeakForm}
\end{equation}
where $a(w,v) = \int_\Omega \nabla w \cdot \nabla v \dd x$ and $m(w,v) = \int_\Omega w \sspace v \dd x$.
It is well-known that there are countably many solutions to this problem, $\{(u^{(k)},\lambda^{(k)})\}_{j=1}^\infty$, where each $\lambda^{(k)}>0$.
From here on, we make the ordering assumption $\lambda^{(j)} \leq \lambda^{(k)}$, for every $j<k$.

\subsection{Surrogate mass matrices} %
\label{sub:surrogate_mass_matrices}

So far, we have only rigorously analyzed surrogates of the elliptic bilinear form $a(\cdot,\cdot)$, written above.
Using the techniques developed thus far, results similar to \Cref{thm:BilinearFormDifferencePoisson} can be proven for other bilinear forms, such as $m(\cdot,\cdot)$.
Let the corresponding mass matrix and surrogate mass matrix be denoted $\sfM$ and $\tilde{\sfM}$, respectively.
Employing definition~\cref{eq:DefinitionOfSurrogateMatrixSymmetric}, we take for granted that the accompanying surrogate bilinear form $\tilde{m}(\cdot,\cdot)$ satisfies
\begin{equation}
	|m(v_h, w_h) - \tilde{m}(v_h, w_h)|
	\leq
	C_7
	H^{q+1}
	\| v_h\|_0 \| w_h\|_0
	\,,
\label{eq:SurrogateMassBound}
\end{equation}
for some $C_7$ depending only on \changed{$\bvarphi$}, $p$, $q$, and $\|\Pi_{H}\|$.

In the special case that the geometry mapping $\bvarphi(\hat{\bmx})$ is a polynomial, we have a surrogate reproduction property of the mass form $m(u,v)$.
This property is formalized in the following corollary to \Cref{prop:PolynomialReproduction}.
\begin{corollary}
\label{lem:MassMatrixReproduction}
Assume that the domain mapping $\bvarphi\colon \hat{\Omega} \rightarrow \Omega$ is defined through a polynomial of order $p$, i.e., $\bvarphi \in \big[\mcQ_p(\hat{\Omega})\big]^n$.
Let $\sfM$ be the coefficient matrix arising from the discretization of $m(\cdot,\cdot)$ and $\tilde{\sfM}$ the corresponding surrogate matrix.
If $q \geq n \cdot p - 1$, it holds that $\sfM = \tilde{\sfM}$.
\end{corollary}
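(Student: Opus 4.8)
The plan is to read this off from \Cref{prop:PolynomialReproduction}; the only genuinely new ingredient is a degree count on the Jacobian determinant of $\bvarphi$. First I would change variables $\bmx = \bvarphi(\hat{\bmx})$ to pull the mass form back to the parametric domain, obtaining $\hat{m}(\hat{w},\hat{v}) = \int_{\hat{\Omega}} \hat{w}(\hat{\bmx})\,\hat{v}(\hat{\bmx})\,|\det D\bvarphi(\hat{\bmx})|\dd\hat{\bmx}$. This has exactly the structure~\cref{eq:BilinearFormStructure} with $G(\hat{\bmx},s,t) = s\,t\,g(\hat{\bmx})$, where $g(\hat{\bmx}) = |\det D\bvarphi(\hat{\bmx})|$, and~\cref{eq:SupportAssumption} holds trivially because $G$ vanishes whenever $s=0$ or $t=0$. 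Throughout, one works in the B-spline setting, i.e.\ with trivial weight function $W\equiv 1$, which is the natural situation when $\bvarphi$ is a genuine polynomial mapping (so that~\cref{eq:GeneralChangeOfVars,eq:GeneralStencilFunction} apply without any rational factors).

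Next I would bound the per-direction polynomial degree of $g$. Since each component $\varphi_k\in\mcQ_p(\hat{\Omega})$, every entry $\partial_{\hat{x}_l}\varphi_k$ of $D\bvarphi$ has degree at most $p-1$ in $\hat{x}_l$ and at most $p$ in every other variable. Expanding $\det D\bvarphi$ as a signed sum over permutations of $\{1,\ldots,n\}$, each summand is a product of $n$ such entries in which, for any fixed variable $\hat{x}_m$, exactly one factor is differentiated with respect to $\hat{x}_m$; hence that summand --- and therefore $\det D\bvarphi$ --- has degree at most $(p-1)+(n-1)p = np-1$ in $\hat{x}_m$, so $\det D\bvarphi\in\mcQ_{np-1}(\hat{\Omega})$. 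Because $\bvarphi$ is a diffeomorphism, $\det D\bvarphi$ is nonvanishing and hence of constant sign on the connected set $\hat{\Omega}$, so $g = |\det D\bvarphi| = \pm\det D\bvarphi\in\mcQ_{np-1}(\hat{\Omega})$ as well.

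Finally I would rerun the argument of \Cref{prop:PolynomialReproduction} with the threshold $\mcQ_p$ replaced throughout by $\mcQ_{np-1}$: for every fixed $s,t$ the first-argument map $\hat{\bmx}\mapsto G(\hat{\bmx},s,t)$ lies in $\mcQ_{np-1}(\hat{\Omega})$, so, after the change of variables~\cref{eq:GeneralChangeOfVars}, each stencil function $\Phi_\bdelta$ attached to $m(\cdot,\cdot)$ is a polynomial of the same per-direction degree, $\Phi_\bdelta\in\mcQ_{np-1}(\tilde{\Omega})$. The hypothesis $q\geq np-1$ then yields $\mcQ_{np-1}(\tilde{\Omega})\subseteq\mcQ_q(\tilde{\Omega})\subseteq S_q(\tilde{\bm{\Xi}})$, a space that every bounded projection $\Pi_{H}$ of the kind in \Cref{lem:SplineBAE} reproduces exactly; hence $\tilde{\Phi}_\bdelta = \Pi_{H}\Phi_\bdelta = \Phi_\bdelta$ for every $\bdelta\in\scD$. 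Comparing with definition~\cref{eq:DefinitionOfSurrogateMatrixSymmetric} --- whose remaining, boundary-adjacent entries coincide with those of $\sfM$ by construction, and whose lower triangle is fixed by symmetry --- this forces $\tilde{\sfM} = \sfM$.

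The step I expect to be the main obstacle is, in truth, a bookkeeping matter rather than a conceptual one: one must confirm that the proof of \Cref{prop:PolynomialReproduction} is genuinely insensitive to whether the relevant polynomial degree of $G$ in its first slot is $p$ or $np-1$ --- it is, since that proof uses only that $G(\cdot,s,t)$ is a polynomial and that the integration region $\hat{\omega}_\bdelta$ stays inside $\hat{\Omega}$ --- and, correspondingly, that the sharp threshold for exact reproduction is precisely $q\geq np-1$ and not merely $q>p$. Everything else reduces to the determinant expansion of the second paragraph.
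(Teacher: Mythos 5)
Your proposal is correct and takes essentially the same route as the paper: pull the mass form back to the parametric domain, note that the integrand factor is $\hat{u}\,\hat{v}\,\det(\bmJ)$ with $\det(\bmJ)\in\mcQ_{n\cdot p-1}(\hat{\Omega})$, and invoke \Cref{prop:PolynomialReproduction} under the hypothesis $q\geq n\cdot p-1$. The only differences are that you make explicit what the paper leaves implicit — the Leibniz-expansion degree count for $\det D\bvarphi$, the constant sign of the Jacobian determinant (so the absolute value is harmless), and the observation that the degree-$p$ threshold in \Cref{prop:PolynomialReproduction} transfers verbatim to degree $n\cdot p-1$ — none of which changes the argument.
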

\begin{proof}
Let $\bmJ(\hat{\bmx})$ be the Jacobian (tensor) of $\bvarphi(\hat{\bmx})$.
The transformation of the integral from the physical to the reference domain is given by
\begin{equation}
m(u,v) = \int_\Omega u \sspace v \dd x = \int_{\hat{\Omega}} \hat{u} \sspace \hat{v}\, \det(\bmJ) \dd \hat{\bmx}.
\end{equation}
It remains to show that $G(\hat{\bmx}, \hat{u}(\hat{\bmy}), \hat{v}(\hat{\bmy})) = \hat{u}(\hat{\bmy}) \sspace \hat{v}(\hat{\bmy})\, \det(\bmJ(\hat{\bmx}))$ is a polynomial of degree $n \cdot p - 1$ in the $\hat{\bmx}$-variable.
Since $\bvarphi \in \big[\mcQ_p(\hat{\Omega})\big]^n$, it holds that $\det(\bmJ) \in \mcQ_{n\cdot p - 1}(\hat{\Omega})$.
Applying \cref{prop:PolynomialReproduction} yields the desired reproduction property.
\end{proof}

\subsection{A priori analysis of the eigenvalue error} %
\label{sub:a_priori_analysis_of_the_eigenvalue_error}

Adopt the obvious notation, $(u_h^{(k)},\lambda_h^{(k)})$ and $(\tilde{u}_h^{(k)},\tilde{\lambda}_h^{(k)})$, for the standard IGA and surrogate IGA solutions corresponding to~\cref{eq:EigenvalueWeakForm}, respectively.
Due to space limitations, we only analyze the convergence of the eigenvalues $\tilde{\lambda}_h^{(k)} \to 
\lambda^{(k)}$.

\begin{theorem}
\label{thm:APrioriEigenvalues}
	Let $\theta >1$.
	If $u^{(k)}\in H^{p+1}(\Omega)$, then there exists a constant $C_8$, depending only on $p$ and \changed{$\bvarphi$}, such that
	\begin{equation}
		\frac{|\lambda^{(k)} - \tilde{\lambda}_h^{(k)}|}{\lambda^{(k)}}
		\leq
		C_8 h^{2p} + \theta \cdot (C_4 + C_7) H^{q+1}
		\,,
	\label{eq:EigenvalueBound}
	\end{equation}
	for every sufficiently small $H>0$.
\end{theorem}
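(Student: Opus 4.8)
The plan is to split the total error by the triangle inequality,
\[
\frac{|\lambda^{(k)} - \tilde{\lambda}_h^{(k)}|}{\lambda^{(k)}}
\leq
\frac{|\lambda^{(k)} - \lambda_h^{(k)}|}{\lambda^{(k)}}
+
\frac{|\lambda_h^{(k)} - \tilde{\lambda}_h^{(k)}|}{\lambda^{(k)}}
\,,
\]
into a \emph{discretization error} and a \emph{consistency error}, and to bound the two pieces independently. The first term is nothing but the error of the \emph{standard} Galerkin IGA approximation of a symmetric, coercive elliptic eigenvalue problem over the conforming space $V_{h,0}\subset H^1_0(\Omega)$; by classical spectral approximation theory (cf.\ \cite{da2014mathematical}) together with the IGA interpolation estimate already invoked in the proof of \Cref{thm:APrioriBounds}, one obtains $|\lambda^{(k)} - \lambda_h^{(k)}|/\lambda^{(k)} \leq C_8 h^{2p}$, where the constant $C_8$ absorbs $\|u^{(k)}\|_{p+1}^2$. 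This step is not specific to surrogates, so I would simply cite it.

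For the consistency term I would use a Rayleigh-quotient (Courant--Fischer) argument. By construction (see \cref{eq:DefinitionOfSurrogateMatrixSymmetricKernel,eq:DefinitionOfSurrogateMatrixSymmetric}) both $\tilde{a}$ and $\tilde{m}$ are symmetric; by \Cref{cor:Coercivity} the form $\tilde{a}$ is coercive on $V_{h,0}$ for $H$ small, and \cref{eq:SurrogateMassBound} gives $\tilde{m}(v,v)\geq (1-C_7 H^{q+1})\,m(v,v)$, which is positive for $H$ small. Hence the surrogate matrix pencil $(\tilde{\sfA},\tilde{\sfM})$ has positive eigenvalues and admits the min--max characterization $\tilde{\lambda}_h^{(k)} = \min_{\dim S = k}\,\max_{0\neq v\in S}\,\tilde{a}(v,v)/\tilde{m}(v,v)$ over $k$-dimensional subspaces $S\subset V_{h,0}$, and likewise $\lambda_h^{(k)}$ with $a,m$. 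Now for any $0\neq v\in V_{h,0}$, \Cref{thm:BilinearFormDifferencePoisson} (which applies because $\tilde{\sfA}$ is defined by \cref{eq:DefinitionOfSurrogateMatrixSymmetricKernel}) gives $|a(v,v)-\tilde{a}(v,v)|\leq C_4 H^{q+1}\|\nabla v\|_0^2 = C_4 H^{q+1}\,a(v,v)$, while \cref{eq:SurrogateMassBound} gives $|m(v,v)-\tilde{m}(v,v)|\leq C_7 H^{q+1}\|v\|_0^2 = C_7 H^{q+1}\,m(v,v)$, so that
\[
\frac{1-C_4 H^{q+1}}{1+C_7 H^{q+1}}\,\frac{a(v,v)}{m(v,v)}
\ \leq\
\frac{\tilde{a}(v,v)}{\tilde{m}(v,v)}
\ \leq\
\frac{1+C_4 H^{q+1}}{1-C_7 H^{q+1}}\,\frac{a(v,v)}{m(v,v)}
\,.
\]
Since the two multiplicative factors are independent of $v$, passing to the min--max yields $|\lambda_h^{(k)}-\tilde{\lambda}_h^{(k)}|/\lambda_h^{(k)} \leq (C_4+C_7)H^{q+1}/(1-C_7 H^{q+1})$.

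It then remains to assemble the two estimates. Writing $|\lambda_h^{(k)}-\tilde{\lambda}_h^{(k)}|/\lambda^{(k)} = \big(|\lambda_h^{(k)}-\tilde{\lambda}_h^{(k)}|/\lambda_h^{(k)}\big)\,\big(\lambda_h^{(k)}/\lambda^{(k)}\big)$ and noting that $\lambda_h^{(k)}/\lambda^{(k)}\to 1$ (Galerkin approximates the eigenvalues from above and converges) while $1/(1-C_7 H^{q+1})\to 1$ as $H\to 0$, the product of all factors multiplying $(C_4+C_7)H^{q+1}$ is at most $\theta$ for any fixed $\theta>1$ once $H$ — and hence also $h$, since $H=M h\geq h$ — is sufficiently small. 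Combining this with the discretization bound gives \cref{eq:EigenvalueBound}. I expect the only delicate point to be bookkeeping: keeping the denominators $1-C_4 H^{q+1}$ and $1-C_7 H^{q+1}$ positive and absorbing every $1+o(1)$ factor into the single slack constant $\theta$ rather than into $C_8$; the substance is entirely carried by the already-established perturbation bounds \Cref{thm:BilinearFormDifferencePoisson} and \cref{eq:SurrogateMassBound}. (Alternatively, one could phrase the consistency estimate as a relative eigenvalue perturbation bound for the symmetric pencils $(\sfA,\sfM)$ and $(\tilde{\sfA},\tilde{\sfM})$, but the min--max route is the most self-contained given what is already in hand.)
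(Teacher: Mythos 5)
Your proof is correct and follows essentially the same route as the paper: the same triangle-inequality split, the standard spectral estimate $|\lambda^{(k)}-\lambda_h^{(k)}|\leq C_8\,\lambda^{(k)}h^{2p}$, and a min--max comparison of Rayleigh quotients driven by \Cref{thm:BilinearFormDifferencePoisson} and \cref{eq:SurrogateMassBound}. The only cosmetic difference is that you sandwich the surrogate Rayleigh quotient uniformly over $V_{h,0}$ and absorb the $(1-C_7H^{q+1})^{-1}$ and $\lambda_h^{(k)}/\lambda^{(k)}$ factors into $\theta$ at the end, whereas the paper tests the min--max with the specific subspaces $E_h^{(k)}$ and $\tilde{E}_h^{(k)}$ spanned by discrete eigenfunctions and introduces $\theta$ along the way; both arguments yield \cref{eq:EigenvalueBound}.
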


\begin{proof}
	Clearly, $|\lambda^{(k)} - \tilde{\lambda}_h^{(k)}| \leq |\lambda^{(k)} - \lambda_h^{(k)}| + |\lambda_h^{(k)} - \tilde{\lambda}_h^{(k)}|$.
	It is known that if $u^{(k)}\in H^{p+1}(\Omega)$, then $|\lambda^{(k)} - \lambda_h^{(k)}| \leq C_8 \lambda^{(k)} h^{2p}$.
	We now focus on the term $|\lambda_h^{(k)} - \tilde{\lambda}_h^{(k)}|$.

	Let $V_h^{(k)}$ be the set of all $k$-dimensional subsets of $V_h$ and fix $\theta>1$.
	Two important members of this set are $E_h^{(k)} = \spann\big\{ u_h^{(l)}\big\}_{l=1}^k$ and $\tilde{E}_h^{(k)} = \spann\big\{ \tilde{u}_h^{(l)}\big\}_{l=1}^k$.
	Observe that
	\begin{equation}
	\begin{aligned}
		\tilde{\lambda}_h^{(k)}
		&=
		\min_{E_h\in V_h^{(k)}} \max_{v\in E_h^{\phantom{(k)}}}
		\frac{\tilde{a}(v,v)}{\tilde{m}(v,v)}
		\leq
		\max_{v\in E_h^{(k)}}
		\frac{\tilde{a}(v,v)}{\tilde{m}(v,v)}
		=
		\max_{v\in E_h^{(k)}}
		\frac{a(v,v)}{m(v,v)}
		\frac{\tilde{a}(v,v)}{a(v,v)}
		\frac{m(v,v)}{\tilde{m}(v,v)}
		\\
		&\leq
		\lambda_h^{(k)}
		\max_{v\in E_h^{(k)}}
		\frac{\tilde{a}(v,v)}{a(v,v)}
		\max_{w\in E_h^{(k)}}
		\frac{m(w,w)}{\tilde{m}(w,w)}
		\,.
	\end{aligned}
	\label{eq:EigenvalueConvergenceA}
	\end{equation}
	Note that $\frac{\tilde{a}(v,v)}{a(v,v)} = 1 + \frac{\tilde{a}(v,v)-a(v,v)}{a(v,v)} \leq 1 + C_4 H^{q+1}$, by \Cref{thm:BilinearFormDifferencePoisson}.
	Similarly, by~\cref{eq:SurrogateMassBound}, $\frac{m(w,w)}{\tilde{m}(w,w)} = 1 + \frac{\tilde{m}(w,w)-m(w,w)}{\tilde{m}(w,w)} \leq 1 + \theta\cdot C_7 H^{q+1}$, in the limit $H\to 0$.
	These observations, together with~\cref{eq:EigenvalueConvergenceA}, imply
	\begin{subequations}
	\begin{equation}
		\tilde{\lambda}_h^{(k)}
		\leq
		\lambda_h^{(k)}
		+
		\lambda_h^{(k)} (C_4 + \theta\cdot C_7) H^{q+1}
		\,,
	\label{eq:EigenvalueConvergenceB}
	\end{equation}
	for every sufficiently small $H>0$.
	Following a similar argument,
	\begin{equation}
	\begin{aligned}
		\lambda_h^{(k)}
		&\leq
		\tilde{\lambda}_h^{(k)}
		\frac{\tilde{a}(v,v)}{\tilde{a}(v,v)}
		\max_{w\in E_h^{(k)}}
		\frac{\tilde{m}(w,w)}{m(w,w)}
		&\leq \tilde{\lambda}_h^{(k)} + \tilde{\lambda}_h^{(k)} (\theta\cdot C_4 + C_7) H^{q+1}
		\,,
	\end{aligned}
	\label{eq:EigenvalueConvergenceC}
	\end{equation}
	\end{subequations}
	for every sufficiently small $H>0$.
	Note that~\cref{eq:EigenvalueConvergenceB} implies that $\tilde{\lambda}_h^{(k)} \leq \theta \lambda_h^{(k)}$, as $H\to 0$.
	After introducing this inequality into~\cref{eq:EigenvalueConvergenceC}, we arrive at the upper bound $|\lambda_h^{(k)} - \tilde{\lambda}_h^{(k)}| \leq \lambda_h^{(k)}\, \theta\cdot (C_4 + C_7) H^{q+1}$, in the limit $H\to 0$.
	Because $\lambda_h^{(k)} \to \lambda^{(k)}$, we also have $\lambda_h^{(k)} \leq \theta\lambda^{(k)}$, as $H\to 0$.
	Only elementary algebra remains in order to arrive at~\cref{eq:EigenvalueBound}.
\end{proof}

\begin{figure}
\centering
	\includegraphics[trim=1cm 0cm 1.5cm 0cm,clip=true,height=4.5cm]{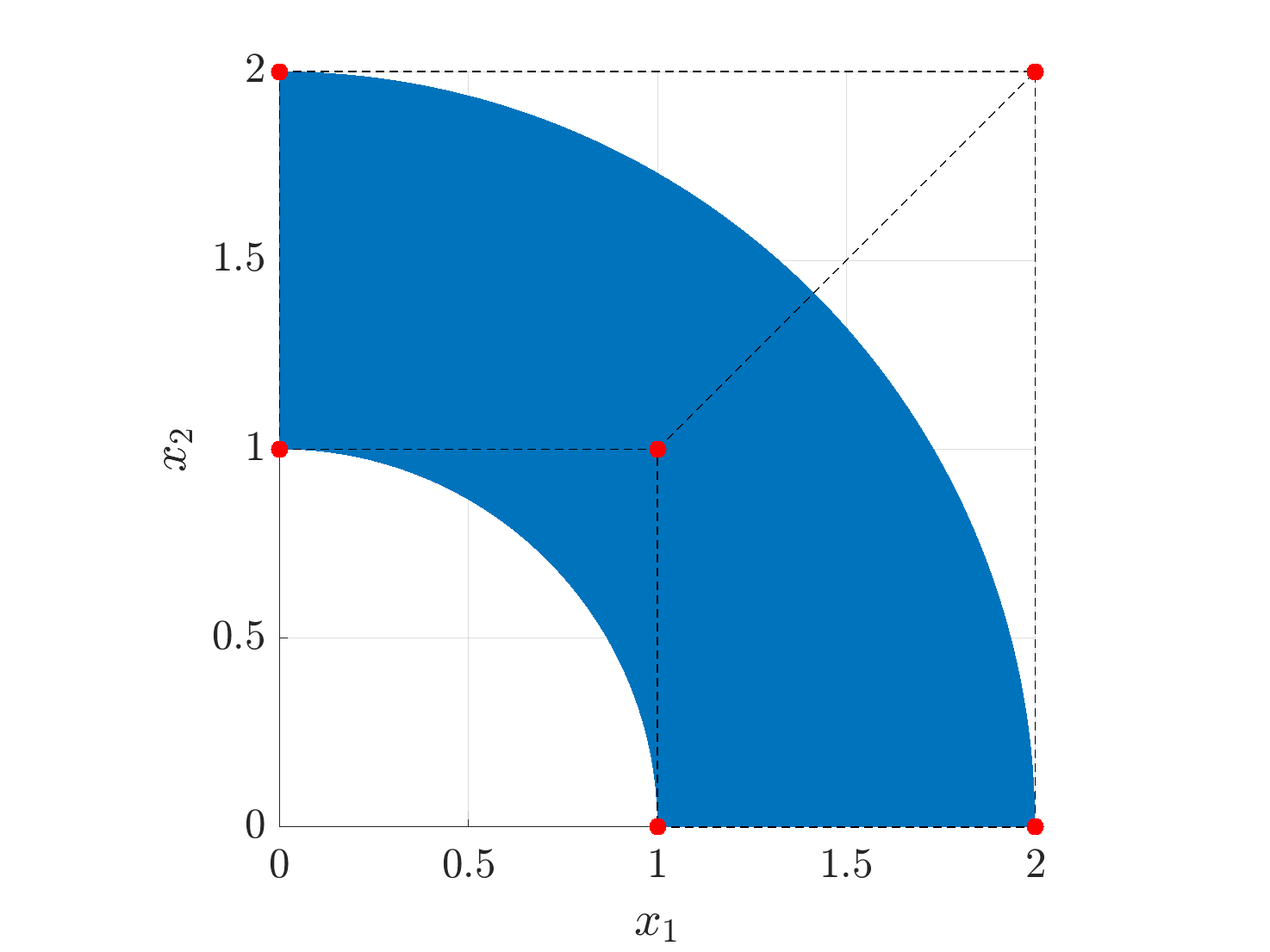}
	\caption{\label{fig:EigenvalueGeometry}The physical domain $\Omega$ for problems~\cref{eq:EigenvalueWeakForm,eq:PlateWeakForm}.}
\end{figure}

\begin{remark}
	One upshot of~\cref{thm:APrioriEigenvalues} is that, if $H = \mcO(h)$, then one may wish to choose $q+1 > 2p$, in order to recover an optimal spectral convergence rate.
	Of course, for irregular geometries, it is difficult to maintain the assumption $\lambda^{(j)} \in H^{p+1}(\Omega)$, and a lower $q$ may still provide a useful approximation.
\end{remark}

\subsection{Numerical experiments} %
\label{sub:Membrane_numerical_experiments}

\begin{figure}%
\centering
\begin{minipage}{0.32\textwidth}
\begin{scaletikzpicturetowidth}{\textwidth}
\begin{tikzpicture}[scale=\tikzscale,font=\large]
\begin{loglogaxis}[
xlabel={Degrees of freedom},
ylabel={$|\lambda^{(1)} - \tilde{\lambda}_h^{(1)}|$},
xmajorgrids,
ymajorgrids,
legend style={fill=white, fill opacity=0.6, draw opacity=1, text opacity=1},
legend pos=north east
]
\addplot[black, mark=*, very thick, mark options={scale=1.5}, fill opacity=0.6] table [x index = {0}, y index={1}, col sep=comma] {./Results/PoissonStatic2D/eig_errors_p2_I1_QuarterAnnulus.csv};
\addlegendentry{$M=1$};
\addplot[color2, mark=diamond*, very thick] table [x index = {0}, y index={2}, col sep=comma] {./Results/PoissonStatic2D/eig_errors_p2_I1_QuarterAnnulus.csv};
\addlegendentry{$q=1$};
\addplot[color3, mark=diamond*, very thick] table [x index = {0}, y index={3}, col sep=comma] {./Results/PoissonStatic2D/eig_errors_p2_I1_QuarterAnnulus.csv};
\addlegendentry{$q=2$};
\addplot[color4, mark=diamond*, very thick] table [x index = {0}, y index={4}, col sep=comma] {./Results/PoissonStatic2D/eig_errors_p2_I1_QuarterAnnulus.csv};
\addlegendentry{$q=3$};
\addplot[color5, mark=diamond*, very thick] table [x index = {0}, y index={5}, col sep=comma] {./Results/PoissonStatic2D/eig_errors_p2_I1_QuarterAnnulus.csv};
\addlegendentry{$q=4$};
\addplot[color6, mark=diamond*, very thick] table [x index = {0}, y index={6}, col sep=comma] {./Results/PoissonStatic2D/eig_errors_p2_I1_QuarterAnnulus.csv};
\addlegendentry{$q=5$};
\logLogSlopeTriangle{0.7}{0.2}{0.13}{2}{4}{};
\legend{}; %
\end{loglogaxis}
\end{tikzpicture} \end{scaletikzpicturetowidth}
\end{minipage}
\hfill
\begin{minipage}{0.32\textwidth}
\begin{scaletikzpicturetowidth}{\textwidth}
\begin{tikzpicture}[scale=\tikzscale,font=\large]
\begin{loglogaxis}[
xlabel={Degrees of freedom},
ylabel={$|\lambda^{(2)} - \tilde{\lambda}_h^{(2)}|$},
xmajorgrids,
ymajorgrids,
legend style={fill=white, fill opacity=0.6, draw opacity=1, text opacity=1},
legend pos=north east
]
\addplot[black, mark=*, very thick, mark options={scale=1.5}, fill opacity=0.6] table [x index = {0}, y index={1}, col sep=comma] {./Results/PoissonStatic2D/eig_errors_p2_I2_QuarterAnnulus.csv};
\addlegendentry{$M=1$};
\addplot[color2, mark=diamond*, very thick] table [x index = {0}, y index={2}, col sep=comma] {./Results/PoissonStatic2D/eig_errors_p2_I2_QuarterAnnulus.csv};
\addlegendentry{$q=1$};
\addplot[color3, mark=diamond*, very thick] table [x index = {0}, y index={3}, col sep=comma] {./Results/PoissonStatic2D/eig_errors_p2_I2_QuarterAnnulus.csv};
\addlegendentry{$q=2$};
\addplot[color4, mark=diamond*, very thick] table [x index = {0}, y index={4}, col sep=comma] {./Results/PoissonStatic2D/eig_errors_p2_I2_QuarterAnnulus.csv};
\addlegendentry{$q=3$};
\addplot[color5, mark=diamond*, very thick] table [x index = {0}, y index={5}, col sep=comma] {./Results/PoissonStatic2D/eig_errors_p2_I2_QuarterAnnulus.csv};
\addlegendentry{$q=4$};
\addplot[color6, mark=diamond*, very thick] table [x index = {0}, y index={6}, col sep=comma] {./Results/PoissonStatic2D/eig_errors_p2_I2_QuarterAnnulus.csv};
\addlegendentry{$q=5$};
\logLogSlopeTriangle{0.7}{0.2}{0.13}{2}{4}{};
\legend{}; %
\end{loglogaxis}
\end{tikzpicture} \end{scaletikzpicturetowidth}
\end{minipage}
\hfill
\begin{minipage}{0.32\textwidth}
\begin{scaletikzpicturetowidth}{\textwidth}
\begin{tikzpicture}[scale=\tikzscale,font=\large]
\begin{loglogaxis}[
xlabel={Degrees of freedom},
ylabel={$|\lambda^{(3)} - \tilde{\lambda}_h^{(3)}|$},
xmajorgrids,
ymajorgrids,
legend style={fill=white, fill opacity=0.6, draw opacity=1, text opacity=1},
legend pos=north east
]
\addplot[black, mark=*, very thick, mark options={scale=1.5}, fill opacity=0.6] table [x index = {0}, y index={1}, col sep=comma] {./Results/PoissonStatic2D/eig_errors_p2_I3_QuarterAnnulus.csv};
\addlegendentry{$M=1$};
\addplot[color2, mark=diamond*, very thick] table [x index = {0}, y index={2}, col sep=comma] {./Results/PoissonStatic2D/eig_errors_p2_I3_QuarterAnnulus.csv};
\addlegendentry{$q=1$};
\addplot[color3, mark=diamond*, very thick] table [x index = {0}, y index={3}, col sep=comma] {./Results/PoissonStatic2D/eig_errors_p2_I3_QuarterAnnulus.csv};
\addlegendentry{$q=2$};
\addplot[color4, mark=diamond*, very thick] table [x index = {0}, y index={4}, col sep=comma] {./Results/PoissonStatic2D/eig_errors_p2_I3_QuarterAnnulus.csv};
\addlegendentry{$q=3$};
\addplot[color5, mark=diamond*, very thick] table [x index = {0}, y index={5}, col sep=comma] {./Results/PoissonStatic2D/eig_errors_p2_I3_QuarterAnnulus.csv};
\addlegendentry{$q=4$};
\addplot[color6, mark=diamond*, very thick] table [x index = {0}, y index={6}, col sep=comma] {./Results/PoissonStatic2D/eig_errors_p2_I3_QuarterAnnulus.csv};
\addlegendentry{$q=5$};
\logLogSlopeTriangle{0.7}{0.2}{0.13}{2}{4}{};
\legend{}; %
\end{loglogaxis}
\end{tikzpicture} \end{scaletikzpicturetowidth}
\end{minipage}
\\
\begin{minipage}{0.32\textwidth}
\begin{scaletikzpicturetowidth}{\textwidth}
\begin{tikzpicture}[scale=\tikzscale,font=\large]
\begin{loglogaxis}[
xlabel={Degrees of freedom},
ylabel={$|\lambda^{(4)} - \tilde{\lambda}_h^{(4)}|$},
xmajorgrids,
ymajorgrids,
legend style={fill=white, fill opacity=0.6, draw opacity=1, text opacity=1},
legend pos=north east
]
\addplot[black, mark=*, very thick, mark options={scale=1.5}, fill opacity=0.6] table [x index = {0}, y index={1}, col sep=comma] {./Results/PoissonStatic2D/eig_errors_p2_I4_QuarterAnnulus.csv};
\addlegendentry{$M=1$};
\addplot[color2, mark=diamond*, very thick] table [x index = {0}, y index={2}, col sep=comma] {./Results/PoissonStatic2D/eig_errors_p2_I4_QuarterAnnulus.csv};
\addlegendentry{$q=1$};
\addplot[color3, mark=diamond*, very thick] table [x index = {0}, y index={3}, col sep=comma] {./Results/PoissonStatic2D/eig_errors_p2_I4_QuarterAnnulus.csv};
\addlegendentry{$q=2$};
\addplot[color4, mark=diamond*, very thick] table [x index = {0}, y index={4}, col sep=comma] {./Results/PoissonStatic2D/eig_errors_p2_I4_QuarterAnnulus.csv};
\addlegendentry{$q=3$};
\addplot[color5, mark=diamond*, very thick] table [x index = {0}, y index={5}, col sep=comma] {./Results/PoissonStatic2D/eig_errors_p2_I4_QuarterAnnulus.csv};
\addlegendentry{$q=4$};
\addplot[color6, mark=diamond*, very thick] table [x index = {0}, y index={6}, col sep=comma] {./Results/PoissonStatic2D/eig_errors_p2_I4_QuarterAnnulus.csv};
\addlegendentry{$q=5$};
\logLogSlopeTriangle{0.7}{0.2}{0.13}{2}{4}{};
\legend{}; %
\end{loglogaxis}
\end{tikzpicture} \end{scaletikzpicturetowidth}
\end{minipage}
\hfill
\begin{minipage}{0.32\textwidth}
\begin{scaletikzpicturetowidth}{\textwidth}
\begin{tikzpicture}[scale=\tikzscale,font=\large]
\begin{loglogaxis}[
xlabel={Degrees of freedom},
ylabel={$|\lambda^{(5)} - \tilde{\lambda}_h^{(5)}|$},
xmajorgrids,
ymajorgrids,
legend style={fill=white, fill opacity=0.6, draw opacity=1, text opacity=1},
legend pos=north east
]
\addplot[black, mark=*, very thick, mark options={scale=1.5}, fill opacity=0.6] table [x index = {0}, y index={1}, col sep=comma] {./Results/PoissonStatic2D/eig_errors_p2_I5_QuarterAnnulus.csv};
\addlegendentry{$M=1$};
\addplot[color2, mark=diamond*, very thick] table [x index = {0}, y index={2}, col sep=comma] {./Results/PoissonStatic2D/eig_errors_p2_I5_QuarterAnnulus.csv};
\addlegendentry{$q=1$};
\addplot[color3, mark=diamond*, very thick] table [x index = {0}, y index={3}, col sep=comma] {./Results/PoissonStatic2D/eig_errors_p2_I5_QuarterAnnulus.csv};
\addlegendentry{$q=2$};
\addplot[color4, mark=diamond*, very thick] table [x index = {0}, y index={4}, col sep=comma] {./Results/PoissonStatic2D/eig_errors_p2_I5_QuarterAnnulus.csv};
\addlegendentry{$q=3$};
\addplot[color5, mark=diamond*, very thick] table [x index = {0}, y index={5}, col sep=comma] {./Results/PoissonStatic2D/eig_errors_p2_I5_QuarterAnnulus.csv};
\addlegendentry{$q=4$};
\addplot[color6, mark=diamond*, very thick] table [x index = {0}, y index={6}, col sep=comma] {./Results/PoissonStatic2D/eig_errors_p2_I5_QuarterAnnulus.csv};
\addlegendentry{$q=5$};
\logLogSlopeTriangle{0.7}{0.2}{0.13}{2}{4}{};
\legend{}; %
\end{loglogaxis}
\end{tikzpicture} \end{scaletikzpicturetowidth}
\end{minipage}
\hfill
\begin{minipage}{0.32\textwidth}
\begin{scaletikzpicturetowidth}{\textwidth}
\begin{tikzpicture}[scale=\tikzscale,font=\large]
\begin{loglogaxis}[
xlabel={Degrees of freedom},
ylabel={$|\lambda^{(6)} - \tilde{\lambda}_h^{(6)}|$},
xmajorgrids,
ymajorgrids,
legend style={fill=white, fill opacity=0.6, draw opacity=1, text opacity=1},
legend pos=north east
]
\addplot[black, mark=*, very thick, mark options={scale=1.5}, fill opacity=0.6] table [x index = {0}, y index={1}, col sep=comma] {./Results/PoissonStatic2D/eig_errors_p2_I6_QuarterAnnulus.csv};
\addlegendentry{$M=1$};
\addplot[color2, mark=diamond*, very thick] table [x index = {0}, y index={2}, col sep=comma] {./Results/PoissonStatic2D/eig_errors_p2_I6_QuarterAnnulus.csv};
\addlegendentry{$q=1$};
\addplot[color3, mark=diamond*, very thick] table [x index = {0}, y index={3}, col sep=comma] {./Results/PoissonStatic2D/eig_errors_p2_I6_QuarterAnnulus.csv};
\addlegendentry{$q=2$};
\addplot[color4, mark=diamond*, very thick] table [x index = {0}, y index={4}, col sep=comma] {./Results/PoissonStatic2D/eig_errors_p2_I6_QuarterAnnulus.csv};
\addlegendentry{$q=3$};
\addplot[color5, mark=diamond*, very thick] table [x index = {0}, y index={5}, col sep=comma] {./Results/PoissonStatic2D/eig_errors_p2_I6_QuarterAnnulus.csv};
\addlegendentry{$q=4$};
\addplot[color6, mark=diamond*, very thick] table [x index = {0}, y index={6}, col sep=comma] {./Results/PoissonStatic2D/eig_errors_p2_I6_QuarterAnnulus.csv};
\addlegendentry{$q=5$};
\logLogSlopeTriangle{0.7}{0.2}{0.13}{2}{4}{};
\legend{}; %
\end{loglogaxis}
\end{tikzpicture} \end{scaletikzpicturetowidth}
\end{minipage}
\\
\begin{minipage}{0.32\textwidth}
\begin{scaletikzpicturetowidth}{\textwidth}
\begin{tikzpicture}[scale=\tikzscale,font=\large]
\begin{loglogaxis}[
xlabel={Degrees of freedom},
ylabel={$|\lambda^{(7)} - \tilde{\lambda}_h^{(7)}|$},
xmajorgrids,
ymajorgrids,
legend style={fill=white, fill opacity=0.6, draw opacity=1, text opacity=1},
legend pos=north east
]
\addplot[black, mark=*, very thick, mark options={scale=1.5}, fill opacity=0.6] table [x index = {0}, y index={1}, col sep=comma] {./Results/PoissonStatic2D/eig_errors_p2_I7_QuarterAnnulus.csv};
\addlegendentry{$M=1$};
\addplot[color2, mark=diamond*, very thick] table [x index = {0}, y index={2}, col sep=comma] {./Results/PoissonStatic2D/eig_errors_p2_I7_QuarterAnnulus.csv};
\addlegendentry{$q=1$};
\addplot[color3, mark=diamond*, very thick] table [x index = {0}, y index={3}, col sep=comma] {./Results/PoissonStatic2D/eig_errors_p2_I7_QuarterAnnulus.csv};
\addlegendentry{$q=2$};
\addplot[color4, mark=diamond*, very thick] table [x index = {0}, y index={4}, col sep=comma] {./Results/PoissonStatic2D/eig_errors_p2_I7_QuarterAnnulus.csv};
\addlegendentry{$q=3$};
\addplot[color5, mark=diamond*, very thick] table [x index = {0}, y index={5}, col sep=comma] {./Results/PoissonStatic2D/eig_errors_p2_I7_QuarterAnnulus.csv};
\addlegendentry{$q=4$};
\addplot[color6, mark=diamond*, very thick] table [x index = {0}, y index={6}, col sep=comma] {./Results/PoissonStatic2D/eig_errors_p2_I7_QuarterAnnulus.csv};
\addlegendentry{$q=5$};
\logLogSlopeTriangle{0.7}{0.2}{0.13}{2}{4}{};
\legend{}; %
\end{loglogaxis}
\end{tikzpicture} \end{scaletikzpicturetowidth}
\end{minipage}
\hfill
\begin{minipage}{0.32\textwidth}
\begin{scaletikzpicturetowidth}{\textwidth}
\begin{tikzpicture}[scale=\tikzscale,font=\large]
\begin{loglogaxis}[
xlabel={Degrees of freedom},
ylabel={$|\lambda^{(8)} - \tilde{\lambda}_h^{(8)}|$},
xmajorgrids,
ymajorgrids,
legend style={fill=white, fill opacity=0.6, draw opacity=1, text opacity=1},
legend pos=north east
]
\addplot[black, mark=*, very thick, mark options={scale=1.5}, fill opacity=0.6] table [x index = {0}, y index={1}, col sep=comma] {./Results/PoissonStatic2D/eig_errors_p2_I8_QuarterAnnulus.csv};
\addlegendentry{$M=1$};
\addplot[color2, mark=diamond*, very thick] table [x index = {0}, y index={2}, col sep=comma] {./Results/PoissonStatic2D/eig_errors_p2_I8_QuarterAnnulus.csv};
\addlegendentry{$q=1$};
\addplot[color3, mark=diamond*, very thick] table [x index = {0}, y index={3}, col sep=comma] {./Results/PoissonStatic2D/eig_errors_p2_I8_QuarterAnnulus.csv};
\addlegendentry{$q=2$};
\addplot[color4, mark=diamond*, very thick] table [x index = {0}, y index={4}, col sep=comma] {./Results/PoissonStatic2D/eig_errors_p2_I8_QuarterAnnulus.csv};
\addlegendentry{$q=3$};
\addplot[color5, mark=diamond*, very thick] table [x index = {0}, y index={5}, col sep=comma] {./Results/PoissonStatic2D/eig_errors_p2_I8_QuarterAnnulus.csv};
\addlegendentry{$q=4$};
\addplot[color6, mark=diamond*, very thick] table [x index = {0}, y index={6}, col sep=comma] {./Results/PoissonStatic2D/eig_errors_p2_I8_QuarterAnnulus.csv};
\addlegendentry{$q=5$};
\logLogSlopeTriangle{0.7}{0.2}{0.13}{2}{4}{};
\legend{}; %
\end{loglogaxis}
\end{tikzpicture} \end{scaletikzpicturetowidth}
\end{minipage}
\hfill
\begin{minipage}{0.32\textwidth}
\begin{scaletikzpicturetowidth}{\textwidth}
\begin{tikzpicture}[scale=\tikzscale,font=\large]
\begin{loglogaxis}[
xlabel={Degrees of freedom},
ylabel={$|\lambda^{(9)} - \tilde{\lambda}_h^{(9)}|$},
xmajorgrids,
ymajorgrids,
legend style={fill=white, fill opacity=0.6, draw opacity=1, text opacity=1},
legend pos=north east
]
\addplot[black, mark=*, very thick, mark options={scale=1.5}, fill opacity=0.6] table [x index = {0}, y index={1}, col sep=comma] {./Results/PoissonStatic2D/eig_errors_p2_I9_QuarterAnnulus.csv};
\addlegendentry{$M=1$};
\addplot[color2, mark=diamond*, very thick] table [x index = {0}, y index={2}, col sep=comma] {./Results/PoissonStatic2D/eig_errors_p2_I9_QuarterAnnulus.csv};
\addlegendentry{$q=1$};
\addplot[color3, mark=diamond*, very thick] table [x index = {0}, y index={3}, col sep=comma] {./Results/PoissonStatic2D/eig_errors_p2_I9_QuarterAnnulus.csv};
\addlegendentry{$q=2$};
\addplot[color4, mark=diamond*, very thick] table [x index = {0}, y index={4}, col sep=comma] {./Results/PoissonStatic2D/eig_errors_p2_I9_QuarterAnnulus.csv};
\addlegendentry{$q=3$};
\addplot[color5, mark=diamond*, very thick] table [x index = {0}, y index={5}, col sep=comma] {./Results/PoissonStatic2D/eig_errors_p2_I9_QuarterAnnulus.csv};
\addlegendentry{$q=4$};
\addplot[color6, mark=diamond*, very thick] table [x index = {0}, y index={6}, col sep=comma] {./Results/PoissonStatic2D/eig_errors_p2_I9_QuarterAnnulus.csv};
\addlegendentry{$q=5$};
\logLogSlopeTriangle{0.7}{0.2}{0.13}{2}{4}{};
\end{loglogaxis}
\end{tikzpicture} \end{scaletikzpicturetowidth}
\end{minipage}
\caption{\label{fig:eigenvalue_convergence_p2} Eigenvalue convergence plots. Quarter annulus geometry with $p=2$ and the constant sampling length $M=5$.
Note that the ``exact'' solutions, $\lambda^{(k)}$, were taken from precomputed values from a high-order discretization on a much finer mesh.
}
\end{figure}
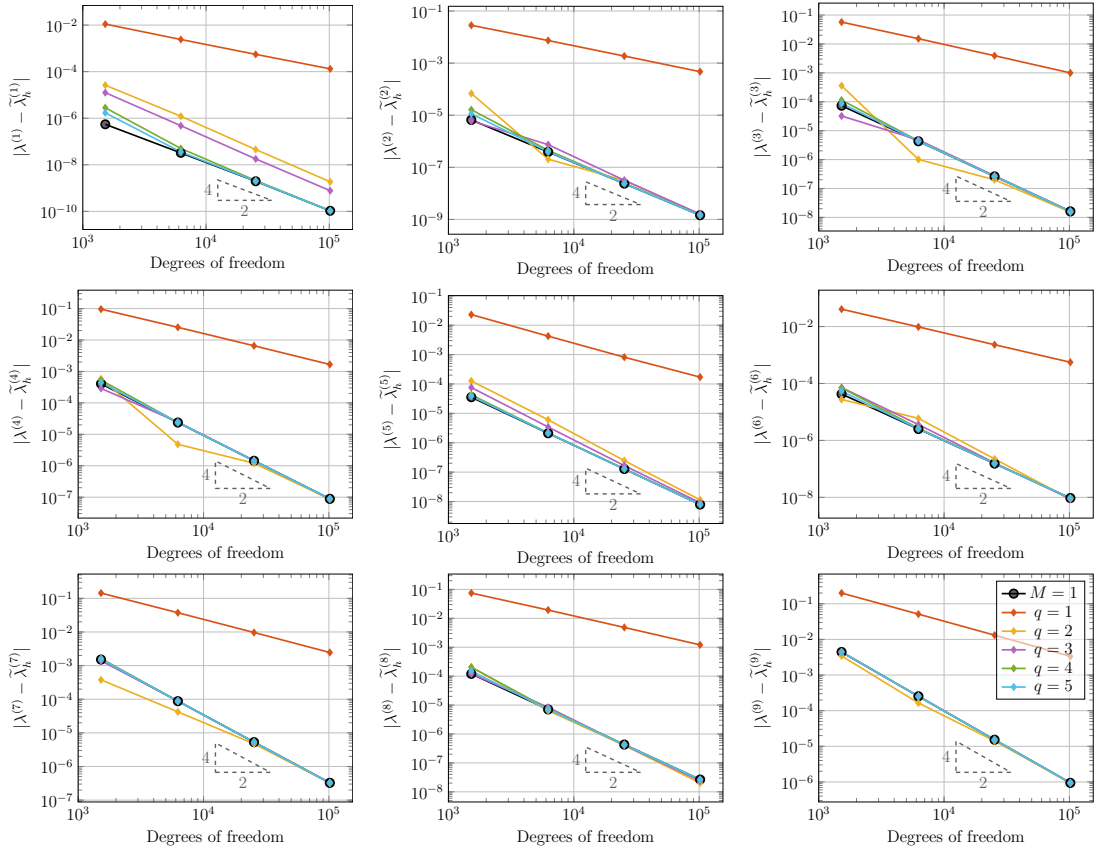

\begin{figure}%
\centering
\begin{minipage}{0.45\textwidth}
\begin{scaletikzpicturetowidth}{\textwidth}
\begin{tikzpicture}[scale=\tikzscale,font=\large]
\begin{axis}[
xlabel={k/N},
ylabel={$\frac{\omega_h^{(k)}-\tilde{\omega}_h^{(k)}}{\omega_h^{(k)}}$},
xmajorgrids,
ymajorgrids,
title={$p=2$},
legend style={fill=white, fill opacity=0.6, draw opacity=1, text opacity=1},
legend pos=north west
]
\addplot[color3, very thick] table [x index = {0}, y index={1}, col sep=comma] {./Results/PoissonStatic2D/omega_diff_p2.csv};
\addlegendentry{$q=2$};
\addplot[color4, very thick] table [x index = {0}, y index={2}, col sep=comma] {./Results/PoissonStatic2D/omega_diff_p2.csv};
\addlegendentry{$q=3$};
\addplot[color5, very thick] table [x index = {0}, y index={3}, col sep=comma] {./Results/PoissonStatic2D/omega_diff_p2.csv};
\addlegendentry{$q=4$};
\addplot[color6, very thick] table [x index = {0}, y index={4}, col sep=comma] {./Results/PoissonStatic2D/omega_diff_p2.csv};
\addlegendentry{$q=5$};
\legend{}; %
\end{axis}
\end{tikzpicture} \end{scaletikzpicturetowidth}
\end{minipage}
\qquad
\begin{minipage}{0.45\textwidth}
\begin{scaletikzpicturetowidth}{\textwidth}
\begin{tikzpicture}[scale=\tikzscale,font=\large]
\begin{axis}[
xlabel={k/N},
ylabel={$\frac{\omega_h^{(k)}-\tilde{\omega}_h^{(k)}}{\omega_h^{(k)}}$},
xmajorgrids,
ymajorgrids,
title={$p=3$},
legend style={fill=white, fill opacity=0.6, draw opacity=1, text opacity=1},
legend pos=north west
]
\addplot[color3, very thick] table [x index = {0}, y index={1}, col sep=comma] {./Results/PoissonStatic2D/omega_diff_p3.csv};
\addlegendentry{$q=2$};
\addplot[color4, very thick] table [x index = {0}, y index={2}, col sep=comma] {./Results/PoissonStatic2D/omega_diff_p3.csv};
\addlegendentry{$q=3$};
\addplot[color5, very thick] table [x index = {0}, y index={3}, col sep=comma] {./Results/PoissonStatic2D/omega_diff_p3.csv};
\addlegendentry{$q=4$};
\addplot[color6, very thick] table [x index = {0}, y index={4}, col sep=comma] {./Results/PoissonStatic2D/omega_diff_p3.csv};
\addlegendentry{$q=5$};
\end{axis}
\end{tikzpicture} \end{scaletikzpicturetowidth}
\end{minipage}
\caption{\label{fig:MembraneSpectum_p2}Relative differences in the computed natural frequencies from the IGA and the surrogate IGA method for $50\times50$ control points and $M=5$.}
\end{figure}
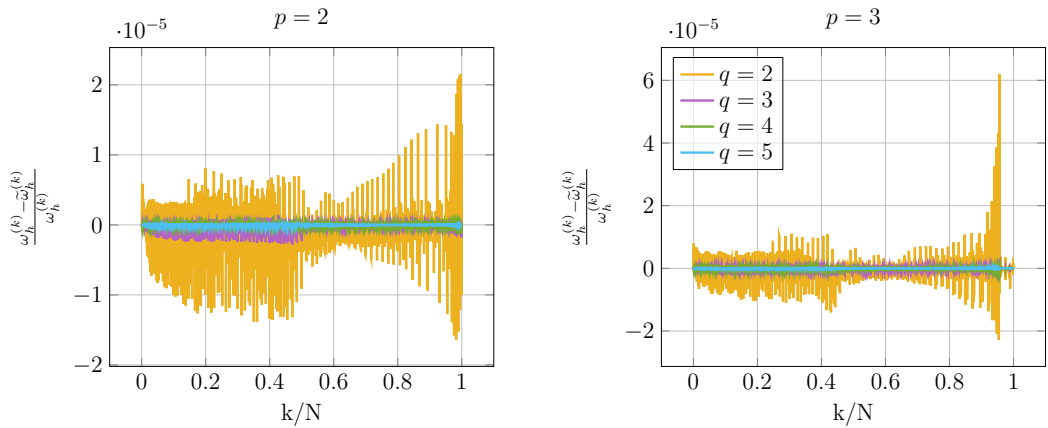

Our numerical experiments for problem~\cref{eq:EigenvalueWeakForm} involve only the two-dimensional quarter annulus domain $\Omega$, depicted in \Cref{fig:EigenvalueGeometry}.
This domain was chosen instead of \Cref{fig:PoissonGeometries2D} so that $u^{(k)}\in H^3(\Omega)$, for each $k$.
Thus, when $p=2$, \Cref{thm:APrioriEigenvalues} concludes that $|\lambda_h^{(k)} - \tilde{\lambda}_h^{(k)}| \leq \mcO(h^4 + H^{q+1})$.
\Cref{fig:eigenvalue_convergence_p2}, which shows the convergence of the first nine eigenvalues, for $p=2$, verifies this result.
Recall \Cref{rem:parity}.
One again witnesses the parity present in the $L^2$ error of the Poisson problems above.
That is, we actually observe the stronger conclusion $|\lambda_h^{(k)} - \tilde{\lambda}_h^{(k)}| = \mcO(h^4 + H^{q+2})$ when $q$ is even.

A close inspection of \Cref{fig:eigenvalue_convergence_p2} appears to indicate that the accuracy of the surrogate solution improves as the eigenvalues grow.
This observation is in line with the previous numerical results for Poisson's equation, which showed nearly indistinguishable solutions for the ``high-frequency'' manufactured solution $u(\bmx) = \sin\left(20\,\pi \,x_1\right)\,\sin\left(20\,\pi \,x_2\right)$ (cf. \Cref{fig:varying_hs_p2,fig:varying_hs_p3,fig:varying_hs_p2_3D}).
Naturally, we should compare the accuracy of all eigenvalues computed with the standard IGA method to those coming from the surrogate IGA method.
This is done, in part, in \Cref{fig:MembraneSpectum_p2} for both $p=2,3$.
Here, it is more meaningful to use the natural frequencies $(\omega^{(j)})^2 = \lambda^{(j)}$.
Notice that the differences are extremely small across the entire range of computed frequencies.

\section{Plate bending under a transverse load} %
\label{sec:the_biharmonic_equation}

Another clear advantage of IGA is the simplicity of discretizing high-order PDEs.
In this section, we briefly demonstrate that the same features hold true for surrogate IGA methods.
As a proof-of-concept, consider the simple Poisson--Kirchoff isotropic plate bending model.
Given a function $f\in L^2(\Omega)$, the corresponding weak form is the following:
\begin{equation}
	\text{Find } u\in H^2(\Omega)\cap H^1_0(\Omega)
	\text{ satisfying }
	\quad
	a(u,v) = F(v)
	\quad
	\text{for all }
	v\in H^2(\Omega)\cap H^1_0(\Omega)
	\,,
\label{eq:PlateWeakForm}
\end{equation}
where $a(u,v) = \int_\Omega \Delta u \, \Delta v \dd x$ and $F(v) = \int_\Omega f\sspace v \dd x$.

\subsection{A higher-dimensional kernel} %
\label{sub:a_higher_dimension_kernel}

With the same principles as used for Poisson's equation, one may easily design a surrogate IGA method for~\cref{eq:PlateWeakForm}.
In our approach, the corresponding surrogate stiffness matrix $\tilde{\sfA}$ was also defined using~\cref{eq:DefinitionOfSurrogateMatrixSymmetricKernel}.
Notice that this definition does not preserve the entire kernel found in the true IGA stiffness matrix $\sfA$.
For instance, one may easily verify that all linear functions lie in the kernel of $a(\cdot,\cdot)$.
Therefore, $\sfA\sfc^{(1)} = \sfA\sfc^{(2)} = 0$, where $\sfc^{(1)},\sfc^{(2)}$ are the $x_1$- and $x_2$-coefficients of the control points, respectively.
In our experiments, this property was only recovered in the limits $h\to 0$ or $H\to h$.

\begin{figure}%
\begin{minipage}{\textwidth}
\centering
\begin{minipage}{0.32\textwidth}
\begin{scaletikzpicturetowidth}{\textwidth}
\begin{tikzpicture}[scale=\tikzscale,font=\large]
\begin{loglogaxis}[
xlabel={Degrees of freedom},
ylabel={Relative $H^2$ error},
xmajorgrids,
ymajorgrids,
legend style={fill=white, fill opacity=0.6, draw opacity=1, text opacity=1},
legend pos=north east
]
\addplot[black, mark=*, very thick, mark options={scale=1.5}, fill opacity=0.6] table [x index = {0}, y index={1}, col sep=comma] {./Results/BiharmonicStatic/h2errors_p3.csv};
\addlegendentry{$M=1$};
\addplot[color3, mark=diamond*, very thick] table [x index = {0}, y index={3}, col sep=comma] {./Results/BiharmonicStatic/h2errors_p3.csv};
\addlegendentry{$q=2$};
\addplot[color4, mark=diamond*, very thick] table [x index = {0}, y index={4}, col sep=comma] {./Results/BiharmonicStatic/h2errors_p3.csv};
\addlegendentry{$q=3$};
\addplot[color5, mark=diamond*, very thick] table [x index = {0}, y index={5}, col sep=comma] {./Results/BiharmonicStatic/h2errors_p3.csv};
\addlegendentry{$q=4$};
\addplot[color6, mark=diamond*, very thick] table [x index = {0}, y index={6}, col sep=comma] {./Results/BiharmonicStatic/h2errors_p3.csv};
\addlegendentry{$q=5$};
\logLogSlopeTriangle{0.7}{0.2}{0.17}{2}{2}{};
\legend{}; %
\end{loglogaxis}
\end{tikzpicture} \end{scaletikzpicturetowidth}
\end{minipage}
\hfill
\begin{minipage}{0.32\textwidth}
\begin{scaletikzpicturetowidth}{\textwidth}
\begin{tikzpicture}[scale=\tikzscale,font=\large]
\begin{loglogaxis}[
xlabel={Degrees of freedom},
ylabel={Relative $H^1$ error},
xmajorgrids,
ymajorgrids,
legend style={fill=white, fill opacity=0.6, draw opacity=1, text opacity=1},
legend pos=north east
]
\addplot[black, mark=*, very thick, mark options={scale=1.5}, fill opacity=0.6] table [x index = {0}, y index={1}, col sep=comma] {./Results/BiharmonicStatic/h1errors_p3.csv};
\addlegendentry{$M=1$};
\addplot[color3, mark=diamond*, very thick] table [x index = {0}, y index={3}, col sep=comma] {./Results/BiharmonicStatic/h1errors_p3.csv};
\addlegendentry{$q=2$};
\addplot[color4, mark=diamond*, very thick] table [x index = {0}, y index={4}, col sep=comma] {./Results/BiharmonicStatic/h1errors_p3.csv};
\addlegendentry{$q=3$};
\addplot[color5, mark=diamond*, very thick] table [x index = {0}, y index={5}, col sep=comma] {./Results/BiharmonicStatic/h1errors_p3.csv};
\addlegendentry{$q=4$};
\addplot[color6, mark=diamond*, very thick] table [x index = {0}, y index={6}, col sep=comma] {./Results/BiharmonicStatic/h1errors_p3.csv};
\addlegendentry{$q=5$};
\logLogSlopeTriangle{0.7}{0.2}{0.15}{2}{3}{};
\legend{}; %
\end{loglogaxis}
\end{tikzpicture} \end{scaletikzpicturetowidth}
\end{minipage}
\hfill
\begin{minipage}{0.32\textwidth}
\begin{scaletikzpicturetowidth}{\textwidth}
\begin{tikzpicture}[scale=\tikzscale,font=\large]
\begin{loglogaxis}[
xlabel={Degrees of freedom},
ylabel={Relative $L^2$ error},
xmajorgrids,
ymajorgrids,
legend style={fill=white, fill opacity=0.6, draw opacity=1, text opacity=1},
legend pos=north east
]
\addplot[black, mark=*, very thick, mark options={scale=1.5}, fill opacity=0.6] table [x index = {0}, y index={1}, col sep=comma] {./Results/BiharmonicStatic/l2errors_p3.csv};
\addlegendentry{$M=1$};
\addplot[color3, mark=diamond*, very thick] table [x index = {0}, y index={3}, col sep=comma] {./Results/BiharmonicStatic/l2errors_p3.csv};
\addlegendentry{$q=2$};
\addplot[color4, mark=diamond*, very thick] table [x index = {0}, y index={4}, col sep=comma] {./Results/BiharmonicStatic/l2errors_p3.csv};
\addlegendentry{$q=3$};
\addplot[color5, mark=diamond*, very thick] table [x index = {0}, y index={5}, col sep=comma] {./Results/BiharmonicStatic/l2errors_p3.csv};
\addlegendentry{$q=4$};
\addplot[color6, mark=diamond*, very thick] table [x index = {0}, y index={6}, col sep=comma] {./Results/BiharmonicStatic/l2errors_p3.csv};
\addlegendentry{$q=5$};
\logLogSlopeTriangle{0.7}{0.2}{0.14}{2}{4}{};
\end{loglogaxis}
\end{tikzpicture} \end{scaletikzpicturetowidth}
\end{minipage}
\end{minipage}
\caption{\label{fig:plate_varying_hs_p3}Plate bending problem~\cref{eq:PlateWeakForm}. Relative errors for $p=3$, $M=5$, and the manufactured solution $u(\bmx) = \sin\left(\pi \,x_1\right)\,\mathrm{sinh}\left(\pi \,x_2\right)$}
\end{figure}
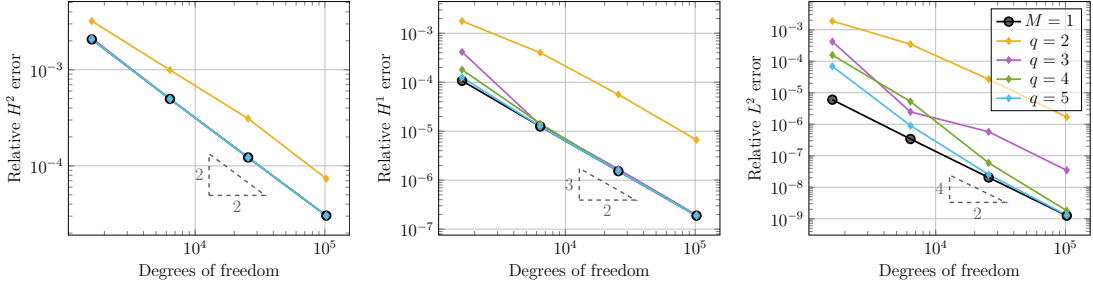

\subsection{Numerical experiments} %
\label{sub:Biharmonic_numerical_experiments}
Let $\Omega$ be the quarter annulus domain depicted in \Cref{fig:EigenvalueGeometry}.
\Cref{fig:plate_varying_hs_p3} shows the convergence of the errors, in the $H^2$, $H^1$, and $L^2$ norms, corresponding to this geometry $\Omega$ and the manufactured solution $u(\bmx) = \sin\left(\pi \,x_1\right)\,\mathrm{sinh}\left(\pi \,x_2\right)$.
Even though the kernel is not preserved, the numerical results we witnessed are similar to those documented for the $p=3$ experiments in the Poisson setting (see top row of \Cref{fig:varying_hs_p3}).
For instance, notice that the surrogate error with $q=2$ is parallel to the reference IGA error ($M=1$), in both the $H^1(\Omega)$ and $L^2(\Omega)$ norms.

\begin{remark}
Although we will not provide any rigorous analysis, if we recall \Cref{rem:StructureScalingLoss}, the similarity between our Poisson results and those above may still appear somewhat surprising.
Indeed, since only the zero row sum property is inherited in the surrogate $\tilde{\sfA}$ when using \cref{eq:DefinitionOfSurrogateMatrixSymmetricKernel}, we can not improve on the upper bound in \Cref{lem:FiniteDifference}.
Had the entire kernel of $\sfA$ been preserved in $\tilde{\sfA}$, we conjecture that an optimal form of this bound would involve an $h^{4-n}$ scaling factor.
Such a factor should permit a surrogate solution $\tilde{u}_h$ of two $h$-orders higher accuracy.
\end{remark}

\section{Stokes' flow} %
\label{sec:stokes_equation}
In this section, we consider a surrogate IGA discretization of Stokes' flow in a domain $\Omega \subset \R^n$.
Given a viscosity $\mu\in \R_{>0}$, a function $\bff \in \big[L^2(\Omega)\big]^n$, and a velocity field on the boundary $\bfg\in \big[H^{\onehalf}(\bdry\Omega)\big]^n$, $\int_{\partial\Omega} \bfg \cdot \bmn \dd s = 0$, the corresponding weak form is the following:
\begin{equation}
\label{eq:StokesWeakForm}
\left\{
\begin{alignedat}{3}
	&\text{Find } \bfu\in [H^1(\Omega)]^n \text{ and } p\in L^2(\Omega)\sspace/\sspace\sspace \R\
	\text{ satisfying }
	\tr\, \bfu = \bfg\,
	\text{ and }\\
	&a(\bfu,\bfv) + b(p,\bfv) + b(q,\bfu) = F(\bfv)
	\quad
	\text{for all }
	\bfv\in [H^1_0(\Omega)]^n \text{ and } q\in L^2(\Omega)
	\,,
	\end{alignedat}
\right.
\end{equation}
where $a(\bfu,\bfv) = \int_\Omega \mu\sspace \nabla \bfu \cdot \nabla \bfv \dd x$, $b(p,\bfv) = \int_\Omega p\, \nabla\cdot \bfv \dd x$, and $F(\bfv) = \int_\Omega \bff \cdot \bfv \dd x$.
In this scenario, the pressure is not unique up to a constant, therefore we enforce the pressure to have zero mean value, i.e., $\int_\Omega p \dd x = 0$.

\subsection{Surrogate divergence matrices} %
Since no symmetry can be exploited, the surrogate divergence matrices $\sfB$ are constructed by employing definition~\cref{eq:DefinitionOfSurrogateMatrix}.
Similarly, as in the mass term arising in \Cref{sec:transverse_vibrations_of_an_isotropic_membrane}, we have a surrogate reproduction property for the divergence form $b(q,\bfu)$, when the geometry map is described by a polynomial.
This property is formalized in the following corollary of \Cref{prop:PolynomialReproduction}.
\begin{corollary}
\label{lem:stokesreproduction}
Assume that the domain mapping $\bvarphi\colon \hat{\Omega} \rightarrow \Omega$ is defined through a polynomial of order $p$, i.e., $\bvarphi \in \big[\mcQ_p(\hat{\Omega})\big]^n$.
Let $\sfB$ be the coefficient matrix arising from the discretization of $b(\cdot,\cdot)$ and $\tilde{\sfB}$ the corresponding surrogate matrix.
If $q \geq (n-1)p$, it holds that $\sfB = \tilde{\sfB}$.
\end{corollary}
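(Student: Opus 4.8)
The plan is to mimic the proof of Corollary~\ref{lem:MassMatrixReproduction} and reduce the statement to Proposition~\ref{prop:PolynomialReproduction}. First I would pull the divergence form $b(q,\bfu)=\int_\Omega q\,\nabla\cdot\bfu\dd x$ back to the parametric domain. Let $\bmJ=D\bvarphi$ be the Jacobian of the geometry map and push velocity and pressure forward componentwise, $\bfu=\hat\bfu\circ\bvarphi^{-1}$, $q=\hat q\circ\bvarphi^{-1}$. The chain rule gives $(\nabla\cdot\bfu)\circ\bvarphi=\sum_{j,k}\partial_{\hat x_k}\hat u_j\,(\bmJ^{-1})_{kj}$, and the change of variables contributes the factor $\det(\bmJ)$; since $\det(\bmJ)\bmJ^{-1}=\adj(\bmJ)$, the transpose of the cofactor matrix, this yields
\begin{equation}
  b(q,\bfu)
  =
  \int_{\hat\Omega}\hat q(\hat\bmx)\sum_{j,k}\partial_{\hat x_k}\hat u_j(\hat\bmx)\,\adj(\bmJ(\hat\bmx))_{kj}\dd\hat\bmx
  .
\end{equation}
If $\det(\bmJ)$ is not sign-definite one replaces it by $|\det(\bmJ)|$, which only inserts an immaterial global sign; compare the treatment of $\det(\bmJ)$ in Corollary~\ref{lem:MassMatrixReproduction}.

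Next I would track the polynomial degree of the geometry-dependent factor $\adj(\bmJ)$. Since $\bvarphi\in[\mcQ_p(\hat\Omega)]^n$, every entry $\partial_{\hat x_l}\varphi_i$ of $\bmJ$ lies in $\mcQ_p(\hat\Omega)$, and every entry of $\adj(\bmJ)$ is a signed $(n-1)\times(n-1)$ minor of $\bmJ$, hence a sum of products of $n-1$ entries of $\bmJ$; therefore $\adj(\bmJ)\in[\mcQ_{(n-1)p}(\hat\Omega)]^{n\times n}$. Consequently the integrand above has exactly the structure required by Proposition~\ref{prop:PolynomialReproduction}: writing $G(\hat\bmx,\hat q(\hat\bmy),\hat\bfu(\hat\bmy))=\hat q(\hat\bmy)\sum_{j,k}\partial_{\hat x_k}\hat u_j(\hat\bmy)\,\adj(\bmJ(\hat\bmx))_{kj}$, assumption~\eqref{eq:SupportAssumption} holds because $\nabla\cdot\bfu$ is supported in $\supp(\bfu)$ so the product vanishes off $\supp(q)\cap\supp(\bfu)$, and $G(\cdot,\hat\bmy,\hat\bmy)$ is a polynomial of degree at most $(n-1)p$ in each Cartesian direction.

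The final step is to invoke Proposition~\ref{prop:PolynomialReproduction}. Because $q\geq(n-1)p$, the associated stencil functions satisfy $\Phi_\bdelta\in\mcQ_{(n-1)p}(\tilde\Omega)\subseteq\mcQ_q(\tilde\Omega)$ for each $\bdelta$, so the degree-$q$ local B-spline interpolant $\Pi_H$ reproduces them exactly, $\tilde\Phi_\bdelta=\Phi_\bdelta$, and with definition~\eqref{eq:DefinitionOfSurrogateMatrix} this gives $\tilde\sfB=\sfB$. One bookkeeping point deserves a sentence: the velocity functions are vector valued and the pressure space is generally a different (cardinal) B-spline space than the velocity space, so Proposition~\ref{prop:PolynomialReproduction} has to be applied component-by-component to each pressure--velocity basis pair; the proof of that proposition uses nothing about the two bases beyond the fact that the remaining integration is over a fixed subset $\hat\omega_\bdelta\subset\hat\Omega$, so this generalization is routine, exactly as is already implicit in Corollary~\ref{lem:MassMatrixReproduction} and Section~\ref{sec:surrogate_matrices_exploiting_basis_structure}. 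The only genuine obstacle is the degree count for $\adj(\bmJ)$ --- that is, verifying that the cofactors of a $\mcQ_p$ Jacobian lie in $\mcQ_{(n-1)p}$ --- since everything else is a direct appeal to the already-established reproduction result.
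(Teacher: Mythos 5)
Your proposal is correct and takes essentially the same route as the paper: pull $b(\cdot,\cdot)$ back to the parametric domain, isolate the geometry factor $\adj(\bmJ)$ via $\bmJ^{-1}=\det(\bmJ)^{-1}\adj(\bmJ)$, bound its polynomial degree in the $\hat{\bmx}$-variable, and then invoke \cref{prop:PolynomialReproduction} together with definition~\cref{eq:DefinitionOfSurrogateMatrix}. The only difference is cosmetic: you get $\adj(\bmJ)\in\big[\mcQ_{(n-1)p}(\hat{\Omega})\big]^{n\times n}$ for general $n$ directly from the minor expansion, whereas the paper verifies the same degree bound by an explicit per-direction computation restricted to $n=2,3$.
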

\begin{proof}
In the following, we only consider the cases $n=2$ and $n=3$.
Let $\bmJ(\hat{\bmx})$ be the Jacobian of $\bvarphi(\hat{\bmx})$.
Assuming a gradient preserving transformation to the reference domain, the divergence $\nabla\cdot \bfu$ in the physical domain is transformed to $\tr(\bmJ^{-1}\hat{\nabla}\hat{\bfu})$, where $\tr$ is the trace.
Using the property $\bmJ^{-1} = \det(\bmJ)^{-1}\adj(\bmJ)$, where $\adj(\bmJ)$ is the adjugate of $\bmJ$, the bilinear form $b(\cdot,\cdot)$ may be written as
\begin{align}
b(q,\bfu) &= \int_\Omega p\, \nabla\cdot \bfv \dd \bmx = \int_{\hat{\Omega}} \hat{p}\,  \tr\left(\bmJ^{-1}\hat{\nabla}\hat{\bfu}\right) \det(\bmJ)\dd \hat{\bmx} = \int_{\hat{\Omega}} \hat{p}\, \tr\left(\adj(\bmJ)\hat{\nabla}\hat{\bfu}\right) \dd \hat{\bmx}.
\end{align}
It remains to show that $G(\hat{\bmx}, \hat{p}(\hat{\bmy}), \hat{\bfu}(\hat{\bmy})) = \hat{p}(\hat{\bmy})\, \tr\left(\adj(\bmJ(\hat{\bmx}))\hat{\nabla}\hat{\bfu}(\hat{\bmy})\right)$ is a polynomial of degree $(n-1)p$ in the $\hat{\bmx}$-variable.
Applying \cref{prop:PolynomialReproduction} yields the desired reproduction property.
The trace operator $\tr$ is linear, thus it suffices to analyze the entries of $\adj(\bmJ)$.
In 2D, the components of $\adj(\bmJ)$ and $\bmJ$ only differ by their position and sign.
Since each component of $\bmJ$ is an element of $\mcQ_p(\hat{\Omega})$, we conclude that each component of $\adj(\bmJ)$ is also in $\mcQ_p(\hat{\Omega})$.
In 3D, the components of $\adj(\bmJ)$ are made up of determinants of $2 \times 2$ sub-matrices of $\bmJ$.
Taking the trace yields
\begin{equation}
\tr\left(\adj(\bmJ)\right) = \det\begin{pmatrix}
\bmJ_{22} & \bmJ_{23}\\
\bmJ_{32} & \bmJ_{33}
\end{pmatrix} + \det\begin{pmatrix}
\bmJ_{11} & \bmJ_{13}\\
\bmJ_{31} & \bmJ_{33}
\end{pmatrix} + \det\begin{pmatrix}
\bmJ_{11} & \bmJ_{12}\\
\bmJ_{21} & \bmJ_{22}
\end{pmatrix}.
\end{equation}
For the first summand, we have $\bmJ_{22} \in \mcP_{p} \otimes \mcP_{p-1} \otimes \mcP_{p}$, $\sfJ_{23} \in \mcP_{p} \otimes \mcP_{p} \otimes \mcP_{p-1}$, $\bmJ_{32} \in \mcP_{p} \otimes \mcP_{p-1} \otimes \mcP_{p}$, and $\bmJ_{33} \in \mcP_{p} \otimes \mcP_{p} \otimes \mcP_{p-1}$.
From this it follows that $\bmJ_{22} \cdot \bmJ_{33} \in \mcP_{2p} \otimes \mcP_{2p-1} \otimes \mcP_{2p-1}$ and $\bmJ_{23} \cdot \bmJ_{32} \in \mcP_{2p} \otimes \mcP_{2p-1} \otimes \mcP_{2p-1}$.
This means that $\bmJ_{22} \cdot \bmJ_{33} - \bmJ_{23} \cdot \bmJ_{32} \in \mcQ_{2p}(\hat{\Omega})$.
Applying the same arguments to the other summands finally yields that $\tr\left(\adj(\bmJ)\right) \in \mcQ_{2p}(\hat{\Omega})$.
\end{proof}
In order to discretize \cref{eq:StokesWeakForm}, an inf-sup stable space pair is required.
For this purpose, we choose the isogeometric \emph{subgrid} element as described in \cite{bressan2013isogeometric}.
In this discretization, the velocity field is defined on a subgrid of the pressure where each pressure element is subdivided into $2^n$ elements.
This allows for using a velocity space of order $p+1$ with $C^{p}(\hat{\Omega})$ regularity and a pressure space of order $p$ with $C^{p-1}(\hat{\Omega})$ regularity.

We do not provide \emph{a priori} error estimates for the Stokes problem, but in the case that the divergence matrix is reproduced one would follow similar arguments as presented for the Poisson problem.
In the scenario where the divergence matrix is not reproduced, further work is necessary.
However, the results in the next subsection suggest that the reproduction is not required in order to obtain an optimal order of convergence. 

\subsection{Numerical experiments} %
Our computational study of Stokes' flow is comprised of two separate experiments.
In the first experiment, we provide a smooth manufactured solution in order to investigate convergence rates.
In the second example, we consider a lid-driven cavity benchmark problem.
Due to the discontinuous boundary conditions, the solution of this problem has singularities at two corners of the domain.
Both examples are computed on the domain shown in \Cref{fig:StokesGeometry} which was constructed by a Coons patch with a cubic boundary parameterization.
We discretize the problem using the aforementioned subgrid element with a third order velocity and second order pressure.
The viscosity is set to $\mu = 1$ for all scenarios.

In the first example, the manufactured solution is chosen to be
\begin{align}
\label{eq:StokesSolution}
\bfu(x,y) = \left[\begin{matrix}\frac{\sin{\left (x \right )} \cos{\left (y \right )}}{x + 1}\\
\frac{\left(\left(x + 1\right) \cos{\left (x \right )} - \sin{\left (x \right )}\right) \sin{\left (y \right )}}{\left(x + 1\right)^{2}}\end{matrix}\right],\quad
p(x,y) = y \sinh{\left (x \right )} + C_{p}.
\end{align}
Note that this solution satisfies $\nabla\cdot \bfu = 0$ and the constant $C_{p} \in \R$ is chosen such that the pressure mean is zero.
The Dirichlet boundary condition $\bfg$ and the right hand side $\bff$ are set accordingly to match the manufactured solution.
In \Cref{fig:stokesconvergence}, we present convergence plots for the velocity and pressure separately for different surrogate orders $q$ and fixed $M = 5$.
For reference, we also include the standard discretization with $M = 1$ in these plots.
We observe the expected convergence orders for all $q \geq 2$.
In agreement with \Cref{lem:stokesreproduction}, the divergence matrices were perfectly reproduced for every $q \geq 3$.

\begin{figure}
\centering
\includegraphics[trim=0.5cm 2.4cm 1cm 3cm,clip=true,width=0.6\textwidth]{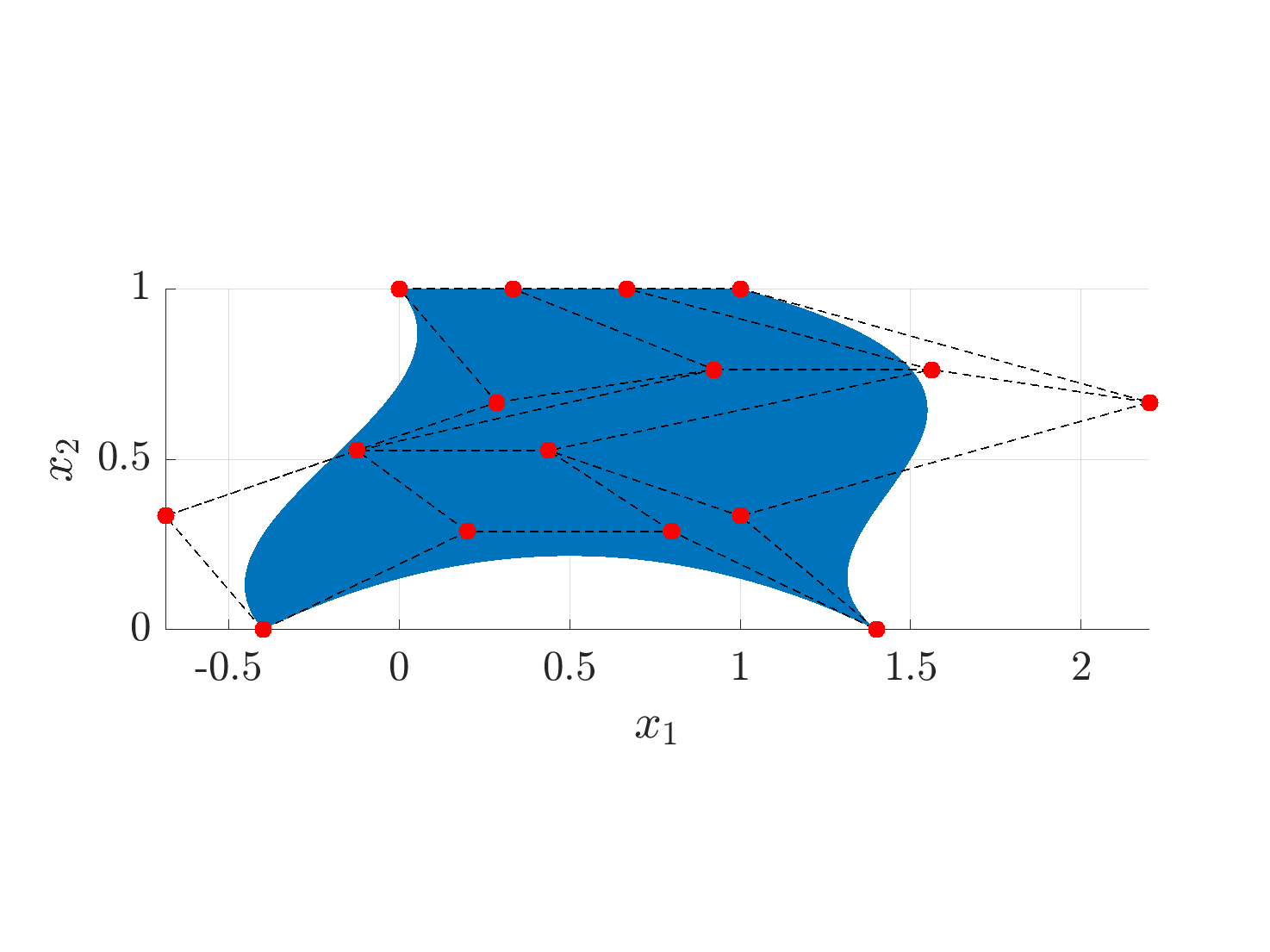}
\caption{\label{fig:StokesGeometry}The computational domain $\Omega$ used in the Stokes flow problem \cref{eq:StokesWeakForm}.}
\end{figure}
\begin{figure}%
\centering
\begin{minipage}{0.32\textwidth}
\begin{scaletikzpicturetowidth}{\textwidth}
\begin{tikzpicture}[scale=\tikzscale,font=\large]
\begin{loglogaxis}[
xlabel={Degrees of freedom},
ylabel={Relative velocity $L^2$ error},
xmajorgrids,
ymajorgrids,
title={Velocity},
legend style={fill=white, fill opacity=0.6, draw opacity=1, text opacity=1},
legend pos=north east
]
\addplot[black, mark=*, very thick, mark options={scale=1.5}, fill opacity=0.6] table [x index = {0}, y index={1}, col sep=comma] {./Results/StokesStatic2D/l2errors_v_p2.csv};
\addlegendentry{$M=1$};
\addplot[color2, mark=diamond*, very thick] table [x index = {0}, y index={2}, col sep=comma] {./Results/StokesStatic2D/l2errors_v_p2.csv};
\addlegendentry{$q=1$};
\addplot[color3, mark=diamond*, very thick] table [x index = {0}, y index={3}, col sep=comma] {./Results/StokesStatic2D/l2errors_v_p2.csv};
\addlegendentry{$q=2$};
\addplot[color4, mark=diamond*, very thick] table [x index = {0}, y index={4}, col sep=comma] {./Results/StokesStatic2D/l2errors_v_p2.csv};
\addlegendentry{$q=3$};
\addplot[color5, mark=diamond*, very thick] table [x index = {0}, y index={5}, col sep=comma] {./Results/StokesStatic2D/l2errors_v_p2.csv};
\addlegendentry{$q=4$};
\addplot[color6, mark=diamond*, very thick] table [x index = {0}, y index={6}, col sep=comma] {./Results/StokesStatic2D/l2errors_v_p2.csv};
\addlegendentry{$q=5$};
\logLogSlopeTriangle{0.7}{0.2}{0.11}{2}{4}{};
\legend{}; %
\end{loglogaxis}
\end{tikzpicture}
 \end{scaletikzpicturetowidth}
\end{minipage}
\qquad
\begin{minipage}{0.32\textwidth}
\begin{scaletikzpicturetowidth}{\textwidth}
\begin{tikzpicture}[scale=\tikzscale,font=\large]
\begin{loglogaxis}[
xlabel={Degrees of freedom},
ylabel={Relative pressure $L^2$ error},
xmajorgrids,
ymajorgrids,
title={Pressure},
legend style={fill=white, fill opacity=0.6, draw opacity=1, text opacity=1},
legend pos=north east
]
\addplot[black, mark=*, very thick, mark options={scale=1.5}, fill opacity=0.6] table [x index = {0}, y index={1}, col sep=comma] {./Results/StokesStatic2D/l2errors_p_p2.csv};
\addlegendentry{$M=1$};
\addplot[color2, mark=diamond*, very thick] table [x index = {0}, y index={2}, col sep=comma] {./Results/StokesStatic2D/l2errors_p_p2.csv};
\addlegendentry{$q=1$};
\addplot[color3, mark=diamond*, very thick] table [x index = {0}, y index={3}, col sep=comma] {./Results/StokesStatic2D/l2errors_p_p2.csv};
\addlegendentry{$q=2$};
\addplot[color4, mark=diamond*, very thick] table [x index = {0}, y index={4}, col sep=comma] {./Results/StokesStatic2D/l2errors_p_p2.csv};
\addlegendentry{$q=3$};
\addplot[color5, mark=diamond*, very thick] table [x index = {0}, y index={5}, col sep=comma] {./Results/StokesStatic2D/l2errors_p_p2.csv};
\addlegendentry{$q=4$};
\addplot[color6, mark=diamond*, very thick] table [x index = {0}, y index={6}, col sep=comma] {./Results/StokesStatic2D/l2errors_p_p2.csv};
\addlegendentry{$q=5$};
\logLogSlopeTriangle{0.7}{0.2}{0.11}{2}{3}{};
\end{loglogaxis}
\end{tikzpicture}
 \end{scaletikzpicturetowidth}
\end{minipage}
\caption{\label{fig:stokesconvergence}Stokes' flow problem \cref{eq:StokesWeakForm} with manufactured solution \cref{eq:StokesSolution}. The velocity is discretized with $p=3$ and the pressure with $p=2$. Relative $L^2$ velocity and pressure errors for $M = 5$.}
\end{figure}
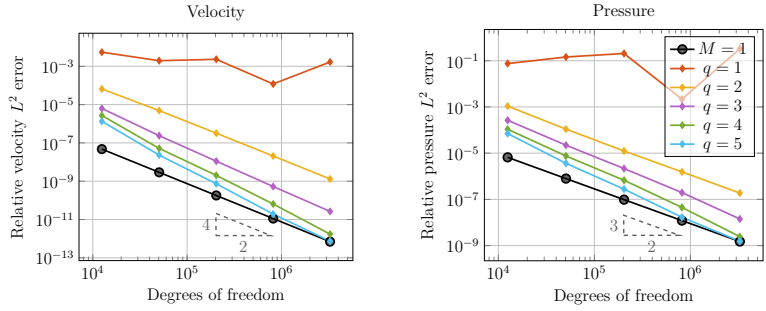

In the second example, we consider a lid-driven cavity benchmark on the domain \Cref{fig:StokesGeometry} where the fluid is driven on the top edge by constant velocity $\bfg = (1,0)^\top$ and we assume no-slip boundary conditions $\bfg = \bm{0}$ on the remaining parts of the boundary.
The degrees of freedom corresponding to the nodal basis functions in the top left and top right corner are set to zero.
Furthermore, the volume forces are neglected, i.e., $\bff = \bm{0}$.
In \Cref{fig:stokes_streamlines_ref}, we show the velocity streamlines which were computed using a standard IGA approach on a mesh with $320 \times 320$ control points.
The effect of different surrogate approaches on the velocity streamlines may be observed in \Cref{fig:stokes_streamlines_q1_m10,fig:stokes_streamlines_q2_m10,fig:stokes_streamlines_q3_m10,fig:stokes_streamlines_q1_m50,fig:stokes_streamlines_q2_m50,fig:stokes_streamlines_q3_m50,fig:stokes_streamlines_q1_m100,fig:stokes_streamlines_q2_m100,fig:stokes_streamlines_q3_m100}.
In the case $q = 3$, where the divergence matrices are \changed{in fact} reproduced, the streamlines show the same behavior as in the standard approach even for $M = 100$.
For other values of $q$ and $M$, the streamline behavior is different, but the streamlines are getting closer to the reference solution the larger $q$ and the smaller $M$ becomes.
We note that actually using an interpolation order of $q=1,2,3$ in computation is still probably not recommended for standard practice.
For instance, assembly using $q=5$ took roughly the same time as either of these lower-order choices and, in this case, the surrogate solution $(\tilde{\bfu}_h,\tilde{p}_h)$ should be expected to be even more accurate.

\begin{figure}
\centering
\begin{subfigure}[c]{\textwidth}
\centering
\qquad
\begin{minipage}{0.7\textwidth}
\includegraphics[width=\textwidth]{./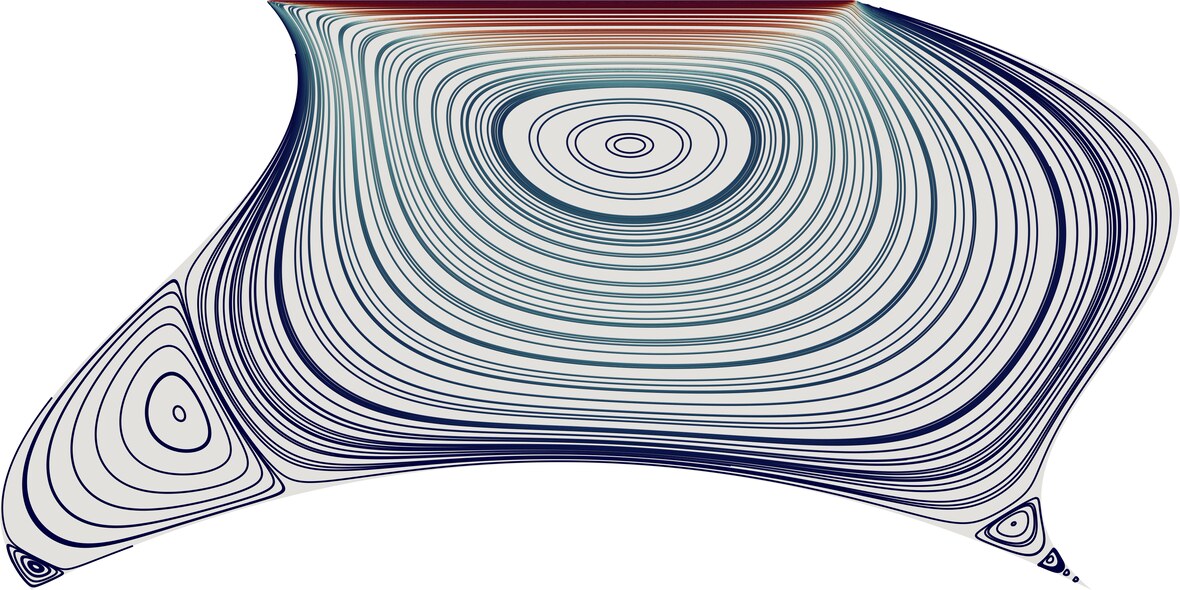}
\end{minipage}
~~
\begin{minipage}{0.12\textwidth}
\begin{tikzpicture}
\pgfmathsetlengthmacro\MajorTickLength{
  \pgfkeysvalueof{/pgfplots/major tick length} * 0.5
}
\begin{axis}[
xmin=0, xmax=0.02,
ymin=0, ymax=1,
axis on top,
scaled x ticks=false,
scaled y ticks=false,
xtick=\empty,
xticklabels=\empty,
yticklabel pos=right,
y tick label style={
  /pgf/number format/.cd,
    fixed,
    fixed zerofill,
    precision=1,
  /tikz/.cd  
},
extra y tick style={
    font=\large,
    tick style=transparent, %
    yticklabel pos=left,
    y tick label style={
        /pgf/number format/.cd,
            std,
            precision=3,
      /tikz/.cd
    }
},
ylabel={\Large $|u|$},
ylabel style={rotate=-90},
width=1.82cm,
height=5.5cm,
major tick length=\MajorTickLength
]
\addplot graphics [
includegraphics cmd=\pgfimage,
xmin=\pgfkeysvalueof{/pgfplots/xmin}, 
xmax=\pgfkeysvalueof{/pgfplots/xmax}, 
ymin=\pgfkeysvalueof{/pgfplots/ymin}, 
ymax=\pgfkeysvalueof{/pgfplots/ymax}
] {./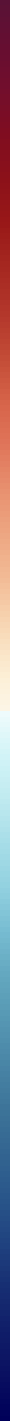};
\end{axis}
\end{tikzpicture}
 \end{minipage}
\caption{\label{fig:stokes_streamlines_ref}Reference solution with $M = 1$}
\end{subfigure}

\begin{subfigure}[c]{0.33\textwidth}
\includegraphics[width=\textwidth]{./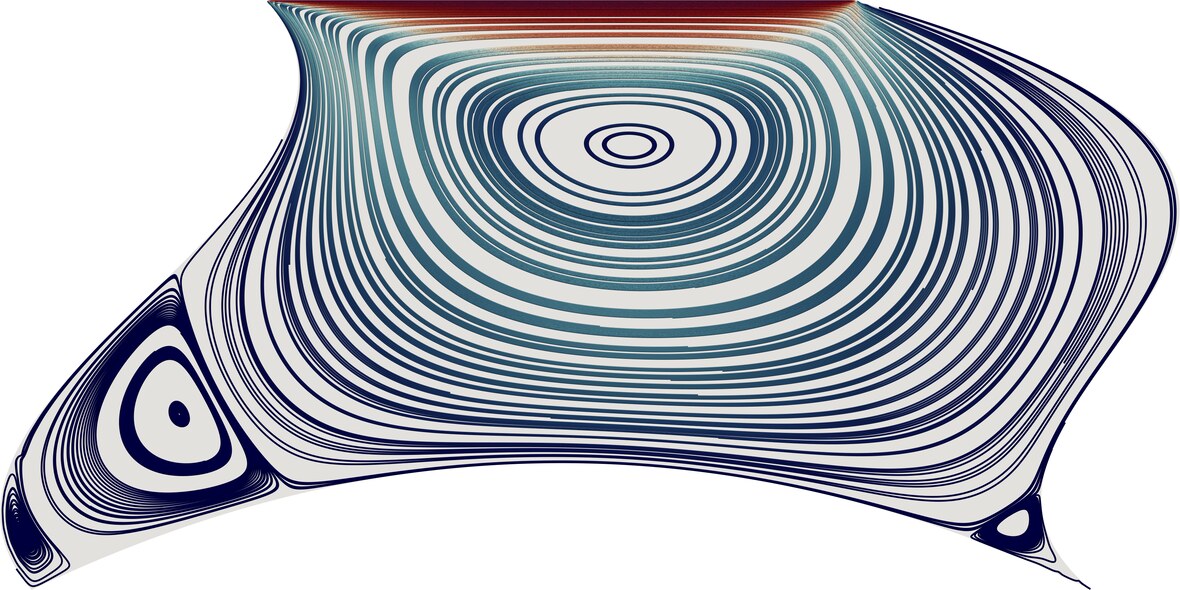}
\caption{\label{fig:stokes_streamlines_q1_m10}$M = 10$, $q = 1$}
\end{subfigure}%
\begin{subfigure}[c]{0.33\textwidth}
\includegraphics[width=\textwidth]{./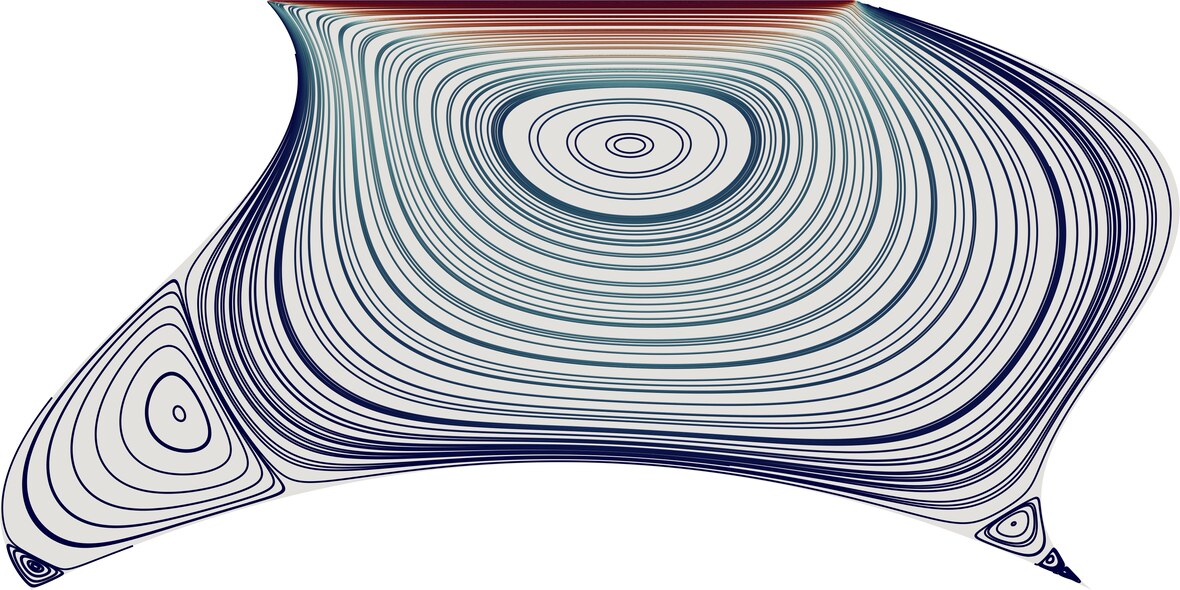}
\caption{\label{fig:stokes_streamlines_q2_m10}$M = 10$, $q = 2$}
\end{subfigure}%
\begin{subfigure}[c]{0.33\textwidth}
\includegraphics[width=\textwidth]{./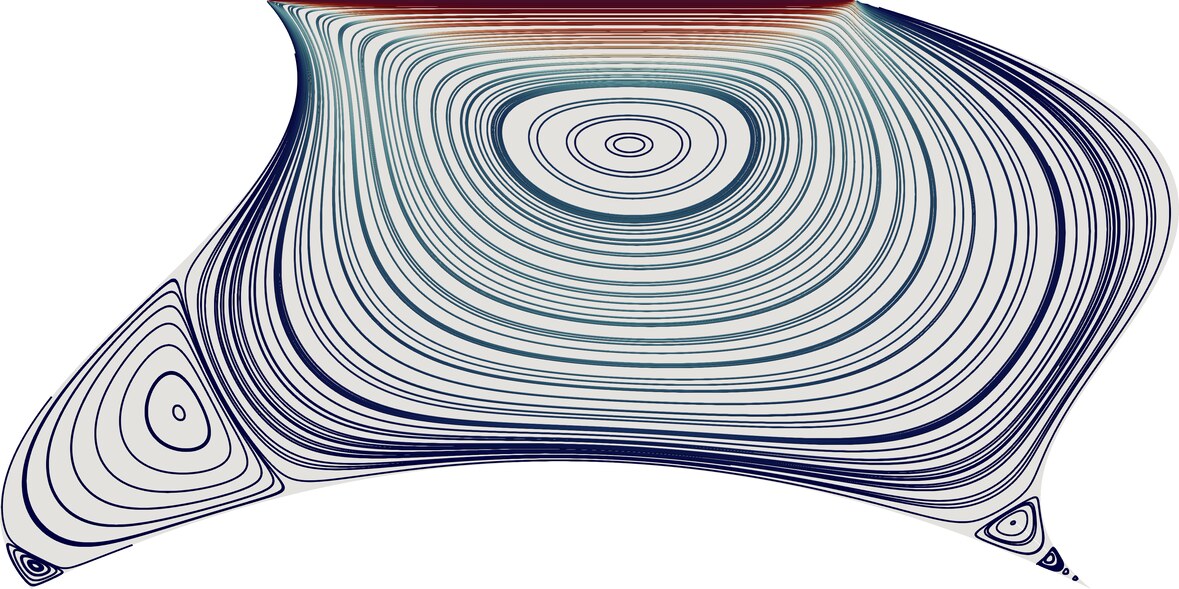}
\caption{\label{fig:stokes_streamlines_q3_m10}$M = 10$, $q = 3$}
\end{subfigure}

\begin{subfigure}[c]{0.33\textwidth}
	\includegraphics[width=\textwidth]{./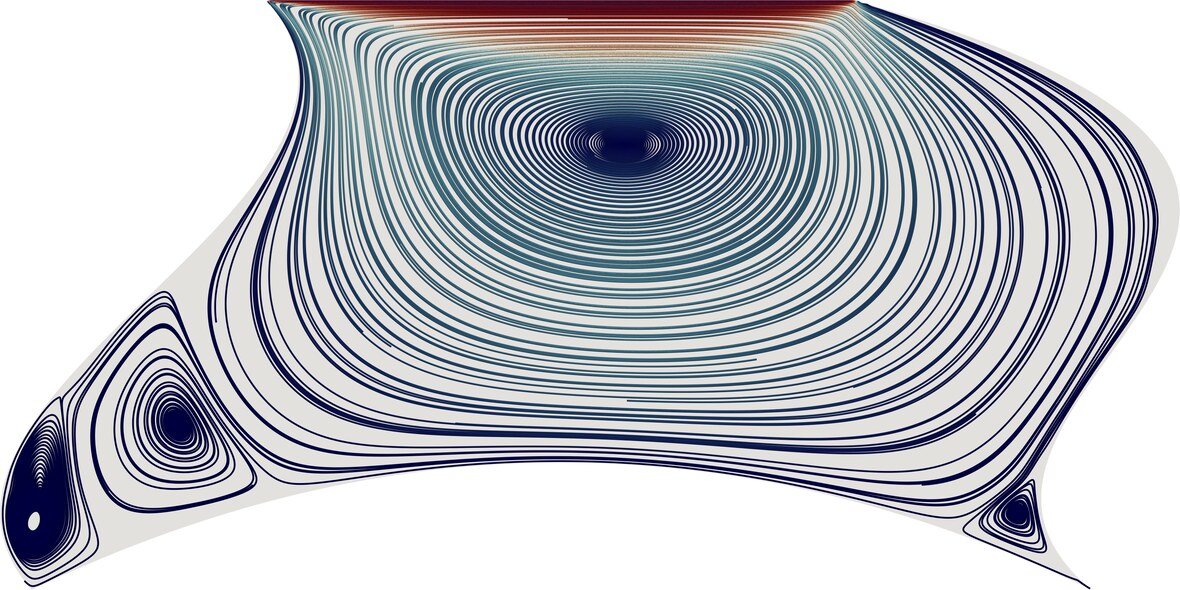}
	\caption{\label{fig:stokes_streamlines_q1_m50}$M = 50$, $q = 1$}
\end{subfigure}%
\begin{subfigure}[c]{0.33\textwidth}
	\includegraphics[width=\textwidth]{./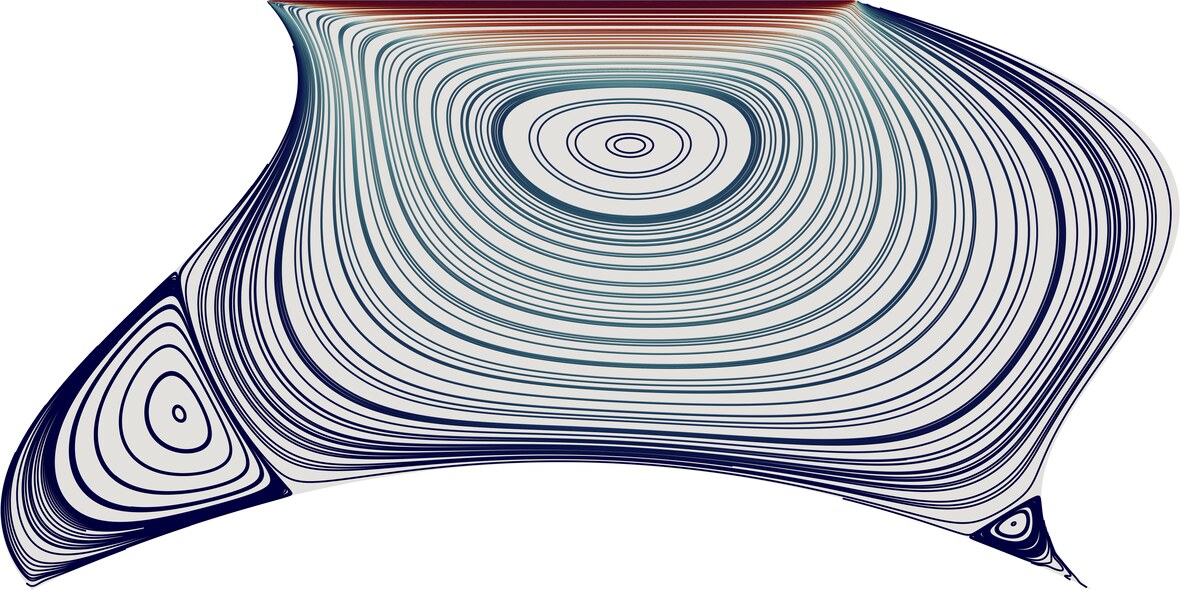}
	\caption{\label{fig:stokes_streamlines_q2_m50}$M = 50$, $q = 2$}
\end{subfigure}%
\begin{subfigure}[c]{0.33\textwidth}
	\includegraphics[width=\textwidth]{./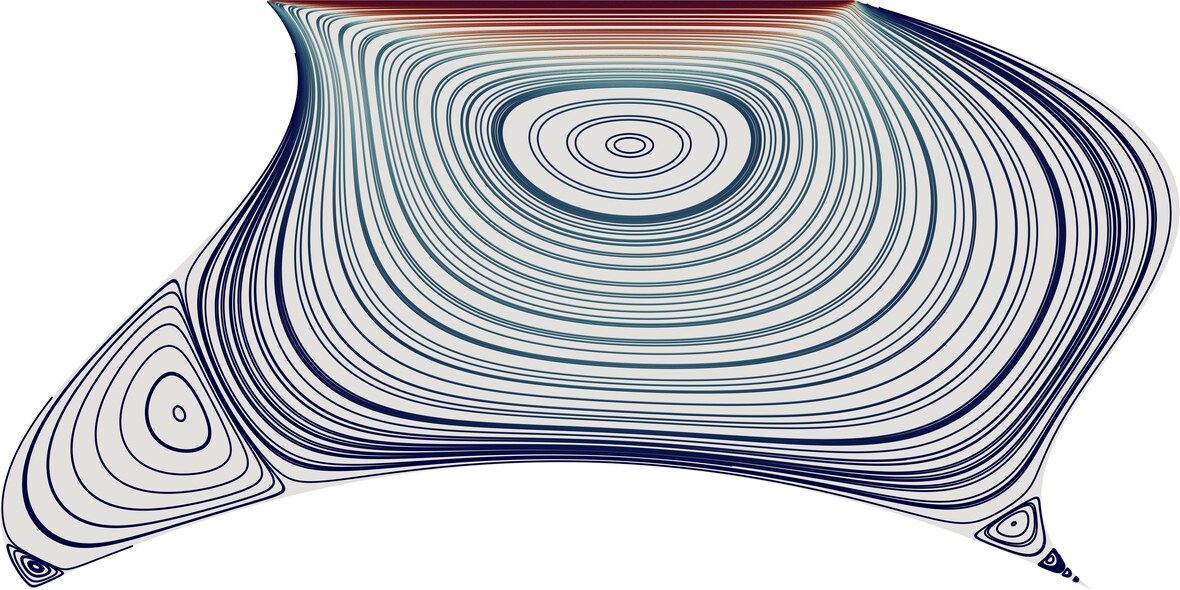}
	\caption{\label{fig:stokes_streamlines_q3_m50}$M = 50$, $q = 3$}
\end{subfigure}

\begin{subfigure}[c]{0.33\textwidth}
	\includegraphics[width=\textwidth]{./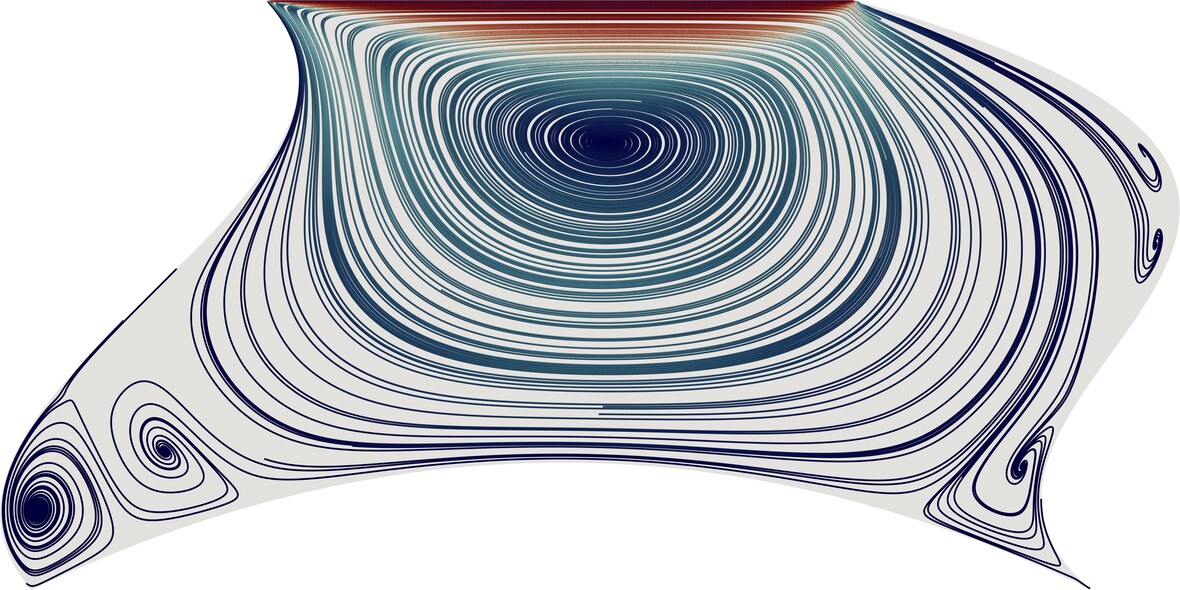}
	\caption{\label{fig:stokes_streamlines_q1_m100}$M = 100$, $q = 1$}
\end{subfigure}%
\begin{subfigure}[c]{0.33\textwidth}
	\includegraphics[width=\textwidth]{./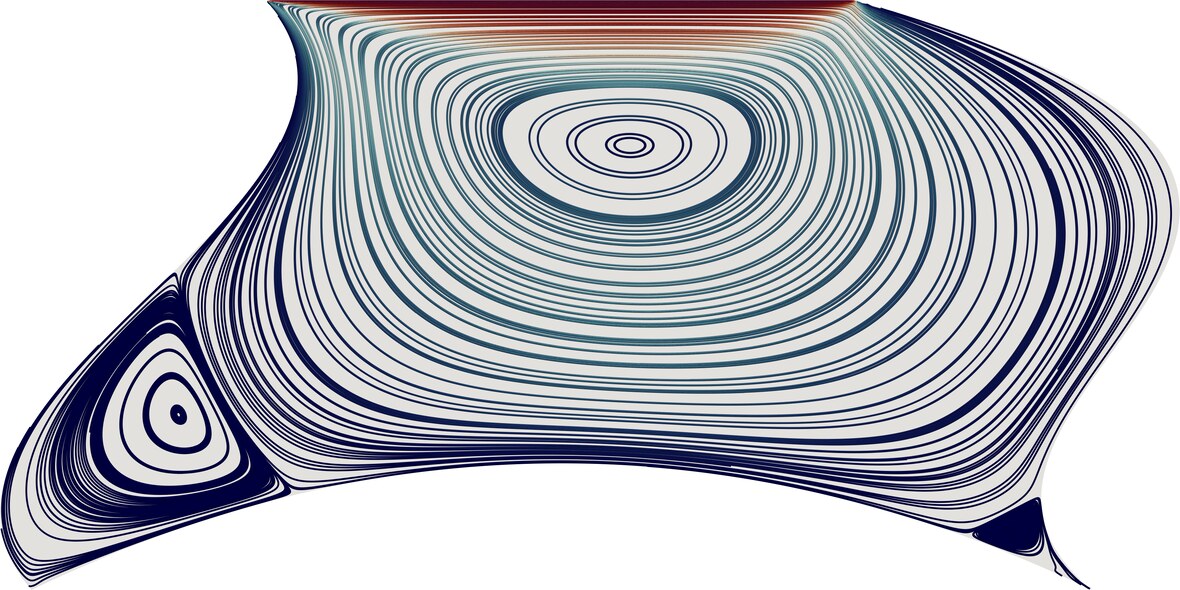}
	\caption{\label{fig:stokes_streamlines_q2_m100}$M = 100$, $q = 2$}
\end{subfigure}%
\begin{subfigure}[c]{0.33\textwidth}
	\includegraphics[width=\textwidth]{./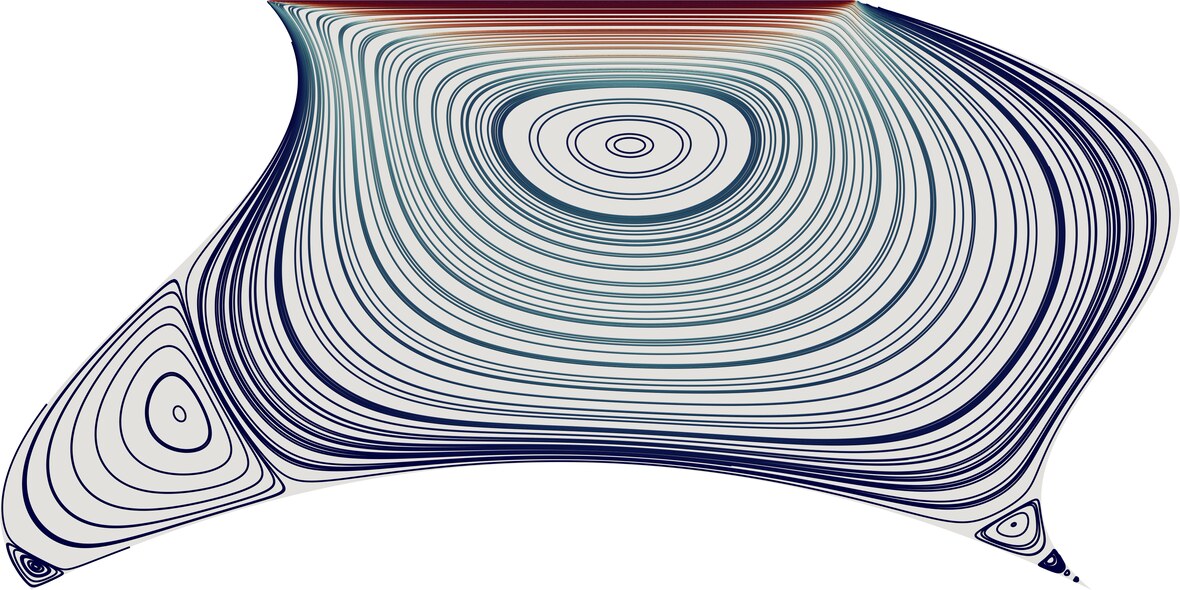}
	\caption{\label{fig:stokes_streamlines_q3_m100}$M = 100$, $q = 3$}
\end{subfigure}
\caption{\label{fig:stokes_streamlines_surr}Lid-driven cavity benchmark velocity streamlines. The velocity is discretized with $p=3$ and the pressure with $p=2$ using surrogate approaches with varying $M$ and $q$. The mesh is discretized by $320 \times 320$ control points. Note that actually using an interpolation order of $q=1,2,3$ is still probably not recommended for standard practice since more accurate results should be expected with $q=5$ while taking roughly the same time.}
\end{figure}
\afterpage{\FloatBarrier}

\appendix
\section{Marsden's identity} %
\label{app:marsden_s_identity}

The purpose of this appendix is to substantiate some of the claims made in \Cref{sub:basis_structure_nurbs}, as well as provide a complete proof of \Cref{thm:RegularityOfStencilFunctions}.
In turn, we adopt all of the notation and assumptions introduced in \Cref{sub:nurbs}.
We begin by stating \Cref{thm:Marsden}, the proof of which can be found in \cite{de1986b}; see also \cite{deBoor1993box,dahmen1985convergence,DAHMEN1983217}.

\begin{theorem}[Marsden]
\label{thm:Marsden}
Let $\psi_{k}(\hat{y}) = (\xi_{k+1}-\hat{y})\cdots(\xi_{k+p}-\hat{y})$.
For any $\hat{x},\hat{y}\in[0,1]$,
\begin{equation}
	(\hat{x}-\hat{y})^p
	=
	\sum_{k=1}^{m}
	{b}_{k}(\hat{x})\psi_{k}(\hat{y})
	\,.
\end{equation}
\end{theorem}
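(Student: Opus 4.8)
The plan is to establish Marsden's identity by induction on the polynomial order $p$, using the Cox--de Boor recurrence for the B-spline basis. Throughout, write $b_k^r$ for the order-$r$ B-spline associated with the knots $\xi_k,\ldots,\xi_{k+r+1}$, so that $b_k = b_k^p$, and set $\psi_k^r(\hat y) = \prod_{i=1}^r (\xi_{k+i}-\hat y)$, with the empty product ($r=0$) equal to $1$; thus the $\psi_k$ of the statement is exactly $\psi_k^p$. First I would dispose of the base case $p=0$: for a fixed $\hat x$ the degree-zero B-splines $b_k^0 = \chi_{[\xi_k,\xi_{k+1})}$ form a partition of unity on the interior of the global knot span, so $\sum_k b_k^0(\hat x)\psi_k^0(\hat y) = \sum_k b_k^0(\hat x) = 1 = (\hat x-\hat y)^0$.

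For the inductive step I would assume $(\hat x-\hat y)^{p-1} = \sum_k b_k^{p-1}(\hat x)\,\psi_k^{p-1}(\hat y)$ and start from the right-hand side $\sum_k b_k^p(\hat x)\,\psi_k^p(\hat y)$. Substituting the Cox--de Boor recurrence
\[
  b_k^p(\hat x)
  =
  \frac{\hat x-\xi_k}{\xi_{k+p}-\xi_k}\,b_k^{p-1}(\hat x)
  +
  \frac{\xi_{k+p+1}-\hat x}{\xi_{k+p+1}-\xi_{k+1}}\,b_{k+1}^{p-1}(\hat x)
\]
and re-indexing the second sum by $k\mapsto k-1$ so that every term carries the factor $b_k^{p-1}(\hat x)$, the coefficient of $b_k^{p-1}(\hat x)$ becomes
\[
  \frac{1}{\xi_{k+p}-\xi_k}
  \Bigl[
    (\hat x-\xi_k)\,\psi_k^p(\hat y)
    +
    (\xi_{k+p}-\hat x)\,\psi_{k-1}^p(\hat y)
  \Bigr].
\]
Next I would use the factorizations $\psi_k^p(\hat y) = (\xi_{k+p}-\hat y)\,\psi_k^{p-1}(\hat y)$ and $\psi_{k-1}^p(\hat y) = (\xi_k-\hat y)\,\psi_k^{p-1}(\hat y)$; a one-line expansion then gives $(\hat x-\xi_k)(\xi_{k+p}-\hat y) + (\xi_{k+p}-\hat x)(\xi_k-\hat y) = (\hat x-\hat y)(\xi_{k+p}-\xi_k)$, so the entire coefficient collapses to $(\hat x-\hat y)\,\psi_k^{p-1}(\hat y)$. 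Hence $\sum_k b_k^p(\hat x)\psi_k^p(\hat y) = (\hat x-\hat y)\sum_k b_k^{p-1}(\hat x)\psi_k^{p-1}(\hat y) = (\hat x-\hat y)^p$ by the inductive hypothesis, which closes the induction.

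The main obstacle is purely bookkeeping at the ends of the open uniform knot vector, where knots coincide and denominators such as $\xi_{k+p}-\xi_k$ may vanish. I would adopt the standard convention that any summand with a vanishing denominator is taken to be zero (consistent, since the associated B-spline is then identically zero) and check that the index shift in the second sum introduces no spurious boundary terms --- extending the knot sequence by dummy knots, or simply restricting the sums to indices $k$ with $\xi_k<\xi_{k+p+1}$, makes this rigorous. With these conventions in place the computation above is complete, and the full details are available in \cite{de1986b}.
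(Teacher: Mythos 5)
Your argument is correct, but note that the paper does not actually prove \Cref{thm:Marsden}: it states it and defers entirely to the literature (\cite{de1986b}), so any proof you give is by construction a different route from the paper's. What you supply is the classical induction-on-degree proof via the Cox--de Boor recurrence, which is essentially the argument found in the cited reference: the base case via the partition-of-unity property of the characteristic-function splines, the re-indexing so that each $b_k^{p-1}(\hat x)$ collects the coefficient
\begin{equation}
	\frac{1}{\xi_{k+p}-\xi_k}\Bigl[(\hat x-\xi_k)\,\psi_k^{p}(\hat y)+(\xi_{k+p}-\hat x)\,\psi_{k-1}^{p}(\hat y)\Bigr],
\end{equation}
and the factorization identity $(\hat x-\xi_k)(\xi_{k+p}-\hat y)+(\xi_{k+p}-\hat x)(\xi_k-\hat y)=(\hat x-\hat y)(\xi_{k+p}-\xi_k)$, which I verified. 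Your handling of coincident end knots (zero-denominator terms set to zero because the associated B-spline vanishes identically) is the standard and consistent convention. One point worth making explicit: the induction hypothesis must be invoked for the degree-$(p-1)$ B-splines \emph{on the same knot vector} $\Xi$, which is not an open uniform knot vector for degree $p-1$; so the statement being inducted on should be Marsden's identity for an arbitrary knot sequence (on its basic interval), not merely for the open uniform vectors of \Cref{sub:nurbs}. With the open end knots of the paper, the extremal degree-$(p-1)$ splines $b_1^{p-1}$ and $b_{m+1}^{p-1}$ are identically zero, so the boundary terms created by your index shift indeed drop out, exactly as you claim; stating this generalized induction hypothesis up front would make the bookkeeping paragraph fully rigorous. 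What the paper's citation buys is brevity; what your proof buys is a self-contained, elementary derivation whose only ingredients (recurrence, partition of unity) are already implicit in \Cref{sub:nurbs}.
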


This theorem allows us to conclude that the expression~\cref{eq:StencilFunctionNURBS} is valid and, moreover, $w\in \mcQ_p(\tilde{\Omega}_{\bm{0}})$.
This is shown in two steps.
\begin{corollary}
\label{cor:Marsden}
Let $\Psi(\tilde{\bmy}) = \prod_{i=1}^n\prod_{j=0}^{p-1} \big(\frac{j}{m-p}-\frac{p-1}{2(m-p)}-\tilde{y}_i\big)$ and $\bmp = (p,\ldots,p)\in\bbN^n$.
Then, for any $\tilde{\bmx},\tilde{\bmy}\in\tilde{\Omega}_{\bm{0}}$,
\begin{equation}
	(\tilde{\bmx}-\tilde{\bmy})^{\bmp}
	=
	\sum_{\tilde{\bmx}_i\in\tilde{\bbX}}
	\hat{B}(\tilde{\bmx}-\tilde{\bmx}_i)\Psi(\tilde{\bmy}-\tilde{\bmx}_i)
	\,.
\label{eq:Marsden_nD}
\end{equation}
\end{corollary}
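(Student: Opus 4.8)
The plan is to obtain~\cref{eq:Marsden_nD} as the tensor product of $n$ copies of the univariate Marsden identity (\Cref{thm:Marsden}), once the latter has been rewritten in terms of the centered cardinal B-spline $b$ and restricted to the region where the non-cardinal basis functions play no role. First I would set up the one-dimensional cardinal form. For the open uniform knot vector of \Cref{sub:nurbs}, the interior knots satisfy $\xi_\ell=(\ell-p-1)h$, and ${b}_k$ is cardinal precisely when $\xi_k,\dots,\xi_{k+p+1}$ are all interior, i.e.\ when $p+1\le k\le m-p$; in that case ${b}_k(\hat x)={b}(\hat x-\tilde x^{(k)})$ with $\tilde x^{(k)}=(k-\tfrac{p+1}{2})h$. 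Writing $\hat y=\tilde x^{(k)}+t$ and using $\xi_{k+j}=(k+j-p-1)h$, a direct computation gives, for $j=1,\dots,p$,
\[
  \xi_{k+j}-\hat y
  =
  \Big(\tfrac{j-1}{m-p}-\tfrac{p-1}{2(m-p)}\Big)-t ,
\]
so that $\psi_k(\hat y)=\psi(\hat y-\tilde x^{(k)})$ with $\psi(t)=\prod_{\ell=0}^{p-1}\big(\tfrac{\ell}{m-p}-\tfrac{p-1}{2(m-p)}-t\big)$, the univariate factor of $\Psi$. The finitely many non-cardinal ${b}_k$ (those with $k\le p$ or $k\ge m-p+1$) are supported in $(0,ph)$, respectively $((m-2p)h,1)$; discarding them from \Cref{thm:Marsden} on the corresponding interior subset leaves exactly
\[
  (\hat x-\hat y)^p
  =
  \sum_{\tilde x^{(k)}}{b}(\hat x-\tilde x^{(k)})\,\psi(\hat y-\tilde x^{(k)}),
\]
where the sum runs over the univariate cardinal midpoints.

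Next I would tensorize. Applying this identity coordinatewise to $\hat x_i,\hat y_i$ and multiplying the $n$ resulting sums, the product of one-variable terms factors as $\prod_{i=1}^n{b}(\tilde x_i-\tilde x^{(k_i)})=\hat B(\tilde{\bmx}-\tilde{\bmx}_{\bmk})$ and $\prod_{i=1}^n\psi(\tilde y_i-\tilde x^{(k_i)})=\Psi(\tilde{\bmy}-\tilde{\bmx}_{\bmk})$, where $\bmk=(k_1,\dots,k_n)$ and $\tilde{\bmx}_{\bmk}=(\tilde x^{(k_1)},\dots,\tilde x^{(k_n)})$ ranges exactly over $\tilde{\bbX}$. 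Since $(\tilde{\bmx}-\tilde{\bmy})^{\bmp}=\prod_{i=1}^n(\tilde x_i-\tilde y_i)^p$, this produces precisely~\cref{eq:Marsden_nD}. The only remaining point is to verify that the coordinate pairs $(\tilde x_i,\tilde y_i)$ coming from $\tilde{\bmx},\tilde{\bmy}\in\tilde{\Omega}_{\bm{0}}$ fall in the interior range on which the univariate collapse above is valid, so that each factorwise expansion applies.

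I expect the main obstacle to be exactly this boundary bookkeeping: one must pin down the supports of the non-cardinal B-splines near $\hat x=0$ and $\hat x=1$ and confirm that every such function vanishes on the relevant part of $\tilde{\Omega}_{\bm{0}}$, which is what legitimizes replacing the full Marsden sum by its cardinal part. Everything else — the reindexing by midpoints, the identification of $\psi_k$ with a translate of $\psi$, and the separation of variables — is routine. (If one wished to bypass the support analysis, an alternative is to argue directly that, for fixed $\tilde{\bmy}$, the right-hand side of~\cref{eq:Marsden_nD} is a piecewise polynomial in $\tilde{\bmx}$ which reproduces all of $\mcQ_p$ by the dual-functional characterization of cardinal B-splines, and then match coefficients against $(\tilde{\bmx}-\tilde{\bmy})^{\bmp}$; this essentially re-derives Marsden's identity, so I would still prefer the reduction above.)
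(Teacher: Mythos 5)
Your route is the same one the paper takes: rewrite the Marsden factors $\psi_k$ for the cardinal indices $k=p+1,\dots,m-p$ as translates of a single polynomial $\psi$, discard the boundary terms, and tensorize; your reindexing computation $\psi_k(\hat y)=\psi(\hat y-\tilde x^{(k)})$ is correct and is exactly the paper's first step. The genuine gap is the step you postpone as ``boundary bookkeeping'' and expect to be routine: for $p\ge 2$ it cannot be completed, because the non-cardinal B-splines do \emph{not} vanish on all of $\tilde{\Omega}_{\bm{0}}$. In one dimension the left boundary splines $b_1,\dots,b_p$ are supported on $[0,ph]$, while the left endpoint of $\tilde{\Omega}_{\bm{0}}$ is $\tilde{x}^{(p+1)}=\frac{p+1}{2}h$, and $\frac{p+1}{2}h<ph$ whenever $p\ge2$. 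Concretely, for $p=2$ take $\tilde{x}=\frac{7}{4}h\in\tilde{\Omega}_{\bm{0}}$: the boundary spline $b_2$ (knots $0,0,h,2h$) satisfies $b_2\big(\frac{7}{4}h\big)=\frac{1}{32}>0$. The truncated identity then fails: each $\psi(\tilde{y}-\tilde{x}^{(k)})$ is, up to the common sign $(-1)^p$, monic of degree $p$ in $\tilde{y}$, so the coefficient of $\tilde{y}^{p}$ on the right-hand side is $\sum_{k=p+1}^{m-p} b(\tilde{x}-\tilde{x}^{(k)})$, which by partition of unity equals $1-\sum_{\text{non-cardinal }k}b_k(\tilde{x})<1$, whereas the left-hand side is monic. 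At $\tilde{x}=\frac{7}{4}h$ the cardinal weights are $\frac{11}{16}+\frac{9}{32}=\frac{31}{32}$, and the missing mass $\frac{1}{32}$ is exactly $b_2\big(\frac{7}{4}h\big)$.

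Consequently the collapse of Marsden's identity to its cardinal part is legitimate only for $\tilde{\bmx}\in[ph,\,1-ph]^n$ (the variable $\tilde{\bmy}$ needs no restriction at all, since the $\psi_k$ are global polynomials); this set coincides with $\tilde{\Omega}_{\bm{0}}$ for $p=1$ and is strictly smaller for $p\ge2$. So either restrict the claim to $[ph,1-ph]^n$ (which still contains $\tilde{\Omega}$, where the stencil analysis lives) or accept that no argument can rescue the boundary layer, since the identity is simply false there. You should also be aware that the paper's own proof makes the same silent truncation — it writes the Marsden sum over $k=p+1,\dots,m-p$ without addressing why the boundary terms drop — so your proposal is faithful to the paper's route; the difference is that you explicitly flagged the missing verification, and that verification is precisely where the statement, as formulated on all of $\tilde{\Omega}_{\bm{0}}$, breaks down.
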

\begin{proof}
	Observe that, for each $k = p+1,\ldots,m-p$, it holds that $\tilde{x}^{(k)} + h = \tilde{x}^{(k+1)} = \xi_{k+1} + (p+1)\cdot \nicefrac{h}{2}$.
	Recall that $k = p+1,\ldots,m-p$ are exactly the indices of the cardinal B-splines $b_k(\hat{x}) = b(\hat{x}-\tilde{x}^{(k)})$.
	Therefore, by \Cref{thm:Marsden}, we see that
	\begin{equation}
		(\tilde{x}_i-\tilde{y}_i)^p
		=
		\sum_{k=p+1}^{m-p}
		b(\tilde{x}_i-\tilde{x}^{(k)})\prod_{j=0}^{p-1}\Big(\tilde{x}^{(k)} + \Big(j-\frac{p-1}{2}\Big)\cdot h-\tilde{y}_i\Big)
		\,,
	\end{equation}
	for each $i=1,\ldots,n$.
	The result now follows from the definitions of $\hat{B}$ and $\Psi$.
\end{proof}

\Cref{cor:Marsden} can be used to write out an elegant expression for any polynomial in $\mcQ_p(\tilde{\Omega}_{\bm{0}})$, in terms of cardinal B-splines.
Indeed, let $\balpha = (\alpha_1,\ldots,\alpha_n)$ be a multi-index, let $f$ be an arbitrary polynomial in $\mcQ_p(\tilde{\Omega}_{\bm{0}})$, and let $D^{\balpha}$ be the $\balpha$-derivative operator, in the variable $\tilde{\bmy}$.
By Taylor's Theorem, it holds that
\begin{equation}
	f(\tilde{\bmx})
	=
	\sum_{i=1}^n
	\sum_{\alpha_i\leq p}
	\frac{D^{\balpha}f(\tilde{\bmy})}{\balpha!}(\tilde{\bmx}-\tilde{\bmy})^{\balpha}
	\,,
\label{eq:AAA}
\end{equation}
for every $\tilde{\bmx},\tilde{\bmy} \in \tilde{\Omega}_{\bm{0}}$.
Next, applying $D^{\bmp-\balpha}$ to both sides of~\cref{eq:Marsden_nD}, we find that
\begin{equation}
	\frac{(-1)^{|\bmp-\balpha|}\bmp!}{\balpha!} (\tilde{\bmx}-\tilde{\bmy})^{\balpha}
	=
	\sum_{\tilde{\bmx}_i\in\tilde{\bbX}}
	\hat{B}(\tilde{\bmx}-\tilde{\bmx}_i) D^{\bmp-\balpha}\Psi(\tilde{\bmy}-\tilde{\bmx}_i)
	\,.
\label{eq:BBB}
\end{equation}
Together,~\cref{eq:AAA,eq:BBB} imply $f(\tilde{\bmx}) = \sum_{\tilde{\bmx}_i\in\tilde{\bbX}} \hat{B}(\tilde{\bmx}-\tilde{\bmx}_i)(Lf)(\tilde{\bmx}_i)$, where
\begin{equation}
	(Lf)(\tilde{\bmx})
	=
	\sum_{i=1}^n
	\sum_{\alpha_i\leq p}
	\frac{(-D)^{\bmp-\balpha}\Psi(\tilde{\bmy}-\tilde{\bmx})}{\bmp!}
	D^{\balpha}f(\tilde{\bmy})
	\,.
\end{equation}

We may now state our second corollary of \Cref{thm:Marsden}.
\begin{corollary}
\label{eq:MarsdenCor3}
Let $W(\hat{\bmx}) = \sum_{j=1}^N w_j \hat{B}_j(\hat{\bmx})$.
If $W\in \mcQ_p(\hat{\Omega})$, then there exists a polynomial $w\in \mcQ_p(\tilde{\Omega}_{\bm{0}})$ such that $w_i = w(\tilde{\bmx}_i)$, for each $\tilde{\bmx}_i \in \tilde{\bbX}$.
Moreover, there exists a constant $C$, depending only on $p$, such that
\begin{equation}
	\|w\|_{W^{r,\infty}(\tilde{\Omega}_{\bm{0}})}
	\leq
	C
	\|W\|_{W^{r,\infty}(\hat{\Omega})}
	\qquad
	\text{for all }
	r \geq p
	\,.
\label{eq:BoundsOnWeightFunctions}
\end{equation}
\end{corollary}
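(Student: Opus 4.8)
The plan is to run the Marsden machinery already assembled in this appendix. I would first set $w := LW$, where $L$ is the operator displayed above, and observe that this is the desired polynomial: since $W\in\mcQ_p(\hat{\Omega})$, its restriction to $\tilde{\Omega}_{\bm{0}}$ lies in $\mcQ_p(\tilde{\Omega}_{\bm{0}})$, and from the explicit form of $L$ each summand is a Cartesian-degree-$\le p$ polynomial in $\tilde{\bmx}$ (the factor $(-D)^{\bmp-\balpha}\Psi$, whose $\tilde{\bmx}$-degree is exactly $\alpha_i\le p$ in the $i$th direction) times a constant ($D^{\balpha}W$ evaluated at the auxiliary point), so $w\in\mcQ_p(\tilde{\Omega}_{\bm{0}})$. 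The quasi-interpolation identity $f(\tilde{\bmx}) = \sum_{\tilde{\bmx}_i\in\tilde{\bbX}} \hat{B}(\tilde{\bmx}-\tilde{\bmx}_i)(Lf)(\tilde{\bmx}_i)$, derived above from \Cref{thm:Marsden} and \Cref{cor:Marsden}, applied with $f=W|_{\tilde{\Omega}_{\bm{0}}}$, then gives $W = \sum_{\tilde{\bmx}_i\in\tilde{\bbX}} w(\tilde{\bmx}_i)\,\hat{B}_i$ on the relevant interior region.

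Next I would identify the coefficients. On the interior sub-box $[ph,1-ph]^n$ all the non-cardinal B-splines $b_1,\dots,b_p$ and $b_{m-p+1},\dots,b_m$ (in each coordinate) vanish identically, so the given representation $W=\sum_{j=1}^N w_j\hat{B}_j$ reduces there to $W=\sum_{\tilde{\bmx}_i\in\tilde{\bbX}}w_i\hat{B}_i$; comparing with the cardinal expansion from the previous step, $\sum_{\tilde{\bmx}_i\in\tilde{\bbX}}\bigl(w_i-w(\tilde{\bmx}_i)\bigr)\hat{B}_i=0$ on $[ph,1-ph]^n$, and since the only B-splines not identically zero on this box are the cardinal ones, local linear independence of B-splines (equivalently, an induction over knot spans inward from the boundary, which pins down all $m-2p$ cardinal coefficients once $m\ge 3p+1$) forces $w_i=w(\tilde{\bmx}_i)$ for every $\tilde{\bmx}_i\in\tilde{\bbX}$. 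This coefficient-matching step is where I expect the real work to be: one must be careful that the cardinal B-splines adjacent to $\bdry\hat{\Omega}$ have support reaching slightly outside $\tilde{\Omega}_{\bm{0}}$, so the argument has to be carried out on the largest interior box on which the Marsden expansion is an honest identity and on which the cardinal functions remain linearly independent, rather than naively on all of $\tilde{\Omega}_{\bm{0}}$.

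Finally, for the stability estimate \cref{eq:BoundsOnWeightFunctions}, I would differentiate $w=LW$ in $\tilde{\bmx}$: because $\tilde{\bmx}$ enters $L$ only through $\Psi(\tilde{\bmy}-\tilde{\bmx})$, each $D^{\bgamma}w$ is a finite linear combination, with coefficients depending only on $p$ and on derivatives of $\Psi$, of the numbers $D^{\balpha}W(\tilde{\bmy})$ with $\alpha_i\le p$. Since $h\le 1$ and the fixed set over which $\Psi$ and its derivatives are evaluated has bounded diameter, those coefficients are bounded by a constant $C=C(p)$, so $\|D^{\bgamma}w\|_{L^\infty(\tilde{\Omega}_{\bm{0}})}\le C\sum_{\alpha_i\le p}\|D^{\balpha}W\|_{L^\infty(\hat{\Omega})}$. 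Summing over the finitely many $\bgamma$ with $|\bgamma|\le r$ (only $\bgamma\le\bmp$ contribute, as $w\in\mcQ_p$), using $\tilde{\Omega}_{\bm{0}}\subset\hat{\Omega}$, and invoking equivalence of norms on the finite-dimensional space $\mcQ_p(\hat{\Omega})$ to bound $\sum_{\alpha_i\le p}\|D^{\balpha}W\|_{L^\infty(\hat{\Omega})}$ by $C(p)\|W\|_{W^{p,\infty}(\hat{\Omega})}\le C(p)\|W\|_{W^{r,\infty}(\hat{\Omega})}$ for $r\ge p$, gives the claim. A cleaner alternative for this last step is to bound the grid values $|w_i|=|w(\tilde{\bmx}_i)|$ by the classical $L^\infty$-stability of the B-spline dual basis and then recover $\|w\|_{W^{r,\infty}(\tilde{\Omega}_{\bm{0}})}$ via a polynomial sampling inequality on the quasi-uniform grid $\tilde{\bbX}$ together with Markov's inequality.
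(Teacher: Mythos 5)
Your proposal is correct and follows essentially the same route as the paper's (terse) proof: take $w = LW$ with the de Boor--Fix/Marsden operator $L$ constructed in the appendix, and obtain the bound \cref{eq:BoundsOnWeightFunctions} from the boundedness of $\Psi$ and its derivatives together with equivalence of norms on the finite-dimensional space $\mcQ_p$. The only difference is that you spell out the coefficient identification $w_i = w(\tilde{\bmx}_i)$ (via local linear independence of the cardinal B-splines on the sub-box where the boundary B-splines vanish, under the mild extra condition $m\geq 3p+1$), a step the paper's proof leaves implicit behind its opening ``Clearly''.
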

\begin{proof}
Clearly, $w = LW \in \mcQ_p(\tilde{\Omega}_{\bm{0}})$.
Therefore, one readily determines that $\|w\|_{L^\infty(\tilde{\Omega}_{\bm{0}})} \leq C \|W\|_{W^{p,\infty}(\tilde{\Omega}_{\bm{0}})}$, for some constant $C$, depending only on $p$.
Due to the equivalence of norms on finite dimensional vector spaces (note that the dimension of $\mcQ_p(\tilde{\Omega}_{\bm{0}})$ depends only on $p$) and the fact $\tilde{\Omega}_{\bm{0}}\subset \overline{\hat{\Omega}}$, we immediately arrive at~\cref{eq:BoundsOnWeightFunctions}.
\end{proof}

We may now complete the proof of \Cref{thm:RegularityOfStencilFunctions}.

\begin{proof}[Proof of \Cref{thm:RegularityOfStencilFunctions}]
Let $\tilde{\bmx} \in \tilde{\Omega}$ be arbitrary.
Then, for any $\bdelta\in\scD$, we have that $\tilde{\bmx} + \bdelta \in \tilde{\Omega}_{\bm{0}}$.
Moreover, for every multi-index $|\balpha| = r$, the product rule can be used to show that
\begin{align}
	D^\balpha
	\Phi_\bdelta(\tilde{\bmx})
	&=
	D^\balpha\bigg[
	w(\tilde{\bmx})w(\tilde{\bmx}+\bdelta)
	\int_{\hat{\omega}_{\bdelta}} \hat{\nabla} \bigg(\frac{\hat{B}(\hat{\bmy})}{W(\tilde{\bmx} + \hat{\bmy})}\bigg)^\top {K}(\tilde{\bmx} + \hat{\bmy})\, \hat{\nabla} \bigg(\frac{\hat{B}_\bdelta(\hat{\bmy})}{W(\tilde{\bmx} + \hat{\bmy})}\bigg) \dd \hat{\bmy}
	\bigg]
	\\
	&\leq
	C\cdot
	\|w\|_{W^{r,\infty}(\tilde{\Omega}_{\bm{0}})}^2
	\sspace
	\|K\|_{W^{r,\infty}(\hat{\Omega})}
	\sspace
	\|W\|_{W^{r,\infty}(\hat{\Omega})}^2
	\sspace
	\|\hat{\nabla} W\|_{W^{r,\infty}(\hat{\Omega})}^2
	\\
	&
	~\cdot
	\big(
	\|\hat{\nabla} \hat{B}\cdot \hat{\nabla} \hat{B}_\bdelta\|_{L^1(\hat{\omega}_\bdelta)}
	+
	\|\hat{\nabla} \hat{B}\cdot \hat{B}_\bdelta\|_{L^1(\hat{\omega}_\bdelta)}
	+
	\|\hat{\nabla} \hat{B}_\bdelta\cdot \hat{B}\|_{L^1(\hat{\omega}_\bdelta)}
	+
	\|\hat{B}\cdot \hat{B}_\bdelta\|_{L^1(\hat{\omega}_\bdelta)}
	\big)
	\,,
\end{align}
for some $C$ depending only on $\balpha$.
Since both functions $K$, and $W$ are determined by the choice of $\Omega$, \Cref{eq:MarsdenCor3} and a scaling argument show that $\|D^{\balpha}\Phi_\bdelta\|_{L^\infty(\tilde{\Omega})}\leq C h^{n-2}$, where $C$ now depends on $p$, $\balpha$, and \changed{$\bvarphi$}.
\Cref{lem:SplineBAE} now completes the proof.~
\end{proof}

\section*{Acknowledgments}
This project has received funding from the European Union's Horizon 2020 research and innovation programme under grant agreement No 800898.
This work was also partly supported by the German Research Foundation through
the Priority Programme 1648 ``Software for Exascale Computing'' (SPPEXA) and by grant WO671/11-1.

\phantomsection%
\bibliographystyle{abbrv}
\bibliography{main}

\begin{thebibliography}{10}

\bibitem{githubdrzisga}
Fork of the {GeoPDEs} git repository including the surrogate branch.
\newblock \url{https://github.com/drzisga/geopdes}, 2019.
\newblock [Online; accessed March 13th 2019].

\bibitem{ANTOLIN2015817}
P.~Antolin, A.~Buffa, F.~Calabrò, M.~Martinelli, and G.~Sangalli.
\newblock Efficient matrix computation for tensor-product isogeometric
  analysis: {T}he use of sum factorization.
\newblock {\em Computer Methods in Applied Mechanics and Engineering},
  285:817--828, 2015.

\bibitem{auricchio2012simple}
F.~Auricchio, F.~Calabro, T.~J. Hughes, A.~Reali, and G.~Sangalli.
\newblock A simple algorithm for obtaining nearly optimal quadrature rules for
  {NURBS}-based isogeometric analysis.
\newblock {\em Computer Methods in Applied Mechanics and Engineering},
  249:15--27, 2012.

\bibitem{bauer2018large}
S.~Bauer, M.~Huber, S.~Ghelichkhan, M.~Mohr, U.~R{\"u}de, and B.~Wohlmuth.
\newblock Large-scale simulation of mantle convection based on a new
  matrix-free approach.
\newblock {\em Journal of Computational Science}, 31:60--76, 2019.

\bibitem{bauer2018new}
S.~Bauer, M.~Huber, M.~Mohr, U.~R{\"u}de, and B.~Wohlmuth.
\newblock A new matrix-free approach for large-scale geodynamic simulations and
  its performance.
\newblock In {\em International Conference on Computational Science}, pages
  17--30. Springer, 2018.

\bibitem{bauer2017two}
S.~Bauer, M.~Mohr, U.~R{\"u}de, J.~Weism{\"u}ller, M.~Wittmann, and
  B.~Wohlmuth.
\newblock A two-scale approach for efficient on-the-fly operator assembly in
  massively parallel high performance multigrid codes.
\newblock {\em Applied Numerical Mathematics}, 122:14--38, 2017.

\bibitem{bressan2013isogeometric}
A.~Bressan and G.~Sangalli.
\newblock Isogeometric discretizations of the stokes problem: stability
  analysis by the macroelement technique.
\newblock {\em IMA Journal of Numerical Analysis}, 33(2):629--651, 2013.

\bibitem{bressan2018sum}
A.~Bressan and S.~Takacs.
\newblock Sum-factorization techniques in isogeometric analysis.
\newblock {\em arXiv preprint arXiv:1809.05471}, 2018.

\bibitem{bungartz2004sparse}
H.-J. Bungartz and M.~Griebel.
\newblock Sparse grids.
\newblock {\em Acta numerica}, 13:147--269, 2004.

\bibitem{butler2012posteriori}
T.~Butler, P.~Constantine, and T.~Wildey.
\newblock A posteriori error analysis of parameterized linear systems using
  spectral methods.
\newblock {\em SIAM Journal on Matrix Analysis and Applications},
  33(1):195--209, 2012.

\bibitem{Calabro2017}
F.~Calabrò, G.~Sangalli, and M.~Tani.
\newblock Fast formation of isogeometric {G}alerkin matrices by weighted
  quadrature.
\newblock {\em Computer Methods in Applied Mechanics and Engineering},
  316:606--622, 2017.
\newblock Special Issue on Isogeometric Analysis: Progress and Challenges.

\bibitem{cottrell2009isogeometric}
J.~A. Cottrell, T.~J. Hughes, and Y.~Bazilevs.
\newblock {\em Isogeometric analysis: toward integration of {CAD} and {FEA}}.
\newblock John Wiley \& Sons, 2009.

\bibitem{da2014mathematical}
L.~B. Da~Veiga, A.~Buffa, G.~Sangalli, and R.~V{\'a}zquez.
\newblock Mathematical analysis of variational isogeometric methods.
\newblock {\em Acta Numerica}, 23:157--287, 2014.

\bibitem{dahmen1980multidimensional}
W.~Dahmen, R.~De~Vore, and K.~Scherer.
\newblock Multidimensional spline approximation.
\newblock {\em SIAM Journal on Numerical Analysis}, 17(3):380--402, 1980.

\bibitem{dahmen1985convergence}
W.~Dahmen, N.~Dyn, and D.~Levin.
\newblock On the convergence rates of subdivision algorithms for box spline
  surfaces.
\newblock {\em Constructive Approximation}, 1(1):305--322, 1985.

\bibitem{DAHMEN1983217}
W.~Dahmen and C.~A. Micchelli.
\newblock Translates of multivariate splines.
\newblock {\em Linear Algebra and its Applications}, 52:217--234, 1983.

\bibitem{davis2007methods}
P.~J. Davis and P.~Rabinowitz.
\newblock {\em Methods of numerical integration}.
\newblock Courier Corporation, 2007.

\bibitem{de1986b}
C.~De~Boor.
\newblock B(asic)-spline basics.
\newblock Technical report, UW\textendash{}Madison Mathematics Research Center,
  1986.

\bibitem{deBoor1993box}
C.~De~Boor, K.~H{\"o}llig, and S.~Riemenschneider.
\newblock {\em Box splines}, volume~98.
\newblock Springer Science \& Business Media, 1993.

\bibitem{de2011geopdes}
C.~de~Falco, A.~Reali, and R.~V{\'a}zquez.
\newblock {GeoPDEs}: a research tool for isogeometric analysis of {PDE}s.
\newblock {\em Advances in Engineering Software}, 42(12):1020--1034, 2011.

\bibitem{drzisga2018surrogate}
D.~Drzisga, B.~Keith, and B.~Wohlmuth.
\newblock The surrogate matrix methodology: a priori error estimation.
\newblock {\em arXiv preprint arXiv:1902.07333}, 2019.

\bibitem{drzisga2019igasurrogateimpl}
D.~Drzisga, B.~Keith, and B.~Wohlmuth.
\newblock The surrogate matrix methodology: A reference implementation for
  low-cost assembly in isogeometric analysis.
\newblock {\em Submitted}, 2019.

\bibitem{Fahrendorf2018}
F.~Fahrendorf, L.~D. Lorenzis, and H.~Gomez.
\newblock Reduced integration at superconvergent points in isogeometric
  analysis.
\newblock {\em Computer Methods in Applied Mechanics and Engineering},
  328:390--410, 2018.

\bibitem{hiemstra2017optimal}
R.~R. Hiemstra, F.~Calabro, D.~Schillinger, and T.~J. Hughes.
\newblock Optimal and reduced quadrature rules for tensor product and
  hierarchically refined splines in isogeometric analysis.
\newblock {\em Computer Methods in Applied Mechanics and Engineering},
  316:966--1004, 2017.

\bibitem{hiemstra2019fast}
R.~R. Hiemstra, G.~Sangalli, M.~Tani, F.~Calabro, and T.~J.~R. Hughes.
\newblock Fast formation and assembly of finite element matrices with
  application to isogeometric linear elasticity.
\newblock ICES Report 19-03, The University of Texas at Austin, 2019.

\bibitem{hofreither2018black}
C.~Hofreither.
\newblock A black-box low-rank approximation algorithm for fast matrix assembly
  in isogeometric analysis.
\newblock {\em Computer Methods in Applied Mechanics and Engineering},
  333:311--330, 2018.

\bibitem{hollig2003finite}
K.~Hollig.
\newblock {\em Finite element methods with {B}-splines}, volume~26.
\newblock Siam, 2003.

\bibitem{hughes2005isogeometric}
T.~J. Hughes, J.~A. Cottrell, and Y.~Bazilevs.
\newblock Isogeometric analysis: {CAD}, finite elements, {NURBS}, exact
  geometry and mesh refinement.
\newblock {\em Computer methods in applied mechanics and engineering},
  194(39-41):4135--4195, 2005.

\bibitem{hughes2010efficient}
T.~J. Hughes, A.~Reali, and G.~Sangalli.
\newblock Efficient quadrature for {NURBS}-based isogeometric analysis.
\newblock {\em Computer methods in applied mechanics and engineering},
  199(5-8):301--313, 2010.

\bibitem{hughes2018mathematics}
T.~J. Hughes and G.~Sangalli.
\newblock Mathematics of isogeometric analysis: a conspectus.
\newblock {\em Encyclopedia of Computational Mechanics Second Edition}, pages
  1--40, 2018.

\bibitem{scipy}
E.~Jones, T.~Oliphant, P.~Peterson, et~al.
\newblock {SciPy}: Open source scientific tools for {Python}.
\newblock \url{http://www.scipy.org}, 2001--.
\newblock [Online; accessed February 11th 2019].

\bibitem{Karatarakis2014}
A.~Karatarakis, P.~Karakitsios, and M.~Papadrakakis.
\newblock {GPU} accelerated computation of the isogeometric analysis stiffness
  matrix.
\newblock {\em Computer Methods in Applied Mechanics and Engineering},
  269:334--355, 2014.

\bibitem{Mantzaflaris2014}
A.~Mantzaflaris and B.~J{\"u}ttler.
\newblock Exploring matrix generation strategies in isogeometric analysis.
\newblock In M.~Floater, T.~Lyche, M.-L. Mazure, K.~M{\o}rken, and L.~L.
  Schumaker, editors, {\em Mathematical Methods for Curves and Surfaces}, pages
  364--382, Berlin, Heidelberg, 2014. Springer Berlin Heidelberg.

\bibitem{mantzaflaris2015integration}
A.~Mantzaflaris and B.~J{\"u}ttler.
\newblock Integration by interpolation and look-up for {G}alerkin-based
  isogeometric analysis.
\newblock {\em Computer Methods in Applied Mechanics and Engineering},
  284:373--400, 2015.

\bibitem{mantzaflaris2017low}
A.~Mantzaflaris, B.~J{\"u}ttler, B.~N. Khoromskij, and U.~Langer.
\newblock Low rank tensor methods in {G}alerkin-based isogeometric analysis.
\newblock {\em Computer Methods in Applied Mechanics and Engineering},
  316:1062--1085, 2017.

\bibitem{MATTIS201836}
S.~A. Mattis and B.~Wohlmuth.
\newblock Goal-oriented adaptive surrogate construction for stochastic
  inversion.
\newblock {\em Computer Methods in Applied Mechanics and Engineering},
  339:36--60, 2018.

\bibitem{piegl2012nurbs}
L.~Piegl and W.~Tiller.
\newblock {\em The NURBS book}.
\newblock Springer Science \& Business Media, 2012.

\bibitem{sangalli2018matrix}
G.~Sangalli and M.~Tani.
\newblock Matrix-free weighted quadrature for a computationally efficient
  isogeometric $k$-method.
\newblock {\em Computer Methods in Applied Mechanics and Engineering},
  338:117--133, 2018.

\bibitem{schoenberg1946contributionsA}
I.~J. Schoenberg.
\newblock Contributions to the problem of approximation of equidistant data by
  analytic functions. {P}art {A}. on the problem of smoothing or graduation.
  {A} first class of analytic approximation formulae.
\newblock {\em Quarterly of Applied Mathematics}, 4:45--99, 1946.

\bibitem{schoenberg1946contributionsB}
I.~J. Schoenberg.
\newblock Contributions to the problem of approximation of equidistant data by
  analytic functions. {P}art {B}. {O}n the problem of osculatory interpolation.
  {A} second class of analytic approximation formulae.
\newblock {\em Quarterly of Applied Mathematics}, 4:112--141, 1946.

\bibitem{schoenberg1973cardinal}
I.~J. Schoenberg.
\newblock {\em Cardinal spline interpolation}, volume~12.
\newblock Siam, 1973.

\bibitem{vazquez2016new}
R.~V{\'a}zquez.
\newblock A new design for the implementation of isogeometric analysis in
  {O}ctave and {M}atlab: {GeoPDEs 3.0}.
\newblock {\em Computers \& Mathematics with Applications}, 72(3):523--554,
  2016.

\end{thebibliography}

\end{document}